\def\cleardoublepage{
  \clearpage
  \if@twoside\ifodd\c@page\else
  \hbox{}
  \thispagestyle{empty}
  \newpage
  \if@twocolumn\hbox{}\newpage\fi
  \fi\fi
  }
\newtheoremstyle{plainsl}%
	{\topsep}
	{\topsep}
	{\slshape} % only non-default setting
	{}
	{\normalfont\bfseries}
	{.}
	{ }
	{}
\theoremstyle{plainsl}
\newtheorem{theorem}{Theorem}[section]
\newtheorem{lemma}[theorem]{Lemma}
\newtheorem{corollary}[theorem]{Corollary}
\newcommand\cref[1]{Corollary~\ref{cor:#1}}
\newcommand\sqr[2]{{\vbox{\hrule height.#2pt
    \hbox{\vrule width.#2pt height#1pt \kern#1pt
        \vrule width.#2pt}\hrule height.#2pt}}}
\renewcommand\qed{%
	\ifmmode\eqno\sqr53
	\else\nolinebreak\ \hfill\sqr53\medbreak\fi}
\numberwithin{equation}{section}
\def\cfudot#1{\ifmmode\setbox7\hbox{$\accent"5E#1$}\else
    \setbox7\hbox{\accent"5E#1}\penalty 10000\relax\fi\raise 1\ht7
    \hbox{\raise.1ex\hbox to 1\wd7{\hss.\hss}}\penalty 10000
    \hskip-1\wd7\penalty 10000\box7}
\newcommand\defn[1]{\textsl{#1}\index{#1}}
\newcommand\named[2]{#1\nomenclature{#1}{#2}}
\newcommand\lf{\lfloor}
\newcommand\rf{\rfloor}
\newcommand\lc{\lceil}
\newcommand\rc{\rceil}
\newcommand\dde{\partial}
\newcommand\conj[1]{\overline{#1}}
\newcommand\bz{\conj{z}}
\newcommand\Hom{\mathop{\mathrm{Hom}}}
\newcommand\Harm{\mathop{\mathrm{Harm}}}
\newcommand\Pol{\mathop{\mathrm{Pol}}}
\newcommand\Cay{X} %\mathop{\mathrm{Cay}}}
\newcommand\inqed{\tag*{\sqr53}}
\newcommand\charac[1]{\chi\left(#1\right)}
\newcommand\slnc{sl_n(\cx)}
\newcommand\leg[2]{\Big(\frac{#1}{#2}\Big)}
\newcommand\Ima{\mathop{\mathrm{Im}}}
\newcommand\Div{\mathop{\mathrm{div}}}
\newcommand\twovec[2]{\left(\begin{matrix}#1\\#2\end{matrix}\right)}
\newcommand\twomat[4]{\left(\begin{array}{cc}#1 & #2\\#3 & #4\end{array}\right)}
\newcommand\lb{\langle}
\newcommand\rb{\rangle}
\newcommand\al{\alpha}
\newcommand\be{\beta}
\newcommand\de{\delta}
\newcommand\De{\Delta}
\newcommand\Ga{\Gamma}
\newcommand\la{\lambda}
\newcommand\La{\Lambda}
\newcommand\om{\omega}
\newcommand\Om{\Omega}
\newcommand\sg{\sigma}
\newcommand\ep{\epsilon}
\newcommand\tha{\theta}
\newcommand\scr[1]{{\mathcal #1}}
\newcommand\cA{\scr{A}}
\newcommand\cH{\scr{H}}
\newcommand\cx{{\mathbb C}}     % complexes
\newcommand\ints{{\mathbb Z}}
\newcommand\zz[1]{\ints_{#1}}
\newcommand\re{{\mathbb R}}     % reals
\newcommand\opk[1]{\mathop{\hbox{\rm #1}}\nolimits} % text as a function
\newcommand\sbs{\subseteq}
\newcommand\abs[1]{\left|#1\right|}
\newcommand\ip[2]{\left\langle#1,#2\right\rangle} %inner product
\newcommand\scp[2]{#1^T\!#2} %scalar product
\newcommand\zero{{\bf0}}
\newcommand\one{{\bf1}}
\newcommand\rk{\opk{rk}}
\newcommand\tr{\mathop{\mathrm{Tr}}}
\newcommand\spn{\opk{span}}
\newcommand\cip[2]{#1^*#2}
\newcommand\msum{\mathop{\mathrm{sum}}}
\newcommand\col{\mathop{\mathrm{col}}}
\newcommand\ad{\mathop{\mathrm{ad}}}
\begin{document}

%% start numbering in roman
\pagenumbering{roman}
%\pagestyle{empty} % no headers and footers (for now)

%% title, declaration, abstract, and thanks
\begin{titlepage}
\vspace*{3cm}
\begin{center}
\Large
Complex Lines with Restricted Angles \\
\vspace{2cm}
\large
by \\
Aidan Roy \\
\vspace{2cm}
A thesis \\
presented to the University of Waterloo \\
in fulfilment of the \\
thesis requirement for the degree of \\
Doctor of Philosophy \\
in \\
Combinatorics and Optimization \\
\vspace{2cm}
Waterloo, Ontario, Canada, 2005 \\
\copyright Aidan Roy 2005
\end{center}
\end{titlepage}

\cleardoublepage

I hereby declare that I am the sole author of this thesis. 

I authorize the University of Waterloo to lend this thesis to other institutions or individuals for the purpose of scholarly research. \\

\vspace{5cm}

I further authorize the University of Waterloo to reproduce this thesis by photocopying or by other means, in total or in part, at the request of other institutions or individuals for the purpose of scholarly research. \\

\cleardoublepage

\begin{center}
\section*{Abstract}
\end{center}
This thesis is a study of large sets of unit vectors in $\cx^n$ such that the absolute value of their standard inner products takes on only a small number of values. 

We begin with bounds: what is the maximal size of a set of lines with only a given set of angles? We rederive a series of upper bounds originally due to Delsarte, Goethals and Seidel, but in a novel way using only zonal polynomials and linear algebra. In the process we get some new results about complex $t$-designs and also some new characterizations of tightness.

Next we consider constructions. We describe some generic constructions using linear codes and Cayley graphs, and then move to two specific instances of the problem: mutually unbiased bases and equiangular lines. Both cases are motivated by problems in quantum computing, although they have applications in digital communications as well.

Mutually unbiased bases are collections of orthonormal bases with a constant angle between vectors from different bases. We construct some maximal sets in prime-power dimensions, originally due to Calderbank, Cameron, Kantor and Seidel, but again in a novel way using relative difference sets or distance-regular antipodal covers. We also detail their numerous relations to other combinatorial objects, including symplectic spreads, orthogonal decompositions of Lie algebras, and spin models. Peripherally, we discuss mutually unbiased bases in small dimensions that are not prime powers and in real vector spaces.

Equiangular lines are collections of vectors with only one angle between them. We use difference sets from finite geometry to construct equiangular lines: these sets do not have maximal size, but they are maximal with respect to having all entries of the same absolute value. We also include some negative results about constructions of maximal sets in large dimensions.

\cleardoublepage

\begin{center}
\section*{Acknowledgements}
\end{center}
My experience as a graduate student at the University of Waterloo has been thoroughly enjoyable and enlightening; many friends, colleagues, and professors are responsible. Most significantly, my supervisor Chris Godsil has offered an enormous amount of time, effort, and knowledge to further my development as a mathematician. His contribution to this thesis is immeasurable. My work has also benefitted substantially from several discussions with Martin R\"otteler, and I am grateful for funding from the Natural Sciences and Engineering Research Council of Canada. Last but not least, I would like to thank Mom, Dad, Giles, and Claire for all their love and support.

%\blankevenpage
\cleardoublepage

\pagestyle{plain} % do the table of contents in plain headers and footers
\tableofcontents\cleardoublepage

%% restart numbering, in arabic; adjust page style
\pagenumbering{arabic}
\pagestyle{content}

%% give your chapters names - you might change the order at some point
\chapter{Introduction}\label{chap:intro}

The field of quantum information has seen enormous growth in the last five years, as the concept of a quantum computer inches closer to reality. This growth has produced a variety of new and interesting combinatorial problems. At the same time, some of the mathematics behind these problems, particularly the combinatorics of quantum measurements, is not very well studied. For this reason, the study of complex vectors with only a few angles has become active and relevant.

Let $u$ and $v$ be unit vectors in ${\mathbb C}^n$. For the purposes of this work, the \defn{angle} between $u$ and $v$ is 
\[
\abs{u^*v}^2.
\] 
An \defn{$s$-distance set} is a set of vectors in which only $s$ angles occur. This thesis examines $s$-distance sets of maximal size. In particular, we consider $1$-distance sets, also called \defn{equiangular lines}, and $2$-distance sets which can be partitioned into orthonormal bases, which are called \defn{mutually unbiased bases}.

\begin{comment}
The term "angle" comes from equiangular lines, which dates back to at least 1973 in the real case. In the complex case, where it is not clear what it means geometrically, it has still become standard terminology. Neumaier defines the angle $\tha$ as $\cos^2 \tha = \abs{u*v}^2$. The \defn{degree} of an $s$-distance set is $s$; it is also an \defn{$s$-code}. The "distance" here is $d(u,v) = \sqrt{1 - \abs{u^*v}^2}$ as proved by Neumaier. In the work of Delsarte, the \defn{degree} of $X$ is the smallest $s$ such that $X$ is an $s$-distance set, while the \defn{strength} is the largest $t$ such that $X$ is a $t$-design.
\end{comment}

\section*{Background}\label{sec:history}

Historically, the study of unit vectors has been tied closely to information theory. In the theory of communication sequences, $u^*v$ is called the \defn{cross-correlation} between $u$ and $v$, and the objective is to minimize its absolute value (see Golomb and Gong \cite{gg1}). For example, suppose vectors $u$ and $v$ (called \defn{signals}) are sent down the same communications channel at the same time in the form $x = u + v$. If their cross-correlation is $0$, then the receiver can decompose $x$ into its original parts by projecting onto the subspaces spanned by $u$ and $v$. More generally, the probability of error in decomposing $x$ is a function of $\abs{u^*v}$. \textsl{Matched filter detection}\index{matched filter detection} is one example of this process: see Proakis \cite{pro1} for details. In the same way that finding large sets with large minimal distance is the fundamental problem in coding theory, finding large sets with small maximal cross-correlation is one of the fundamental problems in signal design.

\begin{comment}
More precisely, bits $a$ and $b$ can be sent at the same time in the form $x = au + bv$, and both $a$ and $b$ can be recovered. Code division multiple access communication does this. Perhaps most useful in dealing with noise: transmit your signals so they are orthogonal to the presumed noise.

Cross-correlation also seems to be used in steganography, where information is encoded into a picture so as to appear to be noise. Correlation is used to detach the information at the end.
\end{comment}

In fact, many results about angles between unit vectors are essentially translations from coding theory. Given a codeword $x = (x_1,\ldots,x_d)$ in $\zz{2}^d$, we can construct a vector $x' \in \re^d$ via the map 
\[
x'_i := (-1)^{x_i}.
\]
Then the cross-correlation between $x'$ and $y'$ in $\re^d$ is a function of the Hamming distance between $x$ and $y$. More generally, using $p$-th primitive roots of unity, codewords in $\zz{p}^d$ translate into vectors in $\cx^d$. 

The study of real unit vectors is older than that of complex vectors. Haantjes \cite{haa1} first considered the problem of real equiangular lines in 1948, under the guise of elliptic geometry. Seidel and others made important advances, culminating in the characterization of real equiangular lines in terms of regular two-graphs in the early 1970's (see Seidel \cite{sei1}). At the same time, sequence analysts were considering cross-correlation of binary sequences, beginning as early as 1953 with Barker \cite{bar1}. Golomb for example (see \cite{gg1}) constructed binary sequences with low correlation from cyclic difference sets. Real mutually unbiased bases have barely been studied, but they are closely related to Hadamard matrices, which date back as far as Sylvester in 1867.

\begin{comment} J.J. Sylvester, ``Thoughts on inverse orthogonal...''
\end{comment}

When complex lines were studied historically, it was typically either as an afterthought to the real case or with the goal of having low cross-correlation regardless of the number of angles. Delsarte, Goethals and Seidel \cite{dgs} developed some important bounds in 1974 as an extension of their work on real vectors; Welch developed relevant bounds for signal sets in the same year. Constructions were investigated only sporadically: Lerner \cite{ler1} might have been the first in 1961. Interest in mutually unbiased bases started in 1981, when Ivanovic \cite{iva1} found maximal sets of bases and showed their usefulness in quantum applications. The development of complex equiangular lines was even later; Zauner \cite{zau1} introduced them in the quantum setting in 1999. 

\begin{comment}
Regular two-graphs introduced by Higman \cite{sei2}
Much of the characterization done in 1966 by van Lint and Seidel \cite{vls1}.
Schwinger: mub's in the 60's?
\end{comment}

\section*{Quantum mechanics}\label{sec:quantum}

Since quantum information plays such a large role in the applications for the problems at hand, we briefly review the postulates of quantum mechanics, focusing on measurements. For a more detailed review, see Nielsen and Chuang \cite[Chapter 2]{nc1}.

The first postulate of quantum mechanics says that the state of any isolated physical system is described by a \defn{state vector} $v$, which is a unit vector in a fixed complex Hilbert space. More precisely, the phase of the vector does not matter: it suffices to consider the vector projectively or up to a complex scalar unit. Often we represent a state by its projection matrix $\rho = vv^*$, which is called a \defn{density matrix}. This is a Hermitian matrix with rank $1$ and trace $1$. 
\nomenclature[\p]{$\rho$}{density matrix}%

Of course in practice no system is isolated, and a state from one system can be entangled with others. A unit vector $v$ is known as a \defn{pure state}, while a \defn{mixed state} is a collection of pure states, each occurring with a certain probability. Suppose $v_i$ occurs with probability $p_i$. Then this mixed state is represented by the density matrix
\[
\rho = \sum_i p_i v_iv_i^*.
\]
Thus an arbitrary mixed state is represented by a positive semidefinite Hermitian matrix with trace of $1$. 

The second postulate of quantum mechanics states that evolution in a closed quantum system is given by a unitary transformation. That is, if \named{$U$}{unitary matrix} is unitary, then $v \mapsto Uv$ represents a change in the system (or, equivalently, $\rho \mapsto U\rho U^*$). Note that if $v$ is a unit vector then so is $Uv$. Unitary evolutions do not play a role in this thesis.

The third postulate describes how a quantum system is measured or observed using a collection of matrices $\scr{M} = \{M_1,\ldots,M_m\}$ such that 
\[
\sum_{i=1}^m M_i^*M_i = I.
\]
Each matrix in $\scr{M}$ is called a \defn{measurement operator}, and the collection is called a \defn{measurement}. Each $M_i$ is assigned a certain probability: given a state $\rho$, we say that \textsl{outcome}\index{measurement outcome} $i$ occurs with probability  
\[
p_i = \tr(M_i\rho M_i^*).
\]
\nomenclature[a$T]{$\tr$}{trace}%
The fact that the matrices $M_i^*M_i$ sum to the identity implies that the probabilities sum to $1$. The state $M_i\rho M_i^*$, suitably normalized, is the \defn{resulting state} after measurement with outcome $i$. If $\rho = vv^*$, then $p_i$ is the length of the vector $M_iv$, and that vector is the resulting state vector after normalization.

In situations when the measurement probabilities are more important than the resulting state, the matrices $M_i$ are sometimes replaced by $E_i := M_i^*M_i$. This is called the \textsl{Positive Operator-Valued Measurement} or \textsl{POVM} formalism. A \defn{POVM} is a collection of Hermitian, positive semidefinite matrices which sum to the identity. 

One type of measurement of particular interest is the \textsl{projective}\index{projective measurement} or \textsl{von Neumann}\index{von Neumann measurement} measurement. In such a measurement, each $M_i$ is a projection onto an orthogonal subspace. Since the matrices of $\scr{M}$ sum to $I$, the direct sum of the corresponding subspaces is all of $\cx^n$. Suppose $M_i$ is the projection onto a one-dimensional subspace, say $M_i = uu^*$, where $u$ is a unit vector. If $\rho = vv^*$ is a pure state, then the probability of outcome $i$ is
\[
p_i = \tr(M_i\rho M_i^*) = \tr(M_i\rho) = \tr(uu^*vv^*) = \abs{u^*v}^2.
\]
This connection between measurements and complex angles is the prime motivation for the problems in this thesis.

The fourth postulate of quantum mechanics states that quantum systems are composed using tensor products. If $v_1$ and $v_2$ are states in Hilbert spaces $\scr{H}_1$ and $\scr{H}_2$, then the state of the composite system is $v_1 \otimes v_2$. This postulate allows for quantum entanglement: a state $v$ in $\scr{H}_1 \otimes \scr{H}_2$ is \textsl{entangled}\index{entanglement} if it cannot be written as a tensor product of states in $\scr{H}_1$ and $\scr{H}_2$. For the purposes of quantum computation, the most common states are \defn{qubits}: vectors in a $2$-dimensional space. However, we will work with systems in arbitrary dimensions.
\nomenclature[\]{$v_1 \otimes v_2$}{tensor product}
 
\begin{comment}
Entanglement is related to mixed states in that if one was observing an entangled state $v$ only in the context of $\scr{H}_1$, then it would appear as a mixed state in that system.
\end{comment}

\section*{Outline}\label{sec:outline}

Our study of maximal sets of complex unit vectors has two components: bounds, and constructions. 

We begin with a brief review of association schemes (Chapter \ref{chap:configs}), as schemes and distance-regular graphs hide underneath all of the work in this thesis. We then consider upper bounds on the size of an $s$-distance set in Chapter \ref{chap:bounds}. These are mostly due to Delsarte, Goethals, and Seidel \cite{dgs}, but we develop them in a different manner using zonal harmonics. Along the way, we get some new characterizations of equality and some new results about complex $t$-designs, which are closely related to $s$-distance sets. The theory of bounds on complex lines has a well-developed, unified structure and it fits into the larger theories of both Delsarte spaces and polynomial spaces. There is also a nice symmetry between $s$-distance sets and $t$-designs.

On the other hand, actually constructing maximal $s$-distance sets seems to be difficult, and no general technique is known. In Chapter \ref{chap:constructions} we develop some general constructions for sets of lines using error-correcting codes and Cayley graphs with few eigenvalues. These constructions work better for mutually unbiased bases (Chapter \ref{chap:mubs}) than for equiangular lines (Chapter \ref{chap:eals}): they produce maximal sets for the former. In fact more progress has been made with mutually unbiased bases overall; numerous connections to combinatorics have been discovered. Conversely, maximal sets of equiangular lines are really only understood on an algebraic level at the present time. In both cases, the problem of finding maximal sets remains open for most dimensions. 

\begin{comment}
Complex equiangular lines have not been around long enough (1999) to say if the problem is hard; mutually unbiased bases have been around for over 20 years. 
\end{comment}

\chapter{Weighted Matrix Algebras}\label{chap:configs}

In this chapter we give a brief introduction to homogeneous weighted adjacency algebras, and Hermitian algebras in particular. These algebras are a slight generalization of association schemes: the larger framework is needed to describe some results in Chapter \ref{chap:bounds}. At the end of the chapter we specialize to association schemes and distance-regular graphs, both of which will arise frequently in later chapters. 

\section{Weighted adjacency algebras}\label{sec:cfgalg}

A \defn{weighted adjacency matrix} of a graph $G$ is a matrix $A$ indexed by the vertices of $G$ with entries satisfying
\[
\abs{A_{a,b}} = \begin{cases}
1, & ab \text{ is an arc}; \\
0, & \text{otherwise}.
\end{cases}
\]
We will always assume our adjacency matrices are Hermitian. Let $A \circ B$ denote the \defn{Schur product} of $A$ and $B$:
\nomenclature[\]{$A \circ B$}{Schur product}%
\[
(A \circ B)_{a,b} = A_{a,b} B_{a,b}.
\]
A \defn{coherently-weighted configuration} is a set of weighted adjacency matrices $\cA = \{A_0, \ldots, A_d\}$ such that 
\begin{enumerate}[(a)]
\item $A_i \circ A_j = 0$ for $i \neq j$, \label{item:cschur}
\item $A_iA_j$ is in the span of $\cA$, and \label{item:cspan}
\item \named{$I$}{identity matrix} is a sum of elements of $\cA$.
\end{enumerate}
\nomenclature[a$A]{$\cA$}{coherently-weighted configuration}%
\nomenclature{$A_i$}{adjacency matrix}%
Because of \eqref{item:cschur} we say the matrices $A_i$ are \defn{Schur orthogonal}. By \eqref{item:cspan} the span of $\cA$ is closed under multiplication: we call $\spn(\cA)$ a \defn{weighted adjacency algebra}. A configuration is \textsl{homogeneous}\index{configuration!homogeneous} if $I$ is an element of $\cA$. In this case we always take $A_0 = I$.

This terminology is not standard. That is, $\cA$ is not a coherent configuration in the sense of Cameron \cite{cam2}; it is the weighting which is coherent and not the configuration. Higman \cite{hig1} calls $\cA$ a \textsl{configuration with a coherent weight}. Throughout this chapter, a configuration will refer to a homogeneous, Hermitian, coherently-weighted configuration.

Since products are in the span of $\cA$, there are constants \named{$p_{ij}(k)$}{intersection number} such that
\[
A_iA_j = \sum_{k=0}^d p_{ij}(k) A_k.
\]
These constants are called the \defn{intersection numbers}. Every matrix in the algebra is Hermitian, so the matrices commute and $p_{ij}(k) = p_{ji}(k)$. In a homogeneous configuration, $p_{ii}(0) = (A_i^2)_{a,a}$ for any vertex $a$. This is the number of vertices adjacent to $a$ in the graph of $A_i$. For this reason $p_{ii}(0)$ is called the \defn{valency} of $A_i$.

The algebra generated by $\cA$ has dimension $d+1$. Since $A_i^k$ is in this algebra for each $k$, the minimal polynomial of $A_i$ has degree at most $d+1$, and $A_i$ has at most $d+1$ distinct eigenvalues. Since the matrices of $\cA$ are Hermitian and commute, they are simultaneously diagonalizable. Let $E_0,E_1,\ldots$ be the projection matrices onto the distinct eigenspaces of $\cA$, with $p_{ij}$ the eigenvalue of $A_i$ for $E_j$. By ``distinct eigenspaces", we mean that each $E_j$ projects on a subspace of an eigenspace for $A_i$, and for each $j \neq k$, there is at least one $i$ such that $E_j$ and $E_k$ do not project onto the same eigenspace of $A_i$.
\nomenclature{$E_j$}{matrix idempotent}%

\begin{comment}
Every Hermitian matrix is diagonalizable $H = UDU^{-1}$. Commuting, diagonalizable matrices are simultaneously diagonalizable using the $E_j$'s: find the minimal projections.
\end{comment}

\begin{theorem} If $\cA = \{A_0,\ldots,A_d\}$ is a Hermitian coherently-weighted configuration, then there are orthogonal idempotents $E_0,\ldots,E_d$ in the span of $\cA$ such that 
\begin{enumerate}[(a)]
\item $\sum_j E_j = I$, \label{idema}
\item $A_iE_j = p_{ij}E_j$, and \label{idemb}
\item $\{E_0,\ldots,E_d\}$ is a basis for $\spn(\cA)$.
\end{enumerate}
\end{theorem}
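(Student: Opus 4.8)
The plan is to exploit simultaneous diagonalizability, which the text has already established: the matrices in $\cA$ are Hermitian and commute (because all products lie in the span and the span consists of Hermitian matrices), so they share a common system of eigenspaces. First I would let $E_0,\ldots,E_d$ be the orthogonal projections onto the distinct common eigenspaces, as defined just before the statement. Since each $A_i$ is diagonal with respect to this decomposition, $A_i = \sum_j p_{ij} E_j$ for the appropriate eigenvalues $p_{ij}$, which immediately gives \eqref{idemb} after multiplying by $E_j$ and using $E_jE_k = \delta_{jk}E_j$. Part \eqref{idema} is just the statement that the common eigenspaces decompose all of $\cx^n$, i.e.\ the $E_j$ resolve the identity.

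The two substantive points are (i) that there are exactly $d+1$ such idempotents, and (ii) that each $E_j$ actually lies in $\spn(\cA)$. For (i): the span of $\cA$ is a commutative algebra of dimension $d+1$ (the $A_i$ are linearly independent since they are Schur orthogonal with distinct supports, by \eqref{item:cschur}, together with $I$ being a sum of some of them). A commutative $*$-algebra of $n\times n$ matrices containing $I$ is, after simultaneous diagonalization, a subalgebra of the diagonal matrices, hence has a basis of orthogonal idempotents, and its dimension equals the number of those idempotents; so there are exactly $d+1$ of them. For (ii): each primitive idempotent $E_j$ of this algebra can be written as a polynomial in, say, a generic element $\sum_i c_i A_i$ whose eigenvalues separate the common eigenspaces — concretely, $E_j = \prod_{k\neq j}\frac{M - \theta_k I}{\theta_j - \theta_k}$ where $M$ is such an element with distinct eigenvalues $\theta_k$ — and polynomials in $M$ lie in $\spn(\cA)$ by \eqref{item:cspan}. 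Finally, $\{E_0,\ldots,E_d\}$ is a linearly independent set of size $d+1 = \dim\spn(\cA)$, hence a basis, giving (c).

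The main obstacle is establishing that the algebra has dimension exactly $d+1$ and that its primitive idempotents number exactly $d+1$ — in other words, matching the "geometric" count of distinct common eigenspaces with the algebraic dimension. This is where the commutative-semisimple structure of $\spn(\cA)$ does the real work: once one knows $\spn(\cA) \cong \cx^{d+1}$ as an algebra, everything else is bookkeeping. I would handle it by noting that $\spn(\cA)$ is spanned by the $d+1$ linearly independent Hermitian matrices $A_i$, is closed under multiplication and conjugate transpose, hence is semisimple and commutative, so isomorphic to a product of copies of $\cx$; the number of factors is both the dimension $d+1$ and the number of common eigenspaces, which forces the two counts to agree. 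An alternative, more elementary route avoiding the word "semisimple" is to argue directly that the minimal idempotents are polynomials in a generic element of $\cA$ as above, that there are at most $d+1$ of them since the algebra has dimension $d+1$, and that there are at least $d+1$ since the $A_i$ would otherwise fail to be linearly independent; I would likely present this concrete version.
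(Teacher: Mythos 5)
Your proposal is correct and follows the same overall strategy as the paper: take the $E_j$ to be the projections onto the distinct common eigenspaces, get (a) from Hermiticity, get (b) from $A_i = \sum_j p_{ij}E_j$, deduce that the $E_j$ span $\spn(\cA)$ and hence number at least $d+1$, then show each $E_j$ lies in $\spn(\cA)$ and conclude by linear independence of orthogonal idempotents. The one place you diverge is the step placing $E_j$ in $\spn(\cA)$: you use a single generic element $M=\sum_i c_iA_i$ whose eigenvalues separate the common eigenspaces and apply Lagrange interpolation to $M$, whereas the paper forms, for each generator $A_i$, the polynomial $f_i(x)=m(x)/(x-p_{ij})$ with $m$ the minimal polynomial of $A_i$, and observes that the product $f_0(A_0)\cdots f_d(A_d)$ is a nonzero multiple of $E_j$ (any $E_k$ with $k\neq j$ is annihilated by some factor, since by definition of ``distinct eigenspaces'' some $A_i$ separates $E_j$ from $E_k$). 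The paper's version avoids the small existence argument your route needs — namely that the finitely many hyperplanes $\{c:\sum_i c_i(p_{ij}-p_{ik})=0\}$ cannot cover the coefficient space, so a separating $M$ exists — which you assert but do not prove; that gap is easy to fill and does not affect correctness. Both constructions buy the same thing, and the rest of your bookkeeping (orthogonal nonzero idempotents are independent, so there are exactly $d+1$ and they form a basis) matches the paper.
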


\begin{proof}
Since the matrices $E_0,E_1,\ldots$ are projection matrices onto distinct eigenspaces, they are orthogonal idempotents. Since $A_i$ is Hermitian, it has a spanning set of eigenvectors; hence the projection matrices sum to the identity. 
From \eqref{idema} and \eqref{idemb} it follows that
\[
A = AI = A\sum_j E_j = \sum_j p_{ij} E_j,
\]
and so the idempotents span $\cA$. Thus there are at least $d+1$ idempotents. 

We claim that each projection matrix $E_j$ is in $\spn(\cA)$. Since $A_iE_j = p_{ij}E_j$, we get that for any polynomial $p(x)$, 
\[
p(A_i)E_j = p(p_{ij})E_j.
\]
Fix $j$, let $m(x)$ be the minimal polynomial of $A_i$, and let $f_i(x) = m(x)/(x-p_{ij}).$ Then
\[
f_i(A_i) = f_i(A_i)I = \sum_k f_i(A_i)E_k = \sum_k f_i(p_{ik})E_k = \sum_{k:p_{ik}=p_{ij}}f_i(p_{ij})E_k.
\]
Since $E_j$ and $E_k$ correspond to different eigenvalues for some $A_i$, and $E_jE_k = 0$, we find that $f_0(A_0)\cdots f_d(A_d)$ is a multiple of $E_j$. Thus $E_j$ is a polynomial in $\cA$. 

Since the projection matrices are orthogonal, they are linearly independent. Each one is in $\spn(\cA)$, so there are exactly $d+1$ of them and we have a basis. \qed
\end{proof}

The projection matrices $E_j$ are called the \textsl{idempotents}\index{configuration!idempotents} of the configuration, and the constants \named{$p_{ij}$}{eigenvalue of a scheme} are the \textsl{eigenvalues}\index{configuration!eigenvalues}. Let $v$ be the number of vertices in the configuration (that is, the matrices are $v \times v$). Since $\cA$ spans the configuration, there are constants \named{$q_{ij}$}{dual eigenvalue} such that
\[
E_j = \frac{1}{v} \sum_i q_{ji}A_i.
\]
The constants $q_{ji}$ are called the \defn{dual eigenvalues}. Note that in a homogeneous configuration,
\[
\tr(E_j) = \frac{1}{v} \sum_i q_{ji} \tr(A_i) = q_{j0}.
\]
Since $E_j$ is a projection matrix, $q_{j0}$ also equals $\rk(E_j)$. 
\nomenclature[a$r]{$\rk$}{rank}%

\begin{comment}
A projection matrix satisfies $E^2-E = 0$, therefore using the minimal polynomial we see that all eigenvalues are $0$ or $1$. Diagonalizing with eigenvalues $\la_i$, we get
\[
\tr(E) = \sum_i \la_i = \sum_{\la_i = 1} 1 = \rk(E).
\]
\end{comment}

Define a \defn{matrix of eigenvalues} $P$ and a \defn{matrix of dual eigenvalues} $Q$ as follows:
%\nomenclature{$P$}{matrix of eigenvalues}%
%\nomenclature{$Q$}{matrix of dual eigenvalues}%
\[
P_{ij} = p_{ji}, \quad Q_{ij} = q_{ji}.
\]
Also let $\De_n$ denote the diagonal matrix with entries $p_{ii}(0)$, and let $\De_m$ be diagonal with entries $q_{j0}$. 

\begin{lemma}\label{lem:pqvi}
If $\cA$ is a Hermitian configuration, then
\[
PQ = vI.
\]
If $\cA$ is also homogeneous, then
\[
P^T\De_m = \De_nQ.
\]
\end{lemma}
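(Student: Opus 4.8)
The plan is to read both identities off the defining relations of the preceding theorem, namely $A_iE_j = p_{ij}E_j$, $E_j = \tfrac{1}{v}\sum_i q_{ji}A_i$, together with the expansion $A_i = \sum_j p_{ij}E_j$ that it supplies, while keeping careful track of the transposed indexing $P_{ij}=p_{ji}$, $Q_{ij}=q_{ji}$.

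For $PQ = vI$ I would substitute the expansion of each $A_l$ into the formula for an idempotent:
$E_k = \tfrac{1}{v}\sum_l q_{kl}A_l = \tfrac{1}{v}\sum_l q_{kl}\sum_m p_{lm}E_m = \tfrac{1}{v}\sum_m\bigl(\sum_l q_{kl}p_{lm}\bigr)E_m$. Since $E_0,\dots,E_d$ are linearly independent, comparing coefficients gives $\sum_l q_{kl}p_{lm} = v\,\delta_{km}$ for all $k,m$. Unwinding the definitions of $P$ and $Q$, the left-hand side is exactly the $(m,k)$ entry of $PQ$, so $PQ = vI$; as $P$ and $Q$ are square matrices this also gives $QP = vI$.

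For $P^T\De_m = \De_n Q$ I would compute $\tr(A_iE_j)$ in two ways. On one hand $A_iE_j = p_{ij}E_j$, so $\tr(A_iE_j) = p_{ij}\tr(E_j) = p_{ij}q_{j0}$, using $\tr(E_j)=q_{j0}$. On the other hand $\tr(A_iE_j) = \tfrac{1}{v}\sum_l q_{jl}\tr(A_iA_l)$, and here homogeneity does the work: since $A_0=I$ and $A_k\circ A_0 = 0$ forces $(A_k)_{aa}=0$ for every $k\neq 0$, the matrix $A_0$ is the only element of trace $v$ while the rest have trace $0$, so $A_iA_l = \sum_k p_{il}(k)A_k$ yields $\tr(A_iA_l) = v\,p_{il}(0)$. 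Moreover Schur orthogonality, $A_i\circ A_l = 0$ for $i\neq l$, makes each diagonal entry $(A_iA_l)_{aa} = \sum_b (A_i)_{ab}\overline{(A_l)_{ab}}$ vanish when $i\neq l$, whereas for $i=l$ it equals the valency $p_{ii}(0)$; hence $p_{il}(0) = p_{ii}(0)\,\delta_{il}$. Therefore $\tr(A_iE_j) = q_{ji}\,p_{ii}(0)$, and equating the two expressions gives $p_{ij}q_{j0} = p_{ii}(0)\,q_{ji}$, which is precisely the $(i,j)$ entry of $P^T\De_m = \De_n Q$.

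The routine part is the index-chasing through the transposed definitions of $P$ and $Q$. The one place that genuinely needs care is the second computation: it is homogeneity (via $A_0=I$ and the resulting zero diagonals) that lets only the $A_0$-coefficient of $A_iA_l$ survive under the trace, and one must check that Schur orthogonality still annihilates the $i\neq l$ terms even though the $A_i$ are Hermitian rather than $0/1$ matrices.
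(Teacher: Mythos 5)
Your proposal is correct and follows essentially the same route as the paper: the identity $PQ=vI$ is the change-of-basis/inverse relation between the bases $\{A_i\}$ and $\{E_j\}$ (which you simply spell out by comparing coefficients), and the second identity comes from evaluating $\tr(A_iE_j)$ in two ways, with your detour through $\tr(A_iA_l)=v\,p_{il}(0)$ and $p_{il}(0)=p_{ii}(0)\,\de_{il}$ just making explicit the trace-orthogonality of the Schur-orthogonal Hermitian matrices that the paper invokes directly.
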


\begin{proof}
The matrix $P$ is the change of basis matrix from $A_i$ to $E_j$, and up to a constant $v$, its inverse is $Q$. For the second equation, taking the trace of $A_iE_j = p_{ij}E_j$, we get
\[
\tr(A_iE_j) = p_{ij}\tr(E_j) = p_{ij}q_{j0}.
\]
Now writing $E_j$ as $\frac{1}{v}\sum_k q_{jk}A_k$, 
\[
\tr(A_i E_j) = \frac{1}{v} \sum_k q_{jk} \tr(A_i A_k).
\]
But $A_i$ and $A_k$ are orthogonal for $i \neq k$, so this simplifies to
\[
p_{ij}q_{j0} = \frac{1}{v} \sum_k q_{ji} \tr(A_i^2) = q_{ji} p_{ii}(0).
\]
Entry-wise, this is the second matrix equation. \qed
\end{proof}

In fact, Lemma \ref{lem:pqvi} can be extended to the non-homogeneous case without too much difficulty. If $\cA$ is Hermitian but not homogeneous, then $\cA$ decomposes into a direct sum of homogeneous subalgebras.

\begin{corollary}\label{cor:interms}
The intersection numbers and dual eigenvalues of a Hermitian coherently-weighted configuration can be written in terms of the eigenvalues.
\end{corollary}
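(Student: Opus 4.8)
The plan is to show that all of the structure constants of the algebra — the intersection numbers $p_{ij}(k)$ and the dual eigenvalues $q_{ij}$ — are determined by the eigenvalue matrix $P$ (equivalently, by the table of values $p_{ij}$). Recall that $P$ records $P_{ij} = p_{ji}$, so knowing $P$ means knowing every $p_{ij}$.

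First I would recover the dual eigenvalues. By Lemma~\ref{lem:pqvi} we have $PQ = vI$, so $Q = vP^{-1}$; since the idempotents are linearly independent and span $\spn(\cA)$, the matrix $P$ is invertible, and hence $Q$ — and with it every $q_{ij}$ — is a rational function of the entries of $P$. (In the homogeneous case one also has $\De_n = \frac{1}{v}\operatorname{diag}$ of the column where $E_0$ appears, so even the valencies $p_{ii}(0) = q_{0i}^{-1}\cdot(\text{something})$ are recovered; but the cleaner route is just $Q = vP^{-1}$.)

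Next I would recover the intersection numbers. Starting from $A_iA_j = \sum_k p_{ij}(k)A_k$ and using property~\eqref{idemb}, $A_iE_\ell = p_{i\ell}E_\ell$, multiply the defining relation on the right by $E_\ell$ to get $p_{i\ell}p_{j\ell}E_\ell = \sum_k p_{ij}(k)p_{k\ell}E_\ell$, i.e. for every $\ell$,
\[
p_{i\ell}\,p_{j\ell} = \sum_{k=0}^d p_{ij}(k)\, p_{k\ell}.
\]
In matrix form this says $P^T$ (or $P$, depending on indexing) applied to the unknown vector $(p_{ij}(k))_k$ gives the known vector $(p_{i\ell}p_{j\ell})_\ell$. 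Since $P$ is invertible, we can solve: $p_{ij}(k) = \frac{1}{v}\sum_\ell q_{k\ell}\, p_{i\ell}p_{j\ell}$ (using $Q = vP^{-1}$). This exhibits each intersection number explicitly in terms of the eigenvalues, completing the proof.

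The only real obstacle is keeping the index conventions straight — whether the relevant change-of-basis matrix is $P$, $P^T$, $Q$, or $Q^T$ — and confirming that $P$ is genuinely invertible, which is immediate from the theorem since the $E_j$ form a basis and $P$ is the transition matrix between the two bases $\{A_i\}$ and $\{E_j\}$ up to the nonzero scalar $v$. Everything else is linear algebra already packaged in Lemma~\ref{lem:pqvi}.
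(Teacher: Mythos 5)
Your proposal is correct and follows essentially the same route as the paper: both recover $Q$ from $PQ = vI$ and then extract the intersection numbers by applying the idempotents to $A_iA_j = \sum_k p_{ij}(k)A_k$ (via $A_iE_l = p_{il}E_l$) and inverting the change of basis, arriving at $p_{ij}(k) = \frac{1}{v}\sum_{l} p_{il}p_{jl}q_{lk}$. The only discrepancy is the index transposition you yourself anticipated: with the paper's convention $E_l = \frac{1}{v}\sum_k q_{lk}A_k$, the coefficient in your final formula should be $q_{lk}$ rather than $q_{k l}$.
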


\begin{proof}
From the previous lemma it is clear that the the dual eigenvalues can be written in terms of the eigenvalues. For the intersection numbers, begin with
\[
A_iA_j = \sum_k p_{ij}(k)A_k.
\]
Multiplying both sides by $I = \sum_l E_l$, we get
\begin{align*}
\sum_k p_{ij}(k)A_k & = \sum_l A_iA_j E_l \\
& = \sum_l p_{il}p_{jl} E_l. 
\end{align*}
Now writing $E_l$ as $\frac{1}{v}\sum_k q_{lk}A_k$, we get
\[
\sum_k p_{ij}(k)A_k = \frac{1}{v}\sum_{l,k} p_{il}p_{jl}q_{lk}A_k. 
\]
However, the $A_i$'s are linearly independent. Therefore,
\[
p_{ij}(k) = \frac{1}{v}\sum_{l} p_{il}p_{jl}q_{lk}. \qed
\]
\end{proof}

There is a standard matrix inner product for configurations:
\[
\ip{A}{B} := \tr(A^*B) = \msum(\bar{A} \circ B),
\]
\nomenclature[\]{$\ip{A}{B}$}{matrix inner product}%
where $\msum(A)$ is the sum of all the entries of $A$. Both the matrices of $\cA$ and the projection matrices are orthogonal with respect to this inner product.

\begin{comment}
From the second definition,
\[
\ip{A_i}{A_j} = \de_{ij}vp_{ii}(0),
\]
where $A_i$ is a $v \times v$ matrix. From the first definition,
\[
\ip{E_i}{E_j} = \de_{ij} \rk(E_i).
\]
\end{comment}

\begin{comment}
\begin{theorem}
If $\cA$ is a Hermitian homogeneous coherently-weighted configuration, then for any matrix $M$,
\[
\sum_i \frac{\ip{A_i}{M}}{vp_{ii}(0)}A_i = \sum_j \frac{\ip{E_j}{M}}{q_{j0}}E_j.
\]
\end{theorem}

\begin{proof}
Since both the idempotents and the Schur idempotents are orthogonal with respect to the matrix inner product, the projection of $M$ onto the span of $\cA$ is
\[
\sum_i \frac{\ip{A_i}{M}}{\ip{A_i}{A_i}}A_i = \sum_j \frac{\ip{E_j}{M}}{\ip{E_j}{E_j}}E_j. \qed
\]
\end{proof}
\end{comment}

\section{Example: Seidel matrices}\label{sec:Seidel}

A \defn{Seidel matrix} is a symmetric matrix with $0$ on the diagonal and off-diagonal entries of $\pm 1$. Each Seidel matrix $S$ can be considered a type of adjacency matrix for a graph, where $u$ and $v$ are adjacent if and only if $S_{uv} = -1$. If $A$ is the standard adjacency matrix of the graph, then
\[
S = J-I-2A.
\]

Suppose $S$ is a Seidel matrix with only two eigenvalues. Then the minimal polynomial of $S$ has degree $2$, and so
\[
S^2 = aS + bI
\]
for some constants $a$ and $b$. Moreover, $S$ is a weighted adjacency matrix and Hermitian, so it follows that $\cA = \{I,S\}$ is a Hermitian homogeneous configuration. 

Seidel matrices with two eigenvalues come from sets of real equiangular lines of maximal size. Let $X = \{v_1,\ldots,v_n\}$ be unit vectors in $\re^d$ such that 
\[
v_i^Tv_j = \pm \al
\]
for some constant $\al$ and all $i \neq j$. Such lines are \textsl{equiangular} with angle $\al^2$. Then the Gram matrix of $X$ has the form $G = I + \al S$, where $S$ is a Seidel matrix. We call $S$ the Seidel matrix corresponding to $X$.

\begin{lemma}\label{lem:realealrelbnd}
Let $X$ be a set of $n$ equiangular lines in $\re^d$ with angle $\al^2$. Then
\[
|X| \leq \frac{d(1-\al^2)}{1-d\al^2}.
\]
Equality holds if and only if
\[
\sum_{i=1}^n v_iv_i^T = \frac{n}{d}I. \qed
\]
\end{lemma}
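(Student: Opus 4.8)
The plan is to regard each $v_iv_i^T$ as a vector in the real inner product space of symmetric $d \times d$ matrices with $\ip{A}{B} = \tr(AB)$ (the conjugation in the paper's definition is immaterial here, as everything is real), and to analyze the single matrix $S := \sum_{i=1}^n v_iv_i^T$. First I would compute its two basic invariants. Since each $v_i$ is a unit vector, $\tr(S) = \sum_i v_i^Tv_i = n$. Since $\ip{v_iv_i^T}{v_jv_j^T} = \tr(v_iv_i^Tv_jv_j^T) = (v_i^Tv_j)^2$, which equals $1$ when $i=j$ and $\al^2$ otherwise by equiangularity, we get $\tr(S^2) = \sum_{i,j}(v_i^Tv_j)^2 = n + n(n-1)\al^2$.

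Next I would bound $\tr(S)^2$ against $\tr(S^2)$. The matrix $S$ is positive semidefinite and acts on $\re^d$, so it has at most $d$ nonzero eigenvalues $\la_1,\ldots,\la_d \ge 0$; applying Cauchy--Schwarz to $(1,\ldots,1)$ and $(\la_1,\ldots,\la_d)$ gives $\tr(S)^2 = \bigl(\sum_k \la_k\bigr)^2 \le d\sum_k \la_k^2 = d\,\tr(S^2)$. Substituting the invariants yields $n^2 \le d\bigl(n + n(n-1)\al^2\bigr)$; dividing by $n$ and collecting the terms carrying a factor of $n$ gives $n(1 - d\al^2) \le d(1-\al^2)$. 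The claimed inequality is meaningful only when $1-d\al^2 > 0$, and in that case dividing through gives $|X| = n \le \dfrac{d(1-\al^2)}{1-d\al^2}$.

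For the equality statement: equality in the Cauchy--Schwarz step forces all the $\la_k$ equal, i.e.\ $S = cI$ for some $c \ge 0$, and conversely $S = cI$ makes every inequality above an equality. Taking the trace of $S = cI$ together with $\tr(S) = n$ pins down $c = n/d$, so equality holds precisely when $\sum_{i=1}^n v_iv_i^T = \frac{n}{d}I$.

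I do not expect a serious obstacle: the argument is entirely elementary once one sets it up via $S$. The only points needing care are observing that $(v_i^Tv_j)^2$ takes just the two values $1$ and $\al^2$ (immediate from the hypothesis), using the \emph{dimension} bound $\rk(S) \le d$ rather than the trivial $\rk(S) \le n$ (otherwise the estimate says nothing), and keeping track of the inequality direction when clearing the denominator $1-d\al^2$.
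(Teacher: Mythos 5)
Your proof is correct, and it takes a genuinely different route from the paper. The paper does not actually prove Lemma~\ref{lem:realealrelbnd} in place: it states the result with a pointer to Godsil and Royle for ``a more direct proof,'' and instead derives the complex analogue (Corollary~\ref{cor:ealrelbnd}) from the general relative-bound machinery of Theorem~\ref{thm:sdistrel}, by expanding the annihilator $F(x) = x - \al$ in Jacobi polynomials and invoking the positivity of $\sum_{a,b} g_r(\abs{a^*b}^2)$; the equality condition there is phrased as ``$X$ is a $1$-design,'' which unwinds to exactly your condition $\sum_i v_iv_i^T = \tfrac{n}{d}I$. Your argument --- computing $\tr(S)$ and $\tr(S^2)$ for $S = \sum_i v_iv_i^T$ and applying Cauchy--Schwarz to the $d$ eigenvalues --- is the elementary, self-contained version (essentially the Godsil--Royle proof the paper cites), and it makes the equality analysis transparent since the only inequality used is Cauchy--Schwarz. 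What the paper's route buys in exchange is generality: the same Jacobi-polynomial argument handles arbitrary degree sets, yields the $t$-design characterizations of tightness, and produces the associated weighted adjacency algebras, none of which are visible from the two-moment computation. Your points of care are well placed; in particular you correctly use the rank bound $d$ rather than $n$, and you correctly note that the division by $1 - d\al^2$ requires $\al^2 < 1/d$, a hypothesis the paper's statement leaves implicit.
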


Lemma \ref{lem:realealrelbnd} is called the \textsl{relative bound}, and in Chapter 3 we prove an analogous result for complex lines. For a more direct proof, see Godsil and Royle \cite[Lemma 11.3.2]{god2}. 

\begin{corollary}
If the relative bound holds with equality, then the Seidel matrix of $X$ has two eigenvalues. Conversely, any Seidel matrix with two eigenvalues corresponds to a set of equiangular lines satisfying the relative bound with equality.
\end{corollary}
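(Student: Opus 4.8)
The plan is to reduce everything to the standard linear-algebra fact that, for any real matrix $M$, the matrices $M^TM$ and $MM^T$ have the same nonzero eigenvalues with the same multiplicities, combined with \lref{realealrelbnd}. Write $G = M^TM$, where $M$ is the $d \times n$ matrix whose columns are $v_1,\ldots,v_n$; then $\sum_i v_iv_i^T = MM^T$, so \lref{realealrelbnd} says the relative bound is tight precisely when $MM^T = \frac{n}{d}I_d$.

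For the first direction, suppose the relative bound holds with equality, so $MM^T = \frac{n}{d}I_d$. Then the only nonzero eigenvalue of $G = M^TM$ is $\frac{n}{d}$, occurring with multiplicity $d$, while $0$ occurs with multiplicity $n-d$. Since $S = \al^{-1}(G - I)$, the matrix $S$ has at most the two eigenvalues $\al^{-1}\!\left(\frac{n}{d}-1\right)$ and $-\al^{-1}$; in the case of interest, $n>d$, both values occur and $S$ has exactly two eigenvalues.

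For the converse, let $S$ be an $n \times n$ Seidel matrix with exactly two (necessarily real) eigenvalues $\theta_1 > \theta_2$. Since $\tr(S)=0$ and $S \neq 0$, it has both a positive and a negative eigenvalue, so $\theta_2 < 0$; set $\al := -1/\theta_2 > 0$ and $G := I + \al S$. Then $G$ is symmetric with eigenvalues $1 - \theta_1/\theta_2 > 0$ and $1 - \theta_2/\theta_2 = 0$, hence positive semidefinite of rank $d$, where $d$ is the multiplicity of $\theta_1$ (so $d < n$). Therefore $G = M^TM$ for some real $d \times n$ matrix $M$, and its columns $v_1,\ldots,v_n$ satisfy $v_i^Tv_i = G_{ii} = 1$ and $v_i^Tv_j = \al S_{ij} = \pm\al$ for $i \neq j$; they form $n$ equiangular lines in $\re^d$ with angle $\al^2$. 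To see this set meets the relative bound with equality, note that $G$ has a single nonzero eigenvalue, of multiplicity $d$, so the symmetric $d \times d$ matrix $MM^T$ (whose eigenvalues are exactly the nonzero eigenvalues of $G$) must be a scalar multiple of $I_d$; taking traces, $\tr(MM^T) = \tr(M^TM) = \tr(G) = \tr(I) + \al\tr(S) = n$, so $MM^T = \frac{n}{d}I_d$, and \lref{realealrelbnd} gives equality.

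The only real care needed is the bookkeeping with multiplicities and signs — checking $\theta_2 < 0$ so that $\al > 0$ and $G \succeq 0$, and matching $\rk(G)$ with the ambient dimension $d$ — but no genuinely hard step arises: the argument rests entirely on the $M^TM$ versus $MM^T$ correspondence and a one-line trace computation.
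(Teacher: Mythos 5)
Your proof is correct and follows essentially the same route as the paper's: both rest on the fact that $M^TM = G = I + \al S$ and $MM^T = \sum_i v_iv_i^T$ share their nonzero eigenvalues and multiplicities, combined with \lref{realealrelbnd}. Your version is slightly more explicit about why the negative eigenvalue exists (via $\tr(S)=0$) and about pinning down the scalar by a trace computation, but these are refinements of bookkeeping, not a different argument.
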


\begin{proof}
Let $M$ be the $d \times n$ matrix with columns $\{v_1,\ldots,v_n\}$. Then 
\[
M^TM = G = I + \al S,
\]
and 
\[
MM^T = \sum_{i=1}^n v_iv_i^T = \frac{n}{d}I.
\]
Now $M^TM$ and $MM^T$ have the same nonzero eigenvalues and multiplicities. Letting $\la^{(m)}$ denote an eigenvalue $\la$ with multiplicity $m$, it follows that the spectrum of $S$ is 
\[
\La = \left\{ -\frac{1}{\al}^{(n-d)}, \frac{n-d}{d\al}^{(d)} \right\}.
\]

For the converse, suppose $S$ has order $n$ and eigenvalues $\la_1^{(m_1)}$ and $\la_2^{(m_2)}$. Without loss of generality, $\la_1 < 0$. If we let $\la_1 = -1/\al$ and $m_1 = n-d$, then $m_2 = d$ and the spectrum of $S$ is $\La$. Therefore $G = I + \al S$ is positive semidefinite with rank $d$, so it is the Gram matrix of a set of lines $\{v_1,\ldots,v_n\}$ in $\re^d$. Those lines are equiangular because all off-diagonal entries of $G$ have the same absolute value. Again using the fact that $M^TM$ and $MM^T$ have the same nonzero eigenvalues, we see that $MM^T$ has exactly one eigenvalue, $n/d$. Thus 
\[
MM^T = \sum_{i=1}^n v_iv_i^T = \frac{n}{d}I,
\]
which implies that Lemma \ref{lem:realealrelbnd} is satisfied with equality. \qed
\end{proof}

\begin{comment}
Same eigenvalues and multiplicities: $M^TMx = \la x$ implies $MM^T(Mx) = \la Mx$. $G$ positive semidefinite of rank $d$ implies $G = M^TM$ with $M$ $d \times n$: this is the characterization of positive semidefinite, for example by $G = Q^TDQ$ with $Q$ orthogonal and $M = D^{1/2}Q$.
\end{comment}

Since multiplying the unit vector $v_i$ by $-1$ will not affect its angle with any other vector, two Seidel matrices are considered equivalent if one can be obtained from the other by multiplying row $i$ and column $i$ by $-1$. The corresponding operation on the graph, which consists of replacing the neighbourhood of a vertex by its complement, is called \defn{switching}. An equivalence class of graphs under this operation is called a \defn{switching class} or \defn{two-graph}. If $\Ga$ and $\Ga'$ are graphs from the same switching class, then their Seidel matrices have the same eigenvalues. A two-graph whose Seidel matrix has only two eigenvalues is called a \textsl{regular two-graph}\index{two-graph!regular}. In this way, maximal sets of real equiangular lines are characterized graph-theoretically. For more details about real equiangular lines, see Seidel \cite{sei1} or Godsil and Royle \cite{god2}.

\section{Example: monomial groups}

A matrix is \textsl{monomial}\index{monomial matrix} if exactly one entry is non-zero in every row and column. Every monomial matrix is of the form
\[
M = DP,
\]
where $D$ is diagonal, and $P$ is a permutation matrix. We call $P$ the \defn{underlying permutation} of $M$. Suppose $G$ is a group of monomial matrices, with $D_1P_1$ and $D_2P_2$ in $G$. Then
\[
D_1P_1D_2P_2 = D_1(P_1D_2P_1^{-1})P_1P_2
\]
is also in $G$. Since $P_1D_2P_1^{-1}$ is diagonal, so is $D_1(P_1D_2P_1^{-1})$. Thus $P_1P_2$ is the underlying permutation, and it follows that the underlying permutations of $G$ also form a group.

The \defn{centralizer} of a matrix group $G$ is the set of matrices
\[
C(G) = \{M :\; Mg = gM \text{ for all } g \in G\}.
\]
\nomenclature{$C(G)$}{centralizer}%
The centralizer is a matrix algebra in that it is closed under addition and multiplication and contains $I$.

\begin{lemma} \label{lem:monmat}
Let $M$ be a matrix and $G$ a monomial group. Then $M$ is in $C(G)$ if and only if for each $DP$ in $G$ with $D$ diagonal and $P$ the matrix of permutation $\pi$,
\[
M_{\pi(x),\pi(y)} = \frac{D_{yy}}{D_{xx}}M_{x,y} \quad \text{(all $x,y$).}
\]
\end{lemma}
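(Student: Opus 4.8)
The plan is to reduce the centralizer condition to an entrywise identity by a direct computation, with no structural input beyond the definitions. By definition $M \in C(G)$ iff $gM = Mg$ for every $g \in G$. Since every element of $G$ has the form $g = DP$ with $D$ diagonal and $P$ the permutation matrix of some $\pi$, it suffices to fix one such $g$ and show that the matrix equation $DPM = MDP$ is equivalent to the family of scalar equations $M_{\pi(x),\pi(y)} = (D_{yy}/D_{xx})M_{x,y}$, ranging over all indices $x,y$. Quantifying the resulting equivalence over all $g \in G$ then gives the lemma; note that no extra closure argument is needed, because ``for all $g \in G$'' already absorbs all relations in the group (the earlier observation that the underlying permutations form a group is not even required here).

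For the computation I would fix the convention that $P$ permutes row indices, so that $(PM)_{a,b} = M_{\pi(a),b}$ and the $(c,b)$ entry of $DP$ is $D_{cc}$ when $b = \pi(c)$ and $0$ otherwise. Then $(DPM)_{a,b} = D_{aa}M_{\pi(a),b}$, while $(MDP)_{a,b} = M_{a,\pi^{-1}(b)}\,D_{\pi^{-1}(b)\,\pi^{-1}(b)}$. Equating the $(x,\pi(y))$ entries of the two sides gives $D_{xx}M_{\pi(x),\pi(y)} = M_{x,y}D_{yy}$, and dividing by the nonzero diagonal entry $D_{xx}$ yields exactly the stated identity. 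Conversely, this identity holding for all $x,y$ reconstitutes $DPM = MDP$ entry by entry, since every index pair $(a,b)$ is of the form $(x,\pi(y))$ for a unique $(x,y)$.

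I expect no genuine obstacle: the content is purely a matching of matrix entries. The one point demanding care is the index convention for permutation matrices — left versus right action, $\pi$ versus $\pi^{-1}$ — which must be chosen to agree with the convention used elsewhere in the text, since the opposite choice produces the cosmetically different (but equivalent) identity $M_{\pi(x),\pi(y)} = (D_{xx}/D_{yy})M_{x,y}$ or interchanges $\pi$ with $\pi^{-1}$.
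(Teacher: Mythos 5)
Your proposal is correct and is essentially the paper's own proof: both reduce $DPM = MDP$ to the equality of the $(x,\pi(y))$ entries, computing $(DPM)_{x,\pi(y)} = D_{xx}M_{\pi(x),\pi(y)}$ and $(MDP)_{x,\pi(y)} = M_{x,y}D_{yy}$ and dividing by the nonzero diagonal entry. Your added remarks on index conventions and on quantifying over all $g \in G$ are sound but do not change the argument.
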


\begin{proof}
Each $DP$ commutes with $M$ if and only if the entries
\[
(DPM)_{x,\pi(y)} = D_{x,x}M_{\pi(x),\pi(y)}
\]
and
\[
(MDP)_{x,\pi(y)} = M_{x,y}D_{y,y}
\]
are equal for every $x$ and $y$. \qed
\end{proof}

Higman \cite{hig1} showed how to construct a homogeneous configuration from the centralizer of a monomial group using induced representations. Let $G$ act transitively on a set $X$, and set $H = G_a$ for some fixed $a \in X$. Also fix $R$ as a set of coset representatives for $H$, so that every $g \in G$ can be written uniquely in the form 
\[
g = rh, \quad r \in R, h \in H.
\]
Now associate each $x \in X$ with the unique coset representative $r_x \in R$ such that
\[
x = r_x(a).
\]
Then the action of $G$ on $X$ can be described as follows: if $gr_x = r_yh$ for some $r_y \in R$ and $h \in H$, then
\[
g(x) = gr_x(a) = r_yh(a) = r_y(a) = y.
\]
Finally, let $\la$ be a fixed linear character of $H$. For each $g \in G$, we define an $|X| \times |X|$ matrix $M(g)$ by its action on the standard basis $\{e_x : x \in X\}$. If $gr_x = r_yh$, then
\[
M(g): e_x \mapsto \la(h)e_y.
\]
It is not difficult to verify that $\{M(g): g \in G\}$ is a representation of $G$. Clearly each $M(g)$ is also monomial, so we have a monomial group. 

\begin{comment}
A linear character is a character of degree $1$: $\la(g) = \tr(M'(g))$, where matrix $M'(g)$ is $1 \times 1$.

Let $g_1$ and $g_2$ be elements of $G$. For a given $x \in X$, there is a unique $y$ in $X$ and $h_y$ in $H$ such that $g_1r_x = r_yh_y$. Similarly, $g_2r_y = r_zh_z$ for some $z$ in $X$ and $h_z$ in $H$. Then,
\[
M(g_2)M(g_1)e_x = M(g_1)\la(h_y)e_y = \la(h_y)\la(h_z)e_z = \la(h_zh_y)e_z.
\]
However, since $g_2g_1r_x = g_2r_yh_y = r_zh_zh_y$, we also have
\[
M(g_2g_1)e_x = \la(h_zh_y)e_z.
\]
\end{comment}

\begin{theorem}\label{thm:higmon}
The centralizer of $\{M(g): g \in G\}$ has a basis which is a homogeneous coherently-weighted configuration.
\end{theorem}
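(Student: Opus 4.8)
The plan is to realize the centralizer $C := C(\{M(g) : g \in G\})$ as the span of a family of monomial‑supported matrices, one for each suitable orbit of $G$ acting diagonally on $X \times X$, and then to check the three axioms of a coherently‑weighted configuration for that family. The main tool is \lref{monmat}: writing each $M(g)$ as $D_gP_g$, where $P_g$ is the permutation matrix of the action $\pi_g$ of $g$ on $X$ and $D_g$ is diagonal with $(D_g)_{yy} = \la(h)$ for the element $h \in H$ determined by $gr_x = r_yh$ with $y = \pi_g(x)$, the lemma says that $M \in C$ if and only if
\[
M_{\pi_g(x),\pi_g(y)} = \frac{(D_g)_{yy}}{(D_g)_{xx}}\,M_{x,y} \qquad \text{for all } g \in G,\ x,y \in X.
\]

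First I would use that $\la$, being a linear character of a finite group, has values of modulus $1$, so $|(D_g)_{xx}| = 1$ and the displayed identity gives $|M_{\pi_g(x),\pi_g(y)}| = |M_{x,y}|$; hence for every $M \in C$ the quantity $|M_{x,y}|$ is constant along each orbit of $G$ on $X \times X$. Let $\Omega_0 = \{(x,x)\},\Omega_1,\dots,\Omega_d$ be those orbits; each meets the slice $\{a\}\times X$ because $G$ is transitive on $X$, so I may fix a representative $(a,b_i) \in \Omega_i$. The identity then shows that an element of $C$ supported on $\Omega_i$ is determined by its single entry at $(a,b_i)$, so that space is at most one‑dimensional; I would define a candidate $A_i$ by $(A_i)_{a,b_i} := 1$, $(A_i)_{x,y} := (D_g)_{b_ib_i}/(D_g)_{aa}$ for any $g$ with $\pi_g(a) = x$, $\pi_g(b_i) = y$, and $0$ off $\Omega_i$. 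Using the cocycle identity $(D_{gg'})_{xx} = (D_g)_{xx}(D_{g'})_{\pi_g^{-1}(x)\,\pi_g^{-1}(x)}$ (which is just $M(gg') = M(g)M(g')$ read off in the $D,P$ decomposition), well‑definedness of $A_i$ reduces to the requirement that $(D_k)_{aa} = (D_k)_{b_ib_i}$ for every $k$ stabilizing the pair $(a,b_i)$, that is, that $\la$ agrees with its $r_{b_i}$‑conjugate on $H \cap r_{b_i}Hr_{b_i}^{-1}$. Call $\Omega_i$ \emph{admissible} when this holds; it always holds for $\Omega_0$, with $A_0 = I$, and when it fails the same computation shows no nonzero element of $C$ is supported on $\Omega_i$, so $M|_{\Omega_i} = 0$ for all $M \in C$. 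A direct check against the displayed identity confirms that each admissible $A_i$ does lie in $C$.

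It then remains to assemble $\cA := \{A_i : \Omega_i \text{ admissible}\}$ and verify the axioms. The $A_i$ are weighted adjacency matrices with pairwise disjoint supports, hence Schur orthogonal and linearly independent; they span $C$ because any $M \in C$ splits as $M = \sum_i M|_{\Omega_i}$ with each $M|_{\Omega_i}$ again in $C$ (the defining relation couples only entries of a single orbit), so $M|_{\Omega_i}$ is a scalar multiple of $A_i$ on admissible orbits and $0$ otherwise. Since $C$ is closed under multiplication, $A_iA_j \in C = \spn(\cA)$, which is axiom (b); axiom (a) is Schur orthogonality, and $I = A_0 \in \cA$ gives axiom (c) and homogeneity. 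Finally, since $M(g)^* = M(g^{-1})$, $C$ is closed under conjugate transposition, so a suitable rescaling of the $A_i$ produces a family closed under $*$ (Hermitian when the admissible orbits are self‑paired), in the form assumed in \sref{cfgalg}.

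I expect the crux to be the well‑definedness step: setting up the cocycle $g \mapsto D_g$ correctly and showing that consistency of the propagated entries of $A_i$ is precisely the condition that $\la$ matches its conjugate on the relevant point stabilizer, which is what singles out the admissible orbits. Once that is in place, the constancy‑of‑modulus observation, the decomposition of an arbitrary $M \in C$, and closure under multiplication are all routine, and the final Hermitian normalization is cosmetic.
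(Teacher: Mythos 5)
Your proof is correct, and it is in fact more complete than what the paper itself supplies: Theorem~\ref{thm:higmon} is stated there with only a citation to Higman, and the one argument present in the source (in a commented-out block) is explicitly restricted to the case where $H$ is normal, so that the underlying permutation group acts regularly on $X$ and each basis matrix can be recovered from a single prescribed row. Your route is the general orbit argument: partition $X \times X$ into $G$-orbits, observe via \lref{monmat} that the subspace of the centralizer supported on a single orbit is at most one-dimensional, characterize the orbits carrying a nonzero element by the consistency condition that $\la$ agree with its $r_{b_i}$-conjugate on the pair stabilizer, and assemble the resulting flat matrices into a Schur-orthogonal spanning set containing $I$. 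This handles arbitrary (not necessarily normal) $H$, and the admissibility condition you isolate is exactly the reason the matrices $M_s$ in the paper's sketch are indexed only by the set $S$ of columns where some centralizer element is nonzero. Two minor remarks: the well-definedness step you flag as the crux does go through by the cocycle identity, with care needed only over whether $D_g$ is indexed by the source or the target of $\pi_g$ (a convention already slightly ambiguous in the paper's own statement of \lref{monmat}); and the closing Hermitian rescaling is not required by the statement --- the paper notes immediately after the theorem that this configuration is in general not Hermitian --- so your hedge about self-paired orbits is the right one.
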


\begin{comment}
In fact, since non-homogeneous Hermitian algebras decompose into homogeneous subalgebras, the assumption of transitivity on the group $G$ may be removed.

\begin{proof}
This proof only works for the case below, namely when $H$ is a normal subgroup. Let $M_G$ denote the matrix group $\{M(g): g \in G\}$. Since $C(M_G)$ is an algebra, it suffices to show that it has a basis of Schur-orthogonal matrices, one element of which is $I$. Assume the rows and columns of the matrices of $G$ are indexed by a set $X$. Fix $a \in X$, and let $S$ be the elements $s \in X$ such that $M_{a,s} \neq 0$ for at least one $M$ in $C(M_G)$. Then for each $s$ in $S$, define a matrix $M_s$ with the following $a$-th row:
\[
(M_s)_{a,y} = \begin{cases}
1, & y = s; \\
0, & y \neq s.
\end{cases}
\]
Define the other rows of $M_s$ in accordance with Lemma \ref{lem:monmat}. Since the underlying group of permutations of $G$ acts regularly on $X$, this defines all rows of $M_s$ uniquely. Each $M_s$ is flat monomial, since the matrices of $G$ are flat monomial. The set $\{M_s: s \in S\}$ is Schur orthogonal and spans $C(M_G)$. Finally, since $I$ is in $C(M_G)$, it follows that $a$ is in $S$ and therefore $M_a = I$. \qed 
\end{proof} 

In general, the matrices in Theorem \ref{thm:higmon} will not even be normal (that is, satisfy $MM^* = M^*M$). 
\end{comment}

In general, the configuration in Theorem \ref{thm:higmon} will not be Hermitian. A monomial matrix is \defn{flat} if all its non-zero entries have the same absolute value; each $M(g)$ is flat. Also, in the case when $H$ is a normal subgroup, the quotient group $G/H$ acts regularly on $X$. In fact, if $G$ is any group of flat monomial matrices such that the underlying group of permutations is regular, then the centralizer of $G$ has a basis which is a homogeneous configuration.

\section{Association schemes}

\begin{comment}
Associations schemes seem to have been introduced formally by Bose and Shimamoto \cite{bos1} in 1952, although their ideas go back to at least 1939.
\end{comment}

A \textsl{symmetric association scheme}\index{association scheme} is a Hermitian homogeneous configuration $\cA$ such that every $A_i \in \cA$ is $0$-$1$, and 
\[
\sum_{i=0}^d A_i = J.
\]
\nomenclature{$J$}{all-ones matrix}%
Since $A_i$ is $0$-$1$, it is a \defn{Schur idempotent}:
\[
A_i \circ A_i = A_i.
\]
This implies that the span of $\cA$ is closed with respect to Schur multiplication, which is not true of coherent configurations in general. The weighted adjacency algebra of an association scheme is called a \defn{Bose-Mesner algebra}.

\begin{lemma} 
If $\cA$ is an association scheme, then 
\[
E_0 = \frac{1}{v}J
\]
is an idempotent of the scheme, and the corresponding eigenvalue for $A_i$ is the valency $p_{ii}(0)$.
\end{lemma}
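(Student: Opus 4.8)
The plan is to verify directly that $E_0 = \frac{1}{v}J$ satisfies the defining properties of an idempotent of the scheme, and then read off the eigenvalue. First I would check that $E_0$ lies in the span of $\cA$: since an association scheme satisfies $\sum_{i=0}^d A_i = J$ (property distinguishing it from a general coherently-weighted configuration), we have $\frac{1}{v}J = \frac{1}{v}\sum_i A_i \in \spn(\cA)$. Next, $E_0$ is idempotent and Hermitian: because each $A_i$ is a $0$-$1$ matrix and the graphs partition all ordered pairs, $J$ is a $0$-$1$ symmetric matrix with every row sum equal to $v$, so $J^2 = vJ$, giving $E_0^2 = \frac{1}{v^2}J^2 = \frac{1}{v}J = E_0$; symmetry is immediate.

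The main computational step is to show $A_i E_0 = p_{ii}(0) E_0$, i.e. that $E_0$ projects onto an eigenspace of each $A_i$ with eigenvalue the valency. This follows because $A_i J = p_{ii}(0) J$: the $(a,b)$ entry of $A_i J$ is $\sum_c (A_i)_{a,c}$, which is the number of arcs out of $a$ in the graph of $A_i$, and in a homogeneous configuration this equals $p_{ii}(0)$ independently of $a$ and of $b$. Hence $A_i E_0 = \frac{1}{v} A_i J = \frac{p_{ii}(0)}{v} J = p_{ii}(0) E_0$. Comparing with property \eqref{idemb} of the idempotent theorem, this identifies $E_0$ as the idempotent with $p_{i0} = p_{ii}(0)$.

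Finally, to confirm $E_0$ is genuinely one of the $d+1$ idempotents guaranteed by the earlier theorem (rather than a random element of the algebra), I note that $E_0$ is a nonzero idempotent in $\spn(\cA)$ with rank $1$; since the $E_j$ form a basis of orthogonal idempotents summing to $I$, any rank-one idempotent in the algebra must be one of them, so $E_0$ is the idempotent corresponding to the eigenvalue $p_{ii}(0)$ of each $A_i$. The only mild subtlety — not really an obstacle — is justifying that "the number of arcs out of $a$ in the graph of $A_i$" is constant; but this is exactly the definition of the valency $p_{ii}(0) = (A_i^2)_{a,a}$ recalled earlier, combined with the fact that all row sums of $A_i$ agree (which holds because $A_i^2$ has constant diagonal and, more directly, because $\sum_j A_j = J$ forces regularity of each constituent graph). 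I expect the entire argument to be a few lines.
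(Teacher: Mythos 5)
Your proof is correct, but it closes the argument by a different route than the paper. Both proofs hinge on the same computation — the row sums of $A_i$ are constant and equal to $p_{ii}(0)$ because $(A_i^2)_{a,a} = p_{ii}(0)$ for a symmetric $0$-$1$ matrix, so $A_iJ = p_{ii}(0)J$. Where you diverge is in showing that $\frac{1}{v}J$ really is one of the minimal idempotents $E_j$ rather than a proper "piece" of a larger eigenspace. You observe that $\frac{1}{v}J$ is a rank-one idempotent lying in $\spn(\cA)$, and that in an algebra spanned by orthogonal idempotents summing to $I$, every idempotent is a sum of some subset of the $E_j$, so a rank-one idempotent must be a single $E_j$ of rank one. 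The paper instead works from the eigenvector $\one$: it notes that in a connected regular graph the valency has multiplicity one, so if the common idempotent containing $\one$ had rank $k$ then each graph $G_i$ would have at least $k$ components compatible with a fixed $k$-cell partition, which contradicts $\sum_i A_i = J$ unless $k=1$. Your version is cleaner and purely linear-algebraic, leaning on the already-proved structure theorem for the idempotent basis; the paper's version is more combinatorial and makes visible the role of "connectivity" of the union of the relations. One small caveat: your parenthetical claim that $\sum_j A_j = J$ by itself forces regularity of each constituent graph is not a valid justification — regularity comes from the constant diagonal of $A_i^2$, which is the first reason you give and is sufficient on its own.
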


\begin{proof}
Since $A_i$ is Hermitian and $0$-$1$, it is symmetric and therefore the adjacency matrix of a graph $G_i$. Since $(A_i)^2_{a,a} = p_{ii}(0)$, this graph is regular with valency $p_{ii}(0)$. Therefore $\one$, the all-ones vector, is an  eigenvector with eigenvalue $p_{ii}(0)$. Denote the idempotent matrix for this eigenspace by $E_0$.

In any connected regular graph, the valency is an eigenvalue of multiplicity $1$. Therefore if $E_0$ has rank $k$, each $G_i$ has at least $k$ components. More specifically, there is a $k$-cell partition $\pi$ of the vertex set such that the partition of components of $G_i$ is a refinement of $\pi$. But $\sum A_i = J$, so every pair of vertices is an edge in some $G_i$ and therefore $k=1$. Thus $E_0$ is the projection onto the space spanned by $\one$. \qed
\end{proof}

Since $E_i \circ E_j$ is in the span of $\cA$, there are also constants \named{$q_{ij}(k)$}{Krein parameter} such that
\[
E_i \circ E_j = \frac{1}{v}\sum_k q_{ij}(k) E_k.
\]
These constants are called the \defn{Krein parameters}. The proof of the following is similar to Corollary \ref{cor:interms}.

\begin{corollary}
The intersection numbers, Krein parameters, and dual eigenvalues of a scheme can all be written in terms of the eigenvalues. \qed
\end{corollary}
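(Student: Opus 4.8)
The plan is to mimic the proofs of Corollary~\ref{cor:interms} and Lemma~\ref{lem:pqvi}, using only the two algebraic structures available on an association scheme: ordinary matrix multiplication (governed by the intersection numbers $p_{ij}(k)$, equivalently by the eigenvalues $p_{ij}$) and Schur multiplication (governed by the Krein parameters $q_{ij}(k)$). Everything will reduce to the change-of-basis identities $A_i = \sum_j p_{ij}E_j$ and $E_j = \tfrac1v\sum_i q_{ji}A_i$, together with the fact that both $\{A_i\}$ and $\{E_j\}$ are bases of $\spn(\cA)$.

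First I would dispose of the dual eigenvalues $q_{ji}$ exactly as in the homogeneous case of Lemma~\ref{lem:pqvi}: since $\cA$ is in particular a Hermitian homogeneous configuration, that lemma already gives $PQ = vI$ and $P^T\De_m = \De_nQ$, and since $P$ is built from the eigenvalues $p_{ij}$ (with the diagonal matrices $\De_n$, $\De_m$ themselves determined by the eigenvalues---$\De_n$ has entries $p_{ii}(0)$, and $\De_m$ has entries $q_{j0} = \rk(E_j)$, which we must argue is recoverable; see the last paragraph), $Q = vP^{-1}$ is a function of the eigenvalues. Next, the intersection numbers: this is verbatim Corollary~\ref{cor:interms}, giving $p_{ij}(k) = \tfrac1v\sum_l p_{il}p_{jl}q_{lk}$, now a function of the eigenvalues alone.

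The genuinely new piece is the Krein parameters. Start from the defining relation $E_i \circ E_j = \tfrac1v\sum_k q_{ij}(k)E_k$ and expand each idempotent using $E_j = \tfrac1v\sum_a q_{ja}A_a$. Since Schur multiplication of the $0$-$1$ matrices is trivial ($A_a \circ A_b = \delta_{ab}A_a$, because the $A_a$ are Schur-orthogonal Schur idempotents---this is exactly where we use that $\cA$ is an association scheme and not merely a coherently-weighted configuration), the left side collapses to $\tfrac1{v^2}\sum_a q_{ia}q_{ja}A_a$. On the right, rewrite each $E_k$ back in the $A$-basis as well, obtaining $\tfrac1{v^2}\sum_{k,a} q_{ij}(k)q_{ka}A_a$. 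Matching coefficients of the linearly independent $A_a$ gives $\sum_k q_{ij}(k)q_{ka} = \sum_a q_{ia}q_{ja}$ for each $a$; written in matrix form this is $Q^{(\mathrm{Krein})} Q = \Lambda$ where the right-hand matrix is again a known function of the $q$'s, hence of the eigenvalues, and $Q$ is invertible. So $q_{ij}(k)$ is a function of the $q_{ji}$'s, hence of the eigenvalues.

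The step I expect to need the most care is making precise the sense in which the $q_{j0} = \rk(E_j)$ are ``written in terms of the eigenvalues,'' since these are positive integers rather than polynomial expressions in the $p_{ij}$; the point is that $\De_m$ is forced by the pair of equations in Lemma~\ref{lem:pqvi} once $P$ and $\De_n$ (the latter read off from $P$, as $p_{ii}(0) = p_{i0}$) are known---indeed $\De_m$ and $Q$ are the unique simultaneous solution, so no independent data beyond the eigenvalues is used. With that understood, all three families $p_{ij}(k)$, $q_{ij}(k)$, $q_{ji}$ are determined, and the corollary follows.
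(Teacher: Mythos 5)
Your proposal is correct and is exactly the argument the paper has in mind: the paper gives no explicit proof, saying only that it is ``similar to Corollary~\ref{cor:interms},'' and your treatment of the Krein parameters is precisely the Schur-multiplication dual of that argument (expand $E_i \circ E_j$ in the $A$-basis using $A_a \circ A_b = \de_{ab}A_a$, then solve the resulting linear system, which is possible since $Q$ is invertible by Lemma~\ref{lem:pqvi}). One small slip: the coefficient comparison should read $\sum_k q_{ij}(k)q_{ka} = q_{ia}q_{ja}$ for each fixed $a$ (the extra $\sum_a$ on the right is spurious), and the system can be solved in closed form by instead substituting $A_a = \sum_k p_{ak}E_k$, giving $q_{ij}(k) = \frac{1}{v}\sum_a q_{ia}q_{ja}p_{ak}$ directly.
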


Suppose $\cA$ is an association scheme and $a$ is adjacent to $b$ in $G_k$, the graph corresponding to $A_k$. Then the $(a,b)$ entry of $A_iA_j$ is the number of vertices $c$ adjacent to $a$ in $G_i$ and adjacent to $b$ in $G_j$. It follows that the intersection number $p_{ij}(k)$ is a nonnegative integer. (Again this is not true of configurations in general.) The next theorem is slightly more difficult, but it is an important condition for proving that a scheme with a given set of parameters does not exist.

\begin{theorem}
The Krein parameters of a scheme are nonnegative. \qed
\end{theorem}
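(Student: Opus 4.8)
The plan is to prove nonnegativity of the Krein parameters $q_{ij}(k)$ by exhibiting them, up to a positive scalar, as entries of a Gram matrix — equivalently, as eigenvalues of a positive semidefinite matrix. The standard device is the following: for matrices $M, N$ indexed by the same set, the Schur product $M \circ N$ is a principal submatrix of the Kronecker product $M \otimes N$ (the submatrix on the ``diagonal'' index pairs $(a,a)$). Since a principal submatrix of a positive semidefinite matrix is positive semidefinite, and since the idempotents $E_k$ are positive semidefinite (each being an orthogonal projection), we get that $E_i \circ E_j$ is positive semidefinite for all $i,j$.

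The key steps, in order, are: (1) Record that each $E_k$ is positive semidefinite, since by the first theorem of the chapter the $E_k$ are orthogonal idempotents in a Hermitian algebra, hence projection matrices with eigenvalues $0$ and $1$. (2) Observe that $E_i \otimes E_j$ is positive semidefinite (a Kronecker product of PSD matrices is PSD), and that $E_i \circ E_j$ equals the principal submatrix of $E_i \otimes E_j$ indexed by the set $\{((a,a)) : a \text{ a vertex}\}$; therefore $E_i \circ E_j \succeq 0$. (3) Use the fact that $E_i \circ E_j = \frac{1}{v} \sum_k q_{ij}(k) E_k$ lies in the Bose--Mesner algebra, so its eigenvalues are exactly the scalars $\frac{1}{v} q_{ij}(k)$, each attained on the eigenspace $\mathrm{col}(E_k)$ (here we use that the $E_k$ are the minimal idempotents and $\sum_k E_k = I$). (4) Combine (2) and (3): a positive semidefinite matrix has nonnegative eigenvalues, so $\frac{1}{v} q_{ij}(k) \ge 0$, and since $v > 0$ we conclude $q_{ij}(k) \ge 0$.

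Alternatively, and perhaps more in the spirit of the chapter, one can avoid Kronecker products: pick the spectral decomposition and compute directly. Writing $E_k$ with rows indexed by vertices, for any vector $x$ one has $x^* (E_i \circ E_j) x = \sum_{a,b} \bar{x}_a x_b (E_i)_{a,b}(E_j)_{a,b}$, and interpreting $(E_i)_{a,b}$ as an inner product of the $a$th and $b$th columns of a matrix square root of $E_i$ (which exists since $E_i \succeq 0$), this double sum rearranges into a squared norm; hence it is nonnegative, so $E_i \circ E_j \succeq 0$, and one proceeds as in step (3)--(4). Either route is short.

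The only real obstacle is step (3): one must be careful that the scalar multiplying $E_k$ in the expansion of $E_i \circ E_j$ really is an eigenvalue of $E_i \circ E_j$ on the full eigenspace $\mathrm{col}(E_k)$, not merely on a subspace of it. This is exactly guaranteed by part (a) and (c) of the first theorem of the chapter — the $E_k$ form a basis of the algebra consisting of pairwise orthogonal idempotents summing to $I$ — so $(E_i \circ E_j) E_k = \frac{1}{v} q_{ij}(k) E_k$ identifies $\frac{1}{v} q_{ij}(k)$ as the eigenvalue on all of $\mathrm{col}(E_k)$, and these eigenspaces exhaust $\mathbb{C}^v$. With that in hand the nonnegativity is immediate. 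Everything else is routine linear algebra about positive semidefinite matrices and Schur products.
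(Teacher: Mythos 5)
Your argument is correct and is essentially the paper's own proof: the paper likewise observes that $E_i \otimes E_j$ is positive semidefinite (being a projection), that $E_i \circ E_j$ is a principal submatrix of it and hence positive semidefinite, and that $q_{ij}(k)$ (up to the positive factor $1/v$) is its eigenvalue on the eigenspace of $E_k$. Your extra care in step (3) about why this scalar is genuinely the eigenvalue on all of $\mathrm{col}(E_k)$ is a reasonable elaboration of what the paper leaves implicit.
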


\begin{proof}
The parameter $q_{ij}(k)$ is the eigenvalue of $E_i \circ E_j$ for eigenspace $E_k$. Now $E_i \otimes E_j$ is a projection matrix, so it is positive semidefinite. But $E_i \circ E_j$ is a principal submatrix, so it is also positive semidefinite and therefore $q_{ij}(k) \geq 0$. \qed
\end{proof}

\section{Distance-regular graphs}

Let $d(a,b)$ denote the distance between two vertices $a$ and $b$ in a graph $G$, and let $\Ga_i(a)$ denote the \textsl{$i$-th neighbourhood} of $a$: the set of vertices at distance $i$ from $a$. Then $G$ is \defn{distance-regular} if, for every $a$ and $b$, the size of $\Ga_i(a) \cap \Ga_j(b)$ depends only on $i$, $j$, and $d(a,b)$.
\nomenclature[\c]{$\Ga_i(a)$}{$i$-th neighbourhood}%

Let $G_i$ denote the \defn{distance-$i$ graph} of $G$: $a$ and $b$ are adjacent in $G_i$ if they are at distance $i$ in $G$. Then $G = G_1$. Also let $A_i$ be the adjacency matrix of $G_i$, with $A_0 = I$. Then each $A_i$ is a symmetric $0$-$1$ matrix, and if $d$ is the diameter of $G$ then
\[
\sum_{i=0}^d A_i = J.
\]
If $G$ is distance-regular, then the $(a,b)$-entry of $A_iA_j$ depends only on the distance between $a$ and $b$. Therefore there are constants $p_{ij}(k)$ such that
\[
A_iA_j = \sum_k p_{ij}(k) A_k,
\]
and so $\{A_0,\ldots,A_d\}$ is an association scheme. 

Suppose $(A_1A_i)_{a,b}$ is nonzero. Then there is a vertex $c$ at distance $1$ from $a$ and distance $i$ from $b$, and so $a$ and $b$ must be at distance $i-1$, $i$ or $i+1$. Therefore if $G$ is distance-regular, the intersection numbers of the scheme can be simplified: there are constants $a_i$, $b_{i-1}$, and $c_{i+1}$ such that 
\begin{equation}\label{eqn:intarray}
A_1A_i = b_{i-1}A_{i-1} + a_iA_i + c_{i+1}A_{i+1}.
\end{equation}
If $a$ and $b$ are at distance $i$, then $b_i$ is the number of vertices $c$ at distance $1$ from $a$ and $i+1$ from $b$. Similarly $c_i$ is the number at distance $1$ from $a$ and $i-1$ from $b$. There is some redundancy here: the number of neighbours of $a$ is $a_i+b_i+c_i = k$, the valency of the graph. Also, $c_0 = 1$ and $b_d = 0$. The intersection numbers are often encapsulated in an \defn{intersection array}:
\[
\{b_0,b_1,\ldots,b_{d-1}; c_1,c_2,\ldots,c_d\}.
\]
\nomenclature[a$b]{$\{b_0,\ldots,b_{d-1}; c_1,\ldots,c_d\}$}{intersection array}%

When $i=2$ in \eqref{eqn:intarray}, we get
\[
c_2A_2 = A_1^2 - a_1A_1 - b_0I,
\]
which is a quadratic polynomial in $A_1$. More generally, induction shows that $A_i$ is a polynomial of degree $i$. A configuration in which each $A_i$ is a polynomial of degree $i$ in $A_1$ is called \defn{$P$-polynomial}.

\begin{theorem} 
An association scheme is $P$-polynomial if and only if its Schur idempotents are the distance matrices of a distance-regular graph. \qed
\end{theorem}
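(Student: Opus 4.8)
The statement is an equivalence, so the plan is to prove both directions. The forward direction ($P$-polynomial $\Rightarrow$ distance matrices of a distance-regular graph) is the more delicate one and I expect it to be the main obstacle; the reverse direction is essentially already established in the paragraph preceding the theorem, so I would dispose of it first. For the reverse direction: if $\{A_0,\ldots,A_d\}$ are the distance matrices of a distance-regular graph $G$, then the displayed relation \eqref{eqn:intarray}, namely $A_1A_i = b_{i-1}A_{i-1} + a_iA_i + c_{i+1}A_{i+1}$ with $c_{i+1} \neq 0$ for $i < d$, lets me solve recursively for $A_{i+1}$ as a polynomial of degree $i+1$ in $A_1$: $A_0 = I$ has degree $0$, $A_1$ has degree $1$, and inductively $A_{i+1} = c_{i+1}^{-1}(A_1A_i - a_iA_i - b_{i-1}A_{i-1})$ is a polynomial in $A_1$ of degree exactly $i+1$ since $A_1A_i$ contributes a term of degree $i+1$ and the subtracted terms have lower degree. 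Hence the scheme is $P$-polynomial.

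For the forward direction, suppose $\cA = \{A_0,\ldots,A_d\}$ is an association scheme that is $P$-polynomial, so each $A_i = v_i(A_1)$ for a polynomial $v_i$ of degree $i$. I want to show the Schur idempotents are the distance matrices of a distance-regular graph; since distance-regularity of the graph $G = G_1$ of $A_1$ is exactly the statement that its distance matrices form an association scheme with the $P$-polynomial structure, the real content is to show that $A_i$ is in fact the distance-$i$ matrix of $G_1$. The key step is to prove by induction on $i$ that the $(a,b)$ entry of $A_i$ is $1$ precisely when $d_{G_1}(a,b) = i$. The base cases $i = 0,1$ are immediate. For the inductive step, since $A_{i}$ is a polynomial of degree $i$ in $A_1$ while $A_0, \ldots, A_{i-1}$ span the polynomials in $A_1$ of degree $< i$, the product $A_1 A_{i-1}$ — a polynomial of degree $i$ — must have a nonzero $A_i$-component, so by the scheme's intersection numbers $A_1 A_{i-1} = \sum_{k} p_{1,i-1}(k) A_k$ with $p_{1,i-1}(i) \neq 0$; moreover the $0$-$1$ matrices $A_k$ are Schur-orthogonal and sum to $J$, so each entry of $A_1 A_{i-1}$ that is "new" (not already covered by $A_0,\ldots,A_{i-1}$) is detected by $A_i$. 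Concretely, if $(a,b)$ has $A_i$-entry $1$, then $A_1 A_{i-1}$ has a positive $(a,b)$ entry, meaning some $c$ is $A_1$-adjacent to $a$ and $A_{i-1}$-adjacent to $b$; by induction $c$ is $G_1$-adjacent to $a$ and at $G_1$-distance $i-1$ from $b$, so $d_{G_1}(a,b) \leq i$, and it cannot be $< i$ because those entries are already accounted for by $A_0,\ldots,A_{i-1}$ which are Schur-orthogonal to $A_i$. Conversely, if $d_{G_1}(a,b) = i$ then walking along a geodesic and applying induction to the penultimate vertex shows $(a,b)$ lies in the support of some $A_k$ with $k \geq i$ forced down to $k = i$ by the same Schur-orthogonality bookkeeping. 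This identifies $A_i$ with the distance-$i$ matrix; in particular $d$ is the diameter, $\sum_i A_i = J$, and the intersection numbers of the scheme are the distance-regularity parameters of $G_1$.

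The main obstacle, and the step I would write most carefully, is the inductive bookkeeping that the $P$-polynomial degree hypothesis forces $p_{1,i-1}(i) \neq 0$ and that no entry is "skipped" — i.e. that the support of $A_i$ is exactly the set of pairs at distance $i$, neither more nor less. The degree count ($A_1A_{i-1}$ has degree $i$, so its expansion in the basis $\{A_k\}$ genuinely involves $A_i$) together with the facts that the $A_k$ are $0$-$1$, Schur-orthogonal, and sum to $J$ is what makes this work; I would make sure to invoke the lemma that $E_0 = \frac1vJ$ and the nonnegativity of the intersection numbers $p_{ij}(k)$ (a positive sum of nonnegative integers is zero iff every term is) to justify that "$A_1A_{i-1}$ has positive $(a,b)$ entry" is equivalent to "there is a witnessing vertex $c$."
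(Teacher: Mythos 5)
Your proof is correct, and it is worth noting that the paper itself states this theorem with no visible proof at all: the reverse direction is absorbed into the paragraph immediately preceding the statement (the induction on \eqref{eqn:intarray} showing each $A_i$ is a degree-$i$ polynomial in $A_1$, exactly as you reproduce it), and the forward direction is left to the reader. The argument the paper has in mind for the forward direction is different in flavour from yours: writing $A_i = v_i(A_1)$ and transporting the trace inner product to polynomials via $\ip{p}{q} := \ip{p(A_1)}{q(A_1)}$, the Schur-orthogonality of the $A_i$ makes $v_0,\ldots,v_d$ an orthogonal polynomial sequence, so the three-term recurrence (which the paper proves later for the Jacobi polynomials) gives $p_{1,i}(k)=0$ for all $\abs{k-i}>1$ at once, i.e.\ the full shape of \eqref{eqn:intarray}. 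Your degree-count extracts only the upper-triangular half of this, namely $p_{1,i-1}(k)=0$ for $k>i$ together with $p_{1,i-1}(i)\neq 0$, but that is precisely what the combinatorial induction needs, and your bookkeeping --- nonnegativity of the intersection numbers, Schur-orthogonality, and $\sum_k A_k = J$ --- correctly pins the support of $A_i$ to the pairs at distance exactly $i$. Both routes must end with this same identification step, which the orthogonal-polynomial machinery does not make disappear; your version reaches it by more elementary means at the cost of not producing the intersection array in one stroke. Two small points to make explicit when writing it up: in the reverse direction, $c_{i+1}\neq 0$ for $i<d$ because a geodesic supplies a witnessing neighbour; and in the forward direction, the claim that $A_1A_{i-1}$ "genuinely involves $A_i$" rests on the fact that a nonzero polynomial of degree at most $d$ cannot vanish at $A_1$, which holds because the $P$-polynomial hypothesis forces $I,A_1,\ldots,A_1^d$ to be linearly independent.
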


\begin{comment}
Let $A_i = p_i(A_1)$, and define an inner product on polynomials by $\ip{p}{q} = \ip{p(A_1)}{q(A_1)}$. Since $A_i$ and $A_j$ are orthogonal, so are $p_i$ and $p_j$. Thus the orthogonal sequence $\{p_i\}$ forms a three-term recurrence: $xp_i = c_{i+1}p_{i+1}+a_ip_i+b_{i-1}p_{i-1}$, which is equation \eqref{eqn:intarray}. This tells us that $A_2$ is the distance-$2$ matrix of $A_1$, and so on.
\end{comment}

Dually, a configuration is \defn{$Q$-polynomial} if each idempotent $E_j$ is Schur polynomial of degree $j$ in $E_1$. In other words, for some polynomial $q$ of degree $j$,
\[
(E_j)_{a,b} = q((E_1)_{a,b}).
\]
For more about association schemes and distance-regular graphs, see Brouwer, Cohen and Neumaier \cite{bcn} or Godsil \cite{blue}.

\section{Example: distance-regular covers}

Let $G$ be a distance-regular graph with diameter $d$. Then $G$ is \defn{antipodal} if any two vertices at distance $d$ from a given $x$ are also at distance $d$ from each other. Equivalently, there is a partition $\pi$ of the vertices such that $x$ and $y$ are in the same cell if and only if they are at maximum distance. The cells of $\pi$ are called \defn{fibres}.

Given a graph $G$ with antipodal partition $\pi$, the \defn{quotient graph} \named{$G/\pi$}{quotient graph} has the fibres of $\pi$ as vertices, with $\pi_i$ and $\pi_j$ adjacent if there are vertices in $\pi_i$ and $\pi_j$ that are adjacent in $G$. Assume $d \geq 3$; then from distance-regularity it follows that all the fibres have the same size, and if two fibres are adjacent in $G/\pi$, then there is a matching between them in $G$. If every fibre has size $n$, we call $G$ an \defn{$n$-fold cover} of $G/\pi$.

\begin{comment}
If $x$ and $y$ are in the same cell, then there are $p_{dd}(d)$ other vertices also in that cell. Therefore all cells have the same size. If $x$ and $z$ are adjacent, then each $w$ in the same cell as $z$ must be at distance $d-1$ from $x$ and is therefore adjacent to a unique $y$ in the same cell as $x$. This gives the matching.

\begin{lemma}
$G$ is bipartite if and only if $a_i = 0$ for all $i$. $G$ is antipodal if and only if $b_i = c_{d-i}$ for all $i \neq \lf d/2 \rf$.
\end{lemma}

\begin{proof}
Suppose $a_i \neq 0$. Then some $y$ and $z$ at distance $i$ from $x$ are adjacent, and we have a cycle of length $2i+1$. The converse is also straightforward. \qed
\end{proof}
\end{comment}

\begin{theorem}
Let $G$ be antipodal and distance-regular with intersection array 
\[
\{b_0,\ldots,b_{d-1};c_1,\ldots,c_d\}.
\]
Then $G/\pi$ has diameter $m = \lf d/2 \rf$. If $d = 2m$, and $G/\pi$ is an $n$-fold cover, then $G/\pi$ has intersection array
\[
\{b_0,\ldots,b_{m-1};c_1,\ldots,nc_m\}.
\]
If $d = 2m+1$, then $G/\pi$ has intersection array
\[
\{b_0,\ldots,b_{m-1};c_1,\ldots,c_m\}. \qed
\]
\end{theorem}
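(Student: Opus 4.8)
The plan is to set up the standard correspondence between an antipodal distance-regular graph $G$ and its quotient $G/\pi$, and then read off the intersection numbers of $G/\pi$ directly from those of $G$ by carefully tracking how the matching structure between fibres affects the counts $b_i$ and $c_i$. First I would establish the claim about the diameter: if $x$ and $y$ lie in fibres at ``quotient-distance'' $k$, I want to show that the $G$-distance between $x$ and $y$ is either $k$ or $d-k$, and that both values are realized by suitable choices of representatives in the two fibres. This follows from the fact (noted just before the theorem) that adjacent fibres are joined by a perfect matching in $G$, so a shortest path in $G/\pi$ of length $k$ lifts to a path in $G$ of length $k$, giving $d(x,y) \le k$; and the antipodal partition forces the only other achievable distance between the two fibres to be $d-k$ (the antipode of $x$ in $x$'s fibre is at distance $d$ from $x$, hence at distance $d-k$ from $y$ once $d(x',y)=k$ for the matched representative). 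Since distances in $G$ range over $0,\dots,d$, the quotient-distances range over $0,\dots,\lfloor d/2\rfloor$, giving $m = \lfloor d/2\rfloor$.

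Next I would compute the intersection array of $G/\pi$. Fix fibres $F,F'$ at quotient-distance $i$ and pick $x \in F$. Counting neighbours in $G/\pi$ of $F$ at quotient-distance $i+1$ from $F'$: a neighbouring fibre $F''$ of $F$ in $G/\pi$ is matched to $x$ via a unique neighbour $z$ of $x$ in $G$, and $F''$ is at quotient-distance $i+1$ from $F'$ exactly when $z$ is at $G$-distance $i+1$ from $y$ (a fixed representative of $F'$ realizing $d(x,y)=i$); by distance-regularity of $G$ the number of such $z$ is exactly $b_i$, provided $i+1 < d-i$, i.e.\ $i < \lfloor d/2 \rfloor = m$. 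So the ``$b$''-parameters of $G/\pi$ are $b_0,\dots,b_{m-1}$, unchanged. The analogous count for $c$-parameters goes the same way when we stay below the antipodal ``collision'' range: for $i \le m$ with $i < d-i$, the number of neighbouring fibres of $F$ at quotient-distance $i-1$ from $F'$ equals $c_i$. The only place the argument changes is at the last step $i = m$ when $d = 2m$ is even: then quotient-distance $m$ corresponds to $G$-distance $m$ (since $d-m = m$), but a neighbour $z$ of $x$ can be at $G$-distance $m-1$ from \emph{any} of the $n$ representatives of $F'$ in its fibre, and these contribute to the count of quotient-distance-$(m-1)$ neighbours; a short argument shows these $n$ contributions are disjoint across the fibre, so $c_m$ becomes $n c_m$ in $G/\pi$. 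When $d = 2m+1$ is odd, quotient-distance $m$ corresponds to $G$-distances $m$ and $m+1$ simultaneously, there is no such collision, and all of $c_1,\dots,c_m$ are inherited unchanged.

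The main obstacle I anticipate is the bookkeeping at the top of the quotient, specifically justifying the factor of $n$ in the even case: one must argue that when $x$ has $G$-distance $m$ to the fibre $F'$, a neighbour $z$ of $x$ at $G$-distance $m-1$ from one representative of $F'$ is at distance $m+1=d-(m-1)$ — in particular \emph{not} $m-1$ — from every other representative, so the sets of ``witnesses'' pulling $F''$ down to quotient-distance $m-1$, one set per representative of $F'$, are pairwise disjoint, and each has size $c_m$ by distance-regularity of $G$. This uses the antipodal property together with $d = 2m$ in an essential way, and it is the one spot where the even/odd dichotomy genuinely bites. Everything else — the diameter claim, the stability of the $b_i$, and the $c_i$ for $i<m$ — is a direct unwinding of the matching-between-adjacent-fibres fact and the definition of distance-regularity, with no real surprises.
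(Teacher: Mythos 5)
The paper does not actually prove this theorem: it states it, cites Brouwer--Cohen--Neumaier and Gardiner for the general case, and only proves the special case of antipodal covers of $K_{k,k}$. So your proposal must be measured against the standard proof in those references, and your outline does follow that route: identify quotient-distance with $\min(k,d-k)$, use the matching between adjacent fibres to put the neighbouring fibres of $F$ in bijection with the neighbours of a fixed $x\in F$, and count. The counting architecture, including the disjointness argument that produces the factor $n$ in $c_m$ when $d=2m$, is correct as far as it goes.

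The genuine gap is in the lemma on which everything else leans: that for a vertex $y$ and a fibre $F$ with $j=d(y,F)<d/2$, exactly one vertex of $F$ is at distance $j$ from $y$ and the remaining $n-1$ are at distance \emph{exactly} $d-j$. Your justification --- ``the antipode of $x$ is at distance $d$ from $x$, hence at distance $d-k$ from $y$'' --- is only a triangle inequality and yields $d(x'',y)\ge d-k$, not equality; a priori $d(x'',y)$ could be anything in $\{d-k,\dots,d\}$. Extending a geodesic from $x$ through $y$ to length $d$ shows that \emph{some} member of $F$ is at distance $d-k$, but not that all of the other $n-1$ are, and the naive induction on $k$ (pass to a neighbour $y_1$ of $y$ with $d(y_1,F)=k-1$) only traps $d(y,x'')$ in a window of width three. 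One genuinely needs the intersection-number machinery here, e.g.\ the identity $k_k\,p^k_{d,\,d-k}=k_d\,p^d_{k,\,d-k}$ together with $p^d_{k,\,d-k}=1$ to conclude $p^k_{d,\,d-k}=k_d=n-1$, or equivalently the characterization of antipodality by $b_i=c_{d-i}$ for $i\ne\lfloor d/2\rfloor$. This lemma is not a technicality: it is what makes the projection distance-preserving up to the fold (hence the diameter claim), what guarantees that a neighbour $z$ of $x$ at distance $m-1$ from one representative of $F'$ is at distance $m+1$ from all the others (your disjointness step for $nc_m$), and what rules out a neighbouring fibre being pulled to quotient-distance $m-1$ through a vertex \emph{other} than $z$ in the odd case. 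Supply a proof of that lemma and the rest of your argument closes up.
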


For a full proof, consult Brouwer et al.~\cite{bcn} or Gardiner \cite{gar1}. We prove a more specific case.

\begin{theorem}
If $G$ is an antipodal distance-regular cover of the complete bipartite graph $K_{k,k}$, then the intersection array of $G$ is
\begin{equation} \label{eqn:dracknnarray}
\{k,k-1,k-\la,1;1,\la,k-1,k\},
\end{equation}
where $\la$ divides $k$.
\end{theorem}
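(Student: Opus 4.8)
The plan is to use the two structural facts we already have about a distance-regular cover $G$ of $K_{k,k}$: that adjacent fibres are joined by a perfect matching, and that $G$ is bipartite. Bipartiteness is immediate, since every edge of $G$ projects to an edge of $K_{k,k}$ and hence joins vertices lying over the two different colour classes; colouring each vertex of $G$ by the class containing its image gives a proper $2$-colouring. In particular every $a_i = 0$, so $b_i + c_i = k$ throughout the intersection array. Since $K_{k,k}$ has diameter $2$, the previous theorem gives $\lfloor d/2\rfloor = 2$, so $d\in\{4,5\}$; and $d=5$ is impossible because the antipodal classes of $G$ are its fibres, so two vertices in one fibre lie over a single vertex of $K_{k,k}$ and hence in one colour class of $G$, forcing their distance to be even. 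Therefore $d=4$.

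It remains to compute the four columns of the array. We have $b_0=k$ (the valency of the cover) and $c_1=1$, whence $b_1=k-1$ by bipartiteness; put $\lambda:=c_2$, so that $b_2=k-\lambda$. For the last two columns I would argue using the matching. If $d(x,y)=4$, then $y$ lies in the fibre of $x$; every neighbour $z$ of $y$ satisfies $d(x,z)\geq d(x,y)-1=3$ and $d(x,z)\leq 4$, and $d(x,z)\neq 4$ because the fibre of $z$ is adjacent to, hence different from, the fibre of $x$; so all $k$ neighbours of $y$ are at distance $3$ from $x$, i.e.\ $c_4=k$ (and then $b_4=0$, as required by $d=4$). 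If $d(x,y)=3$ then $x$ and $y$ lie in different colour classes, so their fibres are adjacent in $K_{k,k}$; the matching gives $y$ exactly one neighbour in the fibre of $x$, and that neighbour is $\neq x$, hence at distance $4$ from $x$. So $b_3=1$ and $c_3=k-1$, and the intersection array is $\{k,k-1,k-\lambda,1;1,\lambda,k-1,k\}$.

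For the divisibility, let $n$ be the common fibre size. The vertices at distance $4$ from a fixed $x$ are exactly the $n-1$ other vertices of its fibre. On the other hand the standard layer count (from \eqref{eqn:intarray}, $k_{i-1}b_{i-1}=k_ic_i$ with $k_0=1$) gives
\[
|\Gamma_4(x)|=\frac{b_0b_1b_2b_3}{c_1c_2c_3c_4}=\frac{k-\lambda}{\lambda}.
\]
Hence $n-1=(k-\lambda)/\lambda$, so $n=k/\lambda$, and since $n$ is a positive integer, $\lambda$ divides $k$. (Equivalently, the previous theorem applied with $d=2m$, $m=2$ identifies the last intersection number $k$ of $K_{k,k}$ with $n c_2 = n\lambda$.)

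The routine algebra here is trivial; the real care is in the first two steps --- establishing $d=4$ and, above all, checking that the matchings genuinely force the claimed distances with no longer detours in $G$ (for instance that two vertices in distinct fibres are never at maximal distance, that all neighbours of a distance-$4$ vertex are at distance $3$, and that a distance-$3$ vertex has a unique neighbour in the antipodal fibre). That bookkeeping, using bipartiteness and antipodality together, is the one place where a slip is easy.
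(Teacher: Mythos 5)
Your proof is correct and follows essentially the same route as the paper's: bipartiteness forces every $a_i=0$, the diameter is pinned to $4$, antipodality together with the matching between adjacent fibres gives the outer columns $b_3=1$, $c_3=k-1$, $c_4=k$, and counting the distance-$4$ layer yields $\la\mid k$. The only cosmetic differences are that you rule out diameter $5$ by a parity argument on fibre-mates (the paper instead exhibits an odd cycle in the quotient) and that you lean on the preceding quotient-diameter theorem where the paper argues directly; both amount to the same idea.
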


\begin{proof}
Suppose $G$ is a distance-regular $n$-fold antipodal cover of $K_{k,k}$. For any quotient graph, the natural mapping from $G$ to $G/\pi$ is a homomorphism; therefore since $K_{k,k}$ is bipartite, so is $G$. 

Now fix a vertex $x$, and suppose $G$ has diameter at least $6$. Then there are vertices at distance $3$ from every vertex in the cell of $x$, and so the quotient graph has diameter $3$. But $K_{k,k}$ has diameter $2$, so by contradiction, $G$ has diameter at most $5$. Now suppose $G$ has distance $5$. Let $y$ and $z$ be adjacent vertices at distance $2$ and $3$ from $x$ respectively. Then the fibres of $y$ and $z$ are adjacent and both at distance $2$ from the fibre of $x$, giving an odd cycle in the quotient graph. By contradiction, $G$ must have diameter $4$.

We can now build up the intersection array $\{b_0,b_1,b_2,b_3;c_1,c_2,c_3,c_4\}$. A distance-regular graph is bipartite if and only if $a_i = 0$ for all $i$, and since $a_i + b_i + c_i = k$, we know that $c_i = k-b_i$. Clearly $b_0 = k$, the valency of the graph, and since $c_1 = 1$, we know $b_1 = k-1$. Similarly, antipodality implies that $b_3 = 1$ and therefore $c_3 = k-1$.  Letting $c_2 = \la$, we get $b_2 = k-\la$, and so the intersection array has the form of \eqref{eqn:dracknnarray}. 

To see that $\la$ divides $k$, use the intersection array to count the number of vertices at each distance from $x$. The number at distance $4$ is $(k-\la)/\la$, which must be an integer. \qed
\end{proof}

It can be shown that if $G$ is an antipodal distance-regular cover, then every eigenvalue of $G/\pi$ is an eigenvalue of $G$ with the same multiplicity. Thus if $G$ is an $n$-fold cover of $K_{k,k}$, then $0$ and $\pm k$ are eigenvalues. However we can obtain all eigenvalues of a distance-regular graph from its intersection array: for an $n$-fold cover of $K_{k,k}$ they are $0$, $\pm k$ and $\pm \sqrt{k}$, with multiplicities $2(k-1)$, $1$, and $k(n-1)$ respectively.

\chapter{Bounds}\label{chap:bounds}

The goal of this chapter is to find upper bounds on the size of $s$-distance sets. Most significantly, in $\cx^d$ there can be at most $d^2$ equiangular lines and at most $d+1$ mutually unbiased bases. When equality holds, the lines can be characterized in terms of $t$-designs.

Most of the bounds in this chapter were first discovered by Delsarte, Goethals, and Seidel \cite{dgs} in 1974. Their approach relied heavily on the ``addition formula'' for harmonic polynomials due to Koornwinder \cite{koo1}; instead, we use zonal polynomials to obtain the same results, as well as some new ones.

Zonal polynomials can be described in the context of both polynomial spaces and Delsarte spaces. Polynomial spaces were introduced by Godsil \cite{god8} as a common framework for deriving results about block designs (due to Ray-Chaudhuri and Wilson \cite{rcw}) and real spherical designs (due to Delsarte, Goethals, and Seidel \cite{dgs2}). Delsarte spaces were formalized by Neumaier \cite{neu1} based on the work in Delsarte's thesis \cite{del1}, which covers block designs as well as bounds on error-correcting codes. 

Both polynomials spaces and Delsarte spaces provide a general framework; we provide the details on how they apply to complex lines. To do this, we rely on a treatment of harmonic polynomials due to Vilenkin and \v{S}apiro \cite{vs1}. As a result of this chapter, we obtain all of the results of Delsarte, Geothals, and Seidel without any difficult complex analysis; linear algebra is the major tool involved. The existence of certain weighted adjacency algebras (also found by Delsarte et al.) falls out of the analysis. 

We also get some new results about complex $t$-designs, which are a generalization of block designs to complex vector spaces. Neumaier characterized maximal $s$-distance sets as minimal $t$-designs in any Delsarte space. Our main result is a characterization of the same form but using a slightly different bound, one which is more appropriate when $0$ is one of the angles.

\begin{comment}
A \defn{Delsarte space} is a metric space $\Om$ with $c_{xy} = d(x,y)^2$ such that there is a polynomial $f_{ij}(t)$ of degree no larger than $i$ or $j$ satisfying
\[
\int_{\Om} c_{az}^i c_{bz}^j \om(z) = f_{ij}(c_{ab}).
\]
For our purposes, $c_{xy} = 1 - \abs{x^*y}^2$. (In the real case $1-x^Ty$ is the standard Euclidean metric, but not here.) It takes some work to show that complex projective space is a Delsarte space.

The \defn{Polynomial spaces} of Godsil on the other hand are defined as follows: $\Pol(\Om,1)$ is the set of zonal polynomials $\{f_a: a \in \Om, \deg(f) = 1\}$, and $\Pol(\Om,r)$ is defined recursively as the span of $\Pol(\Om,1)\Pol(\Om,r-1)$. 

Both Delsarte and Polynomial spaces are technically only defined for real inner product spaces.
\end{comment}

\section{Harmonic polynomials}\label{sec:harm}

Informally, a function $f$ is harmonic if it satisfies the Laplacian equation $\De f = 0$. In this section, we consider harmonic polynomials $f(z): \cx^d \rightarrow \cx$ which are homogeneous in both $z$ and $\bz$.

Let $\Hom(k,l)$ denote the polynomials $f: \cx^{d} \rightarrow \cx$ of the form
\nomenclature[a$H]{$\Hom(k,l)$}{homogeneous polynomials}
\[
f(z) = f(z_1,\ldots,z_d;\bz_1,\ldots,\bz_d),
\]
where $f$ is homogeneous of degree $k$ in $\{z_i\}$ and homogeneous of degree $l$ in $\{\bz_i\}$. In this context, the Laplacian is 
\[
\De := \frac{\dde^2}{\dde z_1 \dde \bz_1} + \ldots + \frac{\dde^2}{\dde z_d \dde \bz_d}.
\]
\nomenclature[\d]{$\De$}{Laplacian}%
For the purposes of partial differentiation, the variables $z_i$ and $\bz_i$ are considered independent. The Laplacian operator commutes with unitary transformations: if $U$ is a unitary mapping on $\cx^d$, then for any $f: \cx^d \rightarrow \cx$,
\[
\De (f \circ U) = (\De f) \circ U.
\]
Let $\nabla$ denote the gradient with respect to $z_1,\ldots,z_d$, namely
\[
\nabla f := \Big(\frac{\dde f}{\dde z_1}, \ldots,\frac{\dde f}{\dde z_d}\Big)^T,
\]
%\nomenclature{$\nabla$}{gradient}%
and $\conj{\nabla}$ the gradient with respect to $\bz_1,\ldots,\bz_d$. Then the following product rule for the Laplacian is easy to verify. 

\begin{lemma}[Product Rule]
\[
\De (fg) = f \De g + g \De f + \nabla f \cdot \conj{\nabla} g + \nabla g \cdot \conj{\nabla} f. \qed
\]
\end{lemma}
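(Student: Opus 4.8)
The plan is to unwind the definition $\De = \dde^2/(\dde z_1\,\dde\bz_1) + \ldots + \dde^2/(\dde z_d\,\dde\bz_d)$ and apply the ordinary single-variable product rule twice, since the variables $z_i$ and $\bz_i$ are treated as independent. So first I would fix an index $i$ and compute, by the usual product rule in the variable $\bz_i$,
\[
\frac{\dde(fg)}{\dde\bz_i} = f\,\frac{\dde g}{\dde\bz_i} + g\,\frac{\dde f}{\dde\bz_i}.
\]
Then I would differentiate this again with respect to $z_i$, applying the product rule to each of the two summands, which produces four terms: $f\,\dde^2 g/(\dde z_i\,\dde\bz_i)$, $g\,\dde^2 f/(\dde z_i\,\dde\bz_i)$, $(\dde f/\dde z_i)(\dde g/\dde\bz_i)$, and $(\dde g/\dde z_i)(\dde f/\dde\bz_i)$.

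The remaining step is to sum over $i = 1,\ldots,d$. The first family of terms sums to $f\De g$ and the second to $g\De f$, directly from the definition of the Laplacian. The third family is $\sum_{i=1}^d (\dde f/\dde z_i)(\dde g/\dde\bz_i)$, which is precisely $\nabla f \cdot \conj{\nabla} g$ by the definitions of the gradients $\nabla$ and $\conj{\nabla}$, and symmetrically the fourth family is $\nabla g \cdot \conj{\nabla} f$. Collecting the four pieces yields the claimed identity.

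There is essentially no obstacle here. The only point worth a remark is that $f$ and $g$ lie in $\Hom(k,l)$, hence are polynomials, so all the partial derivatives exist and the mixed second partials commute; this is what makes the notation $\dde^2/(\dde z_i\,\dde\bz_i)$ unambiguous and the two orders of differentiation agree. The whole proof is the elementary product rule applied twice and then summed coordinatewise.
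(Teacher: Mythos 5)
Your proposal is correct and matches the paper's own argument exactly: apply the one-variable product rule first in $\bz_i$, then in $z_i$, obtain the four terms for each coordinate, and sum over $i$ to assemble $f\De g$, $g\De f$, and the two gradient dot products. Nothing further is needed.
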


\begin{comment}
\begin{proof}
\begin{align*}
\frac{\dde^2 fg}{\dde z_1 \dde \bz_1} & = \frac{\dde}{\dde z_1}(f \frac{\dde g}{\dde \bz_1} + g\frac{\dde f}{\dde \bz_1}) \\
& = \frac{\dde}{\dde z_1}f \frac{\dde}{\dde \bz_1}g + f \frac{\dde^2 g}{\dde z_1 \dde \bz_1} + \frac{\dde}{\dde z_1}g \frac{\dde}{\dde \bz_1}f + g \frac{\dde^2 f}{\dde z_1 \dde \bz_1}
\end{align*}
and similarly for every other $z_i$ and $\bz_i$. Summing over all $i$, the result follows. \qed
\end{proof}
\end{comment}

Define the \defn{harmonic polynomials} $\Harm(k,l)$ as the kernel of $\De$ in $\Hom(k,l)$. Note that $\Harm(k,l)$ is a complex vector space. 
\nomenclature[a$H]{$\Harm(k,l)$}{harmonic polynomials}%
Let
\[
Z := z_1\bz_1 + \ldots z_d \bz_d,
\]
\nomenclature{$Z$}{squared norm}%
and let $[A,B]$ be the commutator
\nomenclature[\]{$[A,B]$}{commutator}%
\[
[A,B] := AB - BA.
\]

\begin{lemma} \label{lem:dez}
If $f$ is in $\Hom(k,l)$, then
\[
[\De,Z] f = (d+k+l) f.
\]
If $f$ is also harmonic, then
\[
\De^r Z^r f = \frac{(d+k+l+r-1)!r!}{(d+k+l-1)!}f.
\]
\end{lemma}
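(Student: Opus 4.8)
The plan is to establish the first identity by direct computation with the Product Rule, and then use it to bootstrap the second by induction on $r$. For the first statement, I would compute $[\De,Z]f = \De(Zf) - Z\De f$. Applying the Product Rule with $g = Z$ (noting that $\De Z = d$, since $\frac{\dde^2 Z}{\dde z_i\,\dde\bz_i} = 1$ for each $i$), this gives
\[
\De(Zf) = Z\De f + f\De Z + \nabla Z \cdot \conj{\nabla} f + \nabla f \cdot \conj{\nabla} Z = Z\De f + df + \nabla Z \cdot \conj{\nabla} f + \nabla f \cdot \conj{\nabla} Z.
\]
Since $\nabla Z = (\bz_1,\ldots,\bz_d)^T$ and $\conj{\nabla} Z = (z_1,\ldots,z_d)^T$, the two remaining terms are $\sum_i \bz_i \frac{\dde f}{\dde\bz_i}$ and $\sum_i z_i \frac{\dde f}{\dde z_i}$, which by the Euler relation for homogeneous functions equal $l f$ and $k f$ respectively (homogeneity of degree $l$ in the $\bz_i$ and degree $k$ in the $z_i$). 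Hence $\De(Zf) - Z\De f = (d+k+l)f$, which is the claimed commutator identity — and it holds for all $f \in \Hom(k,l)$, harmonic or not.

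For the second identity, assume $f$ is harmonic, so $\De f = 0$. I would induct on $r$, the case $r = 0$ being trivial. The key observation is that $Z^j f \in \Hom(k+j,\, l+j)$, so the commutator formula applies at each level with the degree shifted: $[\De,Z](Z^j f) = (d + k + l + 2j)\, Z^j f$. I would prove the auxiliary claim
\[
\De\, Z^r f = r\,(d+k+l+r-1)\, Z^{r-1} f
\]
by induction on $r$: writing $\De Z^r f = \De(Z \cdot Z^{r-1}f) = Z\De(Z^{r-1}f) + [\De,Z](Z^{r-1}f)$, the first term is $Z \cdot (r-1)(d+k+l+r-2)Z^{r-2}f$ by the inductive hypothesis, and the second is $(d+k+l+2(r-1))Z^{r-1}f$ by the commutator identity; adding gives $\bigl((r-1)(d+k+l+r-2) + (d+k+l+2r-2)\bigr)Z^{r-1}f = r(d+k+l+r-1)Z^{r-1}f$, as desired. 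Iterating this $r$ times, $\De^r Z^r f = \prod_{j=1}^{r} j\,(d+k+l+j-1)\, f = r!\,\frac{(d+k+l+r-1)!}{(d+k+l-1)!}\, f$, which is the stated formula.

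The computation is essentially routine; the only point requiring care is keeping track of which homogeneity degrees are in play when applying the commutator identity to $Z^j f$ rather than to $f$ itself — the degree in both $z$ and $\bz$ increases by $j$, so the eigenvalue becomes $d + (k+j) + (l+j) = d+k+l+2j$, not $d+k+l$. Getting that shift right is what makes the telescoping product come out to the factorial ratio. A secondary subtlety is invoking the Euler homogeneity relation correctly for the two separate gradings in $z$ and $\bz$; this is where the hypothesis $f \in \Hom(k,l)$ is used, and it is worth stating explicitly rather than folding it silently into the Product Rule step.
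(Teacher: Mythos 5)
Your proof is correct and follows essentially the same route as the paper: the product rule plus Euler's relation for the commutator identity, then an induction on $r$ built on the single-Laplacian step $\De Z^r f = r(d+k+l+r-1)Z^{r-1}f$. The only cosmetic difference is that the paper obtains that step by computing $\De Z^r = r(d+r-1)Z^{r-1}$ and applying the product rule to $f\cdot Z^r$ once, whereas you peel off one factor of $Z$ at a time via the commutator identity with the degree shift $d+k+l+2j$ — both yield the same telescoping product.
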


\begin{proof}
Note that $\De Z = d$. Then,
\begin{align*}
[\De,Z] f & = \De Zf - Z\De f \\
& = (Z \De f + f \De Z + \nabla f \cdot \conj{\nabla} Z + \conj{\nabla} f \cdot \nabla Z) - Z\De f \\
& = (d+k+l)f.
\end{align*}
A little work shows that $\De Z^r = r(d+r-1)Z^{r-1}$. Then when $f$ is harmonic,
\begin{align*}
\De^r Z^r f & = \De^{r-1}(\De Z^r f) \\
& = \De^{r-1}(f \De Z^r + Z^r \De f + \nabla f \cdot \conj{\nabla} Z^r + \conj{\nabla} f \cdot \nabla Z^r) \\ 
& = \De^{r-1}(f \De Z^r + r(k+l)Z^{r-1}f) \\
& = r(d+r+k+l-1)\De^{r-1}Z^{r-1}f.
\end{align*} 
The result follows by induction. \qed
\end{proof}

\begin{comment}
Proof of $\De Z^r$:
\[
\frac{\dde Z^r}{\dde z_i} = rZ^{r-1}\frac{\dde Z}{\dde z_i} = rZ^{r-1}\bz_i,
\]
and so
\[
\frac{\dde^2 Z^r}{\dde z_i\dde \bz_i} = \frac{\dde rZ^{r-1}\bz_i}{\dde \bz_i} = r((r-1)Z^{r-2}z_i\bz_i + Z^{r-1}).
\]
Therefore $\De Z^r = r((r-1)+d)Z^{r-1}$.
\end{comment}

\begin{corollary}
If $f \neq 0$, then $Zf$ is not harmonic.
\end{corollary}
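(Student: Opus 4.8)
The plan is to derive this as an immediate consequence of Lemma~\ref{lem:dez}. The key observation is that if $f \in \Hom(k,l)$ is harmonic and nonzero, then $Zf \in \Hom(k+1,l+1)$, and we can test whether $Zf$ is harmonic by applying $\De$ to it directly. Using the product rule for the Laplacian with $g = Z$, we have $\De(Zf) = Z\De f + f\De Z + \nabla f \cdot \conj{\nabla}Z + \nabla Z \cdot \conj{\nabla}f$. Since $f$ is harmonic, $\De f = 0$; and since $\De Z = d$, the first term vanishes and the second becomes $df$. The cross terms $\nabla f \cdot \conj{\nabla}Z + \nabla Z \cdot \conj{\nabla}f$ are exactly what appears in the computation of $[\De, Z]f$ in the proof of Lemma~\ref{lem:dez}, where it was shown that $[\De, Z]f = (d+k+l)f$; since $\De f = 0$ this gives $\De(Zf) = (d+k+l)f$. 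Hence $\De(Zf) = (d+k+l)f \neq 0$ because $d + k + l > 0$ and $f \neq 0$, so $Zf$ is not harmonic.

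Alternatively, and perhaps more cleanly, I would invoke the second formula of Lemma~\ref{lem:dez} with $r = 1$: for harmonic $f$, $\De Z f = \frac{(d+k+l)!\,1!}{(d+k+l-1)!}f = (d+k+l)f$. Either route requires essentially no new work. Strictly speaking, the statement of the corollary as phrased does not assume $f$ harmonic, only $f \neq 0$; so I would first reduce to the harmonic case, or simply read the corollary in the context of the preceding lemma where $f$ is understood to range over $\Harm(k,l)$ (which is surely the intended reading, since otherwise one should quantify over which $\Hom(k,l)$ the polynomial $Zf$ might land in and the claim is about the decomposition of homogeneous polynomials into harmonic pieces). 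Assuming the harmonic hypothesis, the argument is a one-line application of the lemma.

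The only mild subtlety — and it is the point worth stating explicitly — is that $d + k + l \geq d \geq 1$, so the scalar $(d+k+l)$ is strictly positive and in particular nonzero; this is what makes $\De(Zf) = (d+k+l)f$ genuinely nonzero given $f \neq 0$. There is no real obstacle here: the corollary is a packaging of Lemma~\ref{lem:dez}, and its role is presumably to set up the direct-sum decomposition $\Hom(k,l) = \Harm(k,l) \oplus Z\Hom(k-1,l-1)$ that typically follows. I would present it in two sentences: apply the lemma with $r=1$ to get $\De(Zf) = (d+k+l)f$, then note $d+k+l > 0$ forces $\De(Zf) \neq 0$.
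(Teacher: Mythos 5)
Your argument for the harmonic case is exactly the paper's first sentence: apply Lemma~\ref{lem:dez} with $r=1$ (or compute $[\De,Z]f$ directly) to get $\De(Zf)=(d+k+l)f\neq 0$. That part is fine. The gap is in your treatment of the hypothesis. The corollary really is asserted for an arbitrary nonzero $f\in\Hom(k,l)$, not just for harmonic $f$; this is the form needed to conclude that $Z\Hom(k-1,l-1)$ meets $\Harm(k+1,l+1)$ only in $0$, which is the content behind the direct-sum decomposition of Theorem~\ref{thm:homdecomp}. You flag the issue but then dispose of it by declaring the harmonic reading ``surely intended,'' or by saying you would ``first reduce to the harmonic case'' without saying how. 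That reduction is the whole difficulty: the natural way to reduce --- write $f=f_0+Zf_1+\cdots$ with $f_i$ harmonic --- uses Corollary~\ref{cor:homdecomp}, which is a consequence of Theorem~\ref{thm:homdecomp}, which comes \emph{after} this corollary. So as written your argument is either incomplete or circular for non-harmonic $f$.

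The paper closes the gap with an induction on the least $q$ such that $\De^q f=0$ (finite since $f$ is a polynomial). Applying $\De^{q-1}$ to the identity $\De Zf - Z\De f=(d+k+l)f$ and using the inductive hypothesis for $f'=\De f$ (which satisfies $\De^{q-1}f'=0$) yields $\De^q Zf = c\,\De^{q-1}f$ with $c\neq 0$; since $\De^{q-1}f\neq 0$ by minimality of $q$, $Zf$ cannot be harmonic. You should either supply this induction or give some other argument that does not presuppose the harmonic decomposition.
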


\begin{proof}
From Lemma \ref{lem:dez} it is clear that if $f$ is harmonic then $Zf$ is not. Otherwise, let $q$ be the minimum such that $\De^qf = 0$. Taking $\De^{q-1}$ of the first equation in Lemma \ref{lem:dez}, it is a straightforward induction on $q$ to show that  
\[
\De^q Z f = c \De^{q-1} f,
\]
for some $c \neq 0$. This implies that $Zf$ is not harmonic, since $\De^{q-1}f$ is nonzero. \qed
\end{proof}

\begin{comment}
Let $f' = \De f$ and assume the result for $f'$. Taking $\De^{q-1}$ of the first equation in Lemma \ref{lem:dez},
\begin{align*}
(d+k+l)\De^{q-1}f & = \De^q Z f + \De^{q-1} Z f' \\
& = \De^q Z f + c' \De^{q-2} f' \\
& = \De^q Z f + c' \De^{q-1} f.
\end{align*}
Thus $\De^q Z f$ is a (nonzero) multiple of $\De^{q-1} f$.
\end{comment}

The following theorem is due to Vilenkin and \v{S}apiro \cite{vs1}. 

\begin{theorem} \label{thm:homdecomp}
\[
\Hom(k,l) = \Harm(k,l) \oplus Z \Hom(k-1,l-1).
\]
\end{theorem}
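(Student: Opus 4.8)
The plan is to establish the direct sum $\Hom(k,l) = \Harm(k,l) \oplus Z\Hom(k-1,l-1)$ by first checking that the two summands intersect trivially, and then using a dimension count (or, equivalently, an explicit projection) to see that they span everything. For the intersection: if $g \in \Harm(k,l) \cap Z\Hom(k-1,l-1)$, then $g = Zf$ for some $f \in \Hom(k-1,l-1)$, and $g$ is harmonic. But the Corollary to Lemma~\ref{lem:dez} says precisely that $Zf$ is not harmonic unless $f = 0$; hence $g = 0$. So the sum is direct. (Without loss of generality $k,l \geq 1$; if either is $0$ the space $\Hom(k-1,l-1)$ is empty and the statement reduces to $\Hom(k,l) = \Harm(k,l)$, which holds because $\De$ lowers degree in both $z$ and $\bz$, so on $\Hom(k,0)$ or $\Hom(0,l)$ the Laplacian is identically zero.)

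For the spanning part, I would argue by induction on $\min(k,l)$, using the map $\De \colon \Hom(k,l) \to \Hom(k-1,l-1)$. The base case $\min(k,l)=0$ is handled above. For the inductive step, I claim $\De$ is \emph{surjective} onto $\Hom(k-1,l-1)$: given any $h \in \Hom(k-1,l-1)$, by induction $h = h_0 + Zh_1$ with $h_0 \in \Harm(k-1,l-1)$ and $h_1 \in \Hom(k-2,l-2)$, and iterating we may write $h = \sum_{r \geq 0} Z^r h_r$ with each $h_r$ harmonic in the appropriate bidegree. Now Lemma~\ref{lem:dez} gives $\De^{r+1} Z^{r+1} h_r$ as an explicit nonzero multiple of $h_r$, and more relevantly $\De(Z^{r+1}h_r)$ is a nonzero multiple of $Z^r h_r$ (the first displayed identity in the proof of that Corollary, or a direct computation via $\De Z^{r+1} = (r+1)(d+r)Z^r$ together with the product rule and harmonicity of $h_r$). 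So choosing constants $c_r \neq 0$ and setting $\tilde h := \sum_r c_r Z^{r+1} h_r \in \Hom(k,l)$, we get $\De \tilde h = h$. Surjectivity of $\De$ then yields $\dim \Hom(k,l) = \dim \Harm(k,l) + \dim \Hom(k-1,l-1)$, since $\Harm(k,l) = \ker \De$; combined with the direct-sum fact that $\dim(\Harm(k,l) \oplus Z\Hom(k-1,l-1)) = \dim\Harm(k,l) + \dim\Hom(k-1,l-1)$ (the map $f \mapsto Zf$ is injective, again by the Corollary), the two subspaces have complementary dimensions and trivial intersection, so their sum is all of $\Hom(k,l)$.

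Alternatively — and perhaps more cleanly — one can avoid the induction by writing down the projection explicitly. Given $f \in \Hom(k,l)$, let $q$ be minimal with $\De^q f = 0$ and define $f_0 := f - Z g$ where $g \in \Hom(k-1,l-1)$ is chosen so that $\De f_0 = 0$; Lemma~\ref{lem:dez} shows $[\De, Z]$ acts invertibly on each $\Hom(k-j,l-j)$ (the eigenvalue $d+k+l-2j$ is positive), which lets one solve for $g$ recursively in the filtration by powers of $Z$. Either way, the one genuine input is Lemma~\ref{lem:dez} and its Corollary.

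The main obstacle is the spanning/surjectivity step: the intersection being trivial is immediate from the Corollary, but showing the harmonic part plus $Z\Hom(k-1,l-1)$ exhausts $\Hom(k,l)$ requires either the dimension bookkeeping (which in turn needs surjectivity of $\De$, proved by the explicit series $\sum_r c_r Z^{r+1}h_r$) or the recursive construction of the projection. I expect the bulk of the care to go into verifying that the constants $c_r$ coming out of Lemma~\ref{lem:dez} are all nonzero in the relevant bidegrees — i.e. that $d + k + l - 2j - 1 > 0$ throughout the range of $j$ that occurs — which is where the hypothesis that we are in $\cx^d$ with $d \geq 1$ (so $d+k+l$ stays large enough) is quietly used.
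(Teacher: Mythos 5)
Your proof is correct, but it takes a genuinely different route from the paper's. The paper proves existence and uniqueness together by inducting on the $\De$-nilpotency order of the individual polynomial $f$ (the least $q$ with $\De^{q+1}f=0$): it uses \lref{dez} to see that $\De^q\bigl[c - Z^q\De^q\bigr]f = 0$, applies the inductive hypothesis to $(c - Z^q\De^q)f$, and rearranges to exhibit the decomposition of $cf$ explicitly; uniqueness is then a separate argument expanding a putative relation $0 = g_0 + Zg_1 + \cdots + Z^sg_s$ and applying $\De^s$. You instead induct on $\min(k,l)$, get uniqueness immediately from the corollary to \lref{dez} (trivial intersection, plus injectivity of multiplication by $Z$), and obtain spanning from rank--nullity once you show $\De\colon \Hom(k,l)\to\Hom(k-1,l-1)$ is surjective via the explicit preimage $\sum_r c_r Z^{r+1}h_r$ with $c_r^{-1} = (r+1)(d+k+l-r-2)$, all of which are nonzero in the relevant range exactly as you note. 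Your dimension count is arguably the cleaner way to get the bare decomposition; what the paper's constructive induction buys is the explicit recursive projection $P_q = P_{q-1}(1 - Z^q\De^q/c_q)$, which it needs immediately afterwards (Corollary~\ref{cor:harmproj}) to derive the closed-form Jacobi polynomials. Your second, alternative sketch is essentially the paper's argument. One small wording quibble: when $\min(k,l)=0$ the space $\Hom(k-1,l-1)$ should be called the zero space rather than ``empty,'' but the base case is handled correctly.
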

\nomenclature[\]{$V_1 \oplus V_2$}{direct product}%

\begin{proof}
The proof is by induction on the smallest value of $q$ such that $\De^q f = 0$, for $f \in \Hom(k,l)$.
Assume the decomposition holds for $f$ when $\De^q f = 0$, and consider $f$ such that $\De^{q+1} f = 0$. Then $\De^q f$ is harmonic, so by the previous lemma, 
\[
\De^q Z^q (\De^q f) = c\De^q f,
\]
where $c$ is the constant $(d+k+l-q-1)!q!/(d+k+l-2q-1)!$. Rearranging,
\[
\De^q [c - Z^q\De^q] f = 0.
\]
Since $(c - Z^q\De^q) f$ satisfies the induction hypothesis, there is some $g \in \Harm(k,l)$ and $h \in \Hom(k-1,l-1)$ such that
\[
(c - Z^q\De^q) f = g + Zh.
\]
Again rearranging,
\[
cf = g + Z(h+ Z^{q-1}\De^q f).
\]
We conclude that $cf$, and therefore $f$, can be decomposed appropriately. 

Next, we show the decomposition is unique. Suppose not; then subtracting two distinct decompositions, we see that $0 = g + Zh$ for some $g \in \Harm(k,l)$ and $h \in \Hom(k-1,l-1)$. Now applying the decomposition to $h$ and repeating, we get
\[
0 = g_0 + Zg_1 + \ldots + Z^sg_s,
\]
where each $g_i$ is harmonic and without loss of generality $g_s \neq 0$. Take $\De^s$ of both sides. Since $\De^iZ^ig_i$ is a nonzero multiple of $g_i$, say $c_ig_i$, we get
\[
0 = \De^sg_0 + \De^{s-1}c_1g_1 + \ldots + c_sg_s.
\]
But $\De g_i = 0$ for each $i$, so we conclude that $c_sg_s = 0$. By contradiction, the decomposition must be unique. \qed
\end{proof}

\begin{corollary} \label{cor:homdecomp}
Let $f \in \Hom(k,l)$, with $q = \min\{k,l\}$. Then
\[
f = f_0 + Zf_1 + \ldots + Z^qf_q,
\]
where $f_i$ is in $\Harm(k-i,l-i)$. \qed
\end{corollary}

From the proof of Theorem \ref{thm:homdecomp}, we get a formula for the orthogonal projection from $\Hom(k,l)$ onto its subspace $\Harm(k,l)$. 

\begin{corollary} \label{cor:harmproj}
Let $P$ denote the projection $\Hom(k,l) \rightarrow\Harm(k,l)$, and let $m = \min\{k,l\}$. Then
\[
P = \Big(1 - \frac{(d+k+l-2-1)!}{(d+k+l-1-1)!1!}Z\De\Big)\ldots\Big(1 - \frac{(d+k+l-2m-1)!}{(d+k+l-m-1)!m!}Z^m\De^m\Big).
\]
\end{corollary}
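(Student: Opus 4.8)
The plan is to reverse-engineer the projection $P$ from the inductive proof of Theorem~\ref{thm:homdecomp}. The key point is that at the step with remainder exponent $q$, the proof exhibited the explicit identity $\De^q[c_q - Z^q\De^q]f = 0$ with $c_q = (d+k+l-q-1)!\,q!/(d+k+l-2q-1)!$, so that $\tfrac{1}{c_q}(c_q - Z^q\De^q)f$ has one lower ``order'' (its $q$-th Laplacian vanishes) while differing from $f$ only by something in $Z\Hom(k-1,l-1)$. Iterating this from $q=m$ down to $q=1$ peels off all the non-harmonic layers. So the first step is to define $T_q := 1 - \tfrac{1}{c_q}Z^q\De^q$ and record that $T_q$ fixes $\Harm(k,l)$ pointwise (since $\De^q$ kills harmonic polynomials) and maps $\Hom(k,l)$ into $\Hom(k,l)$.

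Next I would argue that $T_1 T_2 \cdots T_m$ annihilates the summand $Z\Hom(k-1,l-1)$, i.e. kills everything non-harmonic. For this I would use Corollary~\ref{cor:homdecomp}: any $f\in\Hom(k,l)$ writes uniquely as $f_0 + Zf_1 + \cdots + Z^m f_m$ with $f_i \in \Harm(k-i,l-i)$, and it suffices to show $T_1\cdots T_m(Z^i f_i)=0$ for each $i\geq 1$ while $T_1\cdots T_m(f_0)=f_0$. The second assertion is immediate from the previous paragraph. For the first, the computation in Lemma~\ref{lem:dez} gives $\De^i Z^i f_i = c_i' f_i$ for a (known, nonzero) constant $c_i'$ depending on $d$ and the bidegree of $f_i$; one checks that the factor $T_i$ in the product, namely $1 - \tfrac{1}{c_i}Z^i\De^i$, is precisely calibrated so that $T_i(Z^i f_i) = 0$ --- this is the identity $\De^i Z^i(\De^i f) = c\,\De^i f$ used in the proof of Theorem~\ref{thm:homdecomp}, with the constant matching $c_i$. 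Since the factors $T_j$ for $j\neq i$ map $Z^i\Hom(k-i,l-i)$ into itself and $T_i$ then kills the relevant component, and since the $T_j$ commute (they are all polynomials in the single operator $Z\De$ --- more care is needed here, see below), the product kills $Z^i f_i$.

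The main obstacle, and the place the argument needs genuine care rather than bookkeeping, is the ordering and commuting of the operators $T_q$: $Z$ and $\De$ do not commute (Lemma~\ref{lem:dez} says $[\De,Z] = (d+k+l)$ on $\Hom(k,l)$, and the constant varies with bidegree), so the $Z^q\De^q$ are not simultaneously polynomials in one fixed operator. The honest way to finish is to mimic the induction in Theorem~\ref{thm:homdecomp} directly: show by downward induction on $q$ that $T_q T_{q+1}\cdots T_m$ sends $\Hom(k,l)$ into $\Harm(k,l) \oplus Z\Hom(k-1,l-1) \oplus \cdots \oplus Z^{q-1}\Hom(k-q+1,l-q+1)$ and is the identity on the harmonic summand --- i.e. each factor strips off exactly the top layer $Z^q\Harm(k-q,l-q)$ via the constant $c_q$ computed in Lemma~\ref{lem:dez}, while leaving the lower layers in the span of $Z,\dots,Z^{q-1}$ times harmonics. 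At $q=1$ this says the full product lands in $\Harm(k,l)$ and fixes it, hence equals the orthogonal projection (uniqueness of the decomposition, already proved, guarantees this linear map is the projection onto the first summand along the second). I would present the constants with the shift $d+k+l$ replaced appropriately at each layer, matching the displayed formula in the statement, and leave the routine verification that $c_q$ is exactly the coefficient appearing there to a one-line appeal to the formula $\De^r Z^r f = \tfrac{(d+k+l+r-1)!\,r!}{(d+k+l-1)!}f$ of Lemma~\ref{lem:dez}.
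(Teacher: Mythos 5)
Your proposal is correct and, in its final form, is essentially the paper's own argument: the paper also proves the formula by the downward induction $P_q = P_{q-1}\bigl(1 - Z^q\De^q/c_q\bigr)$, using the identity $\De^q[c_q - Z^q\De^q]f = 0$ from the proof of Theorem~\ref{thm:homdecomp} and the constants of Lemma~\ref{lem:dez} to show each factor strips the top layer without disturbing the harmonic component. You were right to discard the first sketch (killing each $Z^if_i$ by commuting the factors), since the $Z^q\De^q$ do not commute; the ``honest'' induction you substitute is exactly what the paper does.
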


\begin{proof}
Consider $f \in \Hom(k,l)$ such that $\De^q f \neq 0$ but $\De^{q+1}f = 0$, and let $P_q$ denote the projection for this $f$. As noted in the theorem, $\De^q[c_q - Z^q\De^q]f = 0$,
where
\[
c_q = \frac{(d+k+l-q-1)!q!}{(d+k+l-2q-1)!}.
\]
Now let 
\[
\Big(1 - \frac{Z^q\De^q}{c_q}\Big)f = g + Zh
\]
with $g$ harmonic, so $P_{q-1}$ maps $(1 - Z^q\De^q/c_q)f$ to $g$. But then
\[
f = g + Z\Big(h + \frac{Z^{q-1}\De^qf}{c_q}\Big),
\]
so $P_q$ maps $f$ to $g$ also. Thus
\[
P_q = P_{q-1}\Big(1 - \frac{Z^q\De^q}{c_q}\Big).
\]
Note that if $\De^q f = 0$, then $P_{q+1}f = P_qf$. Therefore when $q = m = \min\{k,l\}$, $P_q$ applies to all of $\Hom(k,l)$ and so $P = P_m$. With the initial condition $P_0 = 1$, we get the formula above. \qed
\end{proof}
Vilenkin and \v{S}apiro reformulated this projection as
\[
P = \sum_{r=0}^{\min\{k,l\}} (-1)^r \frac{(d+k+l-r-2)!}{(d+k+l-2)!r!} Z^r \De^r.
\]

We can also use Theorem \ref{thm:homdecomp} to find the dimension of $\Harm(k,l)$. Since the number of monomials of degree $k$ in $d$ variables is ${d+k-1 \choose d-1}$, the dimension of $\Hom(k,l)$ is 
\[
\dim(\Hom(k,l)) = {d+k-1 \choose d-1}{d+l-1 \choose d-1}.
\]
\nomenclature[a$d]{$\dim$}{dimension}%
Then using the decomposition in Theorem \ref{thm:homdecomp},
\[
\dim(\Harm(k,l)) = {d+k-1 \choose d-1}{d+l-1 \choose d-1} - {d+k-2 \choose d-1}{d+l-2 \choose d-1}.
\]

\subsection{Inner product}

Define an inner product on complex functions as follows:
\[
\ip{f}{g} := \int_{\Om} \conj{f(z)}g(z) \; d\om(z).
\]
\nomenclature[\]{$\ip{f}{g}$}{inner product on functions}%
\nomenclature[\z]{$\Om$}{unit sphere}%
\nomenclature[\z]{$\om(z)$}{measure on the unit sphere}%
Here $\Om$ is the unit sphere in $\cx^d$, and $\om$ is the unique measure on $\Om$ which is invariant under unitary transformations and normalized so that
\[
\int_{\Om} \; d\om(z) = 1.
\]
This means that in addition to the usual properties of a complex inner product, for any unitary $U$ on $\cx^d$ we have
\[
\ip{f}{g} = \ip{f \circ U}{g \circ U}.
\]
In fact, this inner product is consistent with our previous direct sum of $\Hom(k,l)$ in Theorem \ref{thm:homdecomp}: the components of the direct sum are orthogonal. 

\begin{theorem} \label{thm:harmortho}
Let $f$ be in $\Harm(k,l)$ and let $g$ be in $\Hom(k-i,l-i)$ for some $i > 0$. Then
\[
\ip{f}{g} = 0. \inqed
\]
\end{theorem}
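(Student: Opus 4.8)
The plan is to reduce the statement to the classical fact that homogeneous harmonic polynomials of different total degrees are orthogonal on a sphere; the hypothesis $i>0$ enters precisely at this reduction.

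First I would apply \cref{homdecomp} to $g\in\Hom(k-i,l-i)$, writing $g=\sum_{j=0}^{q}Z^{j}g_{j}$ with $g_{j}\in\Harm(k-i-j,\,l-i-j)$ and $q=\min\{k-i,l-i\}$. Since $Z(z)=z_{1}\bz_{1}+\cdots+z_{d}\bz_{d}$ equals $1$ for every $z\in\Om$, the functions $g$ and $\sum_{j}g_{j}$ agree on $\Om$, so $\ip{f}{g}=\sum_{j=0}^{q}\ip{f}{g_{j}}$. Each $g_{j}$ is harmonic of total degree $(k+l)-2(i+j)$, and since $i\geq 1$ this is strictly smaller than $k+l$. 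Hence it suffices to prove: if $f\in\Harm(k,l)$ and $h\in\Harm(k',l')$ with $k'+l'<k+l$, then $\ip{f}{h}=0$.

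For this sub-claim I would pass to real coordinates. Writing $z_{j}=x_{j}+iy_{j}$ one gets $\De=\tfrac14\De_{\re^{2d}}$, so being harmonic in the sense of this section is the same as being harmonic as a function on $\re^{2d}$; in particular $\bar f$ is harmonic whenever $f$ is, since $\De_{\re^{2d}}$ has real coefficients. Now $\bar f$ is a homogeneous polynomial on $\re^{2d}$ of degree $k+l$ and $h$ of degree $k'+l'$. Applying Green's second identity on the closed unit ball $B\subset\re^{2d}$ to the pair $\bar f,h$: the volume integral $\int_{B}(\bar f\,\De_{\re^{2d}}h-h\,\De_{\re^{2d}}\bar f)$ vanishes, so the boundary integral over $\Om$ of $\bar f\,\partial_{r}h-h\,\partial_{r}\bar f$ vanishes as well; by Euler's relation, $\partial_{r}h=(k'+l')h$ and $\partial_{r}\bar f=(k+l)\bar f$ on $\Om$. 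Thus $\big((k'+l')-(k+l)\big)\int_{\Om}\bar f h=0$, and since $\om$ is merely the normalization of surface measure on $\Om$, this gives $\big((k'+l')-(k+l)\big)\ip{f}{h}=0$. As $k'+l'\neq k+l$, we conclude $\ip{f}{h}=0$.

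The real obstacle is the first step: one must notice that restriction to $\Om$ collapses every power of $Z$ to $1$, so that $\ip{f}{g}$ becomes a sum of pairings of $f$ against honest harmonic polynomials of total degree at most $(k+l)-2<k+l$, and this degree drop is exactly where $i>0$ is used (for $i=0$ the statement is false, as $g$ could contain a nonzero $\Harm(k,l)$ component). Everything after that is \cref{homdecomp} together with standard spectral theory of the sphere Laplacian, so I would keep the second step brief, perhaps just citing the orthogonality of spherical harmonics of distinct degrees rather than reproving Green's identity.
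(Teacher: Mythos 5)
Your argument is correct and complete. The paper does not prove \tref{harmortho} itself but defers to Rudin and to Axler, Bourdon, and Ramey; the reduction you give (collapse the powers of $Z$ to $1$ on $\Om$ via Corollary~\ref{cor:homdecomp} so that $g$ is replaced on $\Om$ by a sum of harmonic polynomials of total degree strictly below $k+l$, then apply Green's identity and Euler's relation on the ball in $\re^{2d}$ to see that harmonic polynomials of distinct total degrees are orthogonal for the normalized surface measure) is precisely the standard argument those references supply, and your remark that $i>0$ is what forces the degree drop correctly locates where the hypothesis is used.
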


\begin{comment}
Add a \qed.

Using Corollary \ref{cor:homdecomp} and the fact that $Z = 1$ on $\Om$, it suffices to proof the case when $g \in \Harm(k-i,l-i)$. From Corollary \ref{cor:intdelt} below, it then suffices to show 
\[
\De^t(\conj{f}g) = 0,
\]
where $t = k+l-i$. One possibility is to prove that $f$ and $g$ are eigenfunctions with different eigenvalues for a self-adjoint operator. For example, prove that $\De Z$ is self-adjoint.

Alternatively, we could define $\Harm(k,l)$ as the space in $\Hom(k,l)$ orthogonal to $Z\Hom(k-1,l-1)$. But then we would not have explicit formulae for the Jacobi polynomials.
\end{comment}

See Rudin \cite[Chapter~12]{rud1} for a proof, or see Axler, Bourdon, and Ramey~\cite[Proposition~5.9]{abr} for the analogous result on the real sphere, which is standard in harmonic analysis. In particular, if $f$ is in $\Harm(k,k)$, then
\[
\int_{\Om} f(z) \; d\om(z) = \ip{1}{f} = 0.
\]

We now examine integration over the unit sphere in more detail. Consider a function of the form 
\[
f(z) = z_1g(z_2,\ldots,z_d;\bz_2,\ldots,\bz_d).
\]
For every point $a = (a_1,a_2,\ldots,a_d)$ on the unit sphere, there is a point $a' = (-a_1,a_2,\ldots,a_d)$ such that $f(a') = -f(a)$. By symmetry about zero, we conclude that $\ip{1}{f} = 0$. More generally, only monomials in $Z_i := z_i\bz_i$ can have nonzero inner product with $1$. In that case, integration is given by the following theorem. For a proof see Rudin \cite{rud1}, who attributes it to Bungart \cite{bun1}. 

\begin{theorem}
If
\[
f(z) = Z_1^{a_1}\ldots Z_d^{a_d},
\]
then
\[
\int_{\Om} f(z) \; d\om(z) = \frac{(d-1)!a_1!\ldots a_d!}{(d-1+a_1+\ldots+a_d)!}. \qed
\]
\end{theorem}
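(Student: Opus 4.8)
The plan is to evaluate the integral by lifting it to a Gaussian integral over all of $\cx^d \cong \re^{2d}$, where the weight $e^{-\pi\|z\|^2}$ is simultaneously a product of one-variable factors and radially symmetric, so that the spherical integral can be extracted by comparing the product decomposition with the polar decomposition. Write $A := a_1+\cdots+a_d$ and $g(z) := Z_1^{a_1}\cdots Z_d^{a_d}$, and note that $g$ is homogeneous of degree $2A$ under real dilations, i.e.\ $g(r\omega) = r^{2A}g(\omega)$ for $r>0$ and $\omega \in \Om$.

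First I would compute $I := \int_{\cx^d} g(z)\, e^{-\pi\|z\|^2}\, dV(z)$ in two ways. On one hand, the integrand factors coordinatewise as $\prod_{j=1}^d |z_j|^{2a_j} e^{-\pi|z_j|^2}$, so $I = \prod_{j=1}^d \int_{\cx} |w|^{2a_j} e^{-\pi|w|^2}\, dA(w)$; passing to polar coordinates in each $\cx$-plane and substituting $s = \pi\rho^2$ turns each factor into a Gamma integral, giving $\int_\cx |w|^{2a}e^{-\pi|w|^2}\, dA(w) = a!/\pi^a$, hence $I = \big(\prod_j a_j!\big)/\pi^{A}$. On the other hand, polar coordinates on $\re^{2d}$ give $dV = r^{2d-1}\, dr\, d\sigma$, where $\sigma$ is the rotation-invariant surface measure on $\Om = S^{2d-1}$; since $g(r\omega)=r^{2A}g(\omega)$, the integral separates as $I = \big(\int_0^\infty r^{2A+2d-1}e^{-\pi r^2}\, dr\big)\big(\int_\Om g\, d\sigma\big)$, and the radial factor is again a Gamma integral equal to $(A+d-1)!/(2\pi^{A+d})$.

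To finish without needing the total surface area, I would run the same two computations for the case $g\equiv 1$ (i.e.\ $A=0$), which yields $\int_{\cx^d} e^{-\pi\|z\|^2}\, dV = 1$ on the product side and $\big((d-1)!/(2\pi^d)\big)\,\sigma(\Om)$ on the polar side. Dividing the $g$-identity by the $1$-identity, the unknown $\sigma(\Om)$ cancels and the quotient $\int_\Om g\, d\sigma/\sigma(\Om)$ is precisely $\int_\Om g\, d\om$, since $\om$ is by definition the unitarily invariant \emph{probability} measure on $\Om$, that is $\sigma$ rescaled to total mass $1$. Solving for the remaining unknown gives
\[
\int_\Om g\, d\om \;=\; \frac{\big(\prod_j a_j!\big)/\pi^{A}}{(A+d-1)!/\big((d-1)!\,\pi^{A}\big)} \;=\; \frac{(d-1)!\,a_1!\cdots a_d!}{(d-1+a_1+\cdots+a_d)!},
\]
which is the claimed formula.

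The individual computations are all routine one-variable Gamma integrals; the real content is the measure-theoretic bookkeeping — justifying the polar factorization $dV = r^{2d-1}\, dr\, d\sigma$ on $\re^{2d}$ and, above all, identifying the spherical factor $\sigma$ with a scalar multiple of the measure $\om$ set up earlier. This last point is exactly where uniqueness of the $U(d)$-invariant normalized measure enters: $U(d)$ acts transitively on $\Om$, so any two invariant probability measures coincide, forcing $d\sigma/\sigma(\Om) = d\om$. Every interchange of sum, product, and integral is legitimate because the Gaussian weight makes everything absolutely convergent. (If one prefers to stay nearer to the machinery of this section, an alternative is induction on $d$, slicing $\Om\subset\cx^d$ over the value of $|z_d|^2$ and recognizing a Beta integral in the resulting Jacobian; but this requires a comparable computation and is no shorter than the Gaussian route.)
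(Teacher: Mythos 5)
Your proof is correct, and it is essentially the same Gaussian-integral argument the paper relies on (the paper defers to Rudin/Bungart, whose proof computes $\int_{\cx^d} Z_1^{a_1}\cdots Z_d^{a_d}e^{-\|z\|^2}\,dV$ once as a product of one-variable Gamma integrals and once in polar coordinates, exactly as you do). Your extra care in normalizing away $\sigma(\Om)$ by comparing with the $A=0$ case, and in invoking uniqueness of the $U(d)$-invariant probability measure to identify $d\sigma/\sigma(\Om)$ with $d\om$, is sound and fills in the bookkeeping the cited source leaves implicit.
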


\begin{comment}
\begin{proof}
Instead of working with the unit ball, we work in the entire complex space $\cx^d$. As with $\Om$, there is a unique weight function $v(z)$ for integration, namely the Levesque measure. This measure is normalized by dividing by $\pi^d/d!$ so that 
\[
\int_{\cx^d} \; dv(z) = 1.
\]
The two forms of integration are related by polar coordinates: for any function $g$,
\[
\int_{\cx^d} g(z) \; dv(z) = 2d \int_0^\infty r^{2d-1} dr \int_\Om g(rz) d\om(z).
\]

Denote $Z_1^{a_1}\ldots Z_d^{a_d}$ by $Z^a$, where $a = (a_1,\ldots,a_d)$. Now consider the following integral:
\[
I = \int_{\cx^d} Z^a e^{-Z^{(1)}} \; dv(z).
\]
Here $(1) = (1,1,\ldots,1)$. The component of $I$ in the $i$-th coordinate is $Z_i^{a_i} e^{-Z_i}$, and this expression evaluates to $a_i!\pi$ when integrated over $\cx$. Hence
\[
I = a_1!\ldots a_d!\pi^d.
\]
Evaluating $I$ using polar coordinates, we also find that 
\[
I = \frac{2d \pi^d}{d!} \int_0^\infty r^{2(d+a_1+\ldots+a_d)-1}e^{-r^2} dr \int_\Om Z^\al \; d\om(z).
\]
Combining the two expressions for $I$ yields the desired result. \qed
\end{proof}
\end{comment}

\begin{corollary} \label{cor:intdelt}
For any $f \in \Hom(t,t)$,
\[
\int_{\Om} f(z) \; d\om(z) = \frac{(d-1)!}{t!(d-1+t)!} \De^t f .
\]
\end{corollary}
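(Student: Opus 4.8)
The plan is to compute the integral by passing to the harmonic decomposition of $f$, discarding all components that integrate to zero, and then identifying the single surviving component with $\De^t f$.

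First I would invoke Corollary~\ref{cor:homdecomp} to write $f = f_0 + Zf_1 + \ldots + Z^t f_t$ with each $f_i \in \Harm(t-i,t-i)$; in particular $f_t \in \Harm(0,0)$ is a constant. Since $Z = z_1\bz_1 + \ldots + z_d\bz_d$ equals $1$ on the unit sphere $\Om$, integration gives $\int_\Om f\, d\om = \sum_{i=0}^{t} \int_\Om f_i\, d\om$. For $i < t$ the function $f_i$ is harmonic of bidegree $(t-i,t-i)$ with $t-i \ge 1$, so Theorem~\ref{thm:harmortho} (applied to $f_i$ and the constant function $1 \in \Hom(0,0)$) gives $\ip{1}{f_i} = 0$, hence $\int_\Om f_i\, d\om = 0$. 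Thus only the constant term survives: $\int_\Om f\, d\om = f_t$.

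It remains to extract $f_t$ from $f$, which I would do by applying $\De^t$ to the decomposition. For $i < t$, Lemma~\ref{lem:dez} gives $\De^i(Z^i f_i) = c_i f_i$ for a (nonzero) constant $c_i$ because $f_i$ is harmonic, so $\De^t(Z^i f_i) = c_i\,\De^{t-i} f_i = 0$ since $\De f_i = 0$ and $t - i \ge 1$. For the term $i = t$, Lemma~\ref{lem:dez} with $k = l = 0$ and $r = t$ applied to the constant $f_t$ gives $\De^t(Z^t f_t) = \dfrac{(d+t-1)!\,t!}{(d-1)!}\,f_t$. Therefore $\De^t f = \dfrac{(d+t-1)!\,t!}{(d-1)!}\,f_t$, i.e. $f_t = \dfrac{(d-1)!}{t!\,(d-1+t)!}\,\De^t f$, which is exactly the claimed value of $\int_\Om f\, d\om$.

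I do not anticipate a genuine obstacle: all the needed ingredients are already in place. The only points requiring care are the bookkeeping of bidegrees when invoking Theorem~\ref{thm:harmortho} and Lemma~\ref{lem:dez} — in particular, noticing that a single application of $\De$ already annihilates each harmonic $f_i$, so the higher powers $\De^{t-i}$ certainly do — and reading off the constant $\tfrac{(d+t-1)!\,t!}{(d-1)!}$ from Lemma~\ref{lem:dez} with the correct parameters $k=l=0$, $r=t$. A more computational alternative would expand $f$ into monomials $Z_1^{a_1}\cdots Z_d^{a_d}$, integrate each using the Bungart formula of the preceding theorem, and separately evaluate $\De^t$ on each such monomial; this also works but is messier and farther from the chapter's viewpoint.
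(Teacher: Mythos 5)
Your argument is correct, but it is not the route the paper takes. The paper proves Corollary~\ref{cor:intdelt} directly from the Bungart integration formula in the theorem immediately preceding it: expand $f$ into monomials, observe that any monomial of $\Hom(t,t)$ that is not a monomial in the $Z_i = z_i\bz_i$ has both integral zero (by the symmetry argument given just before that theorem) and $\De^t$ equal to zero, and for a monomial $Z_1^{a_1}\cdots Z_d^{a_d}$ compute $\De^t f = t!\,a_1!\cdots a_d!$ and compare with $\int_\Om f\,d\om = (d-1)!\,a_1!\cdots a_d!/(d-1+t)!$ --- exactly the ``messier'' alternative you mention at the end. Your proof instead runs through the harmonic decomposition $f = f_0 + Zf_1 + \cdots + Z^tf_t$ of Corollary~\ref{cor:homdecomp}, kills all but the constant term of the integral via Theorem~\ref{thm:harmortho}, and recovers that constant by applying $\De^t$ and Lemma~\ref{lem:dez}; every step checks out, including the constant $(d+t-1)!\,t!/(d-1)!$ from the lemma with $k=l=0$, $r=t$. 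The trade-off is worth noting: the paper's proof is essentially a one-line verification once the integration formula is in hand and uses nothing else, whereas yours avoids that formula entirely but leans on Theorem~\ref{thm:harmortho}, which is the one result in this section the paper does not prove (it is cited to Rudin). In exchange you get the cleaner intermediate statement that $\int_\Om f\,d\om$ equals the constant component $f_t$ of the harmonic decomposition, which fits the chapter's decomposition-based viewpoint better than the monomial computation does.
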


\begin{proof}
It is not difficult to verify that if $f$ is a monomial in $Z_i$, say $Z_1^{a_1}\ldots Z_d^{a_d}$, then 
\[
\De^t f = t!a_1!\ldots a_d!
\]
and if $f$ is a monomial in $\Hom(t,t)$ but not a monomial in $Z_i$ then $\De^t f$ is zero. \qed
\end{proof}

\begin{comment}
Since each term in $\De^tZ^a$ has the form $(a_1!\ldots a_d!)^2$, and there ${t \choose a_1,\ldots,a_d}$ terms, we get $\De^t f$ as written.
\end{comment}

\section{Zonal polynomials}\label{sec:zonal}

A \textsl{zonal} function on a set $\Om$ is a function $Z_a$, for $a \in \Om$, such that the value of $Z_a(z)$ depends only on the distance between $a$ and $z$ (the ``zone" of $z$ with respect to $a$). In this section we consider zonal polynomials on the unit sphere in $\cx^d$. Here, the \textsl{distance} between two points $a$ and $b$ on $\Om$ is defined to be $\abs{a^*b}$.  

If $f: \cx \rightarrow \cx$ is any univariate polynomial, then
\[
f_a(z) := f(\abs{a^*z}^2)
\]
is a function on $z \in \Om$ which depends only on $\abs{a^*z}$. Since $\abs{a^*z}^2 = (a^*z)(z^*a)$ is a polynomial in $z$ and $\bz$, so is $f_a$. If $f$ is not homogeneous, then terms in $f_a$ can be padded with powers of $Z = \sum z_i\bz_i$, which do not affect the value of the function on $\Om$. Thus $f_a$ defines a polynomial in $\Hom(r,r)$, where $r$ is the degree of $f$. Such functions are called the \defn{zonal polynomials} of $\Om$. The zonal polynomials of degree at most $r$ are denoted $Z(\Om,r)$.

We give one important example of zonal polynomials. For $a \in \Om$, define $Z_a \in \Harm(k,l)$ such that for every $p(x) \in \Harm(k,l)$,
\[
\ip{Z_a}{p} = p(a).
\]
\nomenclature{$Z_a$}{zonal polynomial}%
Since $\Harm(k,l)$ is a finite-dimensional inner-product space, $Z_a$ exists and is unique. Note that for any $a$ and $b$ in $\Om$,
\[
Z_a(b) = \ip{Z_b}{Z_a} = \conj{\ip{Z_a}{Z_b}} = \conj{Z_b(a)}.
\]

\begin{lemma}
In $\Harm(k,k)$,
\[
Z_a(b) = Z_b(a).
\]
\end{lemma}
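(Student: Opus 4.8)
The plan is to reduce the identity to the fact that $Z_a(z)$, viewed as a function of the pair $(a,z)$, depends only on the scalar $\abs{a^*z}^2$: once we know $Z_a(z) = F(\abs{a^*z}^2)$ for a single univariate polynomial $F$ that does not depend on $a$, the conclusion is immediate, since $\abs{a^*b}^2 = \abs{b^*a}^2$ gives $Z_a(b) = F(\abs{a^*b}^2) = F(\abs{b^*a}^2) = Z_b(a)$. (Combined with the already-recorded relation $Z_a(b) = \conj{Z_b(a)}$, this also shows each value $Z_a(b)$ is real, which is why the symmetric rather than merely Hermitian identity holds precisely when $k = l$.)

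First I would establish the equivariance $Z_{Ua} = Z_a \circ U^{-1}$ for every unitary $U$ on $\cx^d$. The space $\Harm(k,l)$ is invariant under $p \mapsto p \circ U^{-1}$ because the Laplacian commutes with unitaries, so $Z_a \circ U^{-1} \in \Harm(k,l)$; and for any $p \in \Harm(k,l)$, the unitary invariance of $\ip{\cdot}{\cdot}$ gives $\ip{Z_a \circ U^{-1}}{p} = \ip{Z_a}{p \circ U} = (p \circ U)(a) = p(Ua)$. Thus $Z_a \circ U^{-1}$ represents the same linear functional on $\Harm(k,l)$ as $Z_{Ua}$, and by uniqueness of the reproducing element the two coincide.

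Next, fix $a, b \in \Om$ and, using transitivity of the unitary group on the unit sphere, choose a unitary $U$ with $Ua = b$. Since $Z_a$ is a zonal polynomial in $\Harm(k,k)$ — its value at $z$ depends only on $\abs{a^*z}^2$, say $Z_a(z) = f(\abs{a^*z}^2)$ for a univariate polynomial $f$ — the equivariance gives $Z_b(z) = Z_a(U^{-1}z) = f(\abs{a^*U^{-1}z}^2) = f(\abs{(Ua)^*z}^2) = f(\abs{b^*z}^2)$. Writing also $Z_b(z) = g(\abs{b^*z}^2)$ and noting that $\abs{b^*z}^2$ ranges over all of $[0,1]$ as $z$ runs over $\Om$, we get $f = g$ on $[0,1]$; that is, the profile polynomial is the same for $a$ and for $b$. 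Therefore $Z_a(b) = f(\abs{a^*b}^2) = f(\abs{b^*a}^2) = Z_b(a)$.

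The only delicate point — and the place where the hypothesis $k = l$ is genuinely used — is knowing that $Z_a$ really is zonal in the strict sense, i.e. a function of $\abs{a^*z}^2$ alone; for $k \neq l$ the analogue fails because $Z_a(e^{i\phi}z) = e^{i(k-l)\phi}Z_a(z)$. One clean way to see strict zonality when $k = l$: the same reproducing-functional argument as above, applied with $U$ in the stabilizer of $a$, shows $Z_a$ is invariant under $\operatorname{Stab}(a)$; the $\operatorname{Stab}(a)$-orbit of a point $z \in \Om$ is determined by the complex number $a^*z$, so $Z_a$ restricted to $\Om$ is a polynomial in $a^*z$ and $\conj{a^*z}$; and the scaling relation above, which for $k = l$ says $Z_a$ is unchanged under $z \mapsto e^{i\phi}z$, then forces the dependence to be through $\abs{a^*z}^2$ only. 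Alternatively one may simply invoke that $Z_a$ was introduced above as an instance of the zonal polynomials of $\Om$.
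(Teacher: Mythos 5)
Your proof is correct, but it takes a genuinely different route from the paper's. The paper's argument stays entirely inside the inner-product space: it sets $p = \Ima Z_a$, observes that $p$ lies in $\Harm(k,k)$ (this is where $k=l$ enters there, since it guarantees $\conj{Z_a}$ and hence $\Ima Z_a$ is again in $\Harm(k,k)$), and computes $0 = \Ima p(a) = \Ima \ip{Z_a}{p} = -\ip{\Ima Z_a}{\Ima Z_a}$, forcing $\Ima Z_a = 0$; the symmetry then drops out of the already-recorded Hermitian relation $Z_a(b) = \conj{Z_b(a)}$. You instead front-load the content of Corollary~\ref{cor:zonalharm}: via the equivariance $Z_{Ua} = Z_a \circ U^{-1}$ and the transitivity of the unitary group you show the profile polynomial of $Z_a$ is independent of the pole, and then the symmetry of $\abs{a^*b}^2$ finishes it. This is not circular --- the paper's own zonality argument does not use this lemma, so reordering is legitimate --- and your stabilizer/phase-rotation discussion correctly pinpoints why $k=l$ is essential (for $k \neq l$ one only gets $Z_a(e^{i\phi}z) = e^{i(k-l)\phi}Z_a(z)$, so strict zonality fails). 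What your route buys is a geometric explanation of the symmetry: the ``distance'' $\abs{a^*b}^2$ is symmetric in $a$ and $b$ and the profile is universal. What the paper's route buys is brevity and the stronger fact, obtained directly rather than as an afterthought, that $Z_a$ is real-valued.
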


\begin{proof}
It suffices to show that $Z_a$ is real-valued. Let $p = \Ima Z_a$, the imaginary part of $Z_a$, which is a real-valued homogeneous polynomial. Then
\[
0 = \Ima p(a) = \Ima \ip{Z_a}{p} = \ip{\Ima \conj{Z_a}}{p} = -\ip{\Ima Z_a}{\Ima Z_a}.
\]
\nomenclature[a$I]{$\Ima$}{imaginary part}%
For any inner product, $\ip{f}{f} \geq 0$, with equality only if $f = 0$. Thus $\Ima Z_a = 0$, and $Z_a$ is real. \qed
\end{proof}

\begin{comment}
Define $p = \Ima Z_a$ as the unique element of $\Harm(k,k)$ such that $\ip{p}{q} = \Ima q(a)$ to see that it is a polynomial. To see the middle equation, use the integral form of the inner product.
\end{comment}

Note that the set $\{Z_a : a \in \Om\}$ spans $\Harm(k,l)$. For, suppose that $v$ is in $\spn\{Z_a\}^\perp$, the subspace of polynomials orthogonal to the span of all $Z_a$. Then
\[
v(a) = \ip{Z_a}{v} = 0,
\]
from which it follows that $v = 0$ and $\spn\{Z_a\} = \Harm(k,l)$. Since $Z_a$ is defined by the inner product, it is also unitarily invariant:
\[
Z_a(z) = Z_{Ua}(Uz).
\]
Furthermore, the unitary mappings preserve distance, and using unitary matrices any pair of points can be mapped to any other pair with the same distance between them. It follows that $Z_a(z)$ depends only on $\abs{a^*z}$.

\begin{corollary} \label{cor:zonalharm}
$Z_a$ in $\Harm(k,k)$ is a zonal polynomial.
\end{corollary}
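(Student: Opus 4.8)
The only thing that needs to be checked is that $Z_a$, which we already know lies in $\Hom(k,k)$ and whose value at $z\in\Om$ depends only on $\abs{a^*z}$, is in fact of the form $f(\abs{a^*z}^2)$ for a \emph{polynomial} $f$ of degree at most $k$ — this is precisely the definition of a zonal polynomial. Since $Z_a(z) = Z_{Ua}(Uz)$ for every unitary $U$, it suffices to treat the case $a = e_1 = (1,0,\ldots,0)$ (and if $d = 1$ or $k = 0$ the statement is immediate).

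First I would restrict $Z_{e_1}$ to the two-real-parameter family of points $z(t,\theta) := \big(\sqrt{t},\, e^{i\theta}\sqrt{1-t},\, 0,\ldots,0\big) \in \Om$, with $t \in [0,1]$ and $\theta \in \re$. Since $\abs{e_1^*z(t,\theta)}^2 = t$ is independent of $\theta$ and $Z_{e_1}$ depends only on $\abs{e_1^* z}$ on $\Om$, the value $Z_{e_1}(z(t,\theta))$ depends only on $t$; call it $\phi(t)$. On the other hand, writing $Z_{e_1} = \sum_{\abs{\alpha}=\abs{\beta}=k} c_{\alpha\beta}\, z^{\alpha}\bz^{\beta}$ and substituting $z = z(t,\theta)$, only multi-indices supported on $\{1,2\}$ survive, and each surviving term contributes a constant multiple of $t^{(\alpha_1+\beta_1)/2}(1-t)^{(\alpha_2+\beta_2)/2}\, e^{i(\alpha_2 - \beta_2)\theta}$.

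The key step is to exploit the $\theta$-independence. Grouping the terms by the frequency $m = \alpha_2 - \beta_2$ shows that for each $m \neq 0$ the corresponding sum of monomials in $t$ and $1-t$ must vanish identically on $(0,1)$; a short linear-independence argument (the functions $((1-t)/t)^{b}$ are linearly independent on $(0,1)$) forces all of those coefficients to be zero. Hence only terms with $\alpha_2 = \beta_2$ — and therefore also $\alpha_1 = \beta_1$, since $\abs{\alpha} = \abs{\beta} = k$ — contribute, and $\phi(t) = \sum_{b=0}^{k} d_b\, t^{k-b}(1-t)^{b}$ for suitable constants $d_b$. This is a polynomial $f(t)$ of degree at most $k$. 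Since every $z \in \Om$ with $\abs{e_1^*z}^2 = t$ gives $Z_{e_1}(z) = \phi(t) = f(t)$, we conclude $Z_{e_1}(z) = f(\abs{e_1^*z}^2)$ on all of $\Om$, so $Z_{e_1}$ is a zonal polynomial; transferring back via a unitary $U$ with $Ua = e_1$ (so that $U^*e_1 = a$) gives $Z_a(z) = Z_{e_1}(Uz) = f(\abs{a^*z}^2)$.

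I expect the main — and essentially only — obstacle to be making the ``$\theta$-independence kills the off-diagonal monomials'' step rigorous: one must check that no half-integer powers of $t$ survive in $\phi$ and that the leftover expression really is a polynomial of degree $\le k$ in $t$ rather than merely some function of $t$, which is where the small linear-independence computation is needed. An alternative that sidesteps the curve is to average $Z_{e_1}$ over the compact subgroup $U(1)\times U(d-1)$ stabilizing the line $\cx e_1$: the average is a group-invariant element of $\Hom(k,k)$ agreeing with $Z_{e_1}$ on $\Om$, and the classical description of $U(d-1)$-invariants forces it to be $\sum_j c_j (z_1\bz_1)^j(z_2\bz_2+\cdots+z_d\bz_d)^{k-j}$, which on $\Om$ equals $\sum_j c_j \abs{z_1}^{2j}(1-\abs{z_1}^2)^{k-j}$; with that route the obstacle simply shifts to citing or proving the fact about $U(d-1)$-invariant polynomials.
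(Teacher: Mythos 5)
Your proof is correct, but your primary argument is a genuinely different route from the paper's. The paper extends $Z_a$ (with pole $a=e_d$) homogeneously to all of $\cx^d$, argues by unitary invariance that it depends only on $z_d\bz_d$ and $Z=\sum z_i\bz_i$, and then simply asserts that a polynomial with this dependence ``may be written as a polynomial in $z_d\bz_d$ and $Z$'' before setting $Z=1$. That asserted step is exactly the $U(1)\times U(d-1)$-invariant-theory fact you identify as the obstacle in your ``alternative'' route -- in other words, your alternative \emph{is} essentially the paper's proof, with the gap named rather than elided. Your main argument avoids this entirely: by restricting to the curve $z(t,\theta)$, expanding $Z_{e_1}$ in monomials, and using Fourier independence in $\theta$ together with linear independence of the powers of $(1-t)/t$, you show concretely that the restriction is a polynomial of degree at most $k$ in $t=\abs{e_1^*z}^2$, and the previously established fact that $Z_{e_1}$ depends only on $\abs{e_1^*z}$ on $\Om$ transports this to the whole sphere. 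What each buys: the paper's version is shorter and matches the general ``invariants of the stabilizer of the pole'' philosophy used throughout the chapter, but leaves its key step implicit; yours is longer but elementary and self-contained. A small simplification of your argument: since $Z_{e_1}(z(t,\theta))$ is independent of $\theta$, it equals its average over $\theta\in[0,2\pi]$, which kills the $m\neq 0$ frequencies immediately and hands you the polynomial $\sum_b d_b\,t^{k-b}(1-t)^b$ without needing the linear-independence computation at all.
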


\begin{proof}
Using unitary mappings, it suffices to show the result for a single point $a \in \Om$, say $a = e_d$, the $d$-th standard basis vector in $\cx^d$. Then on $\Om$, $Z_a(z)$ depends only on $\abs{a^*z}^2 = z_d\bz_d$. More generally (again using unitary rotations), if we consider $Z_a$ as a polynomial on $\cx^d$, then $Z_a$ depends only on $z_d\bz_d$ and $Z = \sum z_i\bz_i$. It follows that $Z_a$ may be written as a polynomial in $z_d\bz_d$ and $Z$. Setting $Z = 1$ on $\Om$, we have a polynomial in $z_d\bz_d = \abs{a^*z}^2$.  \qed
\end{proof}

This zonal polynomial is called the \defn{zonal harmonic} with pole $a$. 

\subsection{Orthogonal zonal polynomials}

By Corollary \ref{cor:zonalharm}, any zonal harmonic $Z_a \in \Harm(k,k)$ may be written
\[
Z_a(z) = g_k(\abs{a^*z}^2)
\]
for some univariate polynomial $g_k$ of degree $k$. At this point we change notation for the zonal harmonics, since they are defined by $a$ and $g$. Relabel $Z_a$ as
\[
g_{k,a}(z) := g_k(\abs{a^*z}^2).
\]
\nomenclature{$g_{k,a}$}{zonal polynomial in $\Harm(k,k)$}%
\nomenclature{$g_k$}{Jacobi polynomial for $\Harm(k,k)$}%
Since $g_{k,a}$ is in $\Harm(k,k)$, by Theorem \ref{thm:homdecomp} it is orthogonal to $\Hom(i,i)$ for $i < k$. In particular, it is orthogonal to $g_{i,b}$ for any $b \in \Om$. For this reason, $g_{k,a}$ is called a \defn{zonal orthogonal polynomial} with respect to $a$. 

The zonal orthogonal polynomials have many nice properties. For example, by definition,
\[
\ip{g_{k,b}}{g_{k,a}} = g_{k,a}(b) = g_k(\abs{a^*b}^2),
\]

Another example is the following. This result, known as the \defn{addition formula} for $\Harm(k,k)$, was first proved by Koornwinder (see \cite{koo2}, \cite{koo1}).

\begin{theorem} \label{thm:addform1}
Let $S_1, \ldots, S_N$ be an orthonormal basis for $\Harm(k,k)$, and let $a$ and $b$ be in $\Om$. Then
\[
\sum_{i=1}^N \conj{S_i(a)}S_i(b) = g_k(\abs{a^*b}^2).
\]
\end{theorem}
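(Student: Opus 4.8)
The plan is to recognize the left-hand side as the reproducing kernel of $\Harm(k,k)$ written out in coordinates, and then to identify that kernel with the zonal harmonic $Z_a$, which after the relabeling $Z_a = g_{k,a}$ equals $g_k(\abs{a^*z}^2)$ by Corollary~\ref{cor:zonalharm}. No analysis is needed; everything rests on facts already established.

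First I would expand $Z_a$ in the given orthonormal basis: write $Z_a = \sum_{i=1}^N c_i S_i$. Since $\{S_i\}$ is orthonormal, $c_i = \ip{S_i}{Z_a}$. The inner product $\ip{\cdot}{\cdot}$ is conjugate-linear in its first argument, so $c_i = \ip{S_i}{Z_a} = \conj{\ip{Z_a}{S_i}}$, and by the defining reproducing property $\ip{Z_a}{p} = p(a)$ applied to $p = S_i \in \Harm(k,k)$, this equals $\conj{S_i(a)}$. Hence $Z_a = \sum_{i=1}^N \conj{S_i(a)}\,S_i$ as an element of $\Harm(k,k)$. Evaluating both sides at $b \in \Om$ gives $Z_a(b) = \sum_{i=1}^N \conj{S_i(a)}\,S_i(b)$, which is exactly the left-hand side of the claimed formula. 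Finally I would invoke Corollary~\ref{cor:zonalharm}: $Z_a \in \Harm(k,k)$ is a zonal polynomial, meaning there is a univariate degree-$k$ polynomial $g_k$ with $Z_a(z) = g_k(\abs{a^*z}^2)$ for all $z \in \Om$ (this is precisely the relabeling $Z_a = g_{k,a}$). Setting $z = b$ yields $Z_a(b) = g_k(\abs{a^*b}^2)$, and combining the two expressions for $Z_a(b)$ finishes the proof.

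The only point requiring care is the sesquilinear bookkeeping — placing the conjugations on the $S_i(a)$ correctly when passing from $\ip{S_i}{Z_a}$ to $\conj{S_i(a)}$ — but this is routine and there is no genuine obstacle, since the substantive content (the existence, uniqueness, and zonality of $Z_a$) has already been proved. As a consistency check: the right-hand side $g_k(\abs{a^*b}^2)$ is manifestly independent of the choice of orthonormal basis $\{S_i\}$, which matches the fact that $Z_a$ is defined intrinsically and does not depend on that choice; likewise $\spn\{S_i\} = \Harm(k,k)$ is what guarantees $Z_a$ lies in the span being expanded.
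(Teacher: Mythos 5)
Your proposal is correct and is essentially the paper's own argument: the paper likewise expands $g_{k,a}$ (the relabeled $Z_a$) in the orthonormal basis, obtains the coefficients $\conj{S_i(a)}$ from the reproducing property $\ip{g_{k,a}}{S_i} = S_i(a)$, and evaluates at $b$ using Corollary~\ref{cor:zonalharm}. The sesquilinear bookkeeping you flag is handled identically there.
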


\begin{proof}
Using Gram-Schmidt orthonormalization,
\[
g_{k,a} = \sum_{i=1}^N \ip{S_i}{g_{k,a}} S_i.
\]
But recall that $\ip{g_{k,a}}{S_i} = S_i(a)$. Taking the conjugate,
\[
g_{k,a} = \sum_{i=1}^N \conj{S_i(a)} S_i,
\]
which implies
\[
\sum_{i=1}^N \conj{S_i(a)} S_i(b) = g_{k,a}(b) = g_k(\abs{a^*b}^2). \qed
\] 
\end{proof}

\begin{corollary} \label{cor:dimharm}
\[
g_k(1) = \dim(\Harm(k,k)).
\]
\end{corollary}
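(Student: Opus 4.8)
This is an immediate consequence of the addition formula (Theorem~\ref{thm:addform1}), applied with $b = a$ and then averaged over $\Om$. The plan is as follows.

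First, fix an orthonormal basis $S_1,\ldots,S_N$ of $\Harm(k,k)$, so that $N = \dim(\Harm(k,k))$ by definition. Since $a$ is a unit vector, $\abs{a^*a}^2 = 1$, so setting $b = a$ in Theorem~\ref{thm:addform1} gives
\[
\sum_{i=1}^N \abs{S_i(a)}^2 = g_k(1)
\]
for every $a \in \Om$; in particular the left-hand side is the constant $g_k(1)$, independent of $a$.

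Next, integrate both sides over $\Om$ against $\om$. Because $\om$ is normalized so that $\int_\Om d\om(z) = 1$, the right-hand side integrates to $g_k(1)$. On the left-hand side, interchanging the finite sum and the integral and using the definition of the inner product on functions,
\[
\int_\Om \sum_{i=1}^N \abs{S_i(a)}^2 \; d\om(a) = \sum_{i=1}^N \ip{S_i}{S_i} = N,
\]
since the $S_i$ are orthonormal. Comparing the two evaluations yields $g_k(1) = N = \dim(\Harm(k,k))$.

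There is essentially no obstacle here: the only thing to be careful about is that $g_k(1)$ really is a constant (so that averaging is legitimate), but this is exactly what the $a=b$ case of the addition formula provides, and in any case the integration argument does not even require it. One could also skip the averaging and simply note that $\sum_i \abs{S_i(a)}^2$ is visibly independent of $a$ while its integral equals $N$; either route is a one-line deduction from Theorem~\ref{thm:addform1}.
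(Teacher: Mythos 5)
Your proof is correct and is essentially identical to the paper's: both set $b=a$ in the addition formula to get $\sum_i \abs{S_i(a)}^2 = g_k(1)$, then integrate over $\Om$ and use orthonormality of the $S_i$ to identify the left side with $N = \dim(\Harm(k,k))$. Nothing further to add.
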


\begin{proof}
Setting $a = b$ in the addition formula,
\[
\sum_{i=1}^N \conj{S_i(a)} S_i(a) = g_k(\abs{a^*a}^2) = g_k(1).
\]
The previous line is independent of the choice of $a$. Integrating over all of $\Om$, 
\begin{align*}
g_k(1) \int_\Om d\om &= \sum_{i=1}^N \int_\Om \conj{S_i(a)}S_i(a) \; d\om(a) \\
& = \sum_{i=1}^N \ip{S_i}{S_i} \\
& = N. \tag*{\sqr53}
\end{align*} 
\end{proof}

Since $g_{k,a}$ is harmonic, we can find an explicit formula for $g_k$ using the projection from $\Hom(k,k)$ into $\Harm(k,k)$ given by Corollary \ref{cor:harmproj}. Let $a = e_d$, and consider
\[
f_k(z) := \abs{a^*z}^{2k} = (z_d\bz_d)^k.
\]
Clearly $f_k$ is a zonal polynomial in $\Hom(k,k)$; the corresponding univariate polynomial is $x^k$. Note that $\De f_k = k^2f_{k-1}$. Its projection onto $\Harm(k,k)$ is
\begin{align*}
P(f_k) & = \sum_{r=0}^k (-1)^r \frac{(d+2k-r-2)!}{(d+2k-2)!r!} Z^r \De^r f_k \\
& = \sum_{r=0}^k (-1)^r \frac{(d+2k-r-2)!k!^2}{(d+2k-2)!r!(k-r)!^2} Z^r f_{k-r}. 
\end{align*}
On $\Om$, we may take $Z = 1$. Then the univariate polynomial underlying $P(f_k)$ is (abusing notation slightly):
\[
P(x^k) = \sum_{r=0}^k (-1)^r \frac{(d+2k-r-2)!k!^2}{(d+2k-2)!r!(k-r)!^2} \, x^{k-r}.
\]
Normalizing this polynomial so that the value at $1$ is $\dim(\Harm(k,k))$, we get
\[
g_k(x) = \frac{d+2k-1}{(d-1)!} \sum_{r=0}^k (-1)^r \frac{(d+2k-r-2)!}{r!(k-r)!^2} \, x^{k-r}.
\]
Explicitly, the first few polynomials are 
\begin{align*}
g_0(x) & = 1, \\
g_1(x) & = (d+1)(dx - 1), \\
g_2(x) & = \frac{d(d+3)}{4}((d+1)(d+2)x^2 - 4(d+1)x + 2).
\end{align*}

We will refer to these polynomials as the \defn{Jacobi polynomials}: up to a constant they are equivalent to a class of the usual Jacobi polynomials. In addition to defining orthogonal zonal polynomials, they are orthogonal in their own right. Fix $a \in \Om$, and define the following inner product on univariate polynomials:
\[
\ip{f}{g} := \ip{f_a}{g_a}
\]
where $f_a$ is the zonal polynomial with respect to $a$ induced by $f$. Clearly, for the Jacobi polynomials with $i \neq k$,
\[
\ip{g_i}{g_k} = 0.
\]
A sequence of polynomials $g_0, g_1, \ldots$ with $g_i$ of degree $i$ is called an \defn{orthogonal polynomial sequence} if the polynomials are pairwise orthogonal with respect to an inner product satisfying
\begin{equation}
\ip{f}{g} = \ip{1}{fg}.
\label{eqn:ops}
\end{equation}
Our inner product is defined in terms of integration of zonal polynomials, and it satisfies condition \eqref{eqn:ops} whenever $f$ and $g$ are real valued. Hence the Jacobi polynomials are an orthogonal polynomial sequence. The next lemma is a standard result.
 
\begin{lemma}
An orthogonal polynomial sequence satisfies the following three-term recursion: there are constants $a_k$, $b_k$ and $c_k$ such that
\[
a_kg_{k+1} = (x-b_k)g_k - c_kg_{k-1}.
\]
\end{lemma}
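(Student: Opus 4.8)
The plan is to run the classical argument: expand $xg_k$ in the orthogonal basis $\{g_0,\dots,g_{k+1}\}$ and show that all but three of the coefficients vanish. Since $g_0,g_1,\dots,g_{k+1}$ have distinct degrees $0,1,\dots,k+1$, they form a basis of the space of polynomials of degree at most $k+1$. The product $xg_k$ lies in this space, so I would write
\[
xg_k = \sum_{j=0}^{k+1} \lambda_j g_j
\]
for suitable scalars $\lambda_j$, with the aim of computing each $\lambda_j$.

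The key step is to show $\lambda_j = 0$ for $j \leq k-2$. Taking the inner product of both sides with $g_j$ and using pairwise orthogonality, $\lambda_j \ip{g_j}{g_j} = \ip{xg_k}{g_j}$. Here I would invoke the self-adjointness of multiplication by $x$: from the defining property \eqref{eqn:ops} of the inner product (valid for the real-valued polynomials at hand), $\ip{xg_k}{g_j} = \ip{1}{xg_kg_j} = \ip{g_k}{xg_j}$. But for $j \leq k-2$ the polynomial $xg_j$ has degree $j+1 \leq k-1 < k$, so it lies in $\spn\{g_0,\dots,g_{k-1}\}$, every element of which is orthogonal to $g_k$; hence $\ip{g_k}{xg_j} = 0$, and since $\ip{g_j}{g_j} \neq 0$ we conclude $\lambda_j = 0$.

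Thus $xg_k = \lambda_{k+1}g_{k+1} + \lambda_k g_k + \lambda_{k-1}g_{k-1}$. Setting $a_k = \lambda_{k+1}$, $b_k = \lambda_k$, $c_k = \lambda_{k-1}$ and rearranging gives the asserted recursion $a_kg_{k+1} = (x-b_k)g_k - c_kg_{k-1}$. To justify writing it in this "solved for $g_{k+1}$" form one checks $a_k \neq 0$: comparing the coefficients of $x^{k+1}$ on both sides of $xg_k = a_kg_{k+1}+\cdots$ shows that $a_k$ is the ratio of the leading coefficient of $g_k$ to that of $g_{k+1}$, and both are nonzero because $g_k,g_{k+1}$ have degrees exactly $k$ and $k+1$.

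I do not expect a genuine obstacle here, as this is a textbook fact. The only point requiring a little care is the self-adjointness of multiplication by $x$, which must be extracted from hypothesis \eqref{eqn:ops} rather than assumed; and, as already remarked in the text, \eqref{eqn:ops} is only guaranteed for real-valued polynomials, which is exactly the setting in which the lemma is applied to the Jacobi polynomials $g_k$.
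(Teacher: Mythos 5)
Your proof is correct and follows essentially the same route as the paper's: expand $xg_k$ in the orthogonal basis $\{g_0,\ldots,g_{k+1}\}$ and use the self-adjointness of multiplication by $x$ coming from \eqref{eqn:ops} to kill the coefficients of $g_j$ for $j \leq k-2$. Your additional check that $a_k \neq 0$ (by comparing leading coefficients) is a small refinement the paper omits.
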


\begin{proof}
Since $xg_k$ has degree $k+1$, it is a linear combination of $\{g_0,\ldots,g_{k+1}\}$. Using Gram-Schmidt orthonormalization, 
\[
xg_k = \sum_{i=0}^{k+1} \frac{\ip{g_i}{xg_k}}{\ip{g_i}{g_i}} g_i.
\]
But for $i < k-1$, $xg_i$ is a polynomial of degree less than $k$, and so
\[
\ip{g_i}{xg_k} = \ip{xg_i}{g_k} = 0.
\]
Therefore, there are constants $a_k$, $b_k$, and $c_k$ such that
\[
xg_k = a_kg_{k+1} + b_kg_k + c_kg_{k-1}. \qed
\] 
\end{proof}

For the Jacobi polynomials, this three term recurrence was computed explicitly by Delsarte, Goethals, and Seidel \cite{dgs}. Define 
\begin{equation}
\la_k = \frac{k}{d+2k-1}; \qquad \mu_k = \frac{k+1}{d+2k}.
\label{eqn:lamu}
\end{equation}
Then
\[
g_{k+1} = \frac{x + (\la_k-1)\mu_k + \la_k(\mu_{k-1}-1)}{\la_{k+1}\mu_{k}}g_k - \frac{(\la_{k-1}-1)(\mu_{k-1}-1)}{\la_{k+1}\mu_{k}}g_{k-1}.
\]

The Jacobi polynomials which are so useful in $\Harm(k,k)$ can also be adapted for $\Hom(k,k)$. Define the \defn{Jacobi sum polynomial} of degree $k$ to be 
\[
p_k(x) := \sum_{r=0}^k g_r(x).
\]
\nomenclature{$p_k$}{Jacobi sum polynomial for $\Hom(k,k)$}%
\nomenclature{$p_{k,a}$}{zonal polynomial in $\Hom(k,k)$}%
Since $g_r$ has real coefficients, so does $p_k$. Now consider $p_{k,a}$, the zonal polynomial with pole $a$ induced by $p_k$. As with $g_{k,a}$, we may pad with multiples of $Z$ and therefore assume $p_{k,a}$ is in $\Hom(k,k)$. 

The harmonic decomposition of $\Hom(k,k)$ in Theorem \ref{thm:homdecomp} and the fact that $g_r(1) = \dim(\Harm(r,r))$ imply that
\[
p_r(1) = \dim(\Hom(r,r)).
\]

\begin{lemma} \label{lem:jacobsumzonal}
The Jacobi sum $p_{k,a}$ is the unique polynomial in $\Hom(k,k)$ such that for every $f \in \Hom(k,k)$, 
\[
\ip{p_{k,a}}{f} = f(a).
\]
\end{lemma}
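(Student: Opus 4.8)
The plan is to identify $p_{k,a}$ with the reproducing kernel of $\Hom(k,k)$ by splitting everything along the orthogonal decomposition $\Hom(k,k) = \bigoplus_{i=0}^{k} Z^i\Harm(k-i,k-i)$ supplied by Theorem~\ref{thm:homdecomp}, Corollary~\ref{cor:homdecomp} and Theorem~\ref{thm:harmortho}. First I would rewrite $p_{k,a}$ in terms of zonal harmonics: since $p_k = \sum_{r=0}^{k} g_r$ and $g_r(\abs{a^*z}^2) = g_{r,a}\in\Harm(r,r)$, padding each summand of bidegree $(r,r)$ by $Z^{k-r}$ shows that, as an element of $\Hom(k,k)$, $p_{k,a} = \sum_{r=0}^{k} Z^{k-r} g_{r,a}$, i.e.\ the element $\sum_{i=0}^{k} Z^{i} g_{k-i,a}$ of the above direct sum, with the $i$-th term lying in $Z^i\Harm(k-i,k-i)$.

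Next, given $f\in\Hom(k,k)$, I would use Corollary~\ref{cor:homdecomp} to write $f = \sum_{j=0}^{k} Z^{j} f_j$ with $f_j\in\Harm(k-j,k-j)$, and evaluate $\ip{p_{k,a}}{f}$ term by term. Because $Z$ is real and equals $1$ on $\Om$, each cross term collapses to $\ip{Z^{i} g_{k-i,a}}{Z^{j} f_j} = \ip{g_{k-i,a}}{f_j}$, an inner product of genuine harmonic polynomials; by Theorem~\ref{thm:harmortho} this vanishes unless $i=j$, and when $i=j$ the defining property of the zonal harmonic ($\ip{g_{m,a}}{p}=p(a)$ on $\Harm(m,m)$) gives $\ip{g_{k-i,a}}{f_i} = f_i(a)$. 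Summing the surviving diagonal terms yields $\ip{p_{k,a}}{f} = \sum_{i=0}^{k} f_i(a)$. On the other hand, since $a\in\Om$ we have $Z(a)=a^*a=1$, so $f(a) = \sum_j Z(a)^{j} f_j(a) = \sum_j f_j(a)$; comparing the two expressions gives $\ip{p_{k,a}}{f} = f(a)$.

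For uniqueness I would note that $\ip{\cdot}{\cdot}$ is positive definite on $\Hom(k,k)$: any nonzero $f\in\Hom(k,k)$ satisfies $f(z/\abs{z}) = f(z)/\abs{z}^{2k}$ by homogeneity in both $z$ and $\bz$, so it cannot vanish identically on $\Om$, and hence $\ip{f}{f}>0$. Consequently, if some $q\in\Hom(k,k)$ also reproduced every $f$, then $\ip{q-p_{k,a}}{f}=0$ for all $f\in\Hom(k,k)$, in particular for $f=q-p_{k,a}$, forcing $q=p_{k,a}$.

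The only delicate point is the bookkeeping in the second paragraph: one must be careful that padding with powers of $Z$ and then integrating over $\Om$ really does reduce every inner product to one between honest harmonic polynomials, so that the mutual orthogonality of $\Harm(k-i,k-i)$ for distinct $i$ (a consequence of Theorem~\ref{thm:harmortho}) annihilates all off-diagonal terms. Everything else is formal once the direct-sum decomposition and the reproducing property of the $g_{m,a}$ are in hand.
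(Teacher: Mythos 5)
Your argument is correct and follows essentially the same route as the paper: decompose $f$ (and $p_{k,a}$) via Corollary~\ref{cor:homdecomp}, use that padding by $Z$ does not change inner products on $\Om$, and then apply the orthogonality of the harmonic components together with the reproducing property of $g_{r,a}$. You additionally supply the uniqueness step (via positive definiteness of the inner product on $\Hom(k,k)$, using that a nonzero element of $\Hom(k,k)$ cannot vanish on $\Om$ by homogeneity), which the paper's proof leaves implicit; that is a worthwhile, correct addition.
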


\begin{proof}
By Corollary \ref{cor:homdecomp}, and the fact that multiples of $Z$ do not change the inner product, it suffices to show the result for $f \in \Harm(i,i)$, where $i \leq k$. But $\Harm(i,i)$ and $\Harm(r,r)$ are orthogonal for $i \neq r$, so
\[
\ip{p_{k,a}}{f} = \ip{g_{r,a}}{f} = f(a). \qed
\] 
\end{proof}

\subsection{$\Harm(k+1,k)$ zonals}

All of the results in the previous section about zonal polynomials in $\Harm(k,k)$ can be extended to $\Harm(k+1,k)$. Define $h_{k,a}$\index{Jacobi polynomials} to be the unique polynomial in $\Harm(k+1,k)$ such that
\[
\ip{h_{k,a}}{p} := p(a).
\]
Then similarly to Corollary \ref{cor:zonalharm}, we find that 
\[
h_{k,a}(z) = (a^*z)h_k(\abs{a^*z}^2)
\]
\nomenclature{$h_{k,a}$}{zonal polynomial in $\Harm(k+1,k)$}%
\nomenclature{$h_k$}{Jacobi polynomial for $\Harm(k+1,k)$}%
for univariate polynomials $h_k$. The addition formula for $\Harm(k+1,k)$ is again due to Koornwinder. The proof is nearly identical to that of Theorem \ref{thm:addform1}.

\begin{comment}
As before, by unitary rotations, $h_{k,a}(z)$ is a function of $a^*z$. Fixing $a = e_d$ for convenience, $h_{k,a}(z)$ is a function of $z_d$ on $\Om$ and of $z_d$ and $Z$ on $\cx^d$. Since it is in $\Harm(k+1,k)$, it is a polynomial, and thererfore a polynomial on $z_d$ and $Z$, and therefore a polynomial in $z_d$ on $\Om$.
\end{comment}

\begin{theorem} \label{thm:addform2}
Let $S_1, \ldots, S_N$ be an orthonormal basis for $\Harm(k+1,k)$, and let $a$ and $b$ be in $\Om$. Then
\[
\sum_{i=1}^N \conj{S_i(a)}S_i(b) = (a^*b)h_k(\abs{a^*b}^2). \qed
\]
\end{theorem}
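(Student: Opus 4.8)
The plan is to mimic exactly the proof of Theorem~\ref{thm:addform1}, replacing $\Harm(k,k)$ by $\Harm(k+1,k)$ and using the reproducing polynomial $h_{k,a}$ in place of $g_{k,a}$. The only structural facts needed are: (i) $h_{k,a}$ lies in $\Harm(k+1,k)$ and satisfies $\ip{h_{k,a}}{p} = p(a)$ for all $p \in \Harm(k+1,k)$; (ii) $\Harm(k+1,k)$ is a finite-dimensional inner-product space, so Gram--Schmidt applies; and (iii) on $\Om$ we have the explicit form $h_{k,a}(z) = (a^*z)h_k(\abs{a^*z}^2)$. All three are available from the discussion immediately preceding the statement.

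First I would expand $h_{k,a}$ in the given orthonormal basis $S_1,\ldots,S_N$ of $\Harm(k+1,k)$:
\[
h_{k,a} = \sum_{i=1}^N \ip{S_i}{h_{k,a}} S_i.
\]
By the reproducing property, $\ip{h_{k,a}}{S_i} = S_i(a)$, so $\ip{S_i}{h_{k,a}} = \conj{S_i(a)}$, giving
\[
h_{k,a} = \sum_{i=1}^N \conj{S_i(a)}\, S_i.
\]
Evaluating both sides at $b \in \Om$ and invoking the explicit form of $h_{k,a}$,
\[
\sum_{i=1}^N \conj{S_i(a)}\, S_i(b) = h_{k,a}(b) = (a^*b)\, h_k(\abs{a^*b}^2),
\]
which is exactly the claimed identity.

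The only point requiring a little care — the analogue of the step that in Theorem~\ref{thm:addform1} was handled by Corollary~\ref{cor:zonalharm} — is justifying the closed form $h_{k,a}(z) = (a^*z)h_k(\abs{a^*z}^2)$ on $\Om$. Since $h_{k,a}$ is defined purely by the inner product, it is unitarily invariant, $h_{k,a}(z) = h_{Ua}(Uz)$, so after rotating $a$ to $e_d$ it becomes a polynomial in $z_d$ and $Z$; being homogeneous of degree $k+1$ in the $z_i$ and degree $k$ in the $\bz_i$ forces the dependence to be $z_d$ times a polynomial in $z_d\bz_d$ and $Z$, and setting $Z=1$ yields $(a^*z)h_k(\abs{a^*z}^2)$. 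This is the one genuine obstacle; everything else is the same two-line Gram--Schmidt argument as before. (As in Corollary~\ref{cor:dimharm}, setting $a=b$ and integrating would additionally give $h_k(1) = \dim(\Harm(k+1,k))$, though that is not part of the statement.)
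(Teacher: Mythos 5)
Your proof is correct and is exactly the argument the paper intends: the paper states that the proof is "nearly identical to that of Theorem~\ref{thm:addform1}," and your Gram--Schmidt expansion of $h_{k,a}$ together with the reproducing property and the closed form $h_{k,a}(z) = (a^*z)h_k(\abs{a^*z}^2)$ is precisely that adaptation. Your extra care in justifying the closed form matches the paper's own remark preceding the theorem, so nothing is missing.
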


As in Corollary \ref{cor:dimharm},
\[
h_k(1) = \dim(\Harm(k+1,k)).
\]
Projecting the polynomial
\[
f_k(z) := (a^*z)\abs{a^*z}^{2k} = z_d^{k+1}\bz_d^k
\]
from $\Hom(k+1,k)$ to $\Harm(k+1,k)$ and normalizing, we get an explicit formula:
\[
h_k(x) = \frac{d+2k}{(d-1)!} \sum_{r=0}^k (-1)^r \frac{(d+2k-r-1)!}{r!(k-r+1)!(k-r)!}x^{k-r}.
\]
\begin{comment}
(The corresponding univariate polynomial is $x^k$. Note that $\De p_k = (k+1)kp_{k-1}$. 
\begin{align*}
P(p_k) & = \sum_{r=0}^k (-1)^r \frac{(d+2k-r-1)!}{(d+2k-1)!r!}Z^r \De^r p_k \\
& = \sum_{r=0}^k (-1)^r \frac{(d+2k-r-1)!(k+1)!k!}{(d+2k-1)!r!(k-r+1)!(k-r)!}Z^r p_{k-r}. 
\end{align*}
Then 
\[
P(x^k) = \sum_{r=0}^k (-1)^r \frac{(d+2k-r-1)!(k+1)!k!}{(d+2k-1)!r!(k-r+1)!(k-r)!}x^{k-r},
\]
and normalize so that $h_k(1) = \dim(\Harm(k+1,k))$.
\end{comment}

The first few polynomials are 
\begin{align*}
h_0(x) & = d, \\
h_1(x) & = \frac{d(d+2)}{2}((d+1)x - 2), \\
h_2(x) & = \frac{d(d+1)(d+4)}{12}((d+2)(d+3)x^2 - 6(d+2)x + 6).
\end{align*}

Finally, the three term recurrence for $h_k$ is
\[
h_{k+1} = \frac{x + \la_{k+1}(\mu_k-1) + (\la_k-1)\mu_k}{\la_{k+1}\mu_{k+1}}h_k - \frac{(\la_k-1)(\mu_{k-1}-1)}{\la_{k+1}\mu_{k+1}}h_{k-1},
\]
with $\la_k$ and $\mu_k$ defined as in \eqref{eqn:lamu}.

The Jacobi sum polynomial\index{Jacobi sum polynomial} for $\Hom(k+1,k)$ is 
\[
q_k(x) := \sum_{r=0}^k h_r(x).
\]
\nomenclature{$q_k$}{Jacobi sum polynomial for $\Hom(k+1,k)$}%
\nomenclature{$q_{k,a}$}{zonal polynomial in $\Hom(k+1,k)$}%
From the harmonic decomposition of $\Hom(k+1,k)$, we get
\[
q_k(1) = \dim(\Hom(k+1,k)),
\]
and for every $f \in \Hom(k+1,k))$,
\[
\ip{q_{k,a}}{f} = f(a).
\]

\section{$s$-distance sets}\label{sec:sdist}

We are interested in sets of complex lines with restrictions on the angles between them. Let $X$ be a subset of $\Om$, the unit sphere in $\cx^d$. The \defn{degree set} of $X$ is the set
\[
A = \{\al: \abs{x^*y}^2 = \al, x \neq y \in X\}.
\]
\nomenclature[\a]{$\al$}{angle}%
Then $X$ an \defn{$s$-distance set} if $|A| = s$. We always assume a projective line occurs at most once in $X$, so that $1 \notin A$. If $X$ is the set of lines from mutually unbiased bases, then $A = \{0,\al\}$. If $X$ is a set of equiangular lines, then $|A| = 1$.

The following result, due to Delsarte, Goethals and Seidel \cite{dgs}, is called the \defn{absolute bound}. The proof is adapted from Godsil \cite[Theorem 14.4.1]{blue}.

\begin{theorem} \label{thm:sdistbnd}
Let $X$ be an $s$-distance set. Then
\[
|X| \leq \dim(\Hom(s,s)).
\]
If $0$ is in the degree set of $X$, then
\[
|X| \leq \dim(\Hom(s,s-1)).
\]
\end{theorem}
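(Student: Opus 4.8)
The plan is to associate to each line $x \in X$ a function that lives in $\Hom(s,s)$ (or $\Hom(s,s-1)$ when $0 \in A$) and to show that these functions are linearly independent; then $|X|$ is bounded by the dimension of the ambient space. For the first bound, write $A = \{\al_1,\ldots,\al_s\}$ and, for each $x \in X$, define the univariate polynomial
\[
F_x(t) := \prod_{j=1}^s \frac{t - \al_j}{1 - \al_j},
\]
which has degree $s$, satisfies $F_x(1) = 1$, and vanishes at every angle in $A$. Passing to the induced zonal polynomial, set $f_x(z) := F_x(\abs{x^*z}^2)$, padding with powers of $Z$ so that $f_x \in \Hom(s,s)$. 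The key property is the Kronecker-delta behaviour on $X$: for $y \in X$ we have $f_x(y) = F_x(\abs{x^*y}^2)$, which is $1$ if $y = x$ and $0$ if $y \neq x$ (since $\abs{x^*y}^2 \in A$ in that case). This delta property immediately forces linear independence: if $\sum_x c_x f_x = 0$ as a polynomial, evaluating at each $y \in X$ gives $c_y = 0$. Hence $|X| \le \dim(\Hom(s,s))$.

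For the second bound, suppose $0 \in A$, say $\al_1 = 0$, so $A \setminus \{0\} = \{\al_2,\ldots,\al_s\}$ has $s-1$ elements. Now use the "extra" structure the zero angle provides: define
\[
g_x(z) := (x^*z) \prod_{j=2}^s \frac{\abs{x^*z}^2 - \al_j}{1 - \al_j},
\]
so that the underlying polynomial $t \mapsto t^{?}$ — more precisely, $g_x$ is a polynomial in $x^*z$, $z^*x$ of degree $s$ in the $z_i$ and $s-1$ in the $\bz_i$ (the leading factor $x^*z$ contributes one extra holomorphic degree), hence $g_x \in \Hom(s,s-1)$ after padding with $Z$. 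Evaluating at $y \in X$: if $y = x$, $g_x(x) = 1 \cdot \prod (1-\al_j)/(1-\al_j) = 1$; if $y \neq x$ and $\abs{x^*y}^2 \neq 0$, some factor in the product vanishes unless $\abs{x^*y}^2 \in \{\al_2,\ldots,\al_s\}$, in which case it vanishes; and if $\abs{x^*y}^2 = 0$ then the leading factor $x^*y = 0$ kills it. So again $g_x(y) = \delta_{xy}$ on $X$, the $g_x$ are linearly independent, and $|X| \le \dim(\Hom(s,s-1))$.

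The one genuinely careful point — and the step I'd expect to be the main obstacle — is bookkeeping the bidegrees: one must check that after clearing denominators and homogenizing with $Z = z_1\bz_1 + \cdots + z_d\bz_d$ (which equals $1$ on $\Om$ and so doesn't affect function values), $f_x$ genuinely sits in $\Hom(s,s)$ and $g_x$ in $\Hom(s,s-1)$, not some larger space. This is exactly the padding argument already used for zonal polynomials in Section~\ref{sec:zonal}: a univariate polynomial of degree $r$ induces a zonal polynomial in $\Hom(r,r)$, and multiplying by the holomorphic linear factor $x^*z$ shifts the $(k,l)$-bidegree to $(k{+}1,k)$, exactly as with $h_{k,a}(z) = (a^*z)h_k(\abs{a^*z}^2)$. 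Everything else — the $F_x(1)=1$ normalization, the delta property, the linear-independence conclusion — is routine once the bidegrees are pinned down. Note that no integration or orthogonality is needed here; the absolute bound is purely a dimension count via interpolation, which is why the statement precedes the finer linear-programming machinery.
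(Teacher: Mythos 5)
Your proposal is correct and follows essentially the same route as the paper: zonal polynomials induced by the annihilator of the degree set (normalized so the value at $1$ is $1$, which is cosmetic), the Kronecker-delta property on $X$, linear independence, and the dimension count, with the factor $x^*z$ handling the case $0 \in A$ in $\Hom(s,s-1)$. The padding-by-$Z$ bookkeeping you flag as the careful point is exactly how the paper places $f_v$ in the right bidegree.
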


\begin{proof}
Let $A$ be the degree set of $X$, with $1 \notin A$. The \defn{annihilator} of $A$ is
\[
f(x) = \prod_{\al \in A}(x-\al).
\]
Now for each $v \in X$, consider the zonal polynomial with pole $v$ induced by $f$:
\[
f_v(z) := f(\abs{v^*z}^2).
\]
In general, $f_v$ is not homogeneous; however, by padding terms with $Z = \sum z_i\bz_i$, we may take $f_v$ in $\Hom(s,s)$. For any $u \neq v$ in $X$, note that $f_u(v) = 0$, while $f_v(v) \neq 0$. This implies that $\{f_v : v \in X\}$ is a linearly independent set. Since the polynomials are independent, they must number fewer than the dimension of the space in which they reside. 

When one of the angles is $0$, consider the polynomials $f_v(z) = (v^*z)f(\abs{v^*z}^2)$, where $f$ is the annihilator of $A-\{0\}$. These polynomials reside in $\Hom(s,s-1)$, and the proof is similar. \qed
\end{proof}

If equality holds, then the set $\{f_v : v \in X\}$ spans $\Hom(s,s)$ (or $\Hom(s,s-1)$ in the case of $0 \in A$.)

\begin{corollary} \label{cor:ealabsbnd}
Let $X$ be a set of equiangular lines in $\cx^d$. Then
\[
|X| \leq d^2. \qed
\]
\end{corollary}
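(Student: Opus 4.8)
The plan is to derive Corollary~\ref{cor:ealabsbnd} directly from Theorem~\ref{thm:sdistbnd}. A set of equiangular lines in $\cx^d$ is by definition a $1$-distance set (its degree set consists of the single angle $\al$), so the first inequality of Theorem~\ref{thm:sdistbnd} gives $|X| \leq \dim(\Hom(1,1))$. Thus the entire task reduces to computing $\dim(\Hom(1,1))$ and checking it equals $d^2$.

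To compute this dimension I would use the formula established just after Theorem~\ref{thm:homdecomp}, namely
\[
\dim(\Hom(k,l)) = \binom{d+k-1}{d-1}\binom{d+l-1}{d-1}.
\]
Setting $k=l=1$ gives $\dim(\Hom(1,1)) = \binom{d}{d-1}^2 = d^2$. (Alternatively, one can observe directly that $\Hom(1,1)$ is spanned by the monomials $z_i\bz_j$ for $1 \le i,j \le d$, of which there are exactly $d^2$, and these are linearly independent as functions on $\cx^d$.) Combining this with the bound $|X| \le \dim(\Hom(1,1))$ yields $|X| \le d^2$, which is the claim.

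There is essentially no obstacle here: the corollary is a one-line specialization of the preceding theorem together with a trivial binomial computation. The only point requiring any care is confirming that the annihilator construction in the proof of Theorem~\ref{thm:sdistbnd} really does land the polynomials $f_v$ in $\Hom(1,1)$ — but this is immediate since for a single angle $\al$ the annihilator is $f(x) = x - \al$, so $f_v(z) = \abs{v^*z}^2 - \al$, and after padding the constant term $-\al$ with a factor of $Z = \sum_i z_i\bz_i$ we obtain a genuine element of $\Hom(1,1)$. Hence the proof is simply: apply Theorem~\ref{thm:sdistbnd} with $s=1$ and evaluate $\dim(\Hom(1,1)) = d^2$.
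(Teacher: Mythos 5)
Your proposal is correct and matches the paper's intended argument exactly: the corollary follows by applying Theorem~\ref{thm:sdistbnd} with $s=1$ (an equiangular set is a $1$-distance set) and evaluating $\dim(\Hom(1,1)) = \binom{d}{d-1}^2 = d^2$. The paper states this as an immediate consequence with no further proof, so there is nothing to add.
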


\section{$t$-designs}\label{sec:tdes}

Let $X$ be a finite subset of $\Om$, and let $f \in \Hom(k,l)$ and $g \in \Hom(m,n)$ be polynomials. Then $\ip{f}{g}_X$ denotes the average of $\conj{f}g$ over $X$:
\[
\ip{f}{g}_X := \frac{1}{|X|}\sum_{z \in X}\conj{f(z)}{g(z)}.
\]
\nomenclature[\]{$\ip{f}{g}_X$}{average value of $\conj{f}g$ over $X$}%
We call $X$ a \defn{$t$-design} if for every $f$ in $\Hom(t,t)$,
\[
\ip{1}{f}_X = \ip{1}{f}.
\]
That is, the average of $f$ over $X$ is the same as the average of $f$ over all of $\Om$. 

\begin{lemma}
A subset $X$ is a $t$-design if and only if for every $f \in \Harm(k,k)$ with $1 \leq k \leq t$,
\[
\sum_{z \in X} f(z) = 0.
\]
\end{lemma}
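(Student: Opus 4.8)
The plan is to prove both directions by exploiting the harmonic decomposition of $\Hom(t,t)$ from Theorem~\ref{thm:homdecomp} together with Corollary~\ref{cor:intdelt} (or rather the orthogonality relations it encodes). The key observation is that $\Hom(t,t)$ decomposes, by Corollary~\ref{cor:homdecomp}, as a direct sum $\bigoplus_{i=0}^t Z^{t-i}\Harm(i,i)$, and on the unit sphere $\Om$ we have $Z=1$, so every $f\in\Hom(t,t)$ restricted to $\Om$ is a sum $f = \sum_{i=0}^t f_i$ with $f_i\in\Harm(i,i)$. Since the $t$-design condition and the condition $\sum_{z\in X}g(z)=0$ for $g\in\Harm(k,k)$ are both linear in the test function, it suffices to check the equivalence on each graded piece separately.

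First I would establish the forward direction. Suppose $X$ is a $t$-design, and let $f\in\Harm(k,k)$ with $1\le k\le t$. Viewing $f$ as an element of $\Hom(k,k)\subseteq\Hom(t,t)$ (padding with powers of $Z$ if one wants it literally in degree $(t,t)$, which does not change values on $\Om$), the design condition gives $\ip{1}{f}_X = \ip{1}{f}$. By Theorem~\ref{thm:harmortho} (with the constant function $1\in\Harm(0,0)$ and $f\in\Harm(k,k)$, $k\ge 1$), we have $\ip{1}{f}=\int_\Om f\,d\om = 0$. Hence $\ip{1}{f}_X = \frac{1}{|X|}\sum_{z\in X}f(z) = 0$, i.e. $\sum_{z\in X}f(z)=0$.

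For the converse, suppose $\sum_{z\in X}f(z)=0$ for all $f\in\Harm(k,k)$ with $1\le k\le t$. Take an arbitrary $f\in\Hom(t,t)$; I want $\ip{1}{f}_X=\ip{1}{f}$. Using Corollary~\ref{cor:homdecomp} and $Z\equiv 1$ on $\Om$, write $f = \sum_{i=0}^t f_i$ on $\Om$ with $f_i\in\Harm(i,i)$. For $i\ge 1$ the hypothesis gives $\sum_{z\in X}f_i(z)=0$, hence $\ip{1}{f_i}_X=0$; and by Theorem~\ref{thm:harmortho} also $\ip{1}{f_i}=0$ for $i\ge1$. For $i=0$, $f_0$ is a constant (the only element of $\Harm(0,0)$), and for a constant the average over $X$ trivially equals the average over $\Om$. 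Summing over $i$, both $\ip{1}{f}_X$ and $\ip{1}{f}$ reduce to the $i=0$ term and agree, so $X$ is a $t$-design.

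The main obstacle, and the one point that needs care rather than being purely routine, is the bookkeeping in the decomposition step of the converse: one must check that decomposing $f$ as $\sum Z^{t-i}f_i$ and then setting $Z=1$ on $\Om$ is legitimate — that is, that two elements of $\Hom(t,t)$ agreeing on $\Om$ contribute identically both to $\ip{1}{\cdot}_X$ (clear, since $X\subseteq\Om$) and to $\ip{1}{\cdot}$ (clear, since the integral is over $\Om$). Once that is granted, everything else is a direct application of the orthogonality of distinct harmonic spaces against the constants (Theorem~\ref{thm:harmortho}) and linearity.
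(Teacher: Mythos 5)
Your proposal is correct and follows essentially the same route as the paper: the forward direction uses $\ip{1}{f}=0$ for $f\in\Harm(k,k)$ with $k\geq 1$ together with the design condition, and the converse uses the harmonic decomposition of $\Hom(t,t)$ from Corollary~\ref{cor:homdecomp} with $Z\equiv 1$ on $\Om$. You have simply spelled out the bookkeeping that the paper leaves implicit.
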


\begin{proof}
Recall that if $f$ is in $\Harm(k,k)$, then $\ip{1}{f} = 0$. Thus if $X$ is a $t$-design, $\ip{1}{f}_X = 0$. Conversely, if $\ip{1}{f}_X = 0 = \ip{1}{f}$ for every harmonic $f$, the polynomial decomposition in Corollary \ref{cor:homdecomp} implies that $\ip{1}{f}_X = \ip{1}{f}$ for every $f$ in $\Hom(t,t)$. \qed
\end{proof}

Note that if $f$ is in $\Hom(t-1,t-1)$, then $Zf$ is in $\Hom(t,t)$ and takes the same values as $f$ on $\Om$. It follows that if $X$ is a $t$-design, it is also a $(t-1)$-design. A design has \defn{strength} $t$ if $t$ is the largest value such that it is a $t$-design.

\begin{theorem} \label{thm:tdesbnd}\index{absolute bound}
If $X$ is a $t$-design, then
\[
|X| \geq \dim(\Hom(\lc t/2 \rc,\lf t/2 \rf)).
\]
\end{theorem}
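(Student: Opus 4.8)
The plan is to prove the bound by exhibiting $\dim\bigl(\Hom(\lc t/2\rc,\lf t/2\rf)\bigr)$ linearly independent vectors in $\cx^X$. Write $k=\lc t/2\rc$ and $l=\lf t/2\rf$, so that $k+l=t$ and $\Hom(k,l)=\Hom(\lc t/2\rc,\lf t/2\rf)$. I would show that the evaluation map $\rho\colon\Hom(k,l)\to\cx^X$, $f\mapsto\bigl(f(z)\bigr)_{z\in X}$, is injective; since $\dim\cx^X=|X|$, this is exactly the claim. Here it is harmless to view elements of $\Hom(k,l)$ as functions on $\Om$ (put $Z=1$): by Corollary~\ref{cor:homdecomp} and the orthogonality in Theorem~\ref{thm:harmortho}, $\Hom(k,l)$ restricts faithfully to a space of functions on $\Om$ of the same dimension.

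The engine of the argument is that the $t$-design property forces the averaged inner product $\ip{f}{g}_X$ to agree with the true inner product $\ip{f}{g}$ for all $f,g\in\Hom(k,l)$. Indeed $\conj{f}\,g$ is homogeneous of degree $k+l=t$ in the $z_i$ and of degree $l+k=t$ in the $\bz_i$, so it lies in $\Hom(t,t)$; applying $\ip{1}{\cdot}_X=\ip{1}{\cdot}$ to this polynomial, and noting $\ip{1}{\conj{f}\,g}_X=\ip{f}{g}_X$ and $\ip{1}{\conj{f}\,g}=\ip{f}{g}$, yields $\ip{f}{g}_X=\ip{f}{g}$. Now if $f\in\ker\rho$ then $\ip{f}{f}_X=\tfrac{1}{|X|}\sum_{z\in X}|f(z)|^2=0$, hence $\ip{f}{f}=0$, and since $\ip{\cdot}{\cdot}$ is positive definite, $f=0$. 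Equivalently and more concretely: choosing for each $r\le\min\{k,l\}$ an orthonormal basis of $\Harm(k-r,l-r)$, their union $S_1,\dots,S_N$ (with $N=\dim\Hom(k,l)$) is orthonormal in $\Hom(k,l)$, so the $t$-design property makes the Gram matrix $\bigl(\ip{S_i}{S_j}_X\bigr)$ equal to $I_N$; the restrictions of the $S_i$ to $X$ are therefore linearly independent in $\cx^X$, giving $N\le|X|$. One can also package this through the addition formulas of Theorems~\ref{thm:addform1} and~\ref{thm:addform2}: the $|X|\times|X|$ matrices $\bigl(p_{e,a}(b)\bigr)$ for $t=2e$ and $\bigl(q_{e,a}(b)\bigr)$ for $t=2e+1$ factor as $U\conj{U}^{T}$ for suitable $|X|\times N$ matrices $U$, and the design hypothesis forces $U^{*}U=|X|\,I_N$.

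I do not anticipate a real obstacle; the content is linear algebra once the harmonic-polynomial machinery of Section~\ref{sec:harm} is in place. The one spot that needs care is the bookkeeping in the key identity: verifying that $\conj{f}\,g$ truly has bidegree $(t,t)$, so that the $t$-design hypothesis applies exactly (with no padding by $Z$ and no slack), and that $\ip{1}{\conj{f}\,g}_X$ unwinds to $\ip{f}{g}_X$ with the conjugations in the right places; and, if one wishes to be scrupulous, that restriction from abstract polynomials in $\Hom(k,l)$ to functions on $\Om$ preserves dimension, which is what Corollary~\ref{cor:homdecomp} and Theorem~\ref{thm:harmortho} supply.
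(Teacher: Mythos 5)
Your argument is correct and is essentially the paper's own proof: the key identity is the same observation that $\conj{f}\,g\in\Hom(t,t)$ for $f,g\in\Hom(\lc t/2\rc,\lf t/2\rf)$, so the $t$-design hypothesis forces $\ip{f}{g}_X=\ip{f}{g}$, and then an orthogonal basis of $\Hom(\lc t/2\rc,\lf t/2\rf)$ restricts to linearly independent functions on $X$. Your injectivity-of-evaluation phrasing and the Gram-matrix phrasing are just two equivalent packagings of that same step.
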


\begin{proof}
Let $S_1,\ldots,S_N$ be an orthogonal basis for $\Hom(\lc t/2 \rc,\lf t/2 \rf)$. Then $\conj{S_i}S_j$ is in $\Hom(t,t)$. Since $X$ is a $t$-design,
\[
\ip{S_i}{S_j} = \ip{1}{\conj{S_i}S_j} = \ip{1}{\conj{S_i}S_j}_X = \ip{S_i}{S_j}_X.
\]
So the polynomials $S_i: X \rightarrow \cx$ are orthogonal, and therefore independent, as functions on $X$. The space of functions on $X$ has dimension $|X|$. \qed
\end{proof}

If equality holds, then the basis for $\Hom(\lc t/2 \rc,\lf t/2 \rf)$ is also a basis for the functions on $X$. Bannai and Hoggar \cite{bh1} have shown that equality can only hold for $t < 6$.

Another lower bound on the size of a $t$-design is the following, known as the \defn{linear programming bound} (see \cite[Theorem 14.5.3]{blue}). 

\begin{lemma}
Let $X$ be a $t$-design, and let $p \in \Hom(t,t)$ be real and non-negative on $X$. Then for any $a \in X$,
\[
|X| \geq \frac{p(a)}{\ip{1}{p}}.
\]
\end{lemma}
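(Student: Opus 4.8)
The plan is to read the inequality off directly from the defining property of a $t$-design, so this will be a short argument with no analysis involved. The key observation is that $p$ lies in $\Hom(t,t)$, exactly the range of polynomials for which a $t$-design averages correctly. Hence the first step is to invoke the $t$-design identity with $f=p$: we have $\ip{1}{p}_X = \ip{1}{p}$, and since the constant function $1$ satisfies $\conj{1}=1$, the left-hand side unwinds to
\[
\ip{1}{p} \;=\; \ip{1}{p}_X \;=\; \frac{1}{|X|}\sum_{z\in X}p(z).
\]

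The second step uses positivity. Because $p$ is real and non-negative at every point of $X$, every summand $p(z)$ is $\ge 0$, so the whole sum dominates any one of its terms; in particular, for the fixed $a\in X$,
\[
\sum_{z\in X}p(z) \;\ge\; p(a).
\]
Combining this with the previous display gives $|X|\,\ip{1}{p} \ge p(a)$. Finally I would note that $\ip{1}{p}=\ip{1}{p}_X$ is an average of non-negative real numbers, hence non-negative, and divide through by it (in the case $\ip{1}{p}>0$, which is the only case in which the stated bound carries content) to obtain $|X|\ge p(a)/\ip{1}{p}$.

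There is essentially no obstacle here: all the work has already been done in setting up the $t$-design machinery and the decomposition lemmas earlier in the chapter, and the proof itself is just the identity "average over $X$ equals average over $\Om$" together with the hypothesis $p\ge 0$ on $X$ — the same elementary trick that underlies the classical linear programming bounds for codes and designs. The only point meriting a word of care is the degenerate case $\ip{1}{p}=0$: then $p$ vanishes identically on $X$, so $p(a)=0$ and the asserted inequality is trivially true (or, stated without division, $|X|\,\ip{1}{p}\ge p(a)$ holds in all cases).
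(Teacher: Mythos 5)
Your proof is correct and is essentially the paper's own argument: bound $p(a)$ by $\sum_{z\in X}p(z)=|X|\ip{1}{p}_X$ using non-negativity, then replace $\ip{1}{p}_X$ by $\ip{1}{p}$ using the $t$-design property. Your extra remark about the degenerate case $\ip{1}{p}=0$ is a reasonable point of care that the paper passes over silently, but it does not change the substance.
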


\begin{proof}
Since $p$ is nonnegative on $X$,
\[
p(a) \leq \sum_{z \in X}p(z) = |X| \ip{1}{p}_X.
\]
But $X$ is a $t$-design, so $\ip{1}{p}_X = \ip{1}{p}$, and the result follows. \qed
\end{proof}

If equality holds, then $p(z) = 0$ for every $z$ in $X$ except $a$.

\section{Relative bounds}\label{sec:absrel}

In this section we establish tighter upper bounds for $s$-distance sets and $t$-designs. In the following, $g_r$ is the Jacobi polynomial of degree $r$.

\begin{lemma} \label{lem:jacobsum}
For any finite subset $X$ of $\Om$,
\[
\sum_{a,b \in X} g_r(\abs{a^*b}^2) \geq 0.
\]
\end{lemma}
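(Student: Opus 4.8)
The plan is to invoke the addition formula for $\Harm(r,r)$ (Theorem~\ref{thm:addform1}) to rewrite the zonal polynomial values $g_r(\abs{a^*b}^2)$ as inner products of evaluation data, and then recognize the resulting double sum as a sum of squared magnitudes. Concretely, fix an orthonormal basis $S_1,\ldots,S_N$ for $\Harm(r,r)$; the addition formula gives
\[
g_r(\abs{a^*b}^2) = \sum_{i=1}^N \conj{S_i(a)}\,S_i(b)
\]
for all $a,b \in \Om$.

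The key step is then to substitute this identity into the double sum and interchange the (finite) order of summation:
\[
\sum_{a,b \in X} g_r(\abs{a^*b}^2) = \sum_{i=1}^N \sum_{a,b \in X} \conj{S_i(a)}\,S_i(b) = \sum_{i=1}^N \Big(\sum_{a \in X}\conj{S_i(a)}\Big)\Big(\sum_{b \in X} S_i(b)\Big) = \sum_{i=1}^N \Big|\sum_{a \in X} S_i(a)\Big|^2.
\]
Each term is a nonnegative real number, so the whole sum is $\geq 0$, which is exactly the claim.

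There is essentially no obstacle here beyond correctly citing the addition formula; the only things to be slightly careful about are that $X$ is finite (so the interchange of summation is trivially justified) and that $g_r$ has real coefficients so that the left-hand side is manifestly real, matching the real nonnegative right-hand side. It is worth noting for later use that equality holds precisely when $\sum_{a \in X} S_i(a) = 0$ for every $i$, i.e.\ when $\sum_{a \in X} f(a) = 0$ for all $f \in \Harm(r,r)$; this is the condition that will connect the bound to the $t$-design property in the subsequent results.
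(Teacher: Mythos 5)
Your proof is correct and is essentially the paper's argument written in coordinates: the paper observes directly that $g_r(\abs{a^*b}^2) = \ip{g_{r,a}}{g_{r,b}}$ and identifies the double sum as $\ip{\sum_{a} g_{r,a}}{\sum_{a} g_{r,a}} \geq 0$, while you expand each $g_{r,a}$ in an orthonormal basis via the addition formula to get $\sum_i \bigl|\sum_{a \in X} S_i(a)\bigr|^2$, which is the same squared norm. Your equality condition also matches the paper's remark following the lemma.
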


\begin{proof}
\begin{align*}
\sum_{a,b \in X} g_r(\abs{a^*b}^2) & = \sum_{a,b \in X} \ip{g_{r,a}}{g_{r,b}} \\
& = \ip{\sum_{a \in X} g_{r,a}}{\sum_{a \in X} g_{r,a}} \\
& \geq 0. \tag*{\sqr53}
\end{align*} 
\end{proof}

Note that equality holds in Lemma \ref{lem:jacobsum} if and only if $\sum_{a \in X} g_{r,a} = 0$, which occurs if and only if $\ip{1}{g_{r,a}}_X = 0$ for every $a \in \Om$.

The following result is called the \defn{relative bound} for $s$-distance sets. It is due to Delsarte, Goethals and Seidel \cite{dgs}, although the proof is adapted from \cite[Theorem 16.4.2]{blue}. Note that the Jacobi polynomials span $\re[x]$, so any univariate polynomial may be written as a linear combination of them. 

\begin{theorem} \label{thm:sdistrel}
Let $X \sbs \Om$ have finite degree set $A$, and let $F(x) \in \re[x]$ be a polynomial such that
\begin{enumerate}[(a)]
\item $F(\al_i) \leq 0$ for each distance $\al_i \in A$, and
\item if $F(x) = \sum_r c_r g_r(x)$, then $c_r \geq 0$ for all $r$ and $c_0 > 0$.
\end{enumerate}
Then
\[
|X| \leq F(1)/c_0.
\]
\end{theorem}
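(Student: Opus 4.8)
The plan is to imitate the counting argument behind the relative bound in the real case (Lemma~\ref{lem:realealrelbnd}) and the linear-programming philosophy: evaluate the double sum $\sum_{a,b\in X}F(\abs{a^*b}^2)$ in two ways, once by splitting off the diagonal and using hypothesis (a), and once by expanding $F$ in the Jacobi basis and using the positivity in hypothesis (b) together with Lemma~\ref{lem:jacobsum}. First I would write
\[
\sum_{a,b\in X}F(\abs{a^*b}^2)=\sum_{a\in X}F(1)+\sum_{\substack{a,b\in X\\ a\neq b}}F(\abs{a^*b}^2)=|X|\,F(1)+\sum_{a\neq b}F(\abs{a^*b}^2).
\]
Since for $a\neq b$ the value $\abs{a^*b}^2$ lies in the degree set $A$, hypothesis (a) gives $F(\abs{a^*b}^2)\le 0$, so the off-diagonal sum is $\le 0$ and hence $\sum_{a,b\in X}F(\abs{a^*b}^2)\le |X|\,F(1)$.

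Next I would get a lower bound on the same double sum. Writing $F(x)=\sum_r c_r g_r(x)$ and interchanging the order of summation,
\[
\sum_{a,b\in X}F(\abs{a^*b}^2)=\sum_r c_r\sum_{a,b\in X}g_r(\abs{a^*b}^2).
\]
By Lemma~\ref{lem:jacobsum}, each inner sum $\sum_{a,b\in X}g_r(\abs{a^*b}^2)$ is nonnegative, and since each $c_r\ge 0$ by hypothesis (b), every term of the outer sum is nonnegative; in particular the whole sum is at least the $r=0$ term. Because $g_0\equiv 1$, that term is $c_0\sum_{a,b\in X}1=c_0|X|^2$. Therefore $c_0|X|^2\le \sum_{a,b\in X}F(\abs{a^*b}^2)\le |X|\,F(1)$, and dividing by $c_0|X|>0$ (using $c_0>0$ and $X$ nonempty) yields $|X|\le F(1)/c_0$, as claimed.

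There is no serious obstacle here; the argument is a clean sandwich estimate. The one point that needs a word of care is the final division: one must note $c_0>0$ (given) and that $X$ is nonempty (otherwise the bound is vacuous), and that dividing the chain $c_0|X|^2\le |X|F(1)$ by $|X|>0$ is legitimate. It is also worth remarking, as the paper does for the analogous bounds, that equality forces $F(\al_i)=0$ whenever the angle $\al_i$ actually occurs and $c_r\sum_{a,b\in X}g_r(\abs{a^*b}^2)=0$ for every $r\ge 1$, i.e. $\sum_{a\in X}g_{r,a}=0$ for each $r$ with $c_r>0$ — which is exactly the design condition linking tight $s$-distance sets to $t$-designs.
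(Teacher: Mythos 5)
Your proof is correct and follows essentially the same argument as the paper: both bound the double sum $\sum_{a,b\in X}F(\abs{a^*b}^2)$ above by $|X|F(1)$ via hypothesis (a) and below by $c_0|X|^2$ via Lemma~\ref{lem:jacobsum} and hypothesis (b), the only difference being that the paper phrases the estimate in terms of the averages $\ip{1}{F_a}_X$ rather than the raw sums. Your closing remark on the equality conditions also matches the discussion following the theorem in the paper.
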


\begin{proof}
Let $F_a$ denote the zonal polynomial induced by $F$ with pole $a$, so that 
$F_a(b) = F(\abs{a^*b}^2) \leq 0$ for $b \neq a$. Summing over all $b \in X$,
\[
|X|\ip{1}{F_a}_X \leq F_a(a) = F(1).
\]
Again averaging over all $a \in X$,
\begin{align*}
F(1) & \geq \sum_{a \in X} \ip{1}{F_a}_X \\
& = \sum_{a \in X} \sum_r c_r \ip{1}{g_{r,a}}_X \\
& = \sum_r c_r \sum_{a \in X} \ip{1}{g_{r,a}}_X.
\end{align*}
By Lemma \ref{lem:jacobsum}, the inner sum is non-negative for $r > 0$. If $r = 0$, then $g_{0,a}(b) = 1$ for all $b$, and hence,
\begin{align*}
F(1) & \geq c_0 \sum_{a \in X} \ip{1}{g_{0,a}}_X \\
& = c_0 |X|. \tag*{\sqr53}
\end{align*} 
\end{proof}

Equality holds in Theorem \ref{thm:sdistrel} if and only if $F(\al_i) = 0$ for every $\al_i \in A$, and for every $r > 0$, either $c_r = 0$ or $\ip{1}{g_{r,a}}_X = 0$ for every $a \in \Om$. Since $\{g_{r,a}: a \in \Om\}$ spans $\Harm(r,r)$, we have the following:

\begin{corollary} \label{cor:sdistrel}
If equality holds in Theorem \ref{thm:sdistrel}, and $c_r > 0$ for every $r$ less than $s$, the degree of $F$, then $X$ is an $s$-design. Conversely, if $X$ is an $s$-design and $F(\al_i) = 0$ for every $\al_i \in A$, then equality holds in Theorem \ref{thm:sdistrel}. \qed
\end{corollary}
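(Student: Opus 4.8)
The plan is to read off both implications from the equality criterion established just before the corollary, namely that equality holds in Theorem~\ref{thm:sdistrel} precisely when $F(\al_i)=0$ for every $\al_i\in A$ and, for each $r>0$, either $c_r=0$ or $\ip{1}{g_{r,a}}_X=0$ for all $a\in\Om$. Two auxiliary facts feed into this. First, $\{g_{r,a}:a\in\Om\}$ spans $\Harm(r,r)$, so that $\ip{1}{g_{r,a}}_X=0$ for all $a$ is equivalent (by linearity of $f\mapsto\ip{1}{f}_X$) to $\ip{1}{f}_X=0$ for every $f\in\Harm(r,r)$. Second, the characterisation of $t$-designs in Section~\ref{sec:tdes} says $X$ is a $t$-design if and only if $\sum_{z\in X}f(z)=0$ for every $f\in\Harm(k,k)$ with $1\le k\le t$.

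Before treating the two directions I would record the bookkeeping point that makes the conclusion say ``$s$-design'' rather than ``$(s-1)$-design'': since $s=\deg F$ and $g_r$ has degree exactly $r$, the Jacobi expansion of $F$ stops at $r=s$ with leading coefficient $c_s\ne 0$, while condition~(b) of Theorem~\ref{thm:sdistrel} forces $c_s\ge 0$; hence $c_s>0$, and $c_r=0$ for $r>s$. So the hypothesis ``$c_r>0$ for every $r<s$'' upgrades to $c_r>0$ for all $r$ with $1\le r\le s$.

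For the forward direction, assume equality holds. The criterion forces, for each $r$ with $1\le r\le s$, either $c_r=0$ or $\ip{1}{g_{r,a}}_X=0$ for all $a$; since $c_r>0$ throughout this range, the latter holds for every such $r$, and the spanning fact then yields $\ip{1}{f}_X=0$ for every $f\in\Harm(r,r)$ with $1\le r\le s$, that is, $X$ is an $s$-design. For the converse, if $X$ is an $s$-design and $F$ vanishes on $A$, the first clause of the criterion holds by hypothesis; for $1\le r\le s$ the $s$-design property gives $\ip{1}{g_{r,a}}_X=0$ for all $a$, and $c_r=0$ for $r>s$, so the second clause holds as well and equality follows. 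I do not expect a real obstacle; the only step needing care is the degree bookkeeping of the second paragraph, which is exactly what licenses the sharp conclusion ``$s$-design''.
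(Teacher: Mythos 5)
Your proof is correct and follows the same route as the paper: the corollary is read off directly from the equality criterion stated immediately before it, combined with the fact that the zonal harmonics $g_{r,a}$ span $\Harm(r,r)$ and the harmonic characterization of $t$-designs. Your explicit observation that $c_s>0$ follows automatically from $\deg F=s$ together with condition (b) of Theorem~\ref{thm:sdistrel} is left implicit in the paper, and it is precisely the bookkeeping that licenses the conclusion ``$s$-design'' rather than ``$(s-1)$-design''.
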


An $s$-distance set of size $n$ in $\cx^d$ which is also a $t$-design is sometimes called a \textsl{$(d,n,s,t)$-configuration}.

The following bound is due to Wootters and Fields \cite{wf1}, while the equality condition is due to Klappenecker and R\"otteler \cite{kr2}.

\begin{corollary} \label{cor:mubbnd}
Let $X$ be the lines from a set of mutually unbiased bases in $\cx^d$. Then
\[
|X| \leq d(d+1).
\]
Equality holds if and only if $X$ is a $2$-design. \qed
\end{corollary}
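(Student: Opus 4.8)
The plan is to derive Corollary~\ref{cor:mubbnd} directly from Theorem~\ref{thm:sdistrel} and Corollary~\ref{cor:sdistrel} by choosing the right polynomial $F$. A set of mutually unbiased bases in $\cx^d$ has degree set $A = \{0,\alpha\}$ with $\alpha = 1/d$: vectors within a basis are orthogonal, and vectors from different bases have $\abs{x^*y}^2 = 1/d$. So we want a polynomial $F(x) \in \re[x]$ of degree $2$ with $F(0) \le 0$ and $F(1/d) \le 0$ (ideally both equalities, to aim for tightness), and such that when expanded in the Jacobi basis $F = c_0 g_0 + c_1 g_1 + c_2 g_2$ we have $c_0, c_1, c_2 \ge 0$ with $c_0 > 0$. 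Then $|X| \le F(1)/c_0$.

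First I would write down the natural candidate $F(x) = x(x - \tfrac1d) = x^2 - \tfrac1d x$, which vanishes at both angles. Then I would expand $F$ in terms of $g_0, g_1, g_2$ using the explicit formulas from Section~\ref{sec:zonal}: $g_0(x) = 1$, $g_1(x) = (d+1)(dx-1)$, and $g_2(x) = \tfrac{d(d+3)}{4}\bigl((d+1)(d+2)x^2 - 4(d+1)x + 2\bigr)$. Matching coefficients of $x^2$, $x^1$, $x^0$ gives a linear system for $c_2, c_1, c_0$; the $x^2$ coefficient forces $c_2 = \tfrac{4}{d(d+1)(d+2)(d+3)}$, and back-substitution determines $c_1$ and $c_0$. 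The routine computation should yield $c_0 = \tfrac{1}{d(d+1)}$ (one can sanity-check via $c_0 = \ip{1}{F} = \int_\Om F(\abs{a^*z}^2)\,d\om(z)$, using the integration formula of Corollary~\ref{cor:intdelt} or the moments $\ip{1}{\abs{a^*z}^2} = 1/d$ and $\ip{1}{\abs{a^*z}^4} = 2/(d(d+1))$), and all three coefficients are positive. Then $F(1) = 1 - 1/d = (d-1)/d$, so Theorem~\ref{thm:sdistrel} gives $|X| \le F(1)/c_0 = \tfrac{(d-1)/d}{1/(d(d+1))} = (d-1)(d+1) = d^2 - 1$ — which is \emph{not} the claimed bound $d(d+1)$, so the naive choice $F(0)=0$ is too strong.

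The fix is to drop the requirement $F(0) = 0$ and only keep $F(1/d) = 0$, $F(0) \le 0$; concretely take $F(x) = (x - \tfrac1d)(x + \beta)$ for a suitable $\beta \ge 0$, or more cleanly use the fact that the orthogonality relation for mutually unbiased bases really only pins down $\abs{x^*y}^2 \in \{0, 1/d\}$ and we are free to add to $F$ any nonnegative multiple of $x(1-dx)$-type corrections that keep the $g_r$-coefficients nonnegative. The point is to choose $F$ so that $c_0$ is as small as possible relative to $F(1)$ while keeping $c_1, c_2 \ge 0$; the extremal choice turns out to make $c_2 = 0$ (so $F$ is effectively degree-one-like in the Jacobi expansion beyond $g_0$), giving $F = c_0 g_0 + c_1 g_1$ with $F(1/d) = 0$. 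Since $g_1(1/d) = (d+1)(d\cdot\tfrac1d - 1) = 0$ automatically, \emph{any} combination $c_0 g_0 + c_1 g_1$ vanishes at $1/d$, and $F(0) = c_0 - c_1(d+1) \le 0$ forces $c_1 \ge c_0/(d+1)$; taking equality gives $F(0) = 0$ again — so in fact we instead allow $F(0) < 0$ strictly, i.e.\ take $c_1$ large, but then $F(1)/c_0 = (c_0 + c_1 d(d+1))/c_0 = 1 + (c_1/c_0)d(d+1)$ grows, which is the wrong direction. Re-examining: we want $c_0$ \emph{large} relative to $F(1)$... no — we want the ratio $F(1)/c_0$ small, so $c_0$ large. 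With $F = c_0 g_0 + c_1 g_1 + c_2 g_2$ and $F(1/d) \le 0$, and $g_2(1/d) = \tfrac{d(d+3)}{4}\bigl(\tfrac{(d+1)(d+2)}{d^2} - \tfrac{4(d+1)}{d} + 2\bigr)$, this is a genuine linear-programming problem: minimize $F(1)/c_0$, i.e.\ minimize $(c_0 g_0(1) + c_1 g_1(1) + c_2 g_2(1))/c_0$ over $c_1/c_0, c_2/c_0 \ge 0$ subject to $F(1/d) \le 0$ and $F(0) \le 0$. The main obstacle is carefully solving this small LP — identifying which constraint(s) are tight at the optimum — and I expect the optimum is $c_2/c_0$ chosen so that $F(0) = 0$ and $F(1/d) = 0$ \emph{both} hold with $c_1, c_2$ determined, which over-determines a degree-2 $F$ up to scale and should give exactly $F(1)/c_0 = d(d+1)$. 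Once the bound is established, the equality condition follows from Corollary~\ref{cor:sdistrel}: equality forces $F(0) = F(1/d) = 0$ and, since the optimal $F$ has $c_1, c_2 > 0$ for $d \ge 2$ (the degree of $F$ is $s = 2$, so we need $c_r > 0$ for $r < 2$, i.e.\ $c_0, c_1 > 0$), $X$ is a $2$-design; conversely a $2$-design attaining $|X| = d(d+1)$ satisfies the equality conditions since $F$ vanishes on $A$. I would verify the coefficient positivity and the ``$d(d+1)$'' arithmetic as the concluding computation.
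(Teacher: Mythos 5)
Your opening move is exactly the paper's proof: take $F(x) = x\left(x-\tfrac{1}{d}\right)$, the annihilator of $A = \{0,1/d\}$, expand it in the Jacobi basis, and apply Theorem~\ref{thm:sdistrel} together with Corollary~\ref{cor:sdistrel}. The only thing that goes wrong is an arithmetic slip in $c_0$. Using your own (correct) moments $\ip{1}{\abs{a^*z}^2} = 1/d$ and $\ip{1}{\abs{a^*z}^4} = 2/(d(d+1))$,
\[
c_0 = \ip{1}{F} = \frac{2}{d(d+1)} - \frac{1}{d^2} = \frac{2d-(d+1)}{d^2(d+1)} = \frac{d-1}{d^2(d+1)},
\]
not $\tfrac{1}{d(d+1)}$. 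With the correct value,
\[
\frac{F(1)}{c_0} = \frac{(d-1)/d}{(d-1)/(d^2(d+1))} = d(d+1),
\]
which is precisely the claimed bound. The full expansion is $F = \tfrac{4}{d(d+1)(d+2)(d+3)}\,g_2 + \tfrac{3d-2}{d^2(d+1)(d+2)}\,g_1 + \tfrac{d-1}{d^2(d+1)}\,g_0$, so every $c_r$ is positive and the hypotheses of Theorem~\ref{thm:sdistrel} are met; since $c_0, c_1 > 0$ and $F$ vanishes on all of $A$, Corollary~\ref{cor:sdistrel} gives the $2$-design equivalence exactly as you state at the end.

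Everything after your (mistaken) conclusion that the annihilator yields $d^2-1$ is a detour that does not amount to a proof: the linear-programming discussion oscillates over whether $c_0$ should be large or small, never identifies which constraints are tight, never exhibits a concrete polynomial, and closes with ``should give exactly $d(d+1)$'' rather than a verified computation. Relaxing to $F(0) < 0$ would also be a poor trade for the equality statement, since Corollary~\ref{cor:sdistrel} requires $F$ to vanish on the entire degree set before you may conclude that $X$ is a $2$-design, and $0$ genuinely lies in the degree set of any two or more unbiased bases. Fix the arithmetic for $c_0$ and your first paragraph is the paper's proof.
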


\begin{proof}
It is not difficult to show that the angle between lines from different bases in $X$ must be $1/d$. Thus $A = \{0,1/d\}$. Let $F(x)$ be the annihilator of $A$:
\[
F(x) = x\left(x-\frac{1}{d}\right).
\]
Expanding in terms of the Jacobi polynomials, we find that each $c_i > 0$, and in particular,
\[
c_0 = \frac{d-1}{d^2(d+1)}.
\]
\begin{comment}
In fact,
\[
F(x) = \frac{4}{d(d+1)(d+2)(d+3)} g_2(x) + \frac{3d-2}{d^2(d+1)(d+2)} g_1(x) + \frac{d-1}{d^2(d+1)} g_0(x).
\]
\end{comment}
Now note that $F(1) = 1-1/d$ and apply Theorem \ref{thm:sdistrel}. For equality, apply Corollary \ref{cor:sdistrel}. \qed
\end{proof}

The relative bound\index{relative bound} for equiangular lines is the following.

\begin{corollary} \label{cor:ealrelbnd}
Let $X$ be a set of equiangular lines in $\cx^d$ with angle $\al < 1/d$. Then
\[
|X| \leq \frac{d(1-\al)}{1-d\al}.
\]
Equality holds if and only if $X$ is a $1$-design.
\end{corollary}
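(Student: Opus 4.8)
The plan is to follow the template of the proof of Corollary~\ref{cor:mubbnd}: apply the relative bound, Theorem~\ref{thm:sdistrel}, to the annihilator of the degree set. Since $X$ is equiangular with angle $\al$, its degree set is $A=\{\al\}$, so I would take $F(x)=x-\al$. Then $F(\al)=0\le 0$, which is hypothesis (a) of Theorem~\ref{thm:sdistrel}, so the work is entirely in verifying hypothesis (b): writing $F(x)=\sum_r c_r g_r(x)$ in the Jacobi basis, all $c_r\ge 0$ and $c_0>0$.

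Because $F$ has degree $1$, only $g_0$ and $g_1$ occur. Using $g_0(x)=1$ and $g_1(x)=(d+1)(dx-1)$ from Section~\ref{sec:zonal}, matching the coefficient of $x$ forces $c_1=\tfrac{1}{d(d+1)}>0$, and then matching constant terms gives
\[
c_0=\frac1d-\al=\frac{1-d\al}{d}.
\]
This is the one place the hypothesis $\al<1/d$ is used: it is exactly what makes $c_0>0$ (and, incidentally, what makes the claimed bound finite and positive). Theorem~\ref{thm:sdistrel} then yields
\[
|X|\le \frac{F(1)}{c_0}=\frac{1-\al}{(1-d\al)/d}=\frac{d(1-\al)}{1-d\al}.
\]

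For the equality clause I would invoke Corollary~\ref{cor:sdistrel}. The degree of $F$ is $s=1$, so the only index $r<s$ is $r=0$, for which $c_0>0$; hence equality in Theorem~\ref{thm:sdistrel} forces $X$ to be a $1$-design, and conversely, if $X$ is a $1$-design then, since $F(\al)=0$ for the unique $\al\in A$, the second half of Corollary~\ref{cor:sdistrel} gives equality. I do not expect any real obstacle: this is a short specialization of the relative bound, and the only point needing care is that condition (b) demands $c_1\ge 0$ as well as $c_0>0$, both of which hold by the computation above. As a sanity check, this is the exact complex analogue of the real relative bound, Lemma~\ref{lem:realealrelbnd}, whose equality condition $\sum_i v_iv_i^{*}=\tfrac{n}{d}I$ is precisely the assertion that $X$ is a $1$-design.
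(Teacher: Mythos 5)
Your proposal is correct and is essentially identical to the paper's own proof: the paper also takes $F(x)=x-\al$ and writes it as $\frac{1}{d(d+1)}g_1(x)+\bigl(\frac{1}{d}-\al\bigr)g_0(x)$, then applies Theorem~\ref{thm:sdistrel} and Corollary~\ref{cor:sdistrel}. Your explicit verification of the coefficients and of where $\al<1/d$ is used is exactly the computation the paper leaves implicit.
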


\begin{proof}
Let $F$ be the annihilator of $A = \{\al\}$:
\[
F(x) = x-\al = \frac{1}{d(d+1)}g_1(x) + \left(\frac{1}{d}-\al\right)g_0(x).
\]
Now apply Theorem \ref{thm:sdistrel}. For equality, apply Corollary \ref{cor:sdistrel}. \qed
\end{proof}

A particularly interesting case of equality in Theorem \ref{thm:sdistrel} is when $X$ is an $s$-distance set and $F$ is the Jacobi sum polynomial $p_s$. In this case, $F(1)/c_0$ is $\dim(\Hom(s,s))$, and equality is obtained in the bound in Theorem \ref{thm:sdistbnd}.

\begin{corollary}\label{cor:distbnddes}
If $X$ is an $s$-distance set and 
\[
|X| = \dim(\Hom(s,s)),
\]
then $X$ is a $2s$-design.
\end{corollary}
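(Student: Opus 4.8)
The plan is to apply Theorem~\ref{thm:sdistrel} with the polynomial $F = p_s$, the Jacobi sum polynomial of degree $s$, and then to invoke Corollary~\ref{cor:sdistrel} to conclude that $X$ is a design. First I would verify that $p_s$ is an admissible choice of $F$ in Theorem~\ref{thm:sdistrel}. Writing $p_s(x) = \sum_{r=0}^s g_r(x)$, the coefficients in the Jacobi expansion are $c_r = 1$ for $0 \le r \le s$ and $c_r = 0$ for $r > s$; in particular $c_r \ge 0$ for all $r$ and $c_0 = 1 > 0$, so condition (b) holds. For condition (a), I need $p_s(\al_i) \le 0$ for each $\al_i$ in the degree set $A$. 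Here I would use the defining property of $p_{s,a}$ from Lemma~\ref{lem:jacobsumzonal}: for any $v \in X$, the zonal polynomial $p_{s,v}$ is the reproducing kernel for $\Hom(s,s)$, so $\ip{p_{s,v}}{f} = f(v)$. The key observation is that $p_s(\abs{a^*b}^2) = \ip{p_{s,a}}{p_{s,b}}$, so the matrix $\big(p_s(\abs{a^*b}^2)\big)_{a,b \in X}$ is a Gram matrix, hence positive semidefinite, with all diagonal entries equal to $p_s(1) = \dim(\Hom(s,s)) = |X|$.

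Next I would extract condition (a) from a rank/dimension count. The Gram matrix $M$ above is $|X| \times |X|$, positive semidefinite, and its entries $p_{s,v}$ live in $\Hom(s,s)$, a space of dimension $|X|$. By the absolute bound (Theorem~\ref{thm:sdistbnd}) and the remark following it, since $|X| = \dim(\Hom(s,s))$ the annihilator-induced polynomials $\{f_v : v \in X\}$ actually span $\Hom(s,s)$; I would argue similarly that $\{p_{s,v} : v \in X\}$ spans $\Hom(s,s)$ as well — indeed, $f_v$ and $p_{s,v}$ both reproduce the same linear functionals on the $|X|$-dimensional space after accounting for the degree-set structure, or more directly one checks that $p_{s,v}$ cannot lie in a proper subspace since $\{p_{s,v}\}$ already has $|X|$ elements whose Gram matrix $M$ must then be nonsingular (rank $|X|$). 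But $M$ is a matrix of size $|X|$ with constant diagonal $|X|$ whose off-diagonal entries take values in the finite set $\{p_s(\al) : \al \in A\}$; being positive definite of full rank forces, via the relative bound applied with $F = p_s$, that in fact the bound $|X| \le F(1)/c_0 = \dim(\Hom(s,s))$ is met with equality — which it is by hypothesis. The equality clause of Theorem~\ref{thm:sdistrel} then tells us that $F(\al_i) = p_s(\al_i) = 0$ for every $\al_i \in A$ (this discharges condition (a) a posteriori, but the cleaner route is to note that equality in the relative bound is exactly the hypothesis $|X| = \dim(\Hom(s,s))$, so we may run the equality analysis directly).

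Having equality in Theorem~\ref{thm:sdistrel} with $F = p_s$, I would apply Corollary~\ref{cor:sdistrel}: since $c_r = 1 > 0$ for every $r < s$ (indeed for every $r \le s$), equality forces $X$ to be an $s$-design. To upgrade from an $s$-design to a $2s$-design, I would use the addition formula (Theorem~\ref{thm:addform1}) together with the fact that equality in the relative bound means $\ip{1}{g_{r,a}}_X = 0$ for every $a \in \Om$ and every $1 \le r \le s$ — equivalently $\sum_{z \in X} f(z) = 0$ for all $f \in \Harm(r,r)$, $1 \le r \le s$, by the lemma in Section~\ref{sec:tdes}. The point is then that for $1 \le r \le s$ the harmonic spaces $\Harm(r,r)$ already exhaust the harmonic components of $\Hom(s,s)$, and the product structure $\Harm(k,k) \cdot \conj{\Harm(k,k)} \subseteq \Hom(2k,2k)$ combined with the spanning property of $\{p_{s,v}\}$ lets one show that the conditions $\ip{1}{f}_X = \ip{1}{f}$ propagate up to all $f \in \Hom(2s,2s)$: concretely, any $f \in \Harm(k,k)$ with $s < k \le 2s$ can be written (using that the $p_{s,v}$ span $\Hom(s,s)$ and hence products $\conj{p_{s,u}}\,p_{s,v}$ span enough of $\Hom(2s,2s)$) as a combination whose average over $X$ is controlled by the vanishing sums already established. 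The main obstacle is precisely this last step — cleanly showing that vanishing of the harmonic sums up to degree $s$ forces vanishing up to degree $2s$; the right tool is that $\{p_{s,v} : v \in X\}$ is a \emph{basis} of $\Hom(s,s)$ (from the equality in the absolute bound), so for any $f \in \Hom(2s,2s)$ one expands $f$ against products of these basis elements and uses $\ip{p_{s,u}}{p_{s,v}}_X = \ip{p_{s,u}}{p_{s,v}}$ (which is the $s$-design property applied to $\conj{p_{s,u}}p_{s,v} \in \Hom(2s,2s)$... — wait, that is circular) — so more carefully, one uses that $\conj{p_{s,u}}\,p_{s,v}$ restricted to $X$ is determined by the values $p_s(\abs{u^*z}^2)p_s(\abs{v^*z}^2)$, and the reproducing property forces $\sum_{z \in X} \conj{p_{s,u}(z)}p_{s,v}(z) = |X|\,p_{s,u}(v) = |X|\ip{p_{s,u}}{p_{s,v}}$, giving the $2s$-design identity on a spanning set of $\Hom(2s,2s)$ and hence everywhere.
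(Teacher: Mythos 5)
Your overall strategy --- run Theorem~\ref{thm:sdistrel} with $F = p_s$ and harvest the equality conditions --- is the frame the paper itself uses to motivate the result, but your proposal never actually establishes the one fact on which everything hinges: that $p_s(\al_i) \le 0$ (indeed $= 0$) for every $\al_i$ in the degree set. Each of your attempts at it is insufficient or circular. First, positive semidefiniteness of $M = \bigl(p_s(\abs{a^*b}^2)\bigr)_{a,b}$ with diagonal entries $|X|$ only bounds the off-diagonal entries in absolute value; it says nothing about their sign. Second, you cannot invoke ``the equality clause of Theorem~\ref{thm:sdistrel}'' a posteriori: the step $|X|\ip{1}{F_a}_X \le F(1)$ in that proof is exactly where hypothesis (a) is used, so without $p_s(\al_i)\le 0$ there is no chain of inequalities to be tight. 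What the machinery gives unconditionally is only $\sum_{a\ne b} p_s(\abs{a^*b}^2) \ge 0$, which together with $|X| = p_s(1)$ yields nothing about individual values. Third, your closing identity $\sum_{z\in X}\conj{p_{s,u}(z)}\,p_{s,v}(z) = |X|\,p_{s,u}(v)$ is precisely the statement $M^2 = |X|M$, which for the nonsingular $M$ is equivalent to $M = |X|I$, i.e.\ to $p_s(\al_i)=0$ again; the reproducing property concerns the integral inner product $\ip{\cdot}{\cdot}$, not the discrete average $\ip{\cdot}{\cdot}_X$, so it does not ``force'' this. (The claim that the Gram matrix of $\{p_{s,v}\}$ ``must be nonsingular'' because the set has $|X|$ elements is likewise circular; independence has to come from the duality $\ip{p_{s,a}}{f_b} = f_b(a) = f(1)\de_{ab}$ against the basis $\{f_b\}$.)

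The paper's proof succeeds precisely because it keeps the annihilator basis $\{f_b\}$ in play to the end: the relation $\ip{p_{s,a}}{f_b} = f(1)\de_{ab}$ shows $\{p_{s,a}\}$ is a second basis dual to $\{f_b\}$ and that $p_{s,a}$ is a constant multiple of $f_a$, whence $p_s$ is a multiple of the annihilator and $p_s(\al_i)=0$; then the discrete inner products $\ip{p_{s,a}}{f_b}_X$ are trivial to evaluate because $f_b$ vanishes at every point of $X$ other than $b$, and bilinearity over the two bases upgrades $\ip{g}{h}_X = \ip{g}{h}$ on $\Hom(s,s)\times\Hom(s,s)$ to $\ip{1}{h}_X = \ip{1}{h}$ on $\Hom(2s,2s)$. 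By discarding $\{f_b\}$ in favour of the pair $\{p_{s,u}\},\{p_{s,v}\}$ you lose the only functions whose restriction to $X$ can be computed without already knowing the conclusion. To repair your argument you must first prove that $p_s$ is a scalar multiple of the annihilator $f$ --- for instance by expanding $f = \sum_r e_r g_r$, noting that $\sum_r e_r\bigl(g_r(\abs{a^*b}^2)\bigr)_{a,b} = f(1)I$ while each summand is positive semidefinite of rank $\dim(\Harm(r,r))$ and these ranks sum to $|X|$, and comparing traces --- and only then run the equality analysis of Theorem~\ref{thm:sdistrel}.
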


\begin{proof}
Recall from Theorem \ref{thm:sdistbnd} that if $|X| = \dim(\Hom(s,s))$, then the zonal polynomials $\{f_a : a \in X\}$ induced by the annihilator $f$ of $A$ are a basis for $\Hom(s,s)$. Now consider $p_{s,a}$ and $f_b$; by Lemma \ref{lem:jacobsumzonal},
\[
\ip{p_{s,a}}{f_b} = f_b(a).
\]
This inner product is $0$ when $a \neq b$ and $f(1) \neq 0$ otherwise. This implies that $\{p_{s,a} : a \in X\}$ is a second basis for $\Hom(s,s)$. It also implies that $p_{s,a}$ is a constant multiple of $f_a$; thus, $p_s(\al_i) = 0$ (and the hypotheses of Theorem \ref{thm:sdistrel} are satisfied).

On the other hand, since $f(\al_i) = p_s(\al_i) = 0$ for every $\al_i$, the average of $p_{s,a}(z)f_b(z)$ over $X$ is 
\[
\ip{p_{s,a}}{f_b}_X = \begin{cases}
f(1), & a = b; \\
0, & \text{otherwise.}
\end{cases}
\]
We conclude that $\ip{p_{s,a}}{f_b}_X = \ip{p_{s,a}}{f_b}$, for all $a$ and $b$. Since both of these sets are bases, we have $\ip{f}{g}_X = \ip{f}{g}$ for every $f$ and $g$ in $\Hom(s,s)$. But since $\ip{f}{g} = \ip{1}{\conj{f}g}$, this implies $\ip{1}{f}_X = \ip{1}{f}$ for every $f$ in $\Hom(2s,2s)$. \qed
\end{proof}

The argument in Theorem \ref{thm:sdistrel} yields another bound (again due to Delsarte, Goethals, and Seidel) which is most useful when one of the angles of $X$ is $0$.

\begin{theorem} \label{thm:sdistrel2}
Let $X \sbs \Om$ have finite degree set $A$, and let $F(x) \in \re[x]$ be a polynomial such that
\begin{enumerate}[(a)]
\item $\al_i F(\al_i) \leq 0$ for each distance $\al_i \in A$, and
\item if $F(x) = \sum_r c_r h_r(x)$, then $c_r \geq 0$ and $c_0 > 0$.
\end{enumerate}
Then
\[
|X| \leq F(1)/c_0.
\]
\end{theorem}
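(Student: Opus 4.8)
The plan is to mimic the proof of Theorem~\ref{thm:sdistrel} step for step, with the $\Harm(r,r)$ Jacobi polynomials $g_r$ replaced by the $\Harm(r+1,r)$ polynomials $h_r$, and the zonal polynomial $F_a(z)=F(\abs{a^*z}^2)$ replaced by the ``weighted'' zonal quantity $\abs{a^*z}^2F(\abs{a^*z}^2)$. The object to work with is the double sum $\sum_{a,b\in X}\abs{a^*b}^2F(\abs{a^*b}^2)$. Its diagonal terms ($a=b$) each equal $F(1)$, and by hypothesis~(a) each off-diagonal term $\abs{a^*b}^2F(\abs{a^*b}^2)=\al_iF(\al_i)$ is nonpositive, so
\[
\sum_{a,b\in X}\abs{a^*b}^2F(\abs{a^*b}^2)\ \le\ |X|\,F(1).
\]
Writing $F=\sum_r c_rh_r$ turns the left-hand side into $\sum_r c_rT_r$, where $T_r:=\sum_{a,b\in X}\abs{a^*b}^2h_r(\abs{a^*b}^2)$. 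So once we know that $T_r\ge 0$ for every $r$ and that $T_0\ge|X|^2$, hypothesis~(b) gives $|X|\,F(1)\ge c_0T_0\ge c_0|X|^2$, and dividing by $c_0|X|>0$ finishes the proof.

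The positivity of $T_r$ is the analogue of Lemma~\ref{lem:jacobsum} for the non-diagonal harmonic family, and I would prove it using the reproducing property $\ip{h_{r,b}}{h_{r,a}}=h_{r,a}(b)$. Since $h_{r,a}(z)=(a^*z)h_r(\abs{a^*z}^2)$, we have $\abs{a^*b}^2h_r(\abs{a^*b}^2)=(b^*a)\,h_{r,a}(b)$; expanding $b^*a=\sum_{k=1}^d\bar b_k a_k$ and collecting the terms coordinate by coordinate,
\[
T_r=\sum_{k=1}^d\sum_{a,b\in X}\ip{b_k h_{r,b}}{a_k h_{r,a}}=\sum_{k=1}^d\ip{\sum_{a\in X}a_k h_{r,a}}{\sum_{a\in X}a_k h_{r,a}}\ \ge\ 0 ,
\]
and $c_rT_r\ge 0$ since $c_r\ge 0$. (Substituting the addition formula of Theorem~\ref{thm:addform2} for $h_{r,a}(b)$, together with an orthonormal basis of $\Harm(r+1,r)$, produces the same sum-of-squares identity and is an alternative route.)

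For the $r=0$ term I would argue separately, since $h_0$ is the positive constant $d$: letting $M$ be the $d\times|X|$ matrix whose columns are the vectors of $X$,
\[
T_0=d\sum_{a,b\in X}\abs{a^*b}^2=d\,\tr\big((M^*M)^2\big)\ \ge\ |X|^2 ,
\]
the last inequality holding because $M^*M$ has rank at most $d$ and trace $|X|$, so Cauchy--Schwarz on its eigenvalues gives $\tr\big((M^*M)^2\big)\ge|X|^2/d$. Assembling the pieces as in the first paragraph yields $|X|\le F(1)/c_0$, and the same hypotheses feed into the equality analysis exactly as in Corollary~\ref{cor:sdistrel}.

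The one genuinely new ingredient is the positivity lemma $T_r\ge 0$, and that is where the care is needed. The weight must be $\abs{a^*b}^2$ and not $a^*b$: this is what keeps the sum real and, more importantly, independent of the choice of unit-vector representatives for the lines of $X$ (with $a^*b$ in place of $\abs{a^*b}^2$ the $r=0$ term would be $d\,\ip{\sum_a a}{\sum_a a}$, which is phase-dependent and can vanish, making the bound vacuous). One also has to check that the cross terms produced by $b^*a=\sum_k\bar b_k a_k$ reassemble cleanly as the squared norms $\ip{\sum_a a_k h_{r,a}}{\sum_a a_k h_{r,a}}$. Everything else — the upper bound from hypothesis~(a), the $r=0$ estimate, and the final division — is routine and parallels Theorem~\ref{thm:sdistrel} and Corollary~\ref{cor:mubbnd}.
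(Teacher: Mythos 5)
Your proof is correct and follows the paper's argument essentially step for step: the key new ingredient, the positivity of $\sum_{a,b\in X}\abs{a^*b}^2h_r(\abs{a^*b}^2)$, is obtained in the paper by exactly your identity $\abs{a^*b}^2 h_r(\abs{a^*b}^2)=\ip{a h_{r,a}}{b h_{r,b}}$, i.e.\ the coordinate-wise sum of squares over the vector-valued zonal functions, and the reduction from hypothesis (a) to $|X|F(1)\ge\sum_r c_r T_r$ is identical. The only (cosmetic) difference is the $r=0$ term, which the paper handles by writing $xh_0(x)=g_1(x)/(d+1)+g_0(x)$ and invoking Lemma~\ref{lem:jacobsum}, whereas you bound $d\,\tr((M^*M)^2)\ge|X|^2$ directly via Cauchy--Schwarz on the eigenvalues of the Gram matrix --- these two inequalities are equivalent.
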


\begin{proof} Similarly to Lemma \ref{lem:jacobsum}, since 
\[
\abs{a^*b}^2 h_r(\abs{a^*b}^2) = (a^*b) \ip{h_{k,a}}{h_{k,b}} = \ip{a h_{k,a}}{b h_{k,b}},
\]
we get
\[
\sum_{a,b \in X} \abs{a^*b}^2 h_r(\abs{a^*b}^2) \geq 0,
\]
with equality if and only if $\ip{1}{a h_{r,a}}_X = 0$ for every $a \in \Om$. Since $\al_i F(\al_i) \leq 0$, 
\begin{align*}
|X|F(1) & \geq \sum_{a,b \in X} \abs{a^*b}^2 F(\abs{a^*b}^2) \\
& = \sum_r c_r \sum_{a,b \in X} \abs{a^*b}^2 h_r(\abs{a^*b}^2) \\ 
& \geq c_0 \sum_{a,b \in X} \abs{a^*b}^2 h_0(\abs{a^*b}^2). 
\end{align*}
Since $xh_0(x) = g_1(x)/(d+1) + g_0(x)$, this reduces to 
\begin{align*}
|X|F(1) & \geq  c_0 \sum_{a,b \in X} \frac{g_1(\abs{a^*b}^2)}{d+1} + g_0(\abs{a^*b}^2) \\
|X|F(1) & \geq  c_0 \sum_{a,b \in X} g_0(\abs{a^*b}^2) \\
& = c_0 |X|^2. \inqed
\end{align*}
\end{proof}

\begin{comment}
In the proof of the previous result,
\[
\abs{a^*b}^2 h_r(\abs{a^*b}^2) = (a^*b) \ip{h_{k,a}}{h_{k,b}} = \ip{a h_{k,a}}{b h_{k,b}},
\]
where $ah_{k,a}$ is in $\Harm(k+1,k)^d$, not $\Harm(k+1,k)$. Define the inner product on $\Harm(k+1,k)^d$ by
\[
\ip{f}{g} = \int_\Om f(\om)^*g(\om) \; d\om.
\]
\end{comment}

Equality holds if and only if $\al_i F(\al_i) = 0$ for every $\al_i$, and for every $r > 0$ and $a \in \Om$, either $c_r = 0$ or $\ip{1}{a h_{r,a}}_X = 0$. This latter condition implies $\ip{1}{f}_X = 0$ for every $f \in \Harm(r+1,r)$. Applying this result when $F$ is the annihilator of $A-\{0\} = \{1/d\}$ gives another proof of Corollary \ref{cor:mubbnd} for mutually unbiased bases. Equality implies $\ip{1}{f}_X = 0$ for every $f \in \Hom(2,1)$.

When $X$ is an $s$-distance set with $0 \in A$ and $F$ is the Jacobi sum polynomial $q_{s-1}$, equality in Theorem \ref{thm:sdistrel2} implies equality in the second half of Theorem \ref{thm:sdistbnd} as well.

\begin{corollary} \label{cor:distbnddes2}
If $X$ is an $s$-distance set with $0 \in A$ and
\[
|X| = \dim(\Hom(s,s-1)),
\]
then $X$ is a $(2s-1)$-design. \qed
\end{corollary}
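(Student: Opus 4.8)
The plan is to rerun the proof of Corollary \ref{cor:distbnddes} in the ``starred'' setting, replacing $\big(\Hom(s,s),\ f_v(z)=f(\abs{v^*z}^2),\ p_s\big)$ throughout by $\big(\Hom(s,s-1),\ f_v(z)=(v^*z)f(\abs{v^*z}^2),\ q_{s-1}\big)$, where $f$ is now the (degree $s-1$) annihilator of $A-\{0\}$, and using the second half of Theorem \ref{thm:sdistbnd} together with Theorem \ref{thm:sdistrel2} in place of their symmetric counterparts.

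First I would invoke the equality case of the absolute bound: since $0\in A$ and $|X|=\dim(\Hom(s,s-1))$, the remark after Theorem \ref{thm:sdistbnd} says that the zonal polynomials $f_v(z)=(v^*z)f(\abs{v^*z}^2)$, padded by powers of $Z$ into $\Hom(s,s-1)$, form a basis of that space. I would then bring in the Jacobi-sum zonal $q_{s-1,a}\in\Hom(s,s-1)$, which reproduces values there: $\ip{q_{s-1,a}}{g}=g(a)$ for all $g\in\Hom(s,s-1)$. Pairing it against the basis gives $\ip{q_{s-1,a}}{f_b}=f_b(a)$, and $f_b(a)=(b^*a)f(\abs{b^*a}^2)$ vanishes for $a\neq b$ in $X$ (either $\abs{a^*b}^2=0$ kills the linear factor, or $\abs{a^*b}^2$ is a root of $f$) while $f_b(b)=f(1)\neq 0$. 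Exactly as in Corollary \ref{cor:distbnddes}, this makes $\{q_{s-1,a}:a\in X\}$ a second basis of $\Hom(s,s-1)$ and shows that $q_{s-1,a}$ is a scalar multiple of $f_a$ (equivalently $q_{s-1}=\kappa f$, with $\kappa=q_{s-1}(1)/f(1)=|X|/f(1)$ by comparison at the pole); in particular $q_{s-1}$ vanishes on $A-\{0\}$, so $\alpha_i q_{s-1}(\alpha_i)=0$ for all $\alpha_i\in A$ and, since $q_{s-1}=\sum_{r=0}^{s-1}h_r$ has positive coefficients, the hypotheses of Theorem \ref{thm:sdistrel2} hold.

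Next I would mimic the closing of Corollary \ref{cor:distbnddes}, computing both inner-product forms on the mixed pair of bases. On one side $\ip{q_{s-1,a}}{f_b}=f_b(a)=\delta_{ab}f(1)$; on the other, in $\ip{q_{s-1,a}}{f_b}_X$ only $z=b$ survives because $f_b$ vanishes on $X\setminus\{b\}$, and there $q_{s-1,a}(b)$ equals $q_{s-1}(1)=|X|$ when $b=a$ and $(a^*b)q_{s-1}(\abs{a^*b}^2)=0$ when $b\neq a$ (as $q_{s-1}$ vanishes on $A-\{0\}$, while $a^*b=0$ whenever $\abs{a^*b}^2=0$), giving $\ip{q_{s-1,a}}{f_b}_X=\delta_{ab}f(1)$ as well. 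Since $\{q_{s-1,a}:a\in X\}$ and $\{f_b:b\in X\}$ are both bases, it follows that $\ip{f}{g}_X=\ip{f}{g}$ for all $f,g\in\Hom(s,s-1)$; and because $\ip{f}{g}=\ip{1}{\conj{f}g}$ while the products $\conj{f}g$ with $f,g\in\Hom(s,s-1)$ span $\Hom(2s-1,2s-1)$, this yields $\ip{1}{h}_X=\ip{1}{h}$ for every $h\in\Hom(2s-1,2s-1)$, that is, $X$ is a $(2s-1)$-design.

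The genuinely new point relative to the symmetric case is the last spanning claim: a monomial of $\Hom(2s-1,2s-1)$ is realized as $\conj{f}g$ with $f,g\in\Hom(s,s-1)$ by splitting each of its two multi-indices (each of total degree $2s-1$) as a sum of one of degree $s-1$ and one of degree $s$, simultaneously in the $z$- and $\bz$-variables. The delicate ingredient, carried over verbatim from the proof of Corollary \ref{cor:distbnddes}, is the step that $q_{s-1,a}$ is actually proportional to $f_a$ — equivalently that $q_{s-1}$ vanishes on $A-\{0\}$, or that the zonal basis $\{f_a:a\in X\}$ is $\ip{\cdot}{\cdot}$-orthogonal; this is where the hypothesis $|X|=\dim(\Hom(s,s-1))$ does its real work, and everything else is the $\Harm(k,k)$ bookkeeping transported to $\Harm(k+1,k)$.
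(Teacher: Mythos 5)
Your proposal is correct and is exactly the argument the paper intends: the text states Corollary~\ref{cor:distbnddes2} with only a \qed, leaving the reader to transport the proof of Corollary~\ref{cor:distbnddes} from $\bigl(\Hom(s,s),\,p_s\bigr)$ to $\bigl(\Hom(s,s-1),\,q_{s-1}\bigr)$, which is precisely what you do, including the reproducing property of $q_{s-1,a}$, the dual-basis computation $\ip{q_{s-1,a}}{f_b}=\de_{ab}f(1)$, and the comparison of $\ip{\cdot}{\cdot}_X$ with $\ip{\cdot}{\cdot}$ on the two bases. Your explicit verification that products $\conj{f}g$ with $f,g\in\Hom(s,s-1)$ span $\Hom(2s-1,2s-1)$ fills in a detail the paper leaves implicit, and you correctly identify that the proportionality $q_{s-1,a}=c\,f_a$ is the same (terse) step on which the paper's own proof of the unstarred corollary rests.
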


There is also a \defn{relative bound} for $t$-designs. This result is due to Neumaier \cite{neu1}, although the analogous result over the reals was first given by Delsarte et al. \cite{dgs2}.

\begin{theorem} \label{thm:tdesrel}
Let $X$ be a $t$-design, and let $F(x) \in \re[x]$ be a polynomial such that 
\begin{enumerate}[(a)]
\item $F(\al) \geq 0$ for every $\al$ in the degree set of $X$, and $F(1) > 0$;
\item if $F(x) = \sum_r c_r g_r(x)$, then $c_0 > 0$ and $c_r \leq 0$ for $r > t$.
\end{enumerate}
Then
\[
|X| \geq F(1)/c_0.
\]
\end{theorem}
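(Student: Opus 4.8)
The plan is to mirror the proof of Theorem~\ref{thm:sdistrel}, swapping the roles of the two sides: instead of \emph{averaging} a zonal polynomial that is nonpositive on the angles, I will \emph{sum} one that is nonnegative on the angles, and then use the $t$-design hypothesis to turn the resulting inequality into a \emph{lower} bound on $|X|$.

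First I would consider the double sum $\Sigma := \sum_{a,b \in X} F(\abs{a^*b}^2)$ and estimate it two ways. For a lower bound, split off the diagonal: every term with $a = b$ equals $F(1)$, while every term with $a \neq b$ equals $F(\al)$ for some $\al$ in the degree set of $X$, hence is nonnegative by hypothesis~(a). Thus $\Sigma \geq |X|\,F(1)$.

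For the upper bound, expand $F = \sum_r c_r g_r$, and recall $g_r(\abs{a^*b}^2) = g_{r,a}(b)$ with $g_{r,a} \in \Harm(r,r)$. The term $r = 0$ contributes $c_0\sum_{a,b\in X} g_0(\abs{a^*b}^2) = c_0|X|^2$ since $g_0 \equiv 1$. For each $r$ with $1 \leq r \leq t$, the fact that $X$ is a $t$-design gives $\sum_{b \in X} g_{r,a}(b) = 0$ for every $a$ (this is the characterization of $t$-designs, equivalently the remark following Lemma~\ref{lem:jacobsum}), so these terms vanish outright. For each $r > t$, Lemma~\ref{lem:jacobsum} gives $\sum_{a,b \in X} g_r(\abs{a^*b}^2) \geq 0$, and $c_r \leq 0$ by hypothesis~(b), so every such term is $\leq 0$. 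Collecting, $\Sigma \leq c_0|X|^2$. Combining with the lower bound yields $|X|\,F(1) \leq c_0|X|^2$, and since $c_0 > 0$ and $X$ is a nonempty finite set, dividing by $c_0|X|$ gives $|X| \geq F(1)/c_0$.

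I do not expect a genuine obstacle here; the only point requiring care is the bookkeeping of the Jacobi coefficients. The $t$-design property kills precisely the coefficients $c_1,\dots,c_t$, which is exactly why hypothesis~(b) constrains the sign of $c_r$ only for $r > t$, and one must also check that every off-diagonal value $\abs{a^*b}^2$ indeed lies in the degree set so that hypothesis~(a) is applicable. Nothing beyond Lemma~\ref{lem:jacobsum} is needed. If desired, the same argument records the equality condition: equality in Theorem~\ref{thm:tdesrel} forces $F(\al) = 0$ for every $\al$ in the degree set and, for each $r > t$, either $c_r = 0$ or $\sum_{a \in X} g_{r,a} = 0$.
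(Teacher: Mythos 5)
Your proof is correct and is essentially the paper's own argument: the paper works with the averages $\ip{1}{F_a}_X$ summed over $a\in X$, which is just your double sum $\Sigma$ divided by $|X|$, and it uses the identical three-way split of the Jacobi coefficients ($r=0$, $0<r\le t$ killed by the design property, $r>t$ controlled by Lemma~\ref{lem:jacobsum} and the sign of $c_r$). The equality condition you record also matches the remark following the theorem in the paper.
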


\begin{proof}
As in the proof of Theorem \ref{thm:sdistrel}, if $F_a$ is the zonal polynomial induced by $F$, then $F_a(b) \geq 0$, and
\[
|X|\ip{1}{F_a}_X \geq F(1).
\]
Averaging over all $a \in X$,
\begin{align*}
F(1) & \leq \sum_{a \in X} \ip{1}{F_a}_X \\
& = \sum_r c_r \sum_{a \in X} \ip{1}{g_{r,a}}_X.
\end{align*}
Again the inner sum is non-negative. Consider the three cases for $r$. When $r > t$, $c_r$ is non-positive by assumption. When $0 < r \leq t$, the inner sum is $0$ because $g_{r,a}$ is harmonic and $X$ is a $t$-design. When $r = 0$, $g_{0,a} = 1$. It follows that
\begin{align*}
F(1) & \leq c_0 \sum_{a \in X} \ip{1}{g_{0,a}}_X + \sum_{r>t} c_r \sum_{a \in X} \ip{1}{g_{r,a}}_X \\
& \leq c_0 |X|. \tag*{\sqr53}
\end{align*} 
\end{proof}

Equality holds in Theorem \ref{thm:tdesrel} if and only if $F(\al) = 0$ for every $\al$ in the degree set $A$ of $X$, and for every $r>t$, either $c_r = 0$ or $\sum_{a \in X} g_{r,a} = 0$. 

There is an analogous theorem working in $\Hom(k+1,k)$ that is most useful when $0$ is in the degree set. The proof of the following is similar to that of Theorem \ref{thm:tdesrel}, but this result is new.

\begin{theorem} \label{thm:tdesrel2}
Let $X$ be a $t$-design, and let $F(x) \in \re[x]$ be a polynomial such that 
\begin{enumerate}[(a)]
\item $\al F(\al) \geq 0$ for every $\al$ in the degree set of $X$, and $F(1) > 0$;
\item if $F(x) = \sum_r c_r h_r(x)$, then $c_0 > 0$ and $c_r \leq 0$ for $r \geq t$.
\end{enumerate}
Then
\[
|X| \geq F(1)/c_0. \qed
\]
\end{theorem}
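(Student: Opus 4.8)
The plan is to run the argument of \tref{tdesrel} one ``level up'', in $\Hom(r+1,r)$ rather than $\Hom(r,r)$, using the zonal polynomials $h_{r,a} \in \Harm(r+1,r)$ in place of the $g_{r,a}$ --- exactly the relationship \tref{sdistrel2} bears to \tref{sdistrel}. The object I would study is
\[
\Sigma := \sum_{a,b \in X} \abs{a^*b}^2 F(\abs{a^*b}^2),
\]
and the theorem will follow from the two estimates $|X| F(1) \leq \Sigma$ and $\Sigma \leq c_0 |X|^2$. The first is immediate from hypothesis (a): the diagonal terms of $\Sigma$ sum to $|X| F(1)$ because $\abs{a^*a}^2 = 1$, and each off-diagonal term $\abs{a^*b}^2 F(\abs{a^*b}^2)$ is non-negative.

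For the upper estimate, write $F = \sum_r c_r h_r$, so $\Sigma = \sum_r c_r \Sigma_r$ with $\Sigma_r := \sum_{a,b \in X} \abs{a^*b}^2 h_r(\abs{a^*b}^2)$. As recorded in the proof of \tref{sdistrel2}, $\abs{a^*b}^2 h_r(\abs{a^*b}^2) = \ip{a h_{r,a}}{b h_{r,b}}$ in the vector-valued inner product, so $\Sigma_r = \ip{\sum_{a \in X} a h_{r,a}}{\sum_{b \in X} b h_{r,b}} \geq 0$. Note first that $t \geq 1$, since otherwise hypothesis (b) would force $c_0 \leq 0$; in particular $X$ is a $1$-design. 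For $r = 0$ I would use the identity $x h_0(x) = g_1(x)/(d+1) + g_0(x)$ to get $\Sigma_0 = \tfrac{1}{d+1}\sum_{a,b} g_1(\abs{a^*b}^2) + \sum_{a,b} g_0(\abs{a^*b}^2)$; here $\sum_{a,b} g_0(\abs{a^*b}^2) = |X|^2$, while $\sum_{a,b} g_1(\abs{a^*b}^2) = |X| \sum_{b \in X} \ip{1}{g_{1,b}}_X = 0$ because each $g_{1,b}$ is harmonic and $X$ is a $1$-design, so $\Sigma_0 = |X|^2$. For $r \geq t$, hypothesis (b) gives $c_r \leq 0$ while $\Sigma_r \geq 0$, so $c_r \Sigma_r \leq 0$. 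Thus $\Sigma \leq c_0 |X|^2 + \sum_{1 \leq r \leq t-1} c_r \Sigma_r$, and it remains only to show that $\Sigma_r = 0$ for $1 \leq r \leq t-1$.

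That last point is the heart of the matter and the one step genuinely beyond \tref{tdesrel}: I must show $\sum_{a \in X} a h_{r,a} = 0$ for $1 \leq r \leq t-1$. The idea is to pair this element of $\Harm(r+1,r)^d$ against $v\,p$ for an arbitrary $v \in \cx^d$ and $p \in \Harm(r+1,r)$, since such products span $\Harm(r+1,r)^d$. By the reproducing property $\ip{h_{r,a}}{p} = p(a)$, the pairing equals $\sum_{a \in X} (v^*a)\overline{p(a)} = |X|\,\ip{1}{(v^*z)\overline{p(z)}}_X$. Now $(v^*z)\overline{p(z)} \in \Hom(r+1,r+1)$, and since $r+1 \leq t$ the $t$-design property lets me replace the average over $X$ by the average over $\Om$, i.e.\ by $|X|\,\ip{1}{(v^*z)\overline{p(z)}}$; this last integral equals $\overline{\ip{v^*z}{p}}$, which vanishes because $p \in \Harm(r+1,r)$ and $v^*z \in \Hom(1,0) = \Hom((r+1)-r,\; r-r)$ are orthogonal by \tref{harmortho} (its hypothesis $i = r > 0$ holding since $r \geq 1$). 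Hence $\sum_{a \in X} a h_{r,a} = 0$, the middle sum drops out, and combining $|X| F(1) \leq \Sigma \leq c_0 |X|^2$ with $c_0 > 0$ gives $|X| \geq F(1)/c_0$.

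The main obstacle is precisely the vanishing argument of the previous paragraph, which needs a care that is absent in \tref{tdesrel}: there the analogous cancellation is immediate because $g_{r,a}$ is harmonic and $X$ is a $t$-design, whereas here the relevant test polynomials $(v^*z)\overline{p(z)}$ have bidegree $(r+1,r+1)$, forcing one to invoke an $(r+1)$-design together with the harmonic/homogeneous orthogonality of \tref{harmortho}. Beyond that the proof is bookkeeping: checking that every design strength used --- a $1$-design for $\Sigma_0$, an $(r+1)$-design for each middle term with $r \leq t-1$ --- is supplied by $X$ being a $t$-design, and that the range $1 \leq r \leq t-1$ dovetails exactly with the hypothesis $c_r \leq 0$ for $r \geq t$.
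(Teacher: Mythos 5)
Your proof is correct and follows essentially the same route as the paper's: the lower bound $|X|F(1)\le\sum_{a,b}\abs{a^*b}^2F(\abs{a^*b}^2)$ from (a), the expansion in the $h_r$ with each $\sum_{a,b}\abs{a^*b}^2h_r(\abs{a^*b}^2)\ge 0$, the identity $xh_0(x)=g_1(x)/(d+1)+g_0(x)$ for the $r=0$ term (using that $X$ is at least a $1$-design), and the $t$-design property applied to bidegree-$(r+1,r+1)$ polynomials to annihilate the terms with $1\le r\le t-1$. The only difference is presentational: the paper evaluates $\sum_{a,b}\abs{a^*b}^2h_r(\abs{a^*b}^2)=|X|\sum_{a}\ip{1}{h'_{r,a}}_X$ directly, where $h'(x)=xh_r(x)$, and notes this equals $|X|\sum_a\ip{1}{h'_{r,a}}=0$, whereas you prove the equivalent vanishing $\sum_a a\,h_{r,a}=0$ by pairing against the spanning set $\{vp\}$ --- both reduce to the same design strength $r+1\le t$ and the same orthogonality of $\Harm(r+1,r)$ with $\Hom(1,0)$.
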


Combining the theorems of this section gives useful information when $X$ is both an $s$-distance set and a $t$-design. Theorem \ref{thm:sdistrel} together with Theorem \ref{thm:tdesrel} give the following.

\begin{corollary}
Let $X$ be an $s$-distance set and a $t$-design with $t \geq s$. If the annihilator $F(x) = \sum_r c_rg_r(x)$ of the degree set of $X$ satisfies $c_r \geq 0$ for each $r$, then
\[
|X| = F(1)/c_0. \qed
\]
\end{corollary}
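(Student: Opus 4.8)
The plan is to derive the equality $|X| = F(1)/c_0$ by a single double count, and then observe that the same equality can be read off as the conjunction of \tref{sdistrel} and \tref{tdesrel}, as the sentence preceding the statement suggests. The one idea that does all the work is that, because $X$ is an $s$-distance set, its annihilator $F(x)=\prod_{\alpha\in A}(x-\alpha)$ has degree exactly $s$, so the expansion $F=\sum_{r=0}^{s}c_rg_r$ never involves an index exceeding $s\le t$; hence every harmonic summand in it is annihilated by averaging over the $t$-design $X$.

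Concretely, let $F_a(z):=F(\abs{a^*z}^2)$ be the zonal polynomial with pole $a$. On one hand, for $a,b\in X$ the annihilator property (together with $1\notin A$) gives $F_a(b)=F(1)$ when $b=a$ and $F_a(b)=0$ otherwise, so $\sum_{a,b\in X}F(\abs{a^*b}^2)=|X|\,F(1)$. On the other hand $\sum_{a,b\in X}F(\abs{a^*b}^2)=\sum_{r=0}^{s}c_r\sum_{a,b\in X}g_r(\abs{a^*b}^2)$; for $1\le r\le s$ the polynomial $g_{r,a}$ lies in $\Harm(r,r)$, so $\sum_{b\in X}g_{r,a}(b)=0$ since $X$ is a $t$-design and $r\le t$, while for $r=0$ we have $g_0=1$ and the inner sum is $|X|^2$. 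Thus $\sum_{a,b\in X}F(\abs{a^*b}^2)=c_0\,|X|^2$, and equating the two expressions gives $|X|\,F(1)=c_0\,|X|^2$, i.e.\ $|X|=F(1)/c_0$ (and in passing $c_0=F(1)/|X|>0$, since $F(1)=\prod_{\alpha\in A}(1-\alpha)>0$ as every angle is less than $1$). To package this as the advertised combination of the two relative bounds: the annihilator satisfies $F(\alpha_i)=0$ for each $\alpha_i\in A$, so both $F(\alpha_i)\le 0$ and $F(\alpha_i)\ge 0$ hold; $F(1)>0$; $c_0>0$; the hypothesis $c_r\ge 0$ is exactly condition (b) of \tref{sdistrel}; and condition (b) of \tref{tdesrel}, which constrains the coefficients $c_r$ with $r>t$, is vacuous because there are none. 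Hence \tref{sdistrel} yields $|X|\le F(1)/c_0$ and \tref{tdesrel} yields $|X|\ge F(1)/c_0$.

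There is essentially no obstacle. The only mild subtlety is that invoking the two theorems requires knowing $c_0>0$ in advance, and the most natural justification of this is the double count itself (or the observation that $c_0=\ip{1}{F}$ is the average of $F_a$ over the sphere $\Om$, which by the $t$-design property equals $\ip{1}{F_a}_X=F(1)/|X|>0$). Either way, the content reduces to the remark that $\deg F=s\le t$, which forces all the harmonic pieces $g_{r,a}$ with $1\le r\le s$ to average to zero against $X$, leaving only the constant term $c_0$; the rest is bookkeeping.
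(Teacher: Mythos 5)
Your proof is correct. The paper itself gives no written argument: the corollary is presented as an immediate combination of \tref{sdistrel} and \tref{tdesrel}, and your ``packaging'' paragraph is exactly that intended proof, including the two details the paper leaves implicit — that $F(1)=\prod_{\al\in A}(1-\al)>0$ because $1\notin A$, and that condition (b) of \tref{tdesrel} is vacuous since $\deg F=s\le t$. Your leading double count, however, is a genuinely different and more self-contained route: it replaces the pair of inequalities by the single identity $|X|F(1)=c_0|X|^2$, obtained by evaluating $\sum_{a,b\in X}F(\abs{a^*b}^2)$ once via the annihilator property and once via the Jacobi expansion, where every harmonic term $g_{r,a}$ with $1\le r\le s\le t$ sums to zero over the $t$-design. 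This buys two things: it settles the $c_0>0$ subtlety internally (since it yields $c_0=F(1)/|X|$), and it shows that the hypothesis $c_r\ge 0$ is not actually needed for the equality — that hypothesis only enters if one insists on deducing the result from the two relative bounds, because \tref{sdistrel} requires it.
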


Similarly, from Theorems \ref{thm:sdistrel2} and \ref{thm:tdesrel2}:

\begin{corollary}
Let $X$ be an $s$-distance set and a $t$-design with $t \geq s$ and $0$ in $A$, the degree set of $X$. If the annihilator $F(x) = \sum c_rh_r(x)$ of $A-\{0\}$ satisfies $c_r \geq 0$ for each $r$, then
\[
|X| = F(1)/c_0. \qed
\]
\end{corollary}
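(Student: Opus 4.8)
The plan is to obtain the equality by pinning $|X|$ between two matching one‑sided bounds proved in this section. Since $t \geq s$ and $0 \in A$, both \tref{sdistrel2} (the relative bound for $s$‑distance sets in the $h$‑series form) and \tref{tdesrel2} (the relative bound for $t$‑designs in the $h$‑series form) are available, and when $F$ is taken to be the annihilator of $A - \{0\}$ they read $|X| \leq F(1)/c_0$ and $|X| \geq F(1)/c_0$ respectively. Combining them forces $|X| = F(1)/c_0$. So the entire argument is the verification that this single polynomial $F$ meets the hypotheses of both theorems simultaneously.

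First I would record the crucial coincidence about the sign conditions. Because $F(x) = \prod_{\al \in A - \{0\}}(x - \al)$ vanishes at every nonzero element of the degree set, and trivially $0 \cdot F(0) = 0$, we have $\al F(\al) = 0$ for every $\al \in A$. Hence the requirement $\al_i F(\al_i) \leq 0$ in \tref{sdistrel2}(a) and the requirement $\al F(\al) \geq 0$ in \tref{tdesrel2}(a) both hold, as equalities. The remaining part of each (a) is $F(1) > 0$: this holds because every $\al \in A - \{0\}$ satisfies $0 < \al < 1$ (recall $1 \notin A$), so $F(1) = \prod_{\al \in A - \{0\}}(1 - \al) > 0$.

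Next the coefficient conditions, writing $F = \sum_r c_r h_r$. The hypothesis of the corollary gives $c_r \geq 0$ for all $r$, and $c_0 > 0$ (read in exactly as in the preceding corollary, or verified from the expansion — see below); this is condition (b) of \tref{sdistrel2}. For condition (b) of \tref{tdesrel2} I need $c_r \leq 0$ whenever $r \geq t$: but $A - \{0\}$ has $s - 1$ elements, so $\deg F = s - 1$, and since $t \geq s > s - 1$ we get $c_r = 0$ for every $r \geq t$; together with $c_0 > 0$ this is precisely (b). With all hypotheses checked, \tref{sdistrel2} yields $|X| \leq F(1)/c_0$, \tref{tdesrel2} yields $|X| \geq F(1)/c_0$, and the corollary follows. (The special case $F = q_{s-1}$ of this corollary is the equality case of the absolute bound \tref{sdistbnd}, where the common value $F(1)/c_0$ is $\dim(\Hom(s,s-1))$, cf.\ \cref{distbnddes2}.)

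There is no real obstacle here: every inequality needed is already proved, so the argument is just the bookkeeping above. The one point that deserves a second glance is the strict positivity $c_0 > 0$. If one does not wish to take it as part of the hypothesis, one checks it from the $h_r$‑expansion of $F$: since $h_0$ is a positive constant, $c_0$ is a positive multiple of the average of $x\,F(x)$ against the zonal weight on $[0,1]$, and $c_0 = 0$ is ruled out using $c_r \geq 0$ for all $r$ together with $F \not\equiv 0$.
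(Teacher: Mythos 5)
Your proof is correct and is exactly the argument the paper intends: the corollary is stated with an immediate \qed following the remark ``Similarly, from Theorems \ref{thm:sdistrel2} and \ref{thm:tdesrel2}'', i.e.\ one applies the $s$-distance relative bound and the $t$-design relative bound to the same annihilator $F$ of $A-\{0\}$ and matches the two inequalities. Your verification of the hypotheses (in particular that $\al F(\al)=0$ for all $\al\in A$, that $F(1)>0$, and that $\deg F=s-1<t$ kills the coefficients $c_r$ for $r\geq t$) is the bookkeeping the paper leaves implicit, and your remark about needing $c_0>0$ applies equally to the paper's own statement.
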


As with Theorem \ref{thm:sdistrel}, the case of equality in Theorem \ref{thm:tdesrel} when $t = 2s$ and $F = p_s = \sum_{r=0}^s g_r$ is of particular interest. Here $F(1) = \dim(\Hom(t/2,t/2))$, the lower bound in Theorem \ref{thm:tdesbnd}. If equality holds, then any basis for $\Hom(s,s)$ spans the functions on $|X|$. Now suppose $X$ has a finite degree set $A$, and let $f$ be the annihilator of $A$. Then with $a \in X$, the zonal polynomial $f_a$ (restricted to $X$) is in $\Hom(s,s)$. We conclude that $f$ has degree at most $s$ and therefore $|A| \leq s$. 

\begin{corollary} \label{cor:desbnddist}
If $X$ is a $2s$-design with finite degree set and 
\[
|X| = \dim(\Hom(s,s)),
\]
then $X$ is an $s$-distance set. \qed
\end{corollary}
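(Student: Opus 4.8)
The plan is to prove directly that the Jacobi sum polynomial $p_s=\sum_{r=0}^s g_r$ vanishes on the degree set $A$ of $X$. Since $\deg p_s=s$ this forces $|A|\le s$, and the absolute bound of Theorem~\ref{thm:sdistbnd} then rules out $|A|<s$. The argument is dual to the proof of Corollary~\ref{cor:distbnddes}, with the roles of ``$s$-distance set'' and ``$2s$-design'' interchanged.

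First I would unpack the two hypotheses. Because $|X|=\dim(\Hom(s,s))$ is exactly equality in the lower bound of Theorem~\ref{thm:tdesbnd}, an orthogonal basis of $\Hom(s,s)$ restricts to a basis of the space of functions $X\to\cx$; hence every such function is the restriction of some element of $\Hom(s,s)$. Moreover, since $X$ is a $2s$-design and $\conj{f}g\in\Hom(2s,2s)$ whenever $f,g\in\Hom(s,s)$, we have
\[
\ip{f}{g}_X=\ip{1}{\conj{f}g}_X=\ip{1}{\conj{f}g}=\ip{f}{g}\qquad\text{for all }f,g\in\Hom(s,s).
\]

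Next I would invoke the reproducing kernel. The polynomial $p_s$ has real coefficients, so $p_{s,a}(z)=p_s(\abs{a^*z}^2)$ is real-valued and may be taken in $\Hom(s,s)$, and Lemma~\ref{lem:jacobsumzonal} gives $\ip{p_{s,a}}{f}=f(a)$ for every $f\in\Hom(s,s)$. Taking $a\in X$ and using the identity above,
\[
\frac{1}{|X|}\sum_{b\in X}p_{s,a}(b)\,f(b)=\ip{p_{s,a}}{f}_X=\ip{p_{s,a}}{f}=f(a)\qquad\text{for all }f\in\Hom(s,s).
\]
Since the restrictions $f|_X$ range over all functions on $X$, letting $f|_X$ be each indicator function in turn forces $p_{s,a}(b)=|X|\,\delta_{ab}$ for all $a,b\in X$. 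In particular $p_s(\abs{a^*b}^2)=0$ for every pair of distinct $a,b\in X$, so every element of $A$ is a root of $p_s$, and therefore $|A|\le s$.

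Finally, $X$ is then an $|A|$-distance set, so Theorem~\ref{thm:sdistbnd} gives $|X|\le\dim(\Hom(|A|,|A|))$; since $\dim(\Hom(k,k))$ is strictly increasing in $k$, the hypothesis $|X|=\dim(\Hom(s,s))$ forces $|A|=s$, that is, $X$ is an $s$-distance set. The step that needs care is the passage from the two averaging identities to the pointwise equality $p_{s,a}(b)=|X|\,\delta_{ab}$; this is exactly where tightness of the design bound, $|X|=\dim(\Hom(s,s))$, is used, namely to guarantee that $\Hom(s,s)$ restricts onto all of $\cx^X$.
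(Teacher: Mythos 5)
Your proof is correct, and it takes a noticeably different (and more complete) route than the text. The paper's own argument, given in the paragraph preceding the corollary, works with the annihilator $f$ of the degree set $A$: it observes that equality in Theorem \ref{thm:tdesbnd} makes the restrictions of $\Hom(s,s)$ span all functions on $X$, notes that $f_a$ restricted to $X$ lies in this span, and then asserts that $f$ has degree at most $s$. As written that last inference is a leap, since $f_a|_X$ is just a multiple of an indicator function and membership of its restriction in $\Hom(s,s)|_X$ places no constraint on $\deg f$. Your argument supplies exactly the missing mechanism: you transfer the reproducing property $\ip{p_{s,a}}{f}=f(a)$ from $\Om$ to $X$ via the $2s$-design hypothesis, and then use the surjectivity of restriction (which is where tightness of the bound enters) to force $p_{s,a}(b)=|X|\,\delta_{ab}$, so that the degree-$s$ polynomial $p_s$ itself annihilates $A$. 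This is the honest dual of Corollary \ref{cor:distbnddes}, and it also matches the mechanism behind Lemma \ref{lem:desidem}: your identity is equivalent to the statement that the idempotents $E_0,\dots,E_s$ have total rank $|X|$ and hence sum to $I$. A further point in your favour is the final step showing $|A|=s$ exactly (via the absolute bound and monotonicity of $\dim\Hom(k,k)$), which the paper omits, concluding only $|A|\le s$ even though its definition of an $s$-distance set requires equality. The one thing worth stating explicitly is why restriction $\Hom(s,s)\to\cx^X$ is onto: the orthonormal basis $S_1,\dots,S_N$ restricts to functions that are nonzero and pairwise orthogonal for $\ip{\cdot}{\cdot}_X$ by the design property, hence independent, and $N=|X|$; you gesture at this but it is the load-bearing step.
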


Combining Corollary \ref{cor:distbnddes} with Corollary \ref{cor:desbnddist}, we get:

\begin{corollary} Let $X$ be a set of lines in $\cx^d$. Then any two of the following conditions imply the third:
\begin{enumerate}[(a)]
\item $X$ is an $s$-distance set;
\item $X$ is a $2s$-design;
\item $|X| = \dim(\Hom(s,s))$. \qed
\end{enumerate}
\end{corollary}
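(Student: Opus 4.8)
The statement asserts a cyclic set of implications, so the plan is to handle (a)$\,\wedge\,$(c)$\Rightarrow$(b), (b)$\,\wedge\,$(c)$\Rightarrow$(a), and (a)$\,\wedge\,$(b)$\Rightarrow$(c) in turn. Two of the three require no new work. The implication (a)$\,\wedge\,$(c)$\Rightarrow$(b) is precisely \cref{distbnddes}. The implication (b)$\,\wedge\,$(c)$\Rightarrow$(a) is \cref{desbnddist}, once one observes that hypothesis (c) forces $|X| = \dim(\Hom(s,s)) < \infty$, so $X$ is finite and hence automatically has a finite degree set, which is the only extra hypothesis of that corollary.

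The one implication demanding an independent argument is (a)$\,\wedge\,$(b)$\Rightarrow$(c), and the plan is simply to sandwich $|X|$ between the two bounds already proved. From (a), \tref{sdistbnd} gives $|X| \leq \dim(\Hom(s,s))$. From (b), apply \tref{tdesbnd} with $t = 2s$; since $\lc 2s/2 \rc = \lf 2s/2 \rf = s$, this reads $|X| \geq \dim(\Hom(s,s))$. The two inequalities together force $|X| = \dim(\Hom(s,s))$, which is exactly (c).

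I do not expect a genuine obstacle: the corollary is bookkeeping on top of \tref{sdistbnd}, \tref{tdesbnd}, \cref{distbnddes} and \cref{desbnddist}. The two points that warrant a moment of care are (i) checking that the ceiling and floor in \tref{tdesbnd} collapse to $s$ when $t = 2s$, so that the lower bound really matches the absolute upper bound of \tref{sdistbnd}, and (ii) remembering to pass from condition (c) to finiteness of $X$ (and hence of its degree set) before invoking \cref{desbnddist}.
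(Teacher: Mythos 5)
Your proposal is correct and follows essentially the same route as the paper, which simply combines Corollary~\ref{cor:distbnddes} and Corollary~\ref{cor:desbnddist}. You are slightly more thorough in spelling out the implication (a)$\wedge$(b)$\Rightarrow$(c) via the sandwich between Theorem~\ref{thm:sdistbnd} and Theorem~\ref{thm:tdesbnd} with $t=2s$, and in noting that (c) forces finiteness of the degree set; the paper leaves both points implicit.
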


One example is when $X$ is a maximal set of equiangular lines. This corollary was observed by Renes et al. \cite{ren1} and Zauner \cite{zau1}.

\begin{corollary} \label{cor:maxequides}
Let $X$ be a set of lines in $\cx^d$. Then any two of the following conditions imply the third:
\begin{enumerate}[(a)]
\item $X$ is a set of equiangular lines;
\item $X$ is a $2$-design;
\item $|X| = d^2$. \qed
\end{enumerate}
\end{corollary}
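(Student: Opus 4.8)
The plan is to recognize Corollary~\ref{cor:maxequides} as the $s = 1$ instance of the general equivalence established immediately above (the ``any two imply the third'' corollary obtained by combining Corollary~\ref{cor:distbnddes} and Corollary~\ref{cor:desbnddist}). Two translations are all that is needed. First, a set of equiangular lines is exactly a $1$-distance set: ``equiangular'' means the degree set has a single element, which is the defining property of a $1$-distance set (and any finite set of lines trivially has a finite degree set, so the ``finite degree set'' hypothesis appearing in Corollary~\ref{cor:desbnddist} is automatic). Second, by the dimension formula $\dim(\Hom(k,l)) = \binom{d+k-1}{d-1}\binom{d+l-1}{d-1}$ one has $\dim(\Hom(1,1)) = d \cdot d = d^2$. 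With these substitutions, condition (c) of the general corollary, $|X| = \dim(\Hom(s,s))$, becomes $|X| = d^2$, and a $2s$-design becomes a $2$-design, so the general statement specializes exactly to the claim.

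To keep the argument self-contained I would spell out the three implications directly. For (a)$\,\wedge\,$(b)$\,\Rightarrow\,$(c): if $X$ is a $1$-distance set then the absolute bound (Theorem~\ref{thm:sdistbnd}) gives $|X| \le \dim(\Hom(1,1)) = d^2$, while if $X$ is a $2$-design then the design bound (Theorem~\ref{thm:tdesbnd}) with $t = 2$ gives $|X| \ge \dim(\Hom(1,1)) = d^2$; the two inequalities force $|X| = d^2$. For (a)$\,\wedge\,$(c)$\,\Rightarrow\,$(b): this is precisely Corollary~\ref{cor:distbnddes} with $s = 1$, i.e.\ a $1$-distance set meeting the absolute bound is automatically a $2$-design, the mechanism being that the Jacobi sum polynomial $p_1$ satisfies the hypotheses of Theorem~\ref{thm:sdistrel} with equality. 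For (b)$\,\wedge\,$(c)$\,\Rightarrow\,$(a): this is Corollary~\ref{cor:desbnddist} with $s = 1$, namely a $2$-design of size $d^2$ is forced to be a $1$-distance set, because the zonal polynomials built from the annihilator of its degree set must span $\Hom(1,1)$, which bounds the number of distinct angles by $1$.

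I do not expect any genuine obstacle here; the work is purely a matter of matching definitions and invoking results already proved. The only points requiring a moment's care are the explicit identification ``equiangular $\Longleftrightarrow$ $1$-distance set'' and the harmless degenerate cases $|X| \le 1$ (which can only meet $|X| = d^2$ when $d = 1$, where all three conditions hold trivially). I would therefore state these reductions, record $\dim(\Hom(1,1)) = d^2$, and then either cite the preceding general corollary or run through the three short implications above.
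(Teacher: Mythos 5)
Your proposal is correct and follows exactly the paper's route: the corollary is stated there as the $s=1$ specialization of the immediately preceding ``any two imply the third'' corollary (obtained from Corollary~\ref{cor:distbnddes} and Corollary~\ref{cor:desbnddist}), using precisely the identifications equiangular $=$ $1$-distance set and $\dim(\Hom(1,1)) = d^2$. The extra detail you give in unwinding the three implications is accurate but not different in substance from what the paper relies on.
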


Finally, consider equality in the new relative bound, Theorem \ref{thm:tdesrel2}. Suppose $F = q_{s-1} = \sum_{r=0}^{s-1} h_r$, with $t = 2s-1$, and equality holds. Then $F(1) = \dim(\Hom(s,s-1))$, so equality holds in Theorem \ref{thm:tdesbnd} and any basis for $\Hom(s,s-1)$ spans the functions on $|X|$. Now assume $0$ is in the degree set $A$ of $X$, and let $f$ be the annihilator of $A-\{0\}$. Then for $a \in X$, 
\[
f_a(x) := (a^*x)f(\abs{a^*x}^2)
\]
is in $\Hom(s,s-1)$. Therefore $f$ has degree at most $s-1$, and so $|A| \leq s$.

\begin{corollary} \label{cor:desbnddist2}
If $X$ is a $(2s-1)$-design with finite degree set $A$ containing $0$ and 
\[
|X| = \dim(\Hom(s,s-1)),
\]
then $X$ is a $s$-distance set. \qed
\end{corollary}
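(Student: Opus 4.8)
The plan is to run the argument sketched just before \cref{desbnddist}, but with $\Hom(s,s-1)$, the polynomial $q_{s-1}=\sum_{r=0}^{s-1}h_r$, and its zonal $q_{s-1,a}$ in place of $\Hom(s,s)$, $p_s$, and $p_{s,a}$. First I would record what the hypothesis gives. Since $q_{s-1}(1)=\dim(\Hom(s,s-1))=|X|$ is exactly the lower bound of \tref{tdesbnd} for a $(2s-1)$-design, equality holds there; more explicitly, if $S_1,\ldots,S_N$ is an orthogonal basis of $\Hom(s,s-1)$ (so $N=\dim(\Hom(s,s-1))$), then each product $\conj{S_i}S_j$ lies in $\Hom(2s-1,2s-1)$, whence the $(2s-1)$-design property yields $\ip{S_i}{S_j}_X=\ip{1}{\conj{S_i}S_j}_X=\ip{1}{\conj{S_i}S_j}=\ip{S_i}{S_j}$. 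Normalizing, we may assume the $S_i$ are orthonormal, and they remain orthonormal with respect to $\ip{\cdot}{\cdot}_X$; since $N=|X|$ they restrict to an orthonormal basis of the space of functions on $X$.

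Next I would bring in the reproducing kernel of $\Hom(s,s-1)$ with pole $a$. On the one hand it is the zonal polynomial $q_{s-1,a}(z)=(a^*z)\,q_{s-1}(\abs{a^*z}^2)$, characterised by $\ip{q_{s-1,a}}{f}=f(a)$ for $f\in\Hom(s,s-1)$; on the other hand, summing the addition formula of \tref{addform2} over $r=0,\ldots,s-1$ (the orthonormal bases of the $\Harm(r+1,r)$ together form one for $\Hom(s,s-1)$) identifies it as $q_{s-1,a}=\sum_i\conj{S_i(a)}S_i$. The key step is to evaluate $\ip{q_{s-1,a}}{q_{s-1,a}}_X$ in both guises. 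Using the second expression and the orthonormality of the $S_i$ over $X$, it equals $\sum_i\abs{S_i(a)}^2=q_{s-1,a}(a)=q_{s-1}(1)=|X|$. Using the first expression, and that $q_{s-1}$ has real coefficients, it equals $\tfrac{1}{|X|}\sum_{b\in X}\abs{a^*b}^2\,q_{s-1}(\abs{a^*b}^2)^2$. Equating gives $\sum_{b\in X}\abs{a^*b}^2 q_{s-1}(\abs{a^*b}^2)^2=|X|^2$; since the single term $b=a$ already contributes $q_{s-1}(1)^2=|X|^2$ and every term is non-negative, all the terms with $b\neq a$ must vanish.

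Hence for all $a\neq b$ in $X$, either $\abs{a^*b}^2=0$ or $q_{s-1}(\abs{a^*b}^2)=0$. Since $q_{s-1}$ has degree $s-1$ (its leading term comes from $h_{s-1}$), it has at most $s-1$ roots, so the degree set $A$ of $X$ is contained in $\{0\}$ together with those roots, giving $|A|\le s$: $X$ is an $s$-distance set. I expect the main obstacle to be purely organisational — confirming that $q_{s-1,a}$ really is simultaneously the zonal polynomial $(a^*z)q_{s-1}(\abs{a^*z}^2)$ and the reproducing kernel $\sum_i\conj{S_i(a)}S_i$, and that $\deg q_{s-1}=s-1$ — both of which follow from the $\Harm(k+1,k)$ development in \sref{zonal} and the explicit formula for $h_k$. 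This is the exact analogue, for the case $0\in A$, of \cref{desbnddist}, where one instead uses $p_s$ and $\Hom(s,s)$ and arrives at $|A|\le s$ with no distinguished angle.
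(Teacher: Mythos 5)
Your argument is correct, but it does not follow the paper's own route, which is the compressed paragraph immediately preceding the corollary: there, from $|X|=\dim(\Hom(s,s-1))$ the paper concludes that restrictions of $\Hom(s,s-1)$ span the functions on $X$, and then reasons about the annihilator $f$ of $A\setminus\{0\}$, asserting that its zonal lift $(a^*x)f(\abs{a^*x}^2)$ lies in $\Hom(s,s-1)$ and hence that $\deg f\le s-1$ --- a step that is left rather implicit (it is not obvious why $f$ itself, rather than merely its restriction to $X$, is forced into $\Hom(s,s-1)$). You instead run a positivity computation on the reproducing kernel: identifying $q_{s-1,a}$ both as $(a^*z)q_{s-1}(\abs{a^*z}^2)$ and as $\sum_i\conj{S_i(a)}S_i$ via Theorem~\ref{thm:addform2}, and using the $(2s-1)$-design property to transfer orthonormality of the $S_i$ from $\Om$ to $X$, you obtain $\sum_{b\in X}\abs{a^*b}^2q_{s-1}(\abs{a^*b}^2)^2=|X|^2$ with the diagonal term $b=a$ already contributing $|X|^2$, so every off-diagonal angle is a root of $x\,q_{s-1}(x)^2$ and $|A|\le s$. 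This is in effect the equality analysis of Theorem~\ref{thm:tdesrel2} specialized to $F=q_{s-1}$, but because the kernel is squared no sign hypotheses ever need checking, and the argument is self-contained --- arguably more complete than the paper's sketch. One cosmetic point: like the paper, you actually prove $|A|\le s$ rather than $|A|=s$, which is consistent with how the paper uses ``$s$-distance set'' in these corollaries.
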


The next result combines Corollary \ref{cor:desbnddist2} with Corollary \ref{cor:distbnddes2}.

\begin{corollary} \label{cor:tfae2}
Let $X$ be a set of lines in $\cx^d$ with $0$ in the degree set of $X$. Then any two of the following conditions imply the third:
\begin{enumerate}[(a)]
\item $X$ is an $s$-distance set;
\item $X$ is a $(2s-1)$-design;
\item $|X| = \dim(\Hom(s,s-1))$. \qed
\end{enumerate}
\end{corollary}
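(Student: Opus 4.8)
The plan is to verify the three pairwise implications separately; two of them are immediate consequences of results already established, and only the third requires a brief new argument. Throughout, note that $X$ is finite: a $t$-design is by definition a finite subset of $\Om$, and condition (c) forces $|X|$ to be finite directly, so in every case the degree set $A$ of $X$ is finite.

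For (b)$\,\wedge\,$(c)$\,\Rightarrow\,$(a), I would simply invoke Corollary~\ref{cor:desbnddist2}: a $(2s-1)$-design whose finite degree set contains $0$ and which has $|X| = \dim(\Hom(s,s-1))$ is an $s$-distance set. For (a)$\,\wedge\,$(c)$\,\Rightarrow\,$(b), I would invoke Corollary~\ref{cor:distbnddes2}: an $s$-distance set with $0\in A$ and $|X| = \dim(\Hom(s,s-1))$ is a $(2s-1)$-design. In both cases the hypotheses of the cited corollary are met precisely because the ambient hypothesis of the present statement puts $0$ in the degree set.

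For the remaining implication (a)$\,\wedge\,$(b)$\,\Rightarrow\,$(c), I would combine the two size bounds. Since $X$ is an $s$-distance set whose degree set contains $0$, the second half of Theorem~\ref{thm:sdistbnd} gives $|X| \le \dim(\Hom(s,s-1))$. Since $X$ is a $(2s-1)$-design, Theorem~\ref{thm:tdesbnd} gives $|X| \ge \dim(\Hom(\lc (2s-1)/2 \rc, \lf (2s-1)/2 \rf))$, and evaluating $\lc (2s-1)/2 \rc = s$ and $\lf (2s-1)/2 \rf = s-1$ shows this lower bound equals $\dim(\Hom(s,s-1))$. The two inequalities then force equality, which is exactly (c). I do not anticipate any real obstacle here: the only points needing care are checking that the cited corollaries' finiteness and ``$0\in A$'' hypotheses hold under the present assumptions, and correctly computing the ceiling and floor at $t = 2s-1$ so that the lower bound of Theorem~\ref{thm:tdesbnd} matches the upper bound of Theorem~\ref{thm:sdistbnd}.
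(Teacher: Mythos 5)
Your proof is correct and follows essentially the same route as the paper, which derives this corollary by combining Corollary~\ref{cor:desbnddist2} with Corollary~\ref{cor:distbnddes2}. The only difference is that you spell out the remaining implication (a)$\,\wedge\,$(b)$\,\Rightarrow\,$(c) via the sandwich between Theorem~\ref{thm:sdistbnd} and Theorem~\ref{thm:tdesbnd}, which the paper leaves implicit; your computation of the ceiling and floor at $t=2s-1$ is right.
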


If $s = 1$ in Corollary \ref{cor:tfae2}, then $X$ is an orthonormal basis for $\cx^d$. Thus any orthonormal basis is a $1$-design. When $s = 2$, the degree set of $X$ is $\{0,2/(d+2)\}$ and we have a $3$-design. Three examples are known: a set of $6$ lines in $\cx^2$, which form three mutually unbiased bases; a set of $40$ lines in $\cx^4$, constructed from the Witting polytope (See Coxeter \cite[Section 12.5]{cox1}); and a set of $126$ lines in $\cx^6$ due to Mitchell \cite{mit1}.

\section{Algebras}\label{sec:gram}

Let $X \sbs \Om$ have degree set $A = \{\al_1,\ldots,\al_s\}$ and let $\al_0 = 1$. Then define a set of matrices $\cA = \{A_0,\ldots,A_s\}$ with rows and columns indexed by $X$ such that
\[
(A_i)_{a,b} := \begin{cases}
1, & \abs{a^*b}^2 = \al_i; \\
0, & \text{otherwise.}
\end{cases}
\]
Note that the matrices are Schur idempotents with $A_0 = I$ and $\sum_i A_i = J$. In this section, we consider the conditions under which $\cA$ is an association scheme. The results are due to Delsarte, Goethals, and Seidel, although some of the proofs are new.

Define a second set of matrices $E_0,E_1,\ldots$ also indexed by $X$ as follows:
\[
(E_r)_{a,b} := \frac{1}{|X|}g_r(\abs{a^*b}^2).
\]
Each $E_r$ is real, symmetric, and in the span of $\cA$:
\[
E_r = \frac{1}{|X|}\sum_{i=0}^s g_r(\al_i) A_i.
\]

\begin{lemma} \label{lem:desidem}
If $X$ is a $2e$-design, then $E_0, \ldots, E_e$ are orthogonal idempotents.
\end{lemma}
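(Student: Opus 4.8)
The plan is to represent each $E_r$ as a Gram-type matrix by means of the addition formula, and then use the $2e$-design hypothesis to collapse the product $E_rE_{r'}$.

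First I would fix $r\le e$, choose an orthonormal basis $S_1,\dots,S_N$ of $\Harm(r,r)$, and invoke the addition formula (\tref{addform1}) to write $(E_r)_{a,b}=\tfrac1{|X|}g_r(\abs{a^*b}^2)=\tfrac1{|X|}\sum_{i=1}^N\conj{S_i(a)}S_i(b)$. Since $g_r$ has real coefficients, $E_r$ is real and symmetric, so it suffices to establish $E_rE_{r'}=\delta_{rr'}E_r$ for $0\le r,r'\le e$; this simultaneously yields idempotence and mutual orthogonality (and makes each $E_r$ an orthogonal projection).

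The heart of the argument is the evaluation of $(E_rE_{r'})_{a,c}=\sum_{b\in X}(E_r)_{a,b}(E_{r'})_{b,c}$. Expanding both factors via the addition formula, with $\{S_i\}$ a basis of $\Harm(r,r)$ and $\{T_j\}$ a basis of $\Harm(r',r')$ (take $T=S$ when $r=r'$), and interchanging the sums, the inner sum $\sum_{b\in X}S_i(b)\conj{T_j(b)}$ is $|X|\,\ip{T_j}{S_i}_X$, so
\[
(E_rE_{r'})_{a,c}=\frac1{|X|}\sum_{i,j}\conj{S_i(a)}\,T_j(c)\,\ip{T_j}{S_i}_X .
\]
Now $\conj{T_j}S_i\in\Hom(r+r',r+r')$, and because $r,r'\le e$ we have $r+r'\le 2e$; after padding with a power of $Z$ the $2e$-design property gives $\ip{T_j}{S_i}_X=\ip{1}{\conj{T_j}S_i}_X=\ip{1}{\conj{T_j}S_i}=\ip{T_j}{S_i}$. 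When $r\ne r'$ this vanishes since $\Harm(r,r)\perp\Harm(r',r')$ (\tref{harmortho}), whence $E_rE_{r'}=0$; when $r=r'$ it equals $\delta_{ij}$, and the double sum collapses back to $\tfrac1{|X|}\sum_i\conj{S_i(a)}S_i(c)=(E_r)_{a,c}$, so $E_r^2=E_r$.

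The one step needing care is the design reduction: one must verify that $\conj{T_j}S_i$, padded with $Z^{2e-r-r'}$, represents an element of $\Hom(2e,2e)$ agreeing with $\conj{T_j}S_i$ on $\Om$ and on $X$, so that the $2e$-design condition genuinely applies — this is precisely where the inequality $r+r'\le 2e$, i.e.\ $r,r'\le e$, enters. Keeping the conjugations consistent with the convention $\ip{f}{g}_X=\tfrac1{|X|}\sum_{z\in X}\conj{f(z)}g(z)$ is the only other bookkeeping; the rest is routine.
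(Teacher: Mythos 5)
Your proof is correct and is essentially the paper's argument: both reduce $(E_rE_{r'})_{a,b}$ to the average over $X$ of a polynomial of bidegree $(r+r',r+r')$, invoke the $2e$-design property (via padding with $Z$) to replace that average by the integral over $\Om$, and finish with the orthogonality of $\Harm(r,r)$ and $\Harm(r',r')$. The only cosmetic difference is that you expand each $E_r$ through the addition formula into an orthonormal basis of $\Harm(r,r)$, whereas the paper works directly with the zonal polynomials $g_{r,a}$ and their reproducing property $\ip{g_{r,a}}{g_{r,b}}=g_r(\abs{a^*b}^2)$ --- two equivalent packagings of the same computation.
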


\begin{proof}
Consider the product of $E_i$ and $E_j$, for any $i,j \leq e$:
\begin{align*}
(E_iE_j)_{a,b} & = \frac{1}{|X|}\sum_{z \in X} g_i(\abs{a^*z}^2)g_j(\abs{b^*z}^2) \\
& = \ip{1}{g_{i,a}g_{j,b}}_X.
\end{align*}
Since $i+j \leq 2e$ and $X$ is a $2e$-design, this term equals $\ip{1}{g_{i,a}g_{j,b}}$. But $g_{i,a}$ and $g_{j,b}$ are orthogonal for $i \neq j$, and otherwise their inner product is $g_i(\abs{a^*b}^2)$. Thus,
\[
E_iE_j = \begin{cases}
E_i, & i = j; \\
0, & \text{otherwise.}
\end{cases} \qed
\] 
\end{proof}

The same argument shows that if $X$ is a $(2e+1)$-design, then $E_0,\ldots,E_{e+1}$ are linearly independent. Since these matrices are in $\spn(\cA)$, a space of dimension $s+1$, we have:

\begin{corollary}
If $X$ is an $s$-distance set and a $t$-design, then $t \leq 2s$. \qed
\end{corollary}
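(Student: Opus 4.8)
The plan is to argue by contradiction using a dimension count in $\spn(\cA)$. Recall that $\cA=\{A_0,\ldots,A_s\}$ has $s+1$ elements, so $\spn(\cA)$ has dimension at most $s+1$, and that for every $r\ge 0$ the matrix $E_r$ defined by $(E_r)_{a,b}=|X|^{-1}g_r(\abs{a^*b}^2)$ lies in $\spn(\cA)$, since $E_r=|X|^{-1}\sum_{i=0}^s g_r(\al_i)A_i$. These two facts are the only ingredients beyond Lemma~\ref{lem:desidem} and the remark following it.

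Now suppose, for contradiction, that $t\ge 2s+1$. Since a $t$-design is also a $t'$-design for every $t'\le t$, the set $X$ is in particular a $(2s+1)$-design. Applying the observation immediately after Lemma~\ref{lem:desidem} with $2e+1=2s+1$, i.e.\ $e=s$, the matrices $E_0,\ldots,E_{s+1}$ are linearly independent. But that is a collection of $s+2$ linearly independent elements of $\spn(\cA)$, a space of dimension at most $s+1$, which is impossible. Hence $t\le 2s$.

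The argument presents essentially no obstacle, as the substantive work was already carried out in Lemma~\ref{lem:desidem} and in the strengthening to $(2e+1)$-designs. The one place requiring a moment's care is the parity: one must invoke the odd case, a $(2s+1)$-design, rather than the even case, since a $2s$-design only yields the idempotents $E_0,\ldots,E_s$, i.e.\ $s+1$ matrices, which sit inside $\spn(\cA)$ without contradiction. Equivalently, one may split on the parity of $t$: if $t=2e$, Lemma~\ref{lem:desidem} produces $e+1$ independent idempotents, forcing $e\le s$; if $t=2e+1$, the remark produces $e+2$ independent matrices, forcing $e+1\le s$; in either case $t\le 2s$.
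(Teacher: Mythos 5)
Your proof is correct and follows essentially the same route as the paper: the paper deduces the corollary directly from Lemma~\ref{lem:desidem} and the remark that a $(2e+1)$-design yields $e+2$ linearly independent matrices $E_0,\ldots,E_{e+1}$ inside $\spn(\cA)$, which has dimension $s+1$. Your contradiction phrasing (and the parity split at the end) is just a repackaging of that same dimension count.
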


If $X$ is an $s$-distance set and a $2s$-design, then $E_0,\ldots,E_s$ are linearly independent and therefore spanning in $\spn(\cA)$. Since $E_0,\ldots,E_s$ are closed under matrix multiplication, it follows that $\spn(\cA)$ is also closed under matrix multiplication, and we have an association scheme. In fact, we can relax these conditions slightly. 

\begin{theorem} \label{thm:distdesscheme1}
If $X$ is an $s$-distance set and a $2(s-1)$-design, then $\cA$ is an association scheme.
\end{theorem}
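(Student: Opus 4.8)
The plan is to produce a basis of $\spn(\cA)$ consisting of orthogonal idempotents. Once we have this, $\spn(\cA)$ is automatically closed under matrix multiplication, and since the $A_i$ are symmetric $0$-$1$ matrices, pairwise Schur-orthogonal, with $A_0 = I$ and $\sum_i A_i = J$, the collection $\cA$ is a symmetric association scheme.

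First observe that the matrices $A_0,\ldots,A_s$ have pairwise disjoint supports, hence are linearly independent, so $\dim\spn(\cA) = s+1$. Since $X$ is a $2(s-1)$-design, Lemma~\ref{lem:desidem} (applied with $e = s-1$) shows that $E_0,\ldots,E_{s-1}$ are orthogonal idempotents lying in $\spn(\cA)$. Each is nonzero, because $\tr(E_r) = g_r(1) = \dim(\Harm(r,r)) > 0$ by Corollary~\ref{cor:dimharm}. These $s$ idempotents are not enough on their own (they span only an $s$-dimensional subspace), so I would adjoin the matrix
\[
E_s := I - \sum_{r=0}^{s-1}E_r,
\]
which lies in $\spn(\cA)$ since $I = A_0$. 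Writing $E = \sum_{r=0}^{s-1}E_r$, orthogonality of the $E_r$ gives $EE_r = E_r$ and $E^2 = E$, so $E_s^2 = (I-E)^2 = I - E = E_s$ and $E_sE_r = E_r - EE_r = 0$ for $r < s$; thus $E_0,\ldots,E_s$ are orthogonal idempotents.

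The one substantive point is that $E_s \neq 0$. Using $E_r = \frac{1}{|X|}\sum_i g_r(\al_i)A_i$, we have $E = \frac{1}{|X|}\sum_i p_{s-1}(\al_i)A_i$, where $p_{s-1} = \sum_{r=0}^{s-1}g_r$ is the Jacobi sum polynomial of degree $s-1$. If $E_s$ were $0$, then comparing coefficients of the linearly independent matrices $A_i$ in the relation $I = E$ would force $p_{s-1}(\al_0) = |X|$ and $p_{s-1}(\al_i) = 0$ for $i = 1,\ldots,s$; but then $p_{s-1}$ would have the $s$ distinct roots $\al_1,\ldots,\al_s$ while having degree $s-1$, so $p_{s-1} \equiv 0$, contradicting $p_{s-1}(1) = \dim(\Hom(s-1,s-1)) > 0$. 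Hence $E_s \neq 0$.

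Finally, nonzero orthogonal idempotents are linearly independent, so $E_0,\ldots,E_s$ form a basis of the $(s+1)$-dimensional space $\spn(\cA)$. Since $E_iE_j = \de_{ij}E_i \in \spn(\cA)$, this basis, and therefore $\spn(\cA)$, is closed under matrix multiplication; as noted above, this makes $\cA$ a symmetric association scheme. The main obstacle is the construction of a valid $(s+1)$-th idempotent and the verification that it is nonzero; the rest is routine bookkeeping with the properties of the $E_r$ and the Jacobi polynomials already established in this chapter.
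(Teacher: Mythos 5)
Your proof is correct and follows essentially the same route as the paper: both arguments rest on Lemma~\ref{lem:desidem} together with the observation that a polynomial of degree at most $s-1$ cannot vanish at all $s$ distinct angles $\al_1,\ldots,\al_s$, which forces $I$ to be independent of $E_0,\ldots,E_{s-1}$ and hence yields an $(s+1)$-element multiplicatively closed basis of $\spn(\cA)$. Packaging this as the explicit complementary idempotent $E_s = I - \sum_{r=0}^{s-1}E_r$ is a cosmetic variation, not a different method.
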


\begin{proof} 
Since $X$ is a $2(s-1)$-design, $E_0,\ldots,E_{s-1}$ are linearly independent. We claim that $I$ is independent of $E_0,\ldots,E_{s-1}$. For, suppose that $I$ is a linear combination of $E_r$, say
\[
I = \sum_{r=0}^{s-1}c_rE_r.
\]
Examining the $(a,b)$-entry of $I$, we see that for every $\al_i$, $i > 0$,
\[
\sum_{r=0}^{s-1} c_r g_r(\al_i) = 0.
\]
But this implies that $\sum_{r=0}^{s-1}c_rg_r$, a polynomial of degree at most $s-1$, has $s$ distinct zeros. By contradiction, we conclude that $I,E_0,\ldots,E_{s-1}$ are linearly independent. These matrices therefore span $\cA$, and since they are closed under multiplication, $\cA$ an association scheme. \qed
\end{proof}

\begin{corollary}
If $X$ is any set of equiangular lines, or the lines from a set of $d+1$ mutually unbiased bases, then $\cA$ is an association scheme. \qed
\end{corollary}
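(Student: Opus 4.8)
The plan is to observe that in both cases the hypotheses of \tref{distdesscheme1} are already satisfied, so the conclusion is immediate; no new work is needed beyond bookkeeping.

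First consider a set $X$ of equiangular lines. Then $X$ is a $1$-distance set, so in the notation of \tref{distdesscheme1} we have $s = 1$ and $2(s-1) = 0$. Every finite subset of $\Om$ is trivially a $0$-design: $\Hom(0,0)$ consists only of constant functions, and the averaging identity $\ip{1}{f}_X = \ip{1}{f}$ holds for constants. Hence \tref{distdesscheme1} applies with $s = 1$, giving that $\cA = \{I, A_1\}$ is an association scheme (a two-class scheme, i.e.\ the strongly regular graph case).

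Next consider the set $X$ of lines from $d+1$ mutually unbiased bases in $\cx^d$. Each basis contributes $d$ lines and distinct bases share no line, so $|X| = d(d+1)$. Lines from distinct bases meet at angle $1/d$ and lines within a basis are orthogonal, so $X$ is a $2$-distance set with degree set $A = \{0, 1/d\}$; thus $s = 2$ and $2(s-1) = 2$. By \cref{mubbnd}, a set of mutually unbiased bases satisfies $|X| \le d(d+1)$ with equality if and only if $X$ is a $2$-design; since equality holds here, $X$ is a $2$-design. Therefore \tref{distdesscheme1} applies with $s = 2$, and $\cA$ is an association scheme.

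There is essentially no obstacle here: the only facts to check are the count $|X| = d(d+1)$ for $d+1$ mutually unbiased bases and the vacuous nature of the $0$-design condition, both of which are immediate; all the substance was already carried out in \tref{distdesscheme1} and \cref{mubbnd}.
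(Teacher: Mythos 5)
Your proof is correct and follows exactly the route the paper intends: equiangular lines give $s=1$ with the vacuous $0$-design condition, and $d+1$ mutually unbiased bases give $s=2$ with the $2$-design property supplied by Corollary~\ref{cor:mubbnd}, so Theorem~\ref{thm:distdesscheme1} applies in both cases. The paper leaves this corollary unproved (it is stated with only a \textsl{qed} mark), but your argument is precisely the one implicit in the surrounding text, including the identification of the resulting schemes as those of a complete graph and a complete multipartite graph respectively.
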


If $X$ is a set of equiangular lines, then $\cA = \{I,J-I\}$ is the association scheme of a complete graph. If $X$ is a set of mutually unbiased bases, then $\cA$ is the association scheme of a complete multipartite graph. 

When $t = 2s$, we claim that $E_0,\ldots,E_s$ are the idempotents of the scheme $\cA$. To see that the idempotents sum to $I$, note that when $\abs{a^*b}^2 = \al_i$, the $(a,b)$-entry of the sum is
\[
\left(\sum_{r=0}^sE_r\right)_{a,b} = \frac{1}{|X|}\sum_{r=0}^s g_r(\al_i) = \frac{p_r(\al_i)}{|X|}.
\]
Since $|X| = \dim(\Hom(s,s)) = p_r(1)$, the diagonal entries are $1$. That the off-diagonal entries are $0$ follows from the fact that $p(x)$ is a multiple of the annihilator of the degree set of $X$, as in the proof of Corollary \ref{cor:distbnddes}.

In the following, let $p_{ii}(0)$ denote the valency of $A_i$ (the sum of the entries of any row of $A_i$). 

\begin{lemma} \label{lem:idemproj}
If $X$ is an $(s+r)$-design, then
\[
A_iE_r = \frac{p_{ii}(0)g_r(\al_i)}{g_r(1)} E_r.
\]
\end{lemma}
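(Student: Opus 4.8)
The plan is to work at the level of matrix entries: I will compute $(A_iE_r)_{a,b}$ for $a,b\in X$, show that it is one fixed scalar multiple of $(E_r)_{a,b}=\frac{1}{|X|}g_r(\abs{a^*b}^2)$, and then identify the scalar by looking at a diagonal entry. The two ingredients that make this go through are the design hypothesis (to replace a sum over $X$ by an integral over $\Om$) and the orthogonality of harmonic zonal polynomials of different bidegrees (Theorem \ref{thm:harmortho}).

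First I would get rid of the $0$–$1$ matrix $A_i$ by an interpolation trick. With $\al_0=1$ and degree set $\{\al_1,\dots,\al_s\}$, set $\psi_i(x)=\prod_{0\le j\le s,\,j\ne i}\frac{x-\al_j}{\al_i-\al_j}$, a real polynomial of degree $s$ with $\psi_i(\al_j)=\delta_{ij}$. Since for $a,z\in X$ the number $\abs{a^*z}^2$ equals $\al_0=1$ exactly when $z=a$ and otherwise lies in the degree set, we have $(A_i)_{a,z}=\psi_i(\abs{a^*z}^2)$ for all $a,z\in X$ (including $z=a$, where $\psi_i(1)=\delta_{i0}=(A_i)_{a,a}$). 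Letting $\psi_{i,a}$ and $g_{r,b}$ denote the zonal polynomials induced by $\psi_i$ and $g_r$, padded by powers of $Z$ so that $\psi_{i,a}\in\Hom(s,s)$ and $g_{r,b}\in\Harm(r,r)\subseteq\Hom(r,r)$, the product $\psi_{i,a}g_{r,b}$ lies in $\Hom(s+r,s+r)$ and agrees on $\Om$ with $z\mapsto\psi_i(\abs{a^*z}^2)g_r(\abs{z^*b}^2)$. Hence $(A_iE_r)_{a,b}=\sum_{z\in X}(A_i)_{a,z}(E_r)_{z,b}=\frac{1}{|X|}\sum_{z\in X}\psi_i(\abs{a^*z}^2)g_r(\abs{z^*b}^2)=\ip{1}{\psi_{i,a}g_{r,b}}_X$, and because $X$ is an $(s+r)$-design this equals $\ip{1}{\psi_{i,a}g_{r,b}}=\int_\Om\psi_i(\abs{a^*z}^2)g_r(\abs{z^*b}^2)\,d\om(z)$.

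Next I would expand $\psi_i=\sum_{t=0}^s e_tg_t$ in Jacobi polynomials, so that on $\Om$ one has $\psi_{i,a}=\sum_t e_tg_{t,a}$. Since $g_{t,a}\in\Harm(t,t)$ and $g_{r,b}\in\Harm(r,r)$ are orthogonal whenever $t\ne r$ (Theorem \ref{thm:harmortho}), and $\ip{g_{r,a}}{g_{r,b}}=g_{r,b}(a)=g_r(\abs{a^*b}^2)$ by the defining reproducing property of $g_{r,a}$, the integral collapses to $e_r\,g_r(\abs{a^*b}^2)$. Therefore $(A_iE_r)_{a,b}=e_r\,g_r(\abs{a^*b}^2)=|X|\,e_r\,(E_r)_{a,b}$ for all $a,b\in X$, i.e. $A_iE_r=(|X|e_r)E_r$ is a scalar multiple of $E_r$. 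To name the scalar I would evaluate a diagonal entry, which is legitimate since rows and columns are indexed by $X$: fixing $a\in X$, the only $z$ with $(A_i)_{a,z}\ne 0$ are the $p_{ii}(0)$ vertices with $\abs{a^*z}^2=\al_i$, and for each of them $g_r(\abs{z^*a}^2)=g_r(\al_i)$, so $(A_iE_r)_{a,a}=\frac{1}{|X|}p_{ii}(0)g_r(\al_i)$. Comparing with $(|X|e_r)(E_r)_{a,a}=(|X|e_r)\frac{g_r(1)}{|X|}$ gives $|X|e_r=p_{ii}(0)g_r(\al_i)/g_r(1)$, hence $A_iE_r=\frac{p_{ii}(0)g_r(\al_i)}{g_r(1)}E_r$, as required. (Since $|X|e_r$ is independent of $a$, this also forces the row sums of $A_i$ to be constant whenever $g_r(\al_i)\ne 0$, which is consistent with calling $p_{ii}(0)$ ``the'' valency; when $g_r(\al_i)=0$ we get $e_r=0$ and the identity $A_iE_r=0$ is immediate.)

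The ``hard part'' here is really just bookkeeping rather than a genuine obstacle: one must be careful that the padded product $\psi_{i,a}g_{r,b}$ truly lives in $\Hom(s+r,s+r)$ so that the $(s+r)$-design hypothesis is applicable to exactly this polynomial, and that the substitution $(A_i)_{a,z}=\psi_i(\abs{a^*z}^2)$ is valid on the full index set $X$ (the $z=a$ case being handled by $\psi_i(1)=\delta_{i0}$). No analytic machinery beyond what is already in the chapter is needed — Theorem \ref{thm:harmortho}, the reproducing property of the $g_{r,a}$, and the design hypothesis do all the work.
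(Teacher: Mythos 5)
Your argument is correct and is essentially the paper's own proof: the paper likewise introduces the degree-$s$ interpolating polynomial $f_i$ with $f_i(\al_j)=\de_{ij}$, writes $(A_iE_r)_{a,b}=\ip{f_{i,a}}{g_{r,b}}_X$, invokes the $(s+r)$-design hypothesis to replace the average over $X$ by the integral over $\Om$, collapses the Jacobi expansion by orthogonality to $c_r\,g_r(\abs{a^*b}^2)$, and pins down $c_r$ from the diagonal entry. Your extra bookkeeping (padding with $Z$, checking the $z=a$ term via $\psi_i(1)=\de_{i0}$) only makes explicit what the paper leaves implicit.
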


\begin{comment}
This theorem should also be true when $X$ is a $2r$-design. 
\end{comment}

\begin{proof}
There is a unique polynomial of degree $s$ with a given $s+1$ fixed values. Let $f_i$ be the polynomial of degree $s$ such that $f_i(\al_j) := \de_{ij}$ (the Kronecker delta function on the degree set of $X$), and let $f_{i,a}$ be its zonal polynomial at pole $a$. Then
\nomenclature[\d]{$\de_{ij}$}{Kronecker delta function}%
\begin{align*}
(A_iE_r)_{a,b} & = \frac{1}{|X|}\sum_{\abs{a^*c}^2 = \al_i} g_r(\abs{b^*c}^2) \\
& = \ip{f_{i,a}}{g_{r,b}}_X.
\end{align*}
When $X$ is an $(s+r)$-design, this equals $\ip{f_{i,a}}{g_{r,b}}$. Now express $f_i$ in terms of the Jacobi polynomials, say
\[
f_i(x) = \sum_{l=0}^s c_l g_l(x).
\]
Recalling that $g_{r,a}$ is orthogonal to all zonal polynomials of lower degree, we then have
\begin{align*}
\ip{f_{i,a}}{g_{r,b}} & = \sum_{l=0}^s c_l \ip{g_{l,a}}{g_{r,b}} \\
& = c_r g_r(\abs{a^*b}^2).
\end{align*}
To find $c_r$, consider the diagonal entries: 
\[
(A_iE_r)_{a,a} = \frac{1}{|X|}p_{ii}(0) g_r(\al_i) = c_r g_r(1).
\]
Thus $c_r = p_{ii}(0) g_r(\al_i)/|X|g_r(1)$, and the result follows. \qed
\end{proof}

Recall from Chapter \ref{chap:configs} that an association scheme is $Q$-polynomial if each idempotent $E_r$ is a Schur polynomial of degree $r$ in $E_1$. Since $g_r$ is a polynomial of degree $r$ in $g_1$, and the entries of $E_r$ are defined in terms of $g_r$ and $\al_i \in A$, it follows that the association scheme in Theorem \ref{thm:distdesscheme1} is $Q$-polynomial.

\subsection{Gram-matrix algebras}

There is a second weighted adjacency algebra associated with certain $s$-distance sets. When $X$ has degree set $A = \{\al_1,\ldots,\al_s\}$ (and $\al_0 = 1$), define matrices $\cA' = \{A'_0,\ldots,A'_s\}$ indexed by $X$ such that
\[
(A'_i)_{a,b} := \begin{cases}
a^*b, & \abs{a^*b}^2 = \al_i; \\
0, & \text{ otherwise.}
\end{cases}
\]
Note that if $G$ is the Gram-matrix of $|X|$, then
\[
A'_i = G \circ A_i.
\]
If $0$ is in the degree set of $X$, say $\al_s = 0$, then $A'_s = 0$. Thus $\spn(\cA')$ is dimension $s$ when $0 \in A$ and dimension $s+1$ otherwise. Note that $A'_i$ is now Hermitian instead of symmetric. If $A'_iA'_j$ is also Hermitian, then $A'_i$ and $A'_j$ commute; thus when $\spn(\cA)$ is an algebra, it is commutative. Define a second set of idempotents:
\[
(E'_r)_{a,b} =  \frac{1}{|X|}(a^*b)h_r(\abs{a^*b}^2).
\]
Again $E'_r$ is now Hermitian. Each $E'_r$ is still in the span of $\cA'$. The proof of the following is almost identical to Lemma \ref{lem:desidem}.

\begin{lemma} \label{lem:desidem2}
If $X$ is a $(2e+1)$-design, then $E'_0,\ldots,E'_e$ are orthogonal idempotents.
\end{lemma}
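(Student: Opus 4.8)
The plan is to follow the proof of Lemma~\ref{lem:desidem} almost verbatim, substituting the $\Harm(k+1,k)$ zonals $h_{k,a}(z)=(a^*z)h_k(\abs{a^*z}^2)$ for the $\Harm(k,k)$ zonals $g_{k,a}$, and paying attention to complex conjugation, since each $E'_r$ is Hermitian rather than symmetric. The first step is to expand the $(a,b)$-entry of $E'_iE'_j$ for $i,j\le e$ directly from the definitions:
\[
(E'_iE'_j)_{a,b} \;=\; \frac{1}{|X|^2}\sum_{z\in X}(a^*z)h_i(\abs{a^*z}^2)\,(z^*b)h_j(\abs{z^*b}^2).
\]
The first factor in the summand is exactly $h_{i,a}(z)$; since $h_j$ has real coefficients and $\abs{z^*b}^2=\abs{b^*z}^2$ is real, the second factor is $\conj{h_{j,b}(z)}$. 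Hence $(E'_iE'_j)_{a,b}=\frac1{|X|}\ip{h_{j,b}}{h_{i,a}}_X$, the averaged inner product over $X$.

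Next I would pass from the average over $X$ to the integral over $\Om$. The product $\conj{h_{j,b}}\,h_{i,a}$ has bidegree $(i+j+1,\,i+j+1)$: $h_{i,a}\in\Hom(i+1,i)$ contributes degree $i+1$ in $z$ and $i$ in $\bz$, while $\conj{h_{j,b}}$ contributes $j$ in $z$ and $j+1$ in $\bz$. Since $i,j\le e$ we have $i+j+1\le 2e+1$, so because $X$ is a $(2e+1)$-design, $\ip{1}{\conj{h_{j,b}}h_{i,a}}_X=\ip{1}{\conj{h_{j,b}}h_{i,a}}$, i.e.\ $\ip{h_{j,b}}{h_{i,a}}_X=\ip{h_{j,b}}{h_{i,a}}$. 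This is the precise point at which strength $2e+1$ is used rather than the $2e$ of Lemma~\ref{lem:desidem}: an extra unit of degree enters because $h_{i,a}$ has bidegree $(i+1,i)$.

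It then remains to evaluate $\ip{h_{j,b}}{h_{i,a}}$ over $\Om$. If $i\ne j$, the polynomials $h_{i,a}\in\Harm(i+1,i)$ and $h_{j,b}\in\Harm(j+1,j)$ lie in harmonic spaces of different bidegree, which are orthogonal by Theorem~\ref{thm:harmortho}; thus the inner product vanishes and $E'_iE'_j=0$. If $i=j$, the defining (reproducing) property $\ip{h_{i,b}}{p}=p(b)$ for $p\in\Harm(i+1,i)$ gives $\ip{h_{i,b}}{h_{i,a}}=h_{i,a}(b)=(a^*b)h_i(\abs{a^*b}^2)$, so $(E'_iE'_i)_{a,b}=\frac1{|X|}(a^*b)h_i(\abs{a^*b}^2)=(E'_i)_{a,b}$, that is, $(E'_i)^2=E'_i$. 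Combined with the already-noted fact that each $E'_r$ is Hermitian, this shows $E'_0,\dots,E'_e$ are orthogonal idempotents. There is no genuine obstacle here; the only things requiring care are the bookkeeping of bidegrees (so that one sees strength $2e+1$ is exactly what the argument consumes) and keeping the conjugations straight, since unlike the symmetric $E_r$ of Lemma~\ref{lem:desidem}, the $E'_r$ are merely Hermitian and so one of the two zonal factors appears conjugated.
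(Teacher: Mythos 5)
Your proof is correct and follows exactly the route the paper intends: the text simply remarks that the argument is ``almost identical to Lemma~\ref{lem:desidem},'' and your write-up supplies precisely those details, correctly identifying that the extra unit of bidegree from $h_{i,a}\in\Harm(i+1,i)$ is what forces the hypothesis $2e+1$ in place of $2e$.
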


Similarly, if $X$ is a $2e$-design, then $E'_0,\ldots,E'_e$ are linearly independent. 

If $X$ is both an $s$-distance set and a $(2s+1)$-design, then the fact that $E'_0,\ldots,E'_s$ are spanning and closed under multiplication shows that $\spn(\cA')$ is an algebra. As with the association schemes, these hypotheses can be generalized.

\begin{theorem} \label{thm:distdesscheme2}
Let $X$ be an $s$-distance set. If $X$ is also a $(2s-1)$-design, then $\spn(\cA')$ is an algebra. Alternatively, if $X$ is a $(2s-3)$-design and $0$ is in the degree set of $X$, then $\spn(\cA')$ is an algebra. 
\end{theorem}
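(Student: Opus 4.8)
The plan is to mirror the proof of Theorem~\ref{thm:distdesscheme1} almost verbatim, replacing the scheme idempotents $E_r$ by the Gram-matrix idempotents $E'_r$ and Lemma~\ref{lem:desidem} by Lemma~\ref{lem:desidem2}. The one structural point to exploit is that $\spn(\cA')$ always contains the identity: since $\al_0=1$ we have $(A'_0)_{a,a}=a^*a=1$ and $A'_0$ has no other nonzero entries, so $I=A'_0\in\spn(\cA')$. Thus $I$ will play the role of the ``extra'' basis matrix exactly as in the association-scheme case, and it costs nothing to add it because $I^2=I$, $IE'_r=E'_r$, and (by Lemma~\ref{lem:desidem2}) $E'_iE'_j=\de_{ij}E'_i$, so any spanning set of the form $\{I\}\cup\{E'_0,\ldots,E'_e\}$ is automatically closed under multiplication, whence $\spn(\cA')$ is closed under multiplication by bilinearity. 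Recall also the dimension bookkeeping from the text: $\spn(\cA')$ has dimension $s+1$ when $0\notin A$, and dimension $s$ (because $A'_s=0$) when $0\in A$.

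For the first hypothesis, suppose $X$ is a $(2s-1)$-design. Writing $2s-1=2(s-1)+1$ and applying Lemma~\ref{lem:desidem2} with $e=s-1$, the matrices $E'_0,\ldots,E'_{s-1}$ are orthogonal idempotents, hence linearly independent, and each lies in $\spn(\cA')$. If $0\in A$, then $\spn(\cA')$ has dimension $s$ and these $s$ matrices are already a basis of it, so we are done. If $0\notin A$, I would adjoin $I$ and show $I,E'_0,\ldots,E'_{s-1}$ are linearly independent by the degree argument of Theorem~\ref{thm:distdesscheme1}: a relation $I=\sum_{r=0}^{s-1}c_rE'_r$ forces, on comparing the $(a,b)$-entry for $a\neq b$ with $\abs{a^*b}^2=\al_i$ (all $\al_i\neq 0$ since $0\notin A$) and cancelling the nonzero factor $a^*b$, the polynomial $\sum_{r=0}^{s-1}c_rh_r$ of degree at most $s-1$ to vanish at the $s$ distinct angles $\al_1,\ldots,\al_s$; since the $h_r$ have distinct degrees this forces all $c_r=0$ and then $I=0$, a contradiction. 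So $I,E'_0,\ldots,E'_{s-1}$ form a basis of $\spn(\cA')$, and $\spn(\cA')$ is an algebra.

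For the alternative hypothesis, suppose $0\in A$, say $\al_s=0$, and $X$ is a $(2s-3)$-design. Now $2s-3=2(s-2)+1$, so Lemma~\ref{lem:desidem2} with $e=s-2$ gives orthogonal idempotents $E'_0,\ldots,E'_{s-2}$ — only $s-1$ of them, one short of a basis of the $s$-dimensional space $\spn(\cA')$. Adjoin $I$ and run the same degree argument: a relation $I=\sum_{r=0}^{s-2}c_rE'_r$ would make $\sum_{r=0}^{s-2}c_rh_r$, of degree at most $s-2$, vanish at the $s-1$ nonzero angles $\al_1,\ldots,\al_{s-1}$, forcing all $c_r=0$ and $I=0$. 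Hence $I,E'_0,\ldots,E'_{s-2}$ is a basis of $\spn(\cA')$, closed under multiplication, and $\spn(\cA')$ is an algebra.

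I expect the only delicate point — the ``main obstacle'' — to be checking that the design strengths are exactly what Lemma~\ref{lem:desidem2} needs and no more: in the general case one needs $2e+1\le 2s-1$ with $e=s-1$ (tight), and in the $0\in A$ case $2e+1\le 2s-3$ with $e=s-2$ (again tight), the two extra degrees of slack coming precisely from the fact that one angle is $0$, so $\spn(\cA')$ drops in dimension and one fewer idempotent is needed to fill it out. Everything else is the linear-algebra bookkeeping already rehearsed for association schemes, together with the verification (analogous to the proof of Lemma~\ref{lem:desidem}, using $\abs{a^*z}^2h_r(\abs{a^*z}^2)=\overline{h_{r,a}(z)}$-type rewritings and the orthogonality of the spaces $\Harm(r+1,r)$) that underlies Lemma~\ref{lem:desidem2}.
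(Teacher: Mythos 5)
Your proof is correct and follows essentially the same route as the paper's: adjoin $I$ to the orthogonal idempotents $E'_0,\ldots,E'_{s-1}$ (resp.\ $E'_0,\ldots,E'_{s-2}$) obtained from Lemma~\ref{lem:desidem2}, and use the degree argument on $\sum_r c_r h_r$ vanishing at the nonzero angles to conclude that these matrices are a linearly independent, spanning, multiplicatively closed set in $\spn(\cA')$. The only cosmetic difference is that you handle the subcase of a $(2s-1)$-design with $0$ in the degree set directly (the $s$ idempotents already span), whereas the paper restricts its first case to $0\notin A$ and lets the second alternative absorb that subcase.
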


\begin{proof}
First consider the case when $X$ is a $(2s-1)$-design and $0$ is not in the degree set of $X$. The same argument as in the proof of Theorem \ref{thm:distdesscheme2} shows that $I,E_0,\ldots,E'_{s-1}$ are linearly independent, and since these matrices are spanning in $\spn(\cA')$ and closed under multiplication, we have an algebra. Similarly, when $X$ is a $(2s-3)$-design and $0$ is in the degree set, then $I,E'_0,\ldots,E'_{s-2}$ are linearly independent, spanning, and multiplicatively closed. \qed
\end{proof}

\begin{comment}
More precisely: suppose not, and $I$ can be written as a linear combination $I = \sum_r c_rE'_r$. Then the $a,b$ entry of I is
\[
(a^*b) \sum_{r=0}^{s-1}c_rh_r(\al_i) = 0
\]
where $\al_i = \abs{a^*b}^2$. Since $a^*b \neq 0$ for all $a,b$, we have a polynomial $\sum_{r=0}^{s-1}c_rh_r$ of degree $s-1$ with $s$ zeros. Thus $I$ is not a linear combination of $E'_0,\ldots,E'_{s-1}$. If $X$ is a $(2s-1)$-design, then these matrices are orthogonal idempotents. Therefore $\spn(\cA')$ is spanned by $I,E'_0,\ldots,E'_{s-1}$, which are closed under matrix multiplication, and $\cA'$ is an algebra.
\end{comment}

\begin{corollary} \label{cor:distdesscheme2}
Let $X$ be the lines from any set of mutually unbiased bases, or a set of 
\[
|X| = \frac{d(1-\al)}{1-d\al}
\]
equiangular lines with angle $\al$ in $\cx^d$. Also let $A'_1$ be the Gram matrix of $X$. Then the span of $\cA' = \{I,A'_1\}$ is an algebra.
\end{corollary}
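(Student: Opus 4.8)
The plan is to reduce the claim directly to \tref{distdesscheme2}, whose two alternatives ask only that $X$ be a design of rather low strength; the work is in checking that the two families named here meet those hypotheses and in identifying $\cA'$ with $\{I,G\}$.

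First consider a set of mutually unbiased bases. Since $X$ is a union of orthonormal bases, and each orthonormal basis is a $1$-design (by \cref{tfae2} with $s=1$, or directly because $\frac1d\sum_{v\in B}vv^{*}=\frac1d I$ reproduces $\ip{1}{z_i\bar z_j}=\delta_{ij}/d$ on all of $\Hom(1,1)$), averaging over the constituent bases shows that $X$ is itself a $1$-design. If $X$ is a single basis its Gram matrix is $I$ and there is nothing to prove; otherwise, as in the proof of \cref{mubbnd}, the only angles occurring are $0$ and $1/d$, so $X$ is a $2$-distance set with $0$ in its degree set. With $s=2$ we have $2s-3=1$, so the second alternative of \tref{distdesscheme2} applies and $\spn(\cA')$ is an algebra. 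Labelling $\al_1=1/d$, $\al_2=0$ makes $A'_2=0$, and $G=I+A'_1$, so $\spn(\cA')=\spn\{I,A'_1\}=\spn\{I,G\}$, which is the assertion.

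For the equiangular case I would invoke the equality condition of the relative bound: by \cref{ealrelbnd}, a set of $d(1-\al)/(1-d\al)$ equiangular lines with angle $\al<1/d$ is exactly a $1$-design. It is a $1$-distance set, so $s=1$ and $2s-1=1$, and the first alternative of \tref{distdesscheme2} gives that $\spn(\cA')$ is an algebra. Here $A_1=J-I$, whence $A'_1=G\circ(J-I)=G-I$, and again $\spn(\cA')=\spn\{I,A'_1\}=\spn\{I,G\}$.

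There is no genuine obstacle once \tref{distdesscheme2} and the earlier equality conditions are in hand; the only point requiring care is the choice of alternative. It is worth flagging that a set of mutually unbiased bases need not be a $3$-design — only a handful of small examples are — so the first alternative of \tref{distdesscheme2}, which would demand strength $2s-1=3$, is unavailable; one must use the refined ``$0$ in the degree set'' version, which drops the required strength to $2s-3=1$, a bound that every union of orthonormal bases satisfies.
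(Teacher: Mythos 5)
Your proof is correct and follows essentially the same route as the paper: establish that $X$ is a $1$-design (as a union of orthonormal bases, or via equality in the relative bound of Corollary~\ref{cor:ealrelbnd}) and then invoke Theorem~\ref{thm:distdesscheme2}. You additionally make explicit which alternative of that theorem is needed in each case --- in particular that the mutually unbiased case requires the ``$0$ in the degree set'' alternative with $2s-3=1$ --- a detail the paper's proof leaves implicit.
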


\begin{proof}
Any orthonormal basis is a $1$-design, and the disjoint union of $1$-designs is also a $1$-design. Therefore the lines from any set of mutually unbiased bases form a $1$-design. If $X$ is the set of equiangular lines, then equality holds in Corollary \ref{cor:ealrelbnd} and again we have a $1$-design. In either case, Theorem \ref{thm:distdesscheme2} applies. \qed
\end{proof}

As with the previous association schemes, the orthogonal idempotents $E'_r$ are the projections onto the eigenspaces of $A'_i$. 

\begin{lemma}
Let $X$ be an $s$-distance set. If $X$ is a $(s+r+1)$-design, or if $X$ is an $(s+r)$-design and $0$ is in the degree set of $X$, then
\[
A'_iE'_r = \frac{p_{ii}(0) \al_i h_r(\al_i)}{h_r(1)} E'_r.
\]
\end{lemma}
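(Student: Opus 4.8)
The plan is to transcribe the proof of Lemma~\ref{lem:idemproj} into the $\Hom(k+1,k)$ setting, with the Gram-type zonal polynomials $h_{k,a}(z)=(a^{*}z)h_k(\abs{a^{*}z}^{2})$ playing the role that $g_{k,a}$ played there. First I would fix the interpolating polynomial $f_i$ of degree $s$ determined by $f_i(\al_j)=\de_{ij}$ on the degree set of $X$ (with $\al_0=1$), and set $\tilde f_{i,a}(z):=(a^{*}z)\,f_i(\abs{a^{*}z}^{2})$, which after padding with powers of $Z$ lies in $\Hom(s+1,s)$. The key opening step is the entrywise identity $(A'_i)_{a,c}=\tilde f_{i,a}(c)$ for all $c\in X$ (the value $f_i(1)=\de_{i0}$ matches the diagonal of $A'_i$, and the terms with $a^{*}c=0$ vanish on both sides), together with $(E'_r)_{c,b}=\tfrac{1}{|X|}\conj{h_{r,b}(c)}$, valid because $h_r$ has real coefficients. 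Multiplying these out gives $(A'_iE'_r)_{a,b}=\ip{h_{r,b}}{\tilde f_{i,a}}_X$.

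Next I would pass from the discrete average to the true inner product. The product $\conj{h_{r,b}}\,\tilde f_{i,a}$ lies in $\Hom(s+r+1,s+r+1)$, so if $X$ is an $(s+r+1)$-design then $\ip{h_{r,b}}{\tilde f_{i,a}}_X=\ip{h_{r,b}}{\tilde f_{i,a}}$. When $0=\al_s$ lies in the degree set, the case $i=s$ is immediate since $A'_s=0$ (and the right side vanishes too, as $\al_s=0$); for $i<s$ one may replace $f_i$ by the polynomial $f'_i$ of degree $s-1$ interpolating the $s$ values $f'_i(\al_j)=\de_{ij}$ for $j=0,\dots,s-1$ only, because the value of $f_i$ at $\al_s=0$ never affects $\tilde f_{i,a}$ (the factor $a^{*}z$ already kills the terms with $a\perp z$). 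Then $\tilde f'_{i,a}\in\Hom(s,s-1)$, the relevant product lands in $\Hom(s+r,s+r)$, and an $(s+r)$-design suffices; this accounts for the two hypotheses in the statement.

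Finally I would collapse the inner product. Writing $f_i=\sum_l c_l h_l$ (the $h_l$ span $\re[x]$) yields $\tilde f_{i,a}=\sum_l c_l h_{l,a}$; since the spaces $\Harm(l+1,l)$ are mutually orthogonal — the $\Hom(k+1,k)$ analogue of the harmonic decomposition in Theorem~\ref{thm:homdecomp} — only the $l=r$ term survives, and the defining (reproducing) property of $h_{r,b}$ gives $\ip{h_{r,b}}{h_{r,a}}=h_{r,a}(b)=(a^{*}b)h_r(\abs{a^{*}b}^{2})$. Hence $(A'_iE'_r)_{a,b}=c_r(a^{*}b)h_r(\abs{a^{*}b}^{2})=c_r|X|\,(E'_r)_{a,b}$, so $A'_iE'_r$ is a scalar multiple of $E'_r$. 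To pin down $c_r$ I would read off the diagonal: $(A'_iE'_r)_{a,a}=\tfrac{1}{|X|}p_{ii}(0)\,\al_i h_r(\al_i)$, since each of the $p_{ii}(0)$ neighbours $c$ of $a$ in $A'_i$ contributes $\abs{a^{*}c}^{2}h_r(\abs{a^{*}c}^{2})=\al_i h_r(\al_i)$, while the scalar-multiple form gives $c_r h_r(1)$; solving yields $c_r=p_{ii}(0)\al_i h_r(\al_i)/(|X|h_r(1))$ and the stated identity.

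I do not expect a genuine obstacle here; the only thing demanding care is the bidegree bookkeeping, which is precisely what forces the two different design strengths, plus the observation that the degree-$(s-1)$ reduction is legitimate when $0\in A$ and that $i=s$ is either trivial or subsumed (one gets $f'_s\equiv 0$). Implicitly $r$ is assumed to range over the indices for which $E'_r$ is one of the idempotents of $\spn(\cA')$, so that $h_r$ actually occurs in the expansion of $f_i$ (resp. $f'_i$); for such $r$ the argument runs verbatim.
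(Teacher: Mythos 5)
Your proposal is correct and follows essentially the same route as the paper's proof: build the interpolating polynomial $f_i$ (of degree $s-1$ when $0$ is in the degree set), pass to its zonal polynomial in $\Hom(s+1,s)$, identify $(A'_iE'_r)_{a,b}$ with $\ip{h_{r,b}}{f_{i,a}}_X$, use the design hypothesis to replace the discrete average by the true inner product, and extract $c_r$ from the diagonal entries exactly as in Lemma~\ref{lem:idemproj}. Your explicit bidegree bookkeeping justifying the two different design strengths is a welcome elaboration of what the paper leaves implicit.
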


\begin{proof}
Let $f_i$ be the unique polynomial of degree $s$ such that $f_i(\al_j) := \de_{ij}$. If $0$ is in the degree set of $X$, then we may ignore the value of $f_i$ at $0$ and assume $f_i$ has degree $s-1$. Then let $f_{i,a}$ be the zonal polynomial of $f_i$ at pole $a$ in $\Harm(s+1,s)$. That is,
\[
f_{i,a}(z) = \begin{cases}
(a^*z), & \abs{a^*z}^2 = \al_i; \\
0, & \text{otherwise.}
\end{cases}
\]
Then
\[
(A'_iE'_r)_{a,b} = \ip{h_{r,b}}{f_{i,a}}_X,
\]
and the proof now follows that of Lemma \ref{lem:idemproj}. \qed
\end{proof}

\begin{comment}
The details: 
\begin{align*}
(A'_iE'_r)_{a,b} & = \frac{1}{|X|}\sum_{\abs{a^*c}^2 = \al_i} (a^*c)(c^*b)h_r(\abs{b^*c}^2) \\
& = \ip{h_{r,b}}{f_{i,a}}_X.
\end{align*}
When $X$ is an $(s+r+1)$-design, this equals $\ip{h_{r,b}}{f_{i,a}}$. Now express $f_i$ in terms of the Jacobi polynomials, say
\[
f_i(x) = \sum_{l=0}^s c_l h_l(x).
\]
Since $h_{r,a}$ is orthogonal to all zonal polynomials of lower degree, we have
\begin{align*}
\ip{h_{r,b}}{f_{i,a}} & = \sum_{l=0}^s c_l \ip{h_{r,b}}{h_{l,a}} \\
& = c_r (a^*b) h_r(\abs{a^*b}^2).
\end{align*}
To find $c_r$, consider the diagonal entries: 
\[
(A'_iE'_r)_{a,a} = \frac{1}{|X|}v_i \al_i h_r(\al_i) = c_r h_r(1).
\]
Thus $c_r = v_i \al_i h_r(\al_i)/|X|h_r(1)$, and the result follows. \qed
\end{comment}

\section{Real bounds}
Essentially all of the results in this chapter have analogues for real projective lines. Over the reals, we work in $\Hom(k)$, the set of polynomials from $\re^d$ to $\re$ which are homogenous of degree $k$. The Laplacian is the real restriction of the complex Laplacian, namely
\[
\De := \frac{\dde^2}{(\dde x_1)^2} + \ldots + \frac{\dde^2}{(\dde x_d)^2}.
\]
The group of unitary transformations is replaced with the group of orthogonal transformations. The harmonic polynomials $\Harm(k)$ are the elements $f$ of $\Hom(k)$ satisfying $\De f = 0$. 

Compared to the complex situation, harmonic functions over the reals are well-studied; see for example Axler, Bourdon, and Ramey \cite{abr} for the standard results. In particular, letting
\[
|x|^2 = x_1^2 + \ldots + x_n^2,
\]
we have the following analogue of Theorem \ref{thm:homdecomp}.
\begin{theorem}
\[
\Hom(k) = \Harm(k) \oplus |x|^2 \Hom(k-2). \qed
\]
\end{theorem}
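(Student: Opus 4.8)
The plan is to transcribe the proof of Theorem~\ref{thm:homdecomp} verbatim, replacing $Z$ by $|x|^2$, the complex Laplacian by the real one, and the Product Rule by its real form $\De(fg) = f\De g + g\De f + 2\,\nabla f\cdot\nabla g$. The only computation needed is the real analogue of Lemma~\ref{lem:dez}. Since $\De|x|^2 = 2d$ and, by Euler's identity for a homogeneous $f\in\Hom(k)$, $\nabla|x|^2\cdot\nabla f = 2(x\cdot\nabla f) = 2kf$, one gets $[\De,|x|^2]f = (2d+4k)f$. A short induction (identical to the one in Lemma~\ref{lem:dez}, using $\De|x|^{2r} = 2r(d+2r-2)|x|^{2r-2}$) then shows that for harmonic $f\in\Hom(k)$ there is a constant
\[
c_r = \prod_{j=1}^{r} 2j\,(d+2k+2j-2),
\]
which is strictly positive for $d\ge 1$, such that $\De^r|x|^{2r}f = c_r f$.

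For existence I would induct on the least $q$ with $\De^q f = 0$. Given $f\in\Hom(k)$ with $\De^{q+1}f = 0$, the polynomial $\De^q f$ is harmonic, so applying the computation above to it yields $\De^q\bigl(c_q - |x|^{2q}\De^q\bigr)f = c_q\De^q f - \De^q|x|^{2q}(\De^q f) = 0$. By the induction hypothesis $\bigl(c_q - |x|^{2q}\De^q\bigr)f = g + |x|^2 h$ with $g\in\Harm(k)$ and $h\in\Hom(k-2)$, and rearranging gives $c_q f = g + |x|^2\bigl(h + |x|^{2q-2}\De^q f\bigr)$. Since $c_q\ne 0$, this exhibits $f$ as an element of $\Harm(k) + |x|^2\Hom(k-2)$.

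For directness of the sum, suppose $0 = g + |x|^2 h$ with $g\in\Harm(k)$. Applying the decomposition repeatedly to $h$ produces $0 = g_0 + |x|^2 g_1 + \dots + |x|^{2s}g_s$ with each $g_i$ harmonic, and we may assume $g_s\ne 0$. Applying $\De^s$ and using $\De^i(|x|^{2i}g_i) = c_i g_i$ together with $\De g_i = 0$, every summand with $i<s$ is killed and only $c_s g_s$ survives, forcing $c_s g_s = 0$ — a contradiction. I do not expect a genuine obstacle: the argument is a word-for-word copy of Theorem~\ref{thm:homdecomp} (and one could alternatively derive the real statement by restricting the complex one), with the single point of care being that the constants $c_r$ are nonzero, which the real product rule makes transparent. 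As a sanity check, $\dim\Hom(k) = \binom{d+k-1}{d-1}$, so the decomposition forces $\dim\Harm(k) = \binom{d+k-1}{d-1} - \binom{d+k-3}{d-1}$, the expected value.
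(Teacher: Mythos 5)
Your proof is correct, and it is essentially the route the paper intends: the paper states this real decomposition without proof, deferring to Axler, Bourdon, and Ramey, so the substance of your proposal is checking that the argument for Theorem~\ref{thm:homdecomp} transposes to the real setting without incident — and it does. Your constants are right: $[\De,|x|^2]f=(2d+4k)f$ on $\Hom(k)$, $\De|x|^{2r}=2r(d+2r-2)|x|^{2r-2}$, and hence $\De^r|x|^{2r}f=\prod_{j=1}^{r}2j\,(d+2k+2j-2)\,f$ for harmonic $f\in\Hom(k)$. The one point worth making explicit is the nonvanishing check when this is applied to $\De^qf\in\Hom(k-2q)$ in the existence induction: the relevant factors become $2j\,(d+2(k-2q)+2j-2)$ for $1\le j\le q$, and since $\De^qf\ne 0$ forces $k-2q\ge 0$, each factor is at least $2d>0$ (in the complex proof this is hidden inside the factorial formula). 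The uniqueness step is the same telescoping argument as in the paper, and your dimension count $\dim\Harm(k)=\binom{d+k-1}{d-1}-\binom{d+k-3}{d-1}$ agrees with what the paper records immediately after the theorem. The alternative you mention — restricting the complex statement to the reals — is also the paper's own remark about the real bounds, but for this particular theorem your direct adaptation is the cleaner and more self-contained choice.
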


Since the dimension of $\Hom(k)$ is ${d+k-1 \choose d-1}$, we conclude that 
\[
\dim(\Harm(k)) = {d+k-1 \choose d-1} - {d+k-3 \choose d-1}.
\]
Let $\Om$ denote the unit sphere in $\re^d$, and let $\om$ denote the unique measure which is invariant with respect to orthogonal transformations. The inner product on real functions is 
\[
\ip{f}{g} := \int_\Om f(x)g(x) d\om(x),
\]
and if $f$ is in $\Harm(k)$ and $g$ is in $\Hom(k-2)$, then $f$ and $g$ are orthogonal. Using zonal polynomials, we can get an explicit formula for the projection from $\Hom(k)$ to $\Harm(k)$. The Jacobi polynomials for the reals are (see \cite[Theorem 5.38]{abr})
\[
g_k(x) := (d+2k-2) \sum_{r=0}^{\lf r/2 \rf} (-1)^r \frac{(d+2k-2r-4)!!}{r!(k-2r)!(d-2)!!}x^{k-2r}.
\]
These are the unique polynomials such that the induced zonal polynomials $g_{k,a}$ satisfy
\[
\ip{g_{k,a}}{p} = p(a)
\]
for every $a \in \Om$ and $p(x) \in \Harm(k)$.

With these fundamentals in place, we can establish bounds for real projective $s$-distance sets and $t$-designs. Define an $s$-distance set as a set such that the degree set
\[
A := \{(x^Ty)^2 : x,y \in X, x \neq y \}
\]
has size $s$, and a $t$-design as a set $X$ such that for every $f$ in $\Hom(2t)$, 
\[
\ip{1}{f}_X = \ip{1}{f}.
\]

This type of real $t$-design is in some way a generalization of a $t$-$(v,k,\la)$ block design: if $X$ is the set of characteristic vectors of the blocks of a $t$-$(v,k,\la)$ design, and $\Om$ is the set of all $\{0,1\}$ vectors in $\re^v$ with $k$ ones, then for all $f \in \Hom(t)$ the average value of $f$ over $X$ is the same as the average value of $f$ over $\Om$.

\begin{theorem} \label{thm:realabs}
If $X$ is a real $s$-distance set, then
\[
|X| \leq \dim(\Hom(2s)),
\]
with equality if and only if $X$ is a $2s$-design. 
If $X$ is a $2t$-design, then
\[
X| \geq \dim(\Hom(2t)),
\]
with equality if and only if $X$ is an $t$-distance set. 
\end{theorem}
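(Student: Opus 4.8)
The plan is to transcribe the complex arguments of Sections~\ref{sec:sdist}--\ref{sec:absrel} into the real setting, using the real harmonic decomposition $\Hom(k) = \Harm(k) \oplus |x|^2\Hom(k-2)$ and the real Jacobi polynomials $g_k$ recorded above in place of Theorem~\ref{thm:homdecomp} and the complex Jacobi polynomials. For the upper bound I would imitate Theorem~\ref{thm:sdistbnd}: let $f(t) := \prod_{\al \in A}(t^2 - \al)$ be the annihilator of the degree set $A$, a polynomial of degree $2s$ in $t$, and for each $v \in X$ form the zonal polynomial $f_v(z) := f(v^Tz)$, which after padding with powers of $|x|^2$ lies in $\Hom(2s)$. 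Since $f_u(v) = 0$ for $u \neq v$ while $f_v(v) \neq 0$, the $f_v$ are linearly independent, so $|X| \leq \dim(\Hom(2s))$. For the lower bound I would imitate Theorem~\ref{thm:tdesbnd}: take an orthogonal basis $S_1, \ldots, S_N$ of $\Hom(2t)$; then $S_iS_j \in \Hom(4t)$, and since a $2t$-design (which averages all of $\Hom(4t)$ correctly) gives $\ip{S_i}{S_j}_X = \ip{S_i}{S_j}$, the $S_i$ restrict to orthogonal, hence independent, functions on $X$, so $|X| \geq N = \dim(\Hom(2t))$.

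For the equality conditions, one implication in each statement is immediate once both inequalities are in hand: if $X$ is simultaneously an $s$-distance set and a $2s$-design then the upper bound gives $|X| \leq \dim(\Hom(2s))$ and the lower bound (with $t=s$) gives $|X| \geq \dim(\Hom(2s))$, and symmetrically for the $t$-design statement. The substantial direction is the converse, which I would handle exactly as Corollaries~\ref{cor:distbnddes} and \ref{cor:desbnddist}. The one new wrinkle is that in the real line setting only the \emph{even} harmonic spaces appear, $\Hom(2s) = \Harm(2s) \oplus \Harm(2s-2) \oplus \cdots \oplus \Harm(0)$, so the relevant ``Jacobi sum polynomial'' is $p_s := \sum_{j=0}^{s} g_{2j}$ (an even polynomial of degree $2s$ in $t$, equivalently a polynomial $\tilde p_s$ of degree $s$ in $t^2$), with $p_s(1) = \sum_j \dim(\Harm(2j)) = \dim(\Hom(2s))$ and $\ip{p_{s,a}}{h} = h(a)$ for every $h \in \Hom(2s)$.

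Granting this, the two hard implications run as follows. Suppose $X$ is an $s$-distance set with $|X| = \dim(\Hom(2s))$; then the $f_v$ span $\Hom(2s)$, and since $\ip{p_{s,a}}{f_b} = f_b(a)$ vanishes for $a \neq b$, the $p_{s,a}$ form a second basis, forcing $p_{s,a}$ to be a scalar multiple of $f_a$ and hence $\tilde p_s$ to vanish on $A$. Feeding $F = p_s$ into the real analogue of Theorem~\ref{thm:sdistrel} (whose proof needs only the nonnegativity $\sum_{a,b \in X} g_r(a^Tb) = \ip{\sum_{a\in X} g_{r,a}}{\sum_{a\in X} g_{r,a}} \geq 0$, the real version of Lemma~\ref{lem:jacobsum}) gives equality in that bound, and the equality condition (as in Corollary~\ref{cor:sdistrel}) forces $\sum_{a \in X} g_{2j,a} = 0$ for $1 \leq j \leq s$, i.e. $\sum_{a \in X}\phi(a) = 0$ for all $\phi \in \Harm(2j)$; combined with the fact that $\{p_{s,a}\}$ and $\{f_b\}$ are bases of $\Hom(2s)$ whose pairwise products lie in $\Hom(4s)$, this yields $\ip{1}{f}_X = \ip{1}{f}$ for all $f \in \Hom(4s)$, i.e. $X$ is a $2s$-design. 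Conversely, suppose $X$ is a $2t$-design with $|X| = \dim(\Hom(2t))$; set $E_j := \frac{1}{|X|}(g_{2j}(a^Tb))_{a,b \in X}$, which the real analogue of Lemma~\ref{lem:desidem} shows are orthogonal idempotents for $0 \leq j \leq t$. Since $|X| = \dim(\Hom(2t)) = \sum_{j=0}^t g_{2j}(1)$, the idempotent $\sum_{j=0}^t E_j$ has all diagonal entries $1$ and must therefore equal $I$; reading off an off-diagonal entry shows $\tilde p_t$ vanishes at every value $(a^Tb)^2$ occurring in $X$, and since $\tilde p_t$ has degree $t$ with $\tilde p_t(1) \neq 0$ there are at most $t$ such values, so $X$ is a $t$-distance set. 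The main obstacle is not any single step but the bookkeeping binding them together: tracking the squared variable $(x^Ty)^2$ against the polynomials $g_k$ in $x^Ty$, carrying the even-degree restriction correctly through every intermediate lemma, and verifying that the real Jacobi polynomials of the excerpt genuinely satisfy the reproducing and orthogonality identities needed to make the real analogues of Lemma~\ref{lem:jacobsum}, Theorem~\ref{thm:sdistrel}, and Lemma~\ref{lem:desidem} valid.
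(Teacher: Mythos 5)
Your proposal is correct and follows the route the paper itself intends: the paper gives no written proof of Theorem~\ref{thm:realabs}, remarking only that the absolute and relative real bounds ``can be proved by restricting the complex case to the reals,'' and your transcription of the arguments of Theorems~\ref{thm:sdistbnd} and \ref{thm:tdesbnd} and Corollaries~\ref{cor:distbnddes} and \ref{cor:desbnddist} into the real setting — using the decomposition $\Hom(k) = \Harm(k) \oplus |x|^2\Hom(k-2)$, the real Jacobi polynomials, and the even Jacobi sum $\sum_j g_{2j}$ — is precisely that restriction carried out in detail. The bookkeeping you flag (the squared variable $(x^Ty)^2$ versus the degree-$2s$ annihilator in $x^Ty$, and the fact that a real $t$-design integrates $\Hom(2t)$ by the paper's definition) is handled correctly.
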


\begin{comment}
It is more standard to define the degree set of $X$ to be $\{x^Ty: x \neq y \in X\}$, and define $X$ to be a $t$-design if $\ip{1}{f}_X = \ip{1}{f}$ for all $f$ in $\Hom(k)$, $k \leq t$. Then the results are as follows: an $s$-distance set satisfies
\[
|X| \leq \dim(Hom(s) \oplus Hom(s-1)),
\]
while a $t$-design satisfies
\[
|X| \geq \dim(Hom(\lc t-1/2 \rc) \oplus \Hom(\lf t-1/2 \rf).
\]
There is no special role for $0$ in the degree set in this situation.
\end{comment} 

Both the absolute bounds above and the relative bounds below can be proved by restricting the complex case to the reals. 

\begin{theorem} \label{thm:realrel}
Let $X \sbs \Om$ be an $s$-distance set, and let $F(x)  = \sum c_r g_r(x)$ be a real polynomial with $c_0 > 0$. If $F(\al_i) \leq 0$ for each $\al_i \in A$ and $c_r \geq 0$ for each $r$, then
\[
|X| \leq F(1)/c_0.
\]
If $X$ is a $t$-design, $F(\al) \geq 0$ for every $\al \in A$, and $c_r \leq 0$ for $r > t$, then 
\[
|X| \geq F(1)/c_0.
\]
\end{theorem}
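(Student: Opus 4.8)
The plan is to transcribe the proofs of Theorem~\ref{thm:sdistrel} and Theorem~\ref{thm:tdesrel} to the real sphere --- this is what is meant by restricting the complex case to the reals, since the needed machinery is already available: the decomposition $\Hom(k) = \Harm(k) \oplus |x|^2 \Hom(k-2)$, the reproducing zonal polynomials $g_{k,a}$ with $\ip{g_{k,a}}{p} = p(a)$ for $p \in \Harm(k)$, the orthogonality of harmonic spaces of distinct degrees, and the normalization $g_k(1) = \dim(\Harm(k))$ (so in particular $g_0 = 1$).

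First I would establish the real analogue of Lemma~\ref{lem:jacobsum}: for any finite $X \sbs \Om$ and any $r$, since $g_r((a^Tb)^2) = g_{r,b}(a) = \ip{g_{r,a}}{g_{r,b}}$ by the reproducing property,
\[
\sum_{a,b \in X} g_r((a^Tb)^2) = \ip{\sum_{a \in X} g_{r,a}}{\sum_{a \in X} g_{r,a}} \geq 0,
\]
with equality iff $\sum_{a \in X} g_{r,a} = 0$. Dividing by $|X|$, this says $\sum_{a \in X} \ip{1}{g_{r,a}}_X \geq 0$ for every $r$.

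For the first (upper) bound, let $F_a$ denote the zonal polynomial induced by $F$ with pole $a$, padded with powers of $|x|^2$ so that it is homogeneous; then $F_a(b) = F((a^Tb)^2)$. For $b \neq a$ in $X$ we have $(a^Tb)^2 \in A$, so $F_a(b) \leq 0$, while $F_a(a) = F(1)$; summing over $b \in X$ gives $|X|\ip{1}{F_a}_X \leq F(1)$. Averaging over $a \in X$ and writing $F = \sum_r c_r g_r$,
\[
F(1) \geq \sum_{a \in X} \ip{1}{F_a}_X = \sum_r c_r \sum_{a \in X} \ip{1}{g_{r,a}}_X \geq c_0\,|X|,
\]
using the positivity of each inner sum, $c_r \geq 0$, and the fact that the $r = 0$ sum equals $|X|$ since $g_0 = 1$. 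As $c_0 > 0$, this is $|X| \leq F(1)/c_0$. For the second (lower) bound the sign of $F$ on $A$ is reversed, so now $F_a(b) \geq 0$ for $b \neq a$ and $|X|\ip{1}{F_a}_X \geq F(1)$; after averaging,
\[
F(1) \leq \sum_r c_r \sum_{a \in X} \ip{1}{g_{r,a}}_X.
\]
For $1 \leq r \leq t$ the inner sum vanishes because $g_{r,a}$ is a nonconstant harmonic polynomial and $X$ is a $t$-design; for $r > t$ the inner sum is nonnegative while $c_r \leq 0$ by hypothesis, so that term is $\leq 0$; and the $r = 0$ term contributes $c_0|X|$. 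Hence $F(1) \leq c_0|X|$, i.e.\ $|X| \geq F(1)/c_0$.

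There is no substantial obstacle: the argument is a routine translation of the complex proofs. The only points needing a little care are the degree bookkeeping against the real convention for $t$-designs --- so that a $t$-design annihilates exactly the zonal harmonics $g_{1,a}, \dots, g_{t,a}$ --- and verifying that padding $F_a$ (and each $g_{r,a}$) with $|x|^2$ alters neither their values on $\Om$ nor their harmonic decompositions.
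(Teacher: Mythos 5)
Your proof is correct and is exactly the argument the paper intends: the paper gives no written proof of Theorem~\ref{thm:realrel}, remarking only that the relative bounds ``can be proved by restricting the complex case to the reals,'' and your transcription of the proofs of Theorems~\ref{thm:sdistrel} and~\ref{thm:tdesrel} using the real harmonic decomposition, the reproducing zonal polynomials, and the real $t$-design convention is precisely that restriction, with the two delicate points (degree bookkeeping for designs and padding by $|x|^2$) correctly identified and handled.
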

Just as in the complex case, by defining a set of Schur-idempotent matrices in terms of the distances in $X$, we get an algebra whose eigenvalues are the values of the Jacobi polynomials. Over the complex numbers, by considering the zonal polynomials in $\Harm(k+1,k)$ instead of $\Harm(k,k)$, we established additional results particular to when $0$ is in the degree set. Over the reals, we consider $\Harm(2k+1)$ instead of $\Harm(2k)$, and the results are similar. 

Delsarte, Goethals, and Seidel also considered the case of non-projective vectors on the real unit sphere. If we define the degree set of $X$ to be 
\[
A := \{x^Ty: x,y \in X, x \neq y \},
\]
then an $s$-distance set in $\re^d$ satisfies
\[
|X| \leq \dim(\Hom(s) \oplus \Hom(s-1)).
\]
If we define a $t$-design to be a set $X$ such that $\ip{1}{f}_X = \ip{1}{f}$ for all $f$ in $\Hom(k)$, $k \leq t$, then a $t$-design satisfies
\[
|X| \geq \dim(\Hom(\lc t-1/2 \rc) \oplus \Hom(\lf t-1/2 \rf).
\]
In this situation, the results are no stronger when $0$ is in the degree set.

\chapter{General Constructions}\label{chap:constructions}

In this chapter, we present several general constructions for $s$-distance sets, which we will eventually specialize to mutually unbiased bases and equiangular lines. We begin with difference sets in abelian groups, and then relate those difference sets to Cayley graphs. Finally, we describe two ways to obtain $s$-distance sets from linear codes: one using coset graphs, and the other mapping codewords directly to complex lines. 

The results in this chapter are new unless otherwise noted, although specific instances of these constructions have been applied in several situations. Calderbank, Cameron, Kantor and Seidel \cite{ccks} used Kerdock codes to construct mutually unbiased bases, while Delsarte and Goethals \cite{dg1} used BCH codes with three non-trivial weights to construct generalized Hadamard matrices. 

\begin{comment}
\section{Equivalence}\label{sec:equiv}

Before presenting some constructions, we consider what it means for two sets of lines to be equivalent.

First, two lines $x$ and $y$ on the unit sphere in $\cx^d$ are considered the same if they are equal projectively: they span the same one-dimensional subspace. Often, we will move back and forth between projective and affine space without comment. Note that the angle $\abs{x^*y}^2$ does not depend on the choice of unit vector $x$ within the subspace $\lb x \rb$.

Two sets of complex lines $v_1,\ldots,v_n$ and $w_1,\ldots,w_n$ are \textsl{equivalent} if there is a unitary matrix $U$ which maps $V$ to $W$:
\[
\{w_1,\ldots,w_n\} = \{Uv_1,\ldots,Uv_n\}.
\]
Note that the matrix $U$ preserves angles: for any $x$ and $y$,
\[
\abs{\cip{x}{y}} = \abs{\cip{(Ux)}{(Uy)}}.
\]
For mutually unbiased bases, two set of bases $B_1,\ldots,B_k$ and $C_1,\ldots,C_k$ are equivalent if some unitary $U$ maps each $B_i$ to some $C_j$.
\end{comment}

\section{Difference sets}\label{sec:diffsets}

Let $G$ be an abelian group which is written multiplicatively. We work in the group algebra of $G$; denote the identity of $G$ by $1_G$ and identify a subset $D$ of $G$ with its sum in the algebra:
\[
D = \sum_{g \in D} g.
\]
Also let $D^{-1}$ denote the sum of the inverses of $D$:
\[
D^{-1} := \sum_{g \in D} g^{-1}.
\]
Then $DD^{-1}$ is called the set of \defn{differences} of $D$. Informally, $D$ is a difference set if $DD^{-1}$ has some sort of regular structure. For example, $DD^{-1}$ is a \textsl{$(v,k,\la)$-difference set}\index{difference set} if $G$ has size $v$, $D$ has size $k$, and 
\[
DD^{-1} = k1_g + \la(G\backslash\{1_g\}).
\]
If $\chi$ is a character of $G$, then extending linearly $\chi$ can be evaluated at any element of the group algebra. That is, if $x = \sum_{g \in G}c_g g$ for some constants $c_g$, then
\nomenclature[\x]{$\chi$}{character of a group}%
\[
\chi(x) := \sum_{g \in G}c_g \chi(g).
\]

\begin{lemma} \label{lem:diffdist}
Let $G$ be an abelian group of size $v$ and let $D$ be a subset of $G$ of size $d$ such that $D$ generates $G$, and $DD^{-1}$ takes exactly $s$ distinct values on the nontrivial characters of $G$. Then there is an $s$-distance set of size $v$ in $\cx^d$.
\end{lemma}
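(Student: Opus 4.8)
The plan is to build the $s$-distance set explicitly from the irreducible characters of $G$. Since $G$ is abelian of order $v$, it has exactly $v$ irreducible characters $\chi_1 = 1, \chi_2, \ldots, \chi_v$, each one-dimensional, and they form an orthogonal basis for the functions on $G$. For each character $\chi$ I would define a vector $w_\chi \in \cx^d$ by indexing coordinates by the elements of $D$, say $D = \{g_1, \ldots, g_d\}$, and setting $(w_\chi)_j := \frac{1}{\sqrt{d}}\,\chi(g_j)$. Since $|\chi(g)| = 1$ for every $g$, each $w_\chi$ is a unit vector. The candidate $s$-distance set is $X = \{w_{\chi} : \chi \text{ a character of } G\}$, which has $v$ elements (I should check they are distinct as lines; this uses that $D$ generates $G$, see below).

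The key computation is the inner product. For two characters $\chi, \psi$,
\[
w_\chi^* w_\psi = \frac{1}{d}\sum_{j=1}^d \conj{\chi(g_j)}\,\psi(g_j) = \frac{1}{d}\sum_{g \in D} (\conj{\chi}\psi)(g) = \frac{1}{d}\,(\chi^{-1}\psi)(D),
\]
using that $\conj{\chi(g)} = \chi(g)^{-1} = \chi^{-1}(g)$. Writing $\phi = \chi^{-1}\psi$, this is $\frac{1}{d}\phi(D)$, and hence the angle is
\[
\abs{w_\chi^* w_\psi}^2 = \frac{1}{d^2}\abs{\phi(D)}^2 = \frac{1}{d^2}\,\phi(D)\,\conj{\phi(D)} = \frac{1}{d^2}\,\phi(D)\,\phi(D^{-1}) = \frac{1}{d^2}\,\phi(DD^{-1}),
\]
where the last step extends $\phi$ linearly to the group algebra and uses $\conj{\phi(D)} = \phi(D^{-1})$. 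Now $\phi = \chi^{-1}\psi$ runs over all nontrivial characters of $G$ as $\chi \neq \psi$ range over distinct characters, so by hypothesis $\phi(DD^{-1})$ takes exactly $s$ distinct values; therefore $\abs{w_\chi^* w_\psi}^2$ takes exactly $s$ distinct values, and $X$ is an $s$-distance set.

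It remains to confirm $|X| = v$, i.e.\ that distinct characters give distinct lines. If $w_\chi$ and $w_\psi$ span the same line, then $\abs{w_\chi^*w_\psi}^2 = 1$, so $\frac{1}{d^2}\phi(DD^{-1}) = 1$ with $\phi = \chi^{-1}\psi$; tracking the equality case of $\abs{\phi(D)} \le \sum_{g\in D}\abs{\phi(g)} = d$ forces $\phi(g)$ to be the same unit scalar for every $g \in D$, and after rescaling $\phi$ is trivial on all of $D$. Since $D$ generates $G$, $\phi$ is the trivial character, so $\chi = \psi$. This is the one place the generating hypothesis is used, and it is the step I expect to require the most care (the equality analysis of the triangle inequality for sums of complex units), though it is not deep. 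The rest is a direct character-sum calculation; no appeal to the earlier machinery on zonal polynomials or configurations is needed for this particular lemma.
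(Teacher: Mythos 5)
Your proof is correct in its essentials and follows exactly the same route as the paper: restrict each character of $G$ to $D$ to get a vector in $\cx^d$, compute $w_\chi^*w_\psi$ as the character sum $\phi(D)$ with $\phi = \chi^{-1}\psi$, and observe that $\abs{\phi(D)}^2 = \phi(DD^{-1})$ takes only $s$ values. One small caveat on the distinctness check you added (which the paper itself omits): from the equality case of the triangle inequality you get that $\phi$ is \emph{constant} on $D$, and "rescaling" does not produce a character, so what follows is only that $\phi$ is trivial on the subgroup generated by $DD^{-1}$ --- a character that is constant but nontrivial on a generating set $D$ exists (e.g.\ $D$ a single generator of a cyclic group), and then two characters give the same projective line; this degeneracy corresponds to the excluded angle $1$ and does not arise in the intended applications, but your argument as written does not rule it out.
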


\begin{proof}
Let $\chi_a$ be a character of $G$ and $v_a$ the restriction of $\chi_a$ to $D$, written as a vector in $\cx^{|D|}$. Then
\begin{align*}
\cip{v_a}{v_b} & = \sum_{g \in D} \conj{\chi_a(g)}\chi_b(g) \\
& = \sum_{g \in D} \chi_{ba^{-1}}(g),
\end{align*}
where $\chi := \chi_{ba^{-1}}$ is another character of $G$. In terms of the group algebra, this sum is simply $\chi(D)$. Since 
\[
\conj{\chi(g)} = \chi(g^{-1}),
\]
the absolute value of this sum is
\[
\chi(D)\conj{\chi(D)} = \chi(DD^{-1}).
\]
Thus if $\chi(DD^{-1})$ takes only $s$ different values, then the lines $v_a$ form an $s$-distance set. \qed
\end{proof}

\begin{corollary} \label{cor:diffequi}
If $D$ is a $(v,k,\la)$-difference set, then there is a set of $(k^2-k+\la)/\la$ equiangular lines in $\cx^k$.
\end{corollary}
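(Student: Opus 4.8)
The plan is to apply Lemma~\ref{lem:diffdist} directly, with the difference set $D$ itself playing the role of the generating set; the only work is to check the two hypotheses of that lemma and to compute the size of the resulting set.

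First I would verify that $D$ generates $G$. Since $D$ is a $(v,k,\lambda)$-difference set with $\lambda \geq 1$, the identity $DD^{-1} = k1_G + \lambda(G \setminus \{1_G\})$ shows that every non-identity $g \in G$ arises as a difference $d_1 d_2^{-1}$ with $d_1,d_2 \in D$; hence $g$ lies in the subgroup generated by $D$, and so $D$ generates $G$, as required.

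Next I would evaluate $\chi(DD^{-1})$ on the characters of $G$. For the trivial character, $\chi(D) = k$, so $\chi(DD^{-1}) = k^2$, while directly $\chi\bigl(k1_G + \lambda(G\setminus\{1_G\})\bigr) = k + \lambda(v-1)$; equating the two gives the standard parameter relation $k^2 - k = \lambda(v-1)$, that is,
\[
v = \frac{k^2 - k + \lambda}{\lambda}.
\]
For a non-trivial character $\chi$ one has $\chi(G) = 0$, hence
\[
\chi(DD^{-1}) = k + \lambda\bigl(\chi(G) - \chi(1_G)\bigr) = k - \lambda,
\]
which is a single value independent of $\chi$. Thus $DD^{-1}$ takes exactly $s = 1$ value on the non-trivial characters of $G$.

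Now Lemma~\ref{lem:diffdist}, applied with $d = k$ and $s = 1$, produces a $1$-distance set of size $v = (k^2 - k + \lambda)/\lambda$ in $\cx^k$; a $1$-distance set is precisely a set of equiangular lines (the common angle being $(k-\lambda)/k^2$ after normalizing the vectors $v_a$, which have squared norm $k$, and note $(k-\lambda)/k^2 < 1/k$ since $\lambda \geq 1$). This completes the proof. I expect essentially no obstacle here: all the substance is contained in Lemma~\ref{lem:diffdist}, and the only mildly non-automatic point is extracting the count $v = (k^2-k+\lambda)/\lambda$, which follows either from the trivial-character computation above or, equivalently, from counting the $k^2$ terms of $DD^{-1}$, of which exactly $k$ equal $1_G$ and the remaining $k^2-k$ are distributed $\lambda$ to each of the $v-1$ non-identity elements.
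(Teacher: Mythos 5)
Your proof is correct and follows essentially the same route as the paper: evaluate $\chi(DD^{-1}) = k - \la$ on the nontrivial characters (and $k^2$ on the trivial one), invoke Lemma~\ref{lem:diffdist} with $s=1$, and read off $v = (k^2-k+\la)/\la$ from the parameter relation. The only additions beyond the paper's argument are your explicit check that $D$ generates $G$ and the computation of the normalized angle $(k-\la)/k^2$, both of which are fine.
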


\begin{proof}
If $D$ is a $(v,k,\la)$-difference set, then
\[
DD^{-1} = (k-\la)1_G + \la G,
\]
and consequently $v = (k^2 -k +\la)/\la$. Now consider the value of $DD^{-1}$ evaluated at a character $\chi$. If $\chi$ is the trivial character, then $\chi(DD^{-1}) = k^2$. Note that $\chi_{ba^{-1}}$ is trivial only when $a = b$. Otherwise, $\chi(G) = 0$ and $\chi(DD^{-1}) = k-\la$. Thus for every $a \neq b$, the absolute value of the angle between $v_a$ and $v_b$ is a constant. Normalizing so that these vectors become unit vectors, we have a set of equiangular lines. \qed
\end{proof}

\section{Graphs}\label{sec:graphs}
The results of the previous section can also be described in the language of graph theory. 

When $G$ is a group and $D$ is a subset of $G$, let $\Cay(G,D)$ denote the \defn{Cayley digraph} of $G$ with connection set $D$: the graph with vertex set $G$ and arc set 
\nomenclature[a$X]{$\Cay(G,D)$}{Cayley graph}% 
\[
E := \{(x,x+d): x \in G, d \in D\}.
\]
If $D$ is inverse-closed and $0 \notin D$, then $\Cay(G,D)$ is a graph. It is a standard result (see Godsil \cite[Section 12.9]{blue}, for example) that the eigenvalues of $\Cay(G,D)$ have an explicit formula in terms of characters.

\begin{lemma} 
If $\chi$ is a character of $G$, then $\chi$ is an eigenvector of $\Cay(G,D)$ with eigenvalue $\chi(D)$.
\end{lemma}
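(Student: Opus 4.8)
The plan is to regard the character $\chi$ as the column vector indexed by $G$ whose entry in position $x$ is $\chi(x)$, and to apply the adjacency matrix $A$ of $\Cay(G,D)$ to it directly. Recall that $A_{x,y}=1$ exactly when $(x,y)$ is an arc, that is, when $y=x+d$ for some $d\in D$ (equivalently $y-x\in D$), and $A_{x,y}=0$ otherwise; since $D$ is inverse-closed and $0\notin D$ this $A$ is a genuine symmetric $0$-$1$ adjacency matrix, so the assertion that $\chi$ is an ``eigenvector'' is to be read in the usual linear-algebra sense.

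First I would compute the $x$-th entry of $A\chi$, namely
\[
(A\chi)_x \;=\; \sum_{y\in G} A_{x,y}\,\chi(y) \;=\; \sum_{d\in D}\chi(x+d).
\]
Using that $\chi$ is multiplicative, $\chi(x+d)=\chi(x)\chi(d)$, so
\[
(A\chi)_x \;=\; \chi(x)\sum_{d\in D}\chi(d) \;=\; \chi(x)\,\chi(D),
\]
where the last equality is just the definition of $\chi(D)$ as the value of $\chi$ extended linearly to the group algebra. Since this holds for every $x\in G$, we get $A\chi=\chi(D)\,\chi$. Finally $\chi$ is not the zero vector, because $\chi(1_G)=1\neq 0$, so it is a bona fide eigenvector of $\Cay(G,D)$ with eigenvalue $\chi(D)$.

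There is essentially no obstacle here; the only points needing care are bookkeeping: that the matrix convention for $A$ matches the arc convention $E=\{(x,x+d):x\in G,\ d\in D\}$ used to define $\Cay(G,D)$, and that $\chi(D)$ in the statement denotes precisely $\sum_{d\in D}\chi(d)$. One may add, although it is not part of the claim, that because $G$ is abelian of order $v=|G|$ there are exactly $v$ distinct characters and they are pairwise orthogonal, hence linearly independent; so the vectors $\{\chi\}$ in fact form an eigenbasis and diagonalize $A$. This is the form in which the result is used when the angles in Lemma~\ref{lem:diffdist} are computed.
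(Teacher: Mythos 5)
Your proof is correct and follows essentially the same computation as the paper: expand $(A\chi)_x=\sum_{d\in D}\chi(x+d)=\chi(x)\chi(D)$ and conclude $A\chi=\chi(D)\chi$. The extra remarks about $\chi\neq 0$ and the characters forming an eigenbasis are accurate but not part of the paper's (equally brief) argument.
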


\begin{proof}
Let $A$ be the adjacency matrix of $\Cay(G,D)$ and let $E$ be the arc set. Then
\[
(A\chi)_x = \sum_{xy \in E} \chi(y) = \sum_{d \in D} \chi(x+d) = \chi(x) \sum_{d \in D} \chi(d).
\]
Thus $A\chi = \chi(D)\chi$. \qed
\end{proof}

Note that the absolute value of $\chi(D)$ is $\chi(DD^{-1})$. Combining this result with Lemma \ref{lem:diffdist}, we get an $s$-distance set from any Cayley graph. An eigenvalue of $\Cay(G,D)$ is \textsl{nontrivial} if it is not the valency $|D|$.

\begin{theorem} \label{thm:caydist}
If $\Cay(G,D)$ is connected and has exactly $s$ nontrivial eigenvalues which are distinct in absolute value, then there is an $s$-distance set of size $|G|$ in $\cx^{|D|}$. \qed
\end{theorem}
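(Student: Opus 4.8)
The plan is to deduce the statement from Lemma~\ref{lem:diffdist} together with the character formula for the eigenvalues of $\Cay(G,D)$ just established; the proof is little more than translating the hypothesis into the language of that lemma. Since the ambient group $G$ in this chapter is abelian, write $v := |G|$ and $d := |D|$, and aim to verify the two hypotheses of Lemma~\ref{lem:diffdist} for this particular $D$.

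First I would record that connectedness of $\Cay(G,D)$ forces $D$ to generate $G$: the set of vertices reachable from the identity is the subgroup $\langle D\rangle$, so connectedness gives $\langle D\rangle=G$. (The same conclusion holds whether $\Cay(G,D)$ is a graph or only a digraph, since the underlying undirected graph is $\Cay(G,D\cup D^{-1})$ and $\langle D\rangle$ already contains $D^{-1}$.) This is the ``$D$ generates $G$'' hypothesis of Lemma~\ref{lem:diffdist}. Next I would pin down the full spectrum: because $G$ is abelian it has exactly $v$ characters, which are pairwise orthogonal and hence span the $v$-dimensional space of complex functions on $G$, and by the preceding lemma each character $\chi$ is an eigenvector of $\Cay(G,D)$ with eigenvalue $\chi(D)$. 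Thus the numbers $\chi(D)$, as $\chi$ runs over all characters of $G$, form the complete list of eigenvalues; this holds even if $D$ is not inverse-closed, since a group-ring matrix over an abelian group is always diagonalized by the characters. The trivial character contributes $\chi_0(D)=d$, the valency, which is the trivial eigenvalue, so the nontrivial eigenvalues are exactly the values $\chi(D)$ taken over nontrivial characters $\chi$.

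Now I would convert the eigenvalue hypothesis into a statement about $DD^{-1}$. Using $\conj{\chi(g)}=\chi(g^{-1})$ gives $\abs{\chi(D)}^{2}=\chi(D)\conj{\chi(D)}=\chi(DD^{-1})$. By hypothesis $\chi(D)$ takes exactly $s$ values on the nontrivial characters, and these $s$ values are pairwise distinct in absolute value; since distinct non-negative reals have distinct squares, $\chi(DD^{-1})$ likewise takes exactly $s$ distinct values on the nontrivial characters of $G$. Lemma~\ref{lem:diffdist} then delivers an $s$-distance set of size $v=|G|$ in $\cx^{d}=\cx^{|D|}$, which is exactly the assertion.

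I do not anticipate a genuine obstacle, as the theorem is essentially a corollary of Lemma~\ref{lem:diffdist}. The one point that merits care is the counting in the last paragraph: one must confirm that ``distinct in absolute value'' is precisely the condition preventing two distinct nontrivial eigenvalues (for instance a complex-conjugate pair, or $\pm\lambda$) from being identified after squaring, so that the number of values of $\chi(DD^{-1})$ on nontrivial characters is exactly $s$ rather than smaller; and restricting to nontrivial characters loses nothing, since the trivial character has already been separated out as the valency.
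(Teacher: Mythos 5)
Your proposal is correct and follows exactly the route the paper takes: it treats the theorem as an immediate consequence of the character-eigenvalue lemma together with Lemma~\ref{lem:diffdist}, via the identity $\abs{\chi(D)}^2=\chi(DD^{-1})$. You have merely filled in the details (connectedness gives generation, characters give the full spectrum, distinct absolute values survive squaring) that the paper leaves implicit.
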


For any digraph $\Cay(G,D)$, there is a simple graph on twice as many vertices with essentially the same eigenvalues. Let $A$ be the adjacency matrix of $\Cay(G,D)$, and consider the simple graph with adjacency matrix
\[
B := \twomat{0}{A}{A^T}{0}.
\]
Suppose $\chi$ is an eigenvalue of $A$ with eigenvector $\chi(D)$. Since $A$ is the adjacency matrix of $\Cay(G,D)$, it follows that $A^T$ is the adjacency matrix of $\Cay(G,D^{-1})$. Thus $\chi$ is also an eigenvector of $A^T$, with eigenvalue $\chi(D^{-1}) = \conj{\chi(D)}$. 

\begin{lemma} \label{lem:simpdieig}
If $A$ is the adjacency matrix of $\Cay(G,D)$ and $B$ is the adjacency matrix of the corresponding simple graph, then for each eigenvalue $\la$ of $A$, both $\abs{\la}$ and $-\abs{\la}$ are eigenvalues of $B$. 
\end{lemma}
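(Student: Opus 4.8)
The plan is to use the fact recorded just before the statement: each character $\chi$ of $G$, regarded as a vector in $\cx^{|G|}$, is simultaneously an eigenvector of $A$ with eigenvalue $\chi(D)$ and of $A^T$ with eigenvalue $\chi(D^{-1}) = \overline{\chi(D)}$. Since the $|G|$ characters are linearly independent and $A$ is $|G|\times|G|$, they form a basis of eigenvectors of $A$, so every eigenvalue $\lambda$ of $A$ arises as $\lambda = \chi(D)$ for some character $\chi$, and that $\chi$ is nonzero and satisfies $A\chi = \lambda\chi$ and $A^T\chi = \overline{\lambda}\chi$. (Equivalently, $A$ is normal, being a sum of commuting permutation matrices, so any eigenvector of $A$ is automatically an eigenvector of $A^T = A^*$ for the conjugate eigenvalue; but the character description is the most convenient here.)

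First I would fix such a $\chi$ and $\lambda$ and look for eigenvectors of $B$ among the block vectors $\begin{pmatrix}\chi \\ c\chi\end{pmatrix}$ with $c \in \cx$. Using the block form $B = \twomat{0}{A}{A^T}{0}$,
\[
B\begin{pmatrix}\chi \\ c\chi\end{pmatrix} = \begin{pmatrix}A(c\chi) \\ A^T\chi\end{pmatrix} = \begin{pmatrix}c\lambda\chi \\ \overline{\lambda}\chi\end{pmatrix}.
\]
For this to equal $\mu\begin{pmatrix}\chi \\ c\chi\end{pmatrix}$, and since $\chi \neq 0$, we need $c\lambda = \mu$ and $\overline{\lambda} = \mu c$ as scalars; multiplying these two equations forces $\mu^2 = \lambda\overline{\lambda} = |\lambda|^2$, so necessarily $\mu = \pm|\lambda|$.

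Next I would exhibit the two eigenvectors explicitly. If $\lambda \neq 0$, then taking $c = \overline{\lambda}/|\lambda|$ gives $c\lambda = |\lambda|$ and $\mu c = \overline{\lambda}$ with $\mu = |\lambda|$, so $|\lambda|$ is an eigenvalue of $B$; taking $c = -\overline{\lambda}/|\lambda|$ gives the eigenvalue $-|\lambda|$ in the same way. In both cases $c \neq 0$, so the block vector is genuinely nonzero and is a legitimate eigenvector. If instead $\lambda = 0$, then $|\lambda| = -|\lambda| = 0$ and $B\begin{pmatrix}\chi \\ 0\end{pmatrix} = \begin{pmatrix}0 \\ \overline{\lambda}\chi\end{pmatrix} = 0$, so $0$ is an eigenvalue of $B$ and the statement again holds.

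I do not expect a real obstacle here: once the simultaneous-eigenvector fact is in hand, the proof is essentially a one-line block computation. The only points that need a little care are verifying that the chosen block vectors are nonzero (which is why one records $c\neq 0$) and separating out the degenerate case $\lambda = 0$, where $|\lambda|$ and $-|\lambda|$ coincide.
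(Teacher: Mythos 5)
Your proof is correct and follows essentially the same route as the paper's: both rest on the observation that an eigenvector $v$ of $A$ with eigenvalue $\la$ satisfies $A^Tv = \conj{\la}v$, and then exhibit eigenvectors of $B$ of block form $(v, cv)^T$ by a direct computation. If anything, your version is slightly more careful, since the paper's explicit eigenvectors $(\la v, \pm\abs{\la}v)^T$ degenerate to the zero vector when $\la = 0$, a case you handle separately.
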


\begin{proof}
Let $v$ be an eigenvector of $A$, so $Av = \la v$. Then $A^Tv = \conj{\la}v$. (To see this, note that it is true if $v$ is a character of $G$, and the characters of $G$ are a spanning set of eigenvectors.) It follows that the space spanned by $(0,v)^T$ and $(v,0)^T$ is a two-dimensional invariant subspace of $B$. Moreover,
\[
\twomat{0}{A}{A^T}{0}\twovec{\la v}{\abs{\la}v} = \abs{\la}\twovec{\la v}{\abs{\la} v},
\]
and
\[
\twomat{0}{A}{A^T}{0}\twovec{\la v}{-\abs{\la} v} = -\abs{\la}\twovec{\la v}{-\abs{\la} v}.
\]
Thus $\abs{\la}$ and $-\abs{\la}$ are eigenvalues of $B$. \qed
\end{proof}

Note that the graph for $B$ is bipartite and has an abelian group (namely $G$) acting regularly on the shores of the bipartition. If fact, this construction is reversible.

\begin{lemma} \label{lem:disimp}
Let $G$ be an automorphism group which acts regularly on each shore of a bipartite graph $\Ga$. Then there is a subset $D$ such that the absolute value of the eigenvalues of $\Cay(G,D)$ are the same as those of $\Ga$.
\end{lemma}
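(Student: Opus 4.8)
The plan is to use the regular actions of $G$ to identify both shores of $\Gamma$ with $G$ itself, after which the adjacency matrix of $\Gamma$ acquires exactly the block shape treated in Lemma~\ref{lem:simpdieig}.

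First I would choose a base vertex $\ell_{0}$ in one shore $L$ of $\Gamma$ and a base vertex $r_{0}$ in the other shore $R$. Since $G$ acts regularly on each shore, $g \mapsto g\ell_{0}$ and $h \mapsto hr_{0}$ are bijections from $G$ onto $L$ and onto $R$ respectively (so in particular $|L|=|R|=|G|$). Set
\[
D := \{\, g \in G : \ell_{0} \sim g r_{0} \ \text{in}\ \Gamma \,\}.
\]
Because every element of $G$ acts as an automorphism of $\Gamma$ fixing each shore, for all $g,h \in G$ the vertices $g\ell_{0}$ and $hr_{0}$ are adjacent in $\Gamma$ if and only if $\ell_{0}$ and $g^{-1}h\,r_{0}$ are adjacent, that is, if and only if $g^{-1}h \in D$. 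Hence, listing the vertices of $\Gamma$ as $L$ followed by $R$ under the two identifications above, the adjacency matrix of $\Gamma$ is
\[
B = \twomat{0}{A}{A^{T}}{0},
\]
where $A$ is precisely the adjacency matrix of $\Cay(G,D)$ (with arc set $\{(x,xd) : x \in G,\ d \in D\}$).

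The spectral claim now follows from Lemma~\ref{lem:simpdieig} plus a short converse. That lemma gives that $|\lambda|$ and $-|\lambda|$ are eigenvalues of $B$ for every eigenvalue $\lambda$ of $A = \Cay(G,D)$. For the reverse inclusion, $B^{2} = \twomat{AA^{T}}{0}{0}{A^{T}A}$, so every eigenvalue $\mu$ of $B$ satisfies $\mu^{2} \in \operatorname{spec}(AA^{T})$; and since $G$ is abelian, $A$ lies in the commutative group algebra of $G$ and is therefore normal, so $\operatorname{spec}(AA^{T})$ is exactly $\{\,|\lambda|^{2} : \lambda \in \operatorname{spec}(A)\,\}$ — this is the point where the characters of $G$ (as in the discussion before Lemma~\ref{lem:simpdieig}) diagonalize $A$. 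Combining the two directions shows that the set of absolute values of eigenvalues of $\Gamma$ equals the set of absolute values of eigenvalues of $\Cay(G,D)$, which is the assertion.

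The only step requiring genuine care is the identification in the second paragraph: one must check that translating the bipartite adjacency of $\Gamma$ through two a priori unrelated regular actions — one on $L$, one on $R$ — yields a single connection set $D$ with no residual ``twist'' depending on the choices of $\ell_{0}$ and $r_{0}$. This is exactly where the hypothesis that $G$ acts by graph automorphisms does the work, via the observation that left translation by $g^{-1}$ sends the edge $\{g\ell_{0}, hr_{0}\}$ to $\{\ell_{0}, g^{-1}h\,r_{0}\}$; everything after that is the bookkeeping already carried out in Lemma~\ref{lem:simpdieig}.
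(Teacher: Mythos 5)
Your proof is correct and follows essentially the same route as the paper's: identify each shore with $G$ via a base vertex, define $D$ by adjacency to the base vertex, observe that the adjacency matrix of $\Ga$ becomes $\twomat{0}{A}{A^T}{0}$ with $A$ the adjacency matrix of $\Cay(G,D)$, and invoke Lemma~\ref{lem:simpdieig}. Your extra paragraph establishing the reverse inclusion via $B^2$ and the normality of $A$ is a welcome bit of care that the paper leaves implicit, but it does not change the approach.
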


\begin{proof}
Suppose the two shores of $\Ga$ have vertex sets $Y$ and $Z$, so $G$ acts on $Y$ and $Z$. Choose $y_1 \in Y$ arbitarily, and for each $g \in G$, define $y_g$ to be $y_1^g$, the action of $g$ on $y_1$. Similarly define $z_1$ and $z_g$. Finally, let
\[
D := \{g \in G: z_g \sim y_1 \}.
\]
We claim that $\Cay(G,D)$ is the desired graph. To see this, let $B = \twomat{0}{A}{A^T}{0}$ be the adjacency matrix of $\Ga$, where the first block is indexed by $Y$ and the second by $Z$. Since $y_v$ is adjacent to $z_{v+g}$ for every $g \in D$, in fact $A$ is the adjacency matrix of $\Cay(G,D)$. The result now follows from Lemma \ref{lem:simpdieig}. \qed
\end{proof}

Lemma \ref{lem:disimp} and Theorem \ref{thm:caydist} together imply the next result.

\begin{theorem}
Let $\Ga$ be a connected, bipartite, $d$-regular graph with an abelian group $G$ acting regularly on each shore of the bipartition. If $\Ga$ has $s$ nontrivial eigenvalues which are distinct in absolute value, then there is an $s$-distance set of size $|G|$ in $\cx^d$. \qed
\end{theorem}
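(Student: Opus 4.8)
The plan is to derive this as a direct corollary of Lemma~\ref{lem:disimp} and Theorem~\ref{thm:caydist}, so the work is essentially bookkeeping. First I would apply Lemma~\ref{lem:disimp} to the graph $\Ga$ and the group $G$ (which by hypothesis acts regularly on each shore), obtaining a connection set $D \sbs G$ such that the Cayley digraph $\Cay(G,D)$ has the same eigenvalue-absolute-values as $\Ga$. Since $D$ is constructed as $\{g \in G : z_g \sim y_1\}$ for a fixed vertex $y_1$, its cardinality is $\deg_\Ga(y_1) = d$; hence $\Cay(G,D)$ has $|G|$ vertices and valency $d$.

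Second, I would check that $\Cay(G,D)$ is connected, which is what Theorem~\ref{thm:caydist} requires. Following the labelling in Lemma~\ref{lem:disimp}, $y_v \sim z_w$ in $\Ga$ exactly when $w - v \in D$, so the set of $Y$-shore vertices reachable from $y_0$ is the subgroup $H$ generated by the differences $\{d - d' : d,d' \in D\}$; thus $\Ga$ being connected is equivalent to $H = G$. This in turn forces $\langle D\rangle = G$: any two elements of $D$ lie in a common coset of $H$, so $D \sbs d_0 + H$ and $\langle D\rangle = \langle d_0\rangle + H$ for any $d_0 \in D$, whence $H = G$ gives $\langle D\rangle = G$. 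Consequently $\Cay(G,D)$ is connected.

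Third, by Lemma~\ref{lem:simpdieig} the absolute values of the eigenvalues of $\Cay(G,D)$ coincide with those of the simple bipartite graph with adjacency matrix $\twomat{0}{A}{A^T}{0}$, where $A$ is the adjacency matrix of $\Cay(G,D)$; by the construction of Lemma~\ref{lem:disimp} this bipartite graph is $\Ga$ itself. Under this correspondence the valency $d$ of $\Cay(G,D)$ matches the eigenvalues $\pm d$ of $\Ga$, and the nontrivial eigenvalues of $\Cay(G,D)$, distinct in absolute value, match exactly the $s$ distinct absolute values occurring among the nontrivial eigenvalues of $\Ga$. Therefore $\Cay(G,D)$ has exactly $s$ nontrivial eigenvalues distinct in absolute value, and Theorem~\ref{thm:caydist} yields an $s$-distance set of size $|G|$ in $\cx^{|D|} = \cx^d$.

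I expect no genuine difficulty, as the statement merely packages two earlier results. The one point deserving care is that connectivity of $\Ga$ gives the strong condition $\langle D\rangle = G$ rather than merely that the differences of $D$ generate $G$; this is precisely what guarantees that Theorem~\ref{thm:caydist} (equivalently, Lemma~\ref{lem:diffdist}) applies with the full line count $|G|$, i.e.\ that the $|G|$ characters of $G$ restrict to $|G|$ distinct lines on $D$.
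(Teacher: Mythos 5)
Your proof is correct and follows exactly the route the paper takes: the paper derives this theorem as an immediate consequence of Lemma~\ref{lem:disimp} and Theorem~\ref{thm:caydist}, stating only that those two results ``together imply'' it. Your explicit check that connectivity of $\Ga$ forces $\langle D\rangle = G$ (so that the Cayley digraph is connected and all $|G|$ characters give distinct lines) is a detail the paper leaves implicit, and you handle it correctly.
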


\section{Codes}\label{sec:codes}

In addition to designs and graphs, codes may also be used to construct complex lines. 

Let \named{$V(n,q)$}{vector space of dimension $n$ over $GF(q)$} denote the $n$-dimensional vector space over $GF(q)$ with standard basis $e_1,\ldots,e_n$. Also let $d(x,y)$ denote the {\it Hamming distance} between vectors $x$ and $y$: the number of coordinates in which $x$ and $y$ differ. Now suppose $C$ is an $(n,k)$-linear code over $GF(q)$. (That is, $C$ is a $k$-dimensional subspace of $V(n,q)$.) The \defn{coset graph} of $C$, denoted $\Ga(C)$, is the graph with the cosets of $C$ as vertices and $x+C$ and $y+C$ adjacent if some $w \in y+C$ is at Hamming distance one from $x$. If the minimum distance between any two codewords in $C$ is at least $2$, then $\Ga(C)$ is simple. The coset graph of $C$ is a Cayley graph with connection set 
\nomenclature[\c]{$\Ga(C)$}{coset graph}%
\nomenclature{$GF(q)$}{Galois field of order $q$}%
\nomenclature{$e_i$}{standard basis vector}%
\[
S := \{\al e_i + C : \al \in GF(q)^*, 1 \leq i \leq n \}.
\]
We usually assume the minimum distance in $C$ is at least $3$, so that each $\al e_i + C$ is a distinct coset.

For every $(n,k)$-linear code $C$, there is a corresponding $(n,n-k)$-linear code called the \defn{dual code}:
\[
C^\perp := \{x \in V(n,q) : x^Tc = 0 \text{ for all } c \in C\}.
\]
\nomenclature{$C^\perp$}{dual code}%
Note that $(C^\perp)^\perp = C$. As the following lemma from \cite[Section 12.9]{blue} shows, $C^\perp$ is closely related to $\Ga(C)$.

\begin{lemma} \label{lem:dualeigval}
If $c \in C^\perp$ has Hamming weight $a$, then 
\[
\la = (q-1)n-qa
\]
is an eigenvector of $\Ga(C)$.
\end{lemma}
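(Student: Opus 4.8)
The plan is to use the identification, noted just before the statement, of the coset graph $\Ga(C)$ as the Cayley graph $\Cay(G,S)$ on the quotient group $G = V(n,q)/C$ with connection set $S = \{\al e_i + C : \al \in GF(q)^*,\ 1 \le i \le n\}$, and then to read off the eigenvalue from the earlier lemma: every character $\chi$ of $G$ is an eigenvector of $\Cay(G,S)$ with eigenvalue $\chi(S)$. So the whole argument reduces to (i) producing, for each $c \in C^\perp$, a character of $G$, and (ii) evaluating that character on $S$.

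First I would recall the additive characters of $V(n,q)$. Writing $q = p^m$, fixing a primitive $p$-th root of unity $\zeta$, and letting $\mathrm{Tr}\colon GF(q)\to GF(p)$ be the field trace, these characters are $\chi_c(x) = \zeta^{\,\mathrm{Tr}(c^T x)}$ for $c \in V(n,q)$. Such a $\chi_c$ is trivial on $C$ — hence descends to a character of $G$ — exactly when $c^T x = 0$ for all $x \in C$, i.e. when $c \in C^\perp$. Thus each weight-$a$ word $c \in C^\perp$ gives an eigenvector $\chi_c$ of $\Ga(C)$, with eigenvalue $\chi_c(S)$.

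Second I would compute $\chi_c(S) = \sum_{i=1}^n \sum_{\al \in GF(q)^*} \zeta^{\,\mathrm{Tr}(\al c_i)}$ coordinate by coordinate. If $c_i = 0$ the inner sum is $q-1$; if $c_i \neq 0$ then $\al \mapsto \al c_i$ permutes $GF(q)^*$, and since $\sum_{\beta \in GF(q)} \zeta^{\,\mathrm{Tr}(\beta)} = 0$ (the trace is surjective, so each element of $GF(p)$ is an image equally often), the inner sum equals $-1$. As $c$ has exactly $a$ nonzero coordinates and $n-a$ zero coordinates, $\chi_c(S) = (n-a)(q-1) + a(-1) = (q-1)n - qa$, which is the claimed eigenvalue. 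There is no serious obstacle here; the only points requiring a little care are handling the additive characters correctly when $q$ is a prime power rather than a prime, and invoking the standard balanced-trace identity $\sum_{\beta \in GF(q)} \zeta^{\,\mathrm{Tr}(\beta)} = 0$.
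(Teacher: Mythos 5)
Your proof is correct and takes essentially the same route as the paper's: both arguments identify the characters of $V(n,q)/C$ with the codewords of $C^\perp$ and then evaluate the character sum over the connection set coordinate by coordinate, using the fact that the sum of a nontrivial character over $GF(q)^*$ applied to $\alpha c_i$ is $q-1$ when $c_i=0$ and $-1$ otherwise. The only cosmetic difference is that the paper parametrizes these characters via a generator matrix $M$ of $C^\perp$ (writing the codeword as $a^TM$), whereas you check directly which additive characters of $V(n,q)$ are trivial on $C$ and hence descend to the quotient.
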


\begin{proof}
Let $M$ be a generator matrix of $C^\perp$, so that $C^\perp$ is the row space of $M$ and $C$ is the kernel. Then let $\psi$ be the mapping from the column space of $M$ to $V(n,q)/C$ defined as follows: for any $x \in V(n,q)$, 
\[
\psi(Mx) := x+C.
\]
This mapping is well-defined, because if $x$ and $y$ are in the same coset of $C$, then $y = x+c$ for some $c \in C$, and
\[
My = M(x+c) = Mx + Mc = Mx.
\]
It follows that $V(n,q)/C$ is isomorphic to the column space of $M$. With this identification, if $\chi$ is a nontrivial character of $GF(q)$ and $a$ is in the column space of $M$, 
\[
\chi_a(x+C) := \chi(a^TMx)
\]
is a character of $V(n,q)/C$. Therefore the eigenvalues of $\Ga(C)$ are 
\[
\chi_a(S) = \sum_{i=1}^n \sum_{\al \in GF(q)^*} \chi(a^TM(\al e_i)).
\]
However, $a^TM$ is a codeword of $C^\perp$: call it $z$. In the $i$-th coordinate of $z$, we have
\[
\sum_{\al \in GF(q)^*} \chi(\al z^Te_i) = \begin{cases}
q-1, & z^Te_i = 0; \\
-1, & \text{otherwise.} 
\end{cases}
\]
Hence the eigenvalue $\chi_a(S)$ is a function of the weight of $z$. Furthermore, all eigenvalues of $\Ga(C)$ can be found this way. \qed
\end{proof}

Combining the previous lemma with Theorem \ref{thm:caydist} gives the following.

\begin{theorem}
Let $C$ be an $(n,k)$-linear code over $GF(q)$ with exactly $s$ nonzero weights, where $q > 2$ and $C^{\perp}$ has minimum distance at least $3$. Then there is an $s$-distance set of size $|C|$ in $\cx^{n(q-1)}$. \qed
\end{theorem}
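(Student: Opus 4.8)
The plan is to run the coset-graph construction of this section on the \emph{dual} code $C^\perp$ rather than on $C$ itself, and then to invoke \tref{caydist}.

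First I would set up the graph. Since $C^\perp$ is an $(n,n-k)$-linear code over $GF(q)$, its coset graph $\Ga(C^\perp)$ is the Cayley graph $\Cay(V(n,q)/C^\perp, S)$ with connection set
\[
S = \{\al e_i + C^\perp : \al \in GF(q)^*,\ 1 \le i \le n\}.
\]
The vertex group $V(n,q)/C^\perp$ has order $q^{\,n-(n-k)} = q^k = |C|$, which is the claimed size. Because $C^\perp$ has minimum distance at least $3$ (and $q>2$), no two of the cosets $\al e_i + C^\perp$ coincide and none of them equals the identity coset $C^\perp$ — a coincidence would produce a nonzero codeword of weight at most $2$ in $C^\perp$ — so $|S| = n(q-1)$, which will be the ambient dimension. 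Moreover $S$ is inverse-closed, since $-\al \in GF(q)^*$, so $\Ga(C^\perp)$ is a simple graph.

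Next I would check the two hypotheses of \tref{caydist}. Connectedness is immediate: $S$ contains the images of $e_1,\dots,e_n$, which already span $V(n,q)$, so $S$ generates $V(n,q)/C^\perp$. For the eigenvalues, note that the dual of $C^\perp$ is $(C^\perp)^\perp = C$, so \lref{dualeigval}, applied with $C^\perp$ in the role of the code, says that each $c \in C$ of Hamming weight $w$ yields the eigenvalue $(q-1)n - qw$ of $\Ga(C^\perp)$, and that every eigenvalue of $\Ga(C^\perp)$ has this form. The zero codeword gives the valency $(q-1)n$; the $s$ distinct nonzero weights $w_1,\dots,w_s$ of $C$ give the $s$ values $\lambda_i = (q-1)n - qw_i$, which are pairwise distinct and distinct from $(q-1)n$. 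Hence $\Ga(C^\perp)$ has exactly $s$ nontrivial eigenvalues, and provided these are distinct in absolute value, \tref{caydist} delivers an $s$-distance set of size $|V(n,q)/C^\perp| = |C|$ in $\cx^{|S|} = \cx^{n(q-1)}$, as required.

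The one point that is not pure bookkeeping is the last one: showing that the $s$ nontrivial eigenvalues $(q-1)n - qw_i$ are distinct in absolute value, i.e.\ that no two of them are negatives of one another (such a coincidence would force $q(w_i+w_j) = 2(q-1)n$). This is where the hypotheses on $q$ and on the dual distance, which constrain the admissible weight sets of $C$, have to be brought to bear, and it is the natural place for the argument to require care; everything else — the two cardinality counts, connectivity, and the eigenvalue list — is an immediate consequence of \lref{dualeigval} and \tref{caydist}.
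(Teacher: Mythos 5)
Your argument is exactly the one the paper intends: the text gives no explicit proof beyond ``combine \lref{dualeigval} with \tref{caydist},'' and that combination requires precisely the dualization you perform --- building the coset graph $\Ga(C^\perp)$ on the $q^k = |C|$ cosets of $C^\perp$, using the hypothesis on the minimum distance of $C^\perp$ to see that the $n(q-1)$ connection cosets $\al e_i + C^\perp$ are distinct and nonzero, and reading the eigenvalues $(q-1)n-qw$ off the weights $w$ of $(C^\perp)^\perp = C$. All of that bookkeeping is correct.

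The point you flag at the end is a genuine subtlety, but you should not expect the stated hypotheses to resolve it: $q>2$ and minimum dual distance at least $3$ do not prevent two distinct nonzero weights of $C$ from satisfying $q(w_i+w_j)=2(q-1)n$, in which case the eigenvalues $(q-1)n-qw_i$ and $(q-1)n-qw_j$ are negatives of one another and the corresponding angles $\abs{\chi(S)}^2$ coincide. What the construction actually yields is a $t$-distance set where $t$ is the number of distinct values of $\abs{(q-1)n-qw}$ over the nonzero weights $w$ of $C$; thus $t\le s$, with equality generically but not always. The paper silently identifies $t$ with $s$ (equivalently, it reads the hypothesis of \tref{caydist} as ``the nontrivial eigenvalues take exactly $s$ distinct absolute values''), so the gap you isolate lives in the statement itself rather than in your argument, and no further appeal to the hypotheses will close it.
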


In the case of a code over $GF(2)$, the characters of $V(n,q)/C$ in the proof of Lemma \ref{lem:dualeigval} take only $\pm 1$ values, so in fact the vectors are real. Additionally, the characters, when restricted to the set $S$, are not necessarily distinct vectors projectively. Let \named{$\one$}{all-ones vector} denote the all-ones vector. If $a^TM = \one$ is in $C^{\perp}$, then
\[
\chi_a(e_i+C) = \chi(\one^Te_i) = -1
\]
for each $e_i + C$. Therefore $\chi_a$, when restricted to the set $S$, is $-\one$, the same vector as $\chi_0 = \one$ up to a scalar. 

\begin{corollary} \label{cor:realbincode}
Let $C$ be an $(n,k)$-linear code over $GF(2)$ with exactly $s$ nonzero weights, where $C^{\perp}$ has minimum distance at least $3$. If $\one \notin C^{\perp}$, then there is an $s$-distance set of size $|C|$ in $\re^n$. If $\one \in C^{\perp}$, then there is an $\lf s/2 \rf$-distance set of size $|C|/2$ in $\re^n$. \qed
\end{corollary}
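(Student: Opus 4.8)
The plan is to specialize the coset-graph construction of the preceding section to $GF(2)$ and then to track two features of the resulting real vectors: which of them are parallel (to count lines) and how their mutual angles collapse (to count distances). Since $C^{\perp}$ has minimum distance at least $3$, the $n$ cosets $e_1 + C^{\perp},\dots,e_n + C^{\perp}$ are distinct and non-trivial, so $\Ga(C^{\perp})$ is a genuine Cayley graph on $V(n,2)/C^{\perp}$ with a connection set of size $n$. Running the argument of \lref{dualeigval} with $C^{\perp}$ in place of $C$ identifies $V(n,2)/C^{\perp}$ with the character group labelled by the codewords $z$ of $C=(C^{\perp})^{\perp}$, and shows that the character labelled by $z$, restricted to the connection set, is the real $\pm1$-vector $v_z := \bigl((-1)^{z_1},\dots,(-1)^{z_n}\bigr)\in\re^n$ — over $GF(2)$ the character values are simply $\pm1$, as observed in the discussion preceding the statement. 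Hence \tref{caydist} produces exactly the $|C|$ unit vectors $v_z/\sqrt n$, and the direct computation $v_z^{*}v_{z'}=\sum_i(-1)^{z_i+z'_i}=n-2\,w(z+z')$ (with $w$ the Hamming weight and $z+z'\in C$) shows that the squared inner product of two of these lines is $\bigl(1-2\,w(z+z')/n\bigr)^2$, equal to $1$ exactly when $z=z'$ and otherwise governed by which of the $s$ nonzero weights of $C$ the codeword $z+z'$ has.

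If $\one\notin C$, then $v_z=\pm v_{z'}$ forces $z=z'$ (the alternative $z+z'=\one$ is impossible in $C$), so the $|C|$ vectors represent $|C|$ distinct lines whose degree set is $\{(1-2a/n)^2 : a\text{ a nonzero weight of }C\}$; one checks, as in the preceding theorem, that the hypotheses keep this set of size exactly $s$ (two distinct weights cannot sum to $n$), so we obtain an $s$-distance set of size $|C|$. If instead $\one\in C$, then $z+\one\in C$ and $v_{z+\one}=-v_z$ for every $z$, so the vectors collapse in pairs to $|C|/2$ lines, indexed by the cosets of $\{0,\one\}$ in $C$. The distance between the lines of $z$ and $z'$ is $(1-2w(c)/n)^2$ with $c=z+z'\in C\setminus\{0,\one\}$, and this is unchanged when $c$ is replaced by $c+\one$ of weight $n-w(c)$; so the distances are the numbers $(1-2a/n)^2$ as $a$ ranges over $W' := \{\,w(c):c\in C\setminus\{0,\one\}\,\}$, with $a$ and $n-a$ giving the same value. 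The point is that $W'$ is the full set of $s$ nonzero weights of $C$ with the weight $n$ removed (that weight being attained only by $\one$), so $|W'|=s-1$, $W'\subseteq\{1,\dots,n-1\}$, and $W'$ is closed under $a\mapsto n-a$. Counting orbits of this involution on $W'$ — two-element orbits $\{a,n-a\}$ together with at most the single fixed point $n/2$ — gives $\lf s/2\rf$ regardless of the parity of $s$, and distinct orbits yield distinct values of $(1-2a/n)^2$, so we obtain a $\lf s/2\rf$-distance set of size $|C|/2$.

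The inner-product identity and the reality of the binary character vectors are routine and essentially already contained in \lref{dualeigval} and the surrounding discussion. I expect the main obstacle to be the distance count in the case $\one\in C$: one must notice that $\one$ genuinely supplies the weight $n$ among the $s$ nonzero weights of $C$ yet its vector is identified with $v_0$ and so contributes no angle between distinct lines, and it is precisely this one ``lost'' weight that converts the naive $\lc s/2\rc$ into $\lf s/2\rf$. The delicate sub-case is $n$ even with $n/2$ itself a weight of $C$, where a careless orbit count would be off by one.
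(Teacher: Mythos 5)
Your derivation is the one the paper intends (the corollary carries no written proof): form the coset graph $\Ga(C^\perp)$, use \lref{dualeigval} to see that its characters restricted to the connection set are the real $\pm1$ vectors $v_z$ indexed by $z\in C$, compute $v_z^Tv_{z'}=n-2w(z+z')$, and account for the projective identification $v_{z+\one}=-v_z$. Note, though, that your case division is on whether $\one\in C$, whereas the statement as printed divides on whether $\one\in C^{\perp}$. Your version is the one the construction actually supports --- the vectors are indexed by codewords of $C$, so the collapse to $|C|/2$ lines happens exactly when $\one\in C$ --- and it agrees with \cref{nearcode}, which the text says coincides with this construction. You have in effect repaired a slip in the statement (the membership condition was carried over from the notation of \lref{dualeigval}, where the indexing codewords lie in the dual of the code whose cosets are the vertices); that is the right call, but it should be said explicitly rather than done silently.

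The one genuine gap is the parenthetical claim, in the case $\one\notin C$, that ``two distinct weights cannot sum to $n$.'' You need this because weights $a$ and $n-a$ give the same squared angle $(1-2a/n)^2$, but it does not follow from the hypotheses and is false in general. Take $C$ to be the $[5,3]$ binary code whose generator matrix has as columns five distinct nonzero vectors of $GF(2)^3$: then $C^\perp$ has minimum distance $3$, $\one$ lies in neither $C$ nor $C^\perp$, and the nonzero weights of $C$ are $2$, $3$ and $4$, with $2+3=5=n$; the construction therefore yields only two distinct angles, not three. So in this branch the degree set has only \emph{at most} $s$ elements. This imprecision is already present in the theorem preceding the corollary (for general $q$ the weights $a,a'$ with $a+a'=2(q-1)n/q$ collide), so the fault is not originally yours, but a proof cannot close it with a false intermediate claim: either read the conclusion as ``at most $s$ distances'' or add the hypothesis that no two nonzero weights of $C$ sum to $n$. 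Your orbit count giving exactly $\lf s/2\rf$ in the case $\one\in C$, including the treatment of the possible fixed point $n/2$, is correct and is precisely the bookkeeping the paper leaves to the reader.
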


There is a second, more direct construction of $s$-distance sets from codes. Let $q = p^r$, let $\om$ be a primitive $p$-th root of unity, and let $\tr$ denote the trace function from $GF(q)$ to $GF(p)$. Then
\nomenclature[\z]{$\om$}{complex root of unity}
\[
\chi(a) := \om^{\tr(a)}
\]
is a character of $GF(q)$. Now define $\phi$ to be the homomorphism which takes a codeword $c = (c_1,\ldots,c_n) \in C$ to $\cx^n$:
\[
\phi(c) := (\chi(c_1),\ldots,\chi(c_n)).
\]
If $C$ is a linear code, then $\phi(C)$ is closed under Schur multiplication (that is, coordinate-wise multiplication). We will call $c$ \textsl{balanced}\index{balanced codeword} if every nonzero element of $GF(q)$ occurs the same number of times in the coordinates of $c$.

\begin{theorem} \label{thm:balancedcode}
Let $C$ be an $(n,k)$-linear code over $GF(q)$, $q > 2$, with exactly $s$ nonzero weights such that every codeword is balanced. Then $\phi(C)$ is an $s$-distance set of size $|C|$ in $\cx^n$.
\end{theorem}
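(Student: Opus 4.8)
The plan is to compute the inner product between two lines $\phi(c)$ and $\phi(c')$ and show it depends only on the Hamming distance $d(c,c')$, which takes only $s+1$ values on $C$ (one of which, namely $0$, corresponds to equal codewords). First I would note that since $C$ is linear, $\phi$ is a homomorphism onto a set of vectors in $\cx^n$ all of whose entries are complex roots of unity, hence unit vectors after normalizing by $1/\sqrt{n}$. For $c, c' \in C$, write
\[
\cip{\phi(c)}{\phi(c')} = \sum_{i=1}^n \conj{\chi(c_i)}\chi(c'_i) = \sum_{i=1}^n \chi(c'_i - c_i) = \sum_{i=1}^n \chi((c'-c)_i),
\]
using that $\conj{\chi(a)} = \chi(-a)$ and that $\chi$ is a character of the additive group of $GF(q)$. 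Since $c' - c \in C$ by linearity, the right-hand side is $\sum_{i=1}^n \chi(w_i)$ where $w = c' - c$ is a codeword.

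The next step is to evaluate this sum in terms of the weight of $w$. Group the coordinates of $w$ by value: if $w$ has $n - \wt(w)$ zero coordinates, each contributing $\chi(0) = 1$, and the remaining $\wt(w)$ coordinates take nonzero values in $GF(q)^*$, then
\[
\sum_{i=1}^n \chi(w_i) = (n - \wt(w)) + \sum_{a \in GF(q)^*} m_a(w)\,\chi(a),
\]
where $m_a(w)$ counts coordinates equal to $a$. Here is where the \emph{balanced} hypothesis is essential: since every nonzero element occurs the same number of times among the nonzero coordinates, each $m_a(w) = \wt(w)/(q-1)$, so
\[
\sum_{a \in GF(q)^*} m_a(w)\chi(a) = \frac{\wt(w)}{q-1}\sum_{a \in GF(q)^*}\chi(a) = \frac{\wt(w)}{q-1}(-1) = -\frac{\wt(w)}{q-1},
\]
using the standard identity $\sum_{a \in GF(q)}\chi(a) = 0$ for the nontrivial character $\chi$. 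Therefore
\[
\cip{\phi(c)}{\phi(c')} = n - \wt(c'-c) - \frac{\wt(c'-c)}{q-1} = n - \frac{q}{q-1}\wt(c'-c),
\]
a real number depending only on $\wt(c'-c) = d(c,c')$.

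Finally, dividing by $n$ (so the vectors become unit vectors) gives
\[
\abs{\phi(c)^*\phi(c')/n}^2 = \left(1 - \frac{q}{(q-1)n}\,d(c,c')\right)^2,
\]
which is a function of $d(c,c')$ alone. As $c$ ranges over distinct lines with $c \neq c'$, $d(c,c')$ takes values in the set of nonzero weights of $C$ (since $c'-c$ is a nonzero codeword), of which there are exactly $s$; hence at most $s$ distinct angles occur. (One should also check the lines are genuinely distinct, i.e. $q > 2$ ensures $\phi$ does not collapse codewords projectively — with $q > 2$ the only codeword mapping to a scalar multiple of $\phi(0) = \one$ under $\phi$ would have to be $0$, since a balanced nonzero codeword cannot be constant; this is why $q > 2$ is assumed, paralleling the remark after Corollary~\ref{cor:realbincode}.) Thus $\phi(C)$ is an $s$-distance set of size $|C|$ in $\cx^n$. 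I expect the only real subtlety to be the bookkeeping around distinctness of lines and the role of the $q > 2$ hypothesis; the core computation is a routine character sum once the balanced condition is invoked.
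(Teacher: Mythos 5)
Your proposal is correct and follows essentially the same route as the paper: reduce via linearity to evaluating $\one^*\phi(w)$ for the codeword $w = c'-c$, then use the balanced hypothesis together with a character-sum identity to conclude the inner product equals $n - \tfrac{q}{q-1}\wt(w)$ (the paper's $n-qa$ with $\wt(w)=a(q-1)$), so the angle depends only on the weight. The only cosmetic difference is that you sum $\chi$ directly over $GF(q)^*$ using $\sum_{a\in GF(q)^*}\chi(a)=-1$, while the paper groups coordinates by trace value and uses that the $p$-th roots of unity sum to zero; your extra remarks on projective distinctness are a welcome addition the paper leaves implicit.
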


\begin{proof}
Suppose $x$ and $y$ are in $C$. Since $C$ is linear, $y-x$ and \named{$\zero$}{all-zeros vector} (the all-zeros vector) are also in $C$, and 
\[
\phi(x)^*\phi(y) = \phi(\zero)^*\phi(y-x) = \one^*\phi(y-x).
\]
Therefore it suffices to consider the sum of the coordinates of $\phi(c)$, for each $c \in C$. Suppose each nonzero element of $GF(q)$ occurs $a$ times in $\phi(c)$, so the weight of $c$ is $a(q-1)$. Since the trace function is onto, each $\om^i$ with $i \neq 0$ occurs $aq/p$ times. But the $p$-th roots of unity sum to zero, and the remaining coordinates of $\phi(c)$ are $1$, so we have
\[
\one^*\phi(c) = n-qa.
\]
Thus if only $s$ distinct nonzero weights occur in $C$, then only $s$ distinct values occur in the angles of $\phi(C)$. \qed
\end{proof}

When $C$ is a code over $GF(2)$, every codeword is balanced and each $\phi(c)$ is real. Thus if $C$ has $s$ nonzero weights, then $\phi(C)$ is an $s$-distance set in $\re^n$. In this case $\phi(C)$ coincides with the construction in Corollary \ref{cor:realbincode}.

It is common for linear codes to contain $\one$, which is not a balanced codeword. We will say $c \in C$ is \textsl{near-balanced}\index{near-balanced codeword} if there is some $\al \in GF(q)$ such that every element of $GF(q)$ except $\al$ occurs the same number of times in $c$. Note that $\one$ is near-balanced, and that if $c$ is balanced then $c+\one$ is near-balanced. If every element except $0$ occurs $a$ times in $c$, then $c$ has weight $n-(q-1)a$ while $c + \one$ has weight $n-a$.

\begin{theorem} \label{thm:nearcode}
Let $C$ be an $(n,k)$-linear code over $GF(q)$ with every codeword near-balanced, $\one \in C$, and weight set
\[
\{0,n,n-a_1,n-(q-1)a_1,\ldots,n-a_s,n-(q-1)a_s \}.
\]
Then $\phi(C)$ is an $s$-distance set of size $|C|/q$ in $\cx^n$.
\end{theorem}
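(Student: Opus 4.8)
The plan is to mimic the proof of Theorem~\ref{thm:balancedcode}, reducing the inner product computation to a single coordinate sum over $\phi(c)$ and then using the near-balanced hypothesis to identify which angles occur. As before, linearity of $C$ gives, for $x,y \in C$,
\[
\phi(x)^*\phi(y) = \one^*\phi(y-x),
\]
so we need only understand $\one^*\phi(c)$ for $c \in C$, and then the angle $\abs{\phi(x)^*\phi(y)}^2$ is $\abs{\one^*\phi(c)}^2$ where $c = y-x$. Since $\one \in C$, the coset of $c$ modulo the subcode $\langle \one \rangle$ has $q$ members $c, c+\one, c+2\one, \ldots, c+(q-1)\one$; this is why we expect $|C|/q$ distinct lines rather than $|C|$.

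First I would compute $\one^*\phi(c)$ for a near-balanced codeword. If every element of $GF(q)$ except $\al$ occurs $a$ times among the coordinates of $c$, then translating by $-\al\one$ (which changes $\phi(c)$ only by the global scalar $\overline{\chi(\al)}^{?}$... more precisely $\phi(c - \al\one) = \overline{\chi(\al)}\,$-shift of coordinates) we may assume $\al = 0$: every nonzero element occurs $a$ times and $0$ occurs $n - (q-1)a$ times. Exactly as in Theorem~\ref{thm:balancedcode}, since the trace map $GF(q)\to GF(p)$ is onto and balanced on nonzero inputs, each primitive-$p$-th-root value $\om^i$ with $i \neq 0$ occurs $aq/p$ times among $\chi(\text{nonzero entries})$, and these contributions cancel; hence $\one^*\phi(c) = n - qa$ when $0$ is the missing value. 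For a general near-balanced $c$ missing the value $\al$, write $c = c' + \al\one$ with $c'$ missing $0$, giving $\one^*\phi(c) = \overline{\chi(\al)}\,(n - qa)$ up to sign bookkeeping; in particular $\abs{\one^*\phi(c)} = \abs{n - qa}$, which depends only on $a$.

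Next I would count: a codeword missing value $\al$ with multiplicity pattern $(a, \ldots, a)$ on the other values has weight $n - a$ if $\al = 0$ and $n - (q-1)a$ if $\al \neq 0$ (by the same weight computation noted just before the theorem statement). So each ``type'' $a_j$ in the weight set $\{0,n,n-a_1,n-(q-1)a_1,\ldots\}$ produces exactly one value $\abs{n - qa_j}^2$ for the angle $\abs{\phi(x)^*\phi(y)}^2$; the weights $0$ and $n$ correspond to $c \in \langle\one\rangle$, i.e.\ $\phi(x)$ and $\phi(y)$ spanning the same line. Thus $\phi(C)$ has at most $s$ angles. Two codewords $x,y$ give the same projective line iff $\phi(y-x)$ is a scalar multiple of $\one$, i.e.\ $y - x \in \langle \one\rangle$, so the number of distinct lines is $|C|/q$ (using $\one \in C$ and that $\chi$ is injective on $GF(q)$... actually $\om^{\tr(\cdot)}$ need not be injective, so one should instead argue $y-x$ constant on all coordinates forces $y-x \in \langle\one\rangle$ directly). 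Finally one must check the $s$ angle values $\abs{n-qa_j}^2$ are genuinely distinct: they are forced to be, since distinct weights $n - a_j$ or $n-(q-1)a_j$ are listed in the weight set and $a \mapsto n - qa$ is injective, so distinct $a_j$ give distinct angles, while the pair $(a_j$ as ``$\al=0$ type'' vs. ``$\al\neq0$ type'') already share the same $a_j$ and hence the same angle.

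The main obstacle I anticipate is the bookkeeping around the global scalar and the projective identification: showing carefully that shifting a near-balanced codeword to one missing $0$ only multiplies $\phi(c)$ by a unit scalar (so the angle is unchanged), and that the fibres of ``same projective line'' are exactly the cosets of $\langle\one\rangle$ and all have size $q$. Everything else is a direct adaptation of Theorem~\ref{thm:balancedcode}; the root-of-unity cancellation and the weight-versus-multiplicity dictionary are already established in the text preceding the statement.
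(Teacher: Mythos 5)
Your proposal is correct and follows essentially the same route as the paper, whose proof is simply the observation that the argument of Theorem~\ref{thm:balancedcode} carries over, with $\phi(c)$ and $\phi(c+\one)$ spanning the same line so that each line occurs $q$ times. Your extra bookkeeping (shifting a near-balanced word by $\al\one$ to reduce to the balanced computation, and pairing the weights $n-a_j$ and $n-(q-1)a_j$ to a single angle $\abs{n-qa_j}^2$) is exactly what the paper leaves implicit, and the subtlety you flag about $\chi$ not being injective is likewise glossed over in the original.
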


\begin{proof}
The proof is the same as in Theorem \ref{thm:balancedcode}, noting that since $\phi(c)$ and $\phi(c+\one)$ span the same $1$-dimensional vector space, each element of the $s$-distance set occurs $q$ times. \qed
\end{proof}

Using a near-balanced property in a class of tri-weight extended-BCH codes of length $n = p^{2m}$ and dimension $k = 3m+1$, Delsarte and Goethals \cite{dg1} produced a set of $p^m-1$ generalized Hadamard matrices of order $p^{2m}$. The connection between generalized Hadamard matrices and mutually unbiased bases will be discussed in Chapter \ref{chap:mubs}. 

\begin{corollary} \label{cor:nearcode}
Let $C$ be an $(n,k)$-linear code over $GF(2)$ with $\one \in C$ and $s$ distinct nonzero weights. Then $\phi(C)$ is an $\lf s/2 \rf$-distance set of size $|C|/2$ in $\re^n$. \qed
\end{corollary}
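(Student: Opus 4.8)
The plan is to specialise Theorem~\ref{thm:nearcode} to $q=2$, while tracking one extra coincidence among the angles that does not occur over larger fields. First I would record the basic structure of the $\phi$-construction over $GF(2)$. Here $\om=-1$, so $\chi(0)=1$, $\chi(1)=-1$, and $\phi$ maps $C$ into $\{\pm1\}^n\sbs\re^n$; every $\phi(c)$ is real of norm $\sqrt n$. Over $GF(2)$ the ``near-balanced'' hypothesis of Theorem~\ref{thm:nearcode} is automatic (for $\al\in GF(2)$ the set $GF(2)\setminus\{\al\}$ has a single element, so ``every element except $\al$ occurs the same number of times'' holds trivially), so the hypotheses of that theorem reduce exactly to $\one\in C$. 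Since $\chi(1)=-1$ we have $\phi(c+\one)=-\phi(c)$, so $\phi(c)$ and $\phi(c+\one)$ span the same line; as $\one\ne\zero$ the pairs $\{c,c+\one\}$ partition $C$ and $\phi$ is injective, so the vectors $\phi(c)$ span exactly $|C|/2$ distinct lines, which gives the claimed size.

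Next I would compute the angles as in the proof of Theorem~\ref{thm:balancedcode}: for $x,y\in C$, linearity of $C$ and $\phi(\zero)=\one$ give $\phi(x)^*\phi(y)=\one^*\phi(y-x)$, and over $GF(2)$, if $w$ denotes the weight of $y-x$ then $\one^*\phi(y-x)=(n-w)-w=n-2w$. After normalising to unit length the angle between the corresponding lines is $(1-2w/n)^2$. When $\langle\phi(x)\rangle\ne\langle\phi(y)\rangle$, the codeword $y-x$ is nonzero and different from $\one$, so $w$ ranges over the nonzero weights of $C$ other than $n$; since $\one\in C$ forces $n$ itself to be a nonzero weight, there are exactly $s-1$ such values of $w$.

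Finally I would count distinct angles. Because $\one\in C$, complementation $c\mapsto c+\one$ sends a codeword of weight $w$ to one of weight $n-w$, so the weight set of $C$ is symmetric under $w\mapsto n-w$; hence the $s-1$ nonzero weights different from $n$ are permuted among themselves by this involution in orbits of size $1$ or $2$, a size-$1$ orbit being possible only at $w=n/2$. As $(1-2w/n)^2=(1-2(n-w)/n)^2$, each orbit contributes one value of $(1-2w/n)^2$, and distinct orbits contribute distinct values, since $|n-2w|=|n-2w'|$ forces $w=w'$ or $w+w'=n$. The number of orbits is $\lc(s-1)/2\rc=\lf s/2\rf$, which is the degree of $\phi(C)$, completing the proof.

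The inner-product computation and the line identification are routine (the former is essentially Theorem~\ref{thm:balancedcode}); the one point requiring care, and the only content beyond specialising Theorem~\ref{thm:nearcode}, is the angle count. Over a field with $q>2$ the nonzero weights collapse to $s$ angles through the pairs $\{n-a_i,n-(q-1)a_i\}$, but over $GF(2)$ those pairs are degenerate and the collapse instead comes from the symmetry $w\leftrightarrow n-w$ forced by $\one\in C$; one must additionally dispose of the single possible self-paired weight $n/2$ and remember to exclude $w\in\{0,n\}$, which is precisely why the answer is $\lf s/2\rf$ rather than $s-1$ or $s$.
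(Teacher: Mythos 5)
Your proof is correct and follows the route the paper intends: compute $\one^T\phi(y-x)=n-2w$ as in Theorem~\ref{thm:balancedcode}, identify antipodal pairs $\phi(c),\phi(c+\one)$ to get $|C|/2$ lines, and use the weight symmetry $w\mapsto n-w$ forced by $\one\in C$ to collapse the $s-1$ relevant weights into exactly $\lceil(s-1)/2\rceil=\lfloor s/2\rfloor$ angles. The paper leaves these details implicit, and your orbit count (including the observation that $w=n/2$ is the only possible fixed point, whose presence is dictated by the parity of $s$) correctly supplies them.
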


\subsection{Codes over $\zz{4}$}

Codes over $\zz{4}$ may also be used to construct lines with restricted angles, using tools that are the same as with finite fields: coset graphs, and a direct mapping from codewords into complex space. 

If we take $\zz{4}$ to be the set $\{-1,0,1,2\}$, then the \defn{Lee weight} of $x \in \zz{4}$ is
\[
wt(x) := \abs{x}.
\]
\nomenclature{$wt(x)$}{Lee weight}%
The \defn{Lee distance} between $x$ and $y$ is then the Lee weight of $x-y$. The Lee distance between two ``vectors" in $\zz{4}^n$ is the sum of the Lee distances of the coordinates. A \textsl{code}\index{$\zz{4}$-code} over $\zz{4}$ is a subset of $\zz{4}^n$, and a code is \textsl{linear} if it is a submodule. 

We begin with coset graphs. Assume $C$ is a $\zz{4}$-linear code. Then $\Ga(C)$ is the \textsl{coset graph} of $C$ if its vertices are the cosets of $C$, with $x+C$ and $y+C$ adjacent when they contain vectors at Lee distance $1$. If $e_1,\ldots,e_n$ denotes the standard basis for the free module $\zz{4}^n$, then as a Cayley graph $\Ga(C)$ has connection set
\[
S = \{\pm e_i: 1 \leq i \leq n\}.
\]

A linear code $C$ over $\zz{4}$ has a dual code
\[
C^\perp := \{x \in \zz{4}^n: x^Tc = 0 \text{ for all } c \in C \}.
\]
We still have $(C^\perp)^\perp = C$. A \defn{generator matrix} for $C$ is a matrix over $\zz{4}$ such that $C$ is the row space. It follows that $C^\perp$ is the $\zz{4}$-kernel of the generator of $C$. Without loss of generality, we may assume that the matrix has the form 
\[
M = \left(\begin{matrix}
I & A & B \\
0 & 2I & 2C \\
\end{matrix} \right).
\]
In this case, if $A$ has $k_1$ rows and $C$ has $k_2$ rows, then $|C| = 4^{k_1}2^{k_2}$. For more details on $\zz{4}$-linear codes, see Hammons et al. \cite{hkcss}.

Now suppose $M$ is a generator for $C^\perp$, so that $C$ is the kernel. Using the same isomorphism between the column space of $M$ and $\zz{4}^n/C$ as in Lemma \ref{lem:dualeigval}, we get the following analogous result.

\begin{lemma}
If $c \in C^\perp$ has Lee weight $a$, then
\[
\la = 2(n-a)
\]
is an eigenvalue of $\Ga(C)$. \qed
\end{lemma}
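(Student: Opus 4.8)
The plan is to follow the proof of Lemma~\ref{lem:dualeigval} almost verbatim, adjusting only for the fact that the connection set of the coset graph is now $S = \{\pm e_i : 1 \le i \le n\}$ rather than the larger set $\{\al e_i : \al \in GF(q)^*, 1\le i \le n\}$. Let $M$ be a generator matrix for $C^\perp$, so that $C^\perp$ is the $\zz{4}$-row space of $M$ and $C$ is its kernel. Exactly as in the field case, the map $\psi(Mx) := x + C$ is a well-defined isomorphism from the column space of $M$ onto $\zz{4}^n/C$ (well-definedness because $Mc = 0$ for $c \in C$); in particular $\zz{4}^n/C$ is a finite abelian group, so the Cayley-graph eigenvalue lemma of Section~\ref{sec:graphs} applies to $\Ga(C) = \Cay(\zz{4}^n/C, S)$.

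Next, fix a primitive fourth root of unity $\om$ and let $\chi(a) := \om^a$ be the corresponding character of $\zz{4}$. For each $a$ in the column space of $M$ put $\chi_a(x+C) := \chi(a^T M x)$; this is a character of $\zz{4}^n/C$, and as $a$ ranges over the column space these exhaust all characters. By the Cayley-graph eigenvalue lemma, $\chi_a$ is an eigenvector of $\Ga(C)$ with eigenvalue
\[
\chi_a(S) = \sum_{i=1}^n \bigl( \chi(a^T M e_i) + \chi(-a^T M e_i) \bigr) = \sum_{i=1}^n \bigl( \om^{z_i} + \om^{-z_i} \bigr),
\]
where $z := a^T M$ is a codeword of $C^\perp$ with coordinates $z_i$.

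What remains is a one-line case check on $\zz{4} = \{-1,0,1,2\}$: the summand $\om^{z_i} + \om^{-z_i}$ equals $2$ when $z_i = 0$, equals $0$ when $z_i = \pm 1$, and equals $-2$ when $z_i = 2$; that is, it equals $2 - 2\,wt(z_i)$ in terms of the Lee weight. Summing over $i$ gives $\chi_a(S) = 2n - 2\,wt(z) = 2(n - wt(z))$. Since every $c \in C^\perp$ has the form $a^T M$ for some $a$ in the column space of $M$, this exhibits $2(n - a)$ as an eigenvalue of $\Ga(C)$ whenever $c \in C^\perp$ has Lee weight $a$, as claimed.

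I expect no genuine obstacle here; the two points that warrant a moment's care are (i) that $\zz{4}^n/C$ need not be a free module but is still a finite abelian group, so the character theory and the earlier Cayley-graph eigenvalue lemma apply unchanged, and (ii) the per-coordinate computation, which differs from the $GF(q)$ case precisely because each coordinate now contributes the two-term symmetric sum $\om^{z_i} + \om^{-z_i}$ arising from the pair $\pm e_i \in S$ — and this sum is exactly what converts the $\zz{4}$-value $z_i$ into its Lee weight.
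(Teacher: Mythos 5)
Your proof is correct and follows essentially the same route as the paper's: the paper derives this lemma by reusing the isomorphism from Lemma~\ref{lem:dualeigval} and evaluating $\chi_a(S) = \sum_i \bigl(\om^{z^Te_i} + \om^{-z^Te_i}\bigr) = 2(n_0 - n_2) = 2(n - wt(z))$, exactly as you do. Your per-coordinate case check ($2 - 2\,wt(z_i)$) is just a slightly more explicit phrasing of the paper's count of occurrences of each symbol.
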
 
 
\begin{comment}
\begin{proof}
As with codes over $GF(q)$, if $a$ is in the column space of $M$, then 
\[
\chi_a(x+C) := \om^{a^TMx}
\]
is a character of $\zz{4}^n/C$. Let $z$ denote $a^TM \in C^\perp$. Then the eigenvalues of $\Ga(C)$ are
\[
\chi_a(S) = \sum_{i=1}^n \om^{z^Te_i} + \om^{-z^Te_i} = 2(n_0-n_2),
\]
where $n_i$ is the number of occurences of $i$ in $z$. But $n_0 - n_2$ is $n$ minus the Lee weight of $z$. Thus the eigenvalues of $\Ga(C)$ depend only on the weights in $C^\perp$. \qed
\end{proof}
\end{comment}

Since the eigenvalues of $\Ga(C)$ depend only on the Lee weights of $C^\perp$, Theorem \ref{thm:caydist} gives the following result.

\begin{corollary}
Let $C$ be a linear code in $\zz{4}^n$ with exactly $s$ nonzero Lee weights. Then there is an $s$-distance set of size $|C|$ in $\cx^{2n}$. \qed
\end{corollary}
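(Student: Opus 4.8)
The plan is to deduce the statement from the preceding lemma together with \tref{caydist}, the only real work being the passage to the dual code and the bookkeeping of group order versus dimension. First I would apply the preceding lemma with $C^\perp$ in place of $C$: since $(C^\perp)^\perp = C$, it says the eigenvalues of $\Ga(C^\perp)$ are governed by the Lee weights of $C$ — for each $c \in C$ of Lee weight $a$ the number $\la = 2(n-a)$ is an eigenvalue of $\Ga(C^\perp)$, and every eigenvalue arises this way. Now $\Ga(C^\perp)$ is a Cayley graph on $G = \zz{4}^n/C^\perp$, and $|G| = 4^n/|C^\perp| = |C|$ using $|C|\,|C^\perp| = 4^n$; its connection set $S = \{\pm e_i : 1 \le i \le n\}$ has size $2n$. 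So the group order $|G| = |C|$, the connection-set size $|S| = 2n$, and hence the target dimension already match the statement.

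Next I would count the eigenvalues and check connectedness. The zero codeword (Lee weight $0$) gives the eigenvalue $2n = |S|$, the valency; the $s$ nonzero Lee weights $a_1,\dots,a_s$ of $C$ give the $s$ nontrivial eigenvalues $2(n-a_1),\dots,2(n-a_s)$, pairwise distinct because $a \mapsto 2(n-a)$ is injective. The set $\{e_1,\dots,e_n\}$ generates $\zz{4}^n$, so the image of $S$ generates $G$ and $\Ga(C^\perp)$ is connected. Then \tref{caydist} — equivalently \lref{diffdist} applied to the connection set $S$ — yields an $s$-distance set of size $|G| = |C|$ in $\cx^{|S|} = \cx^{2n}$.

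The step needing care — and the only genuine obstacle — is the hypothesis of \tref{caydist} that the $s$ nontrivial eigenvalues be distinct \emph{in absolute value}, since the degree set of the resulting configuration consists of the squared moduli $|2(n-a_i)|^2 = 4(n-a_i)^2$. These are distinct unless $|n-a_i| = |n-a_j|$, i.e. $a_i + a_j = 2n$; in the extreme case $2\mathbf{1} \in C$ (Lee weight $2n$) one gets a nontrivial eigenvalue $-2n$ of modulus equal to the valency, whose character restricts on $S$ to $-\one$, the trivial line up to scalar. To land the clean conclusion ``$s$ angles, $|C|$ lines'' one should note that in the codes of interest the Lee weight set is spread out enough that the $|n-a_i|$ are distinct (so no two weights sum to $2n$ and $2n$ itself is not a weight); otherwise the bound reads ``at most $s$'' and the count of lines may drop. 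Modulo this degeneracy check, the corollary is immediate from the preceding lemma and \tref{caydist}.
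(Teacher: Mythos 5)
Your proof is correct and follows the same route as the paper's one-line argument: apply the preceding lemma to $\Ga(C^\perp)$, so that its eigenvalues $2(n-a)$ are indexed by the Lee weights $a$ of $C = (C^\perp)^\perp$, then invoke \tref{caydist} with $|G| = 4^n/|C^\perp| = |C|$ and $|S| = 2n$. The degeneracy you flag --- that two weights with $a_i + a_j = 2n$ (or the weight $2n$ itself, whose character restricts to $-\one$ on $S$) collapse to the same absolute value, so the honest conclusion is ``at most $s$ distances'' and possibly fewer than $|C|$ distinct lines --- is a real imprecision in the corollary as stated, which the paper's own proof passes over in silence.
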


We now proceed to the direct mapping from $\zz{4}$-codes to complex vectors. Let $i = \sqrt{-1}$, and let
\[
\chi(x) := i^x.
\]
Then the character $\chi$ can be extended to a homomorphism from $\zz{4}^n$ to $\cx^n$: for $c = (c_1,\ldots,c_n)$ in $C$,
\[
\phi(c) := (\chi(c_1),\ldots,\chi(c_n)).
\]
We examine the angles in $\phi(C)$. As with codes over finite fields, if $C$ is linear, then $\phi(C)$ is closed under Schur multiplication. 

Suppose there are $n_j$ occurrences of $j$ in codeword $c$, for $j \in \{0,1,2,3\}$. Then the Lee weight of $c$ is
\[
wt(c) = n_1 + n_3 + 2n_2 = n - (n_0 - n_2).
\]
Similarly, the weight of $wt(c+2\one)$ is $n + (n_0 - n_2)$. Therefore
\[
wt(c+2\one) = 2n - wt(c),
\]
and if $\one \in C$, then the Lee weights of $C$ are symmetric about $n$. The weights of $c+\one$ and $c-\one$ are $n + (n_1 - n_3)$ and $n - (n_1 - n_3)$ respectively.

\begin{theorem}
Let $C$ be a linear code in $\zz{4}^n$ with $\one \in C$ and Lee weights
\[
\{0,n,2n,a_1,2n-a_1,\ldots,a_t,2n-a_t\}.
\]
Then $\phi(C)$ is an $s$-distance set of size $|C|/4$ in $\cx^n$, where $s \leq {t+2 \choose 2}$.
\end{theorem}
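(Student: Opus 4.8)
The plan is to follow the pattern of Theorem~\ref{thm:nearcode} and the surrounding $\zz{4}$-code results, but to keep careful track of which Lee weights actually survive once we pass from vectors to lines; this bookkeeping is exactly what sharpens the naive count to $\binom{t+2}{2}$.

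First I would pin down the redundancy in $\phi$. The map $\phi$ is an injective homomorphism from $(\zz{4}^n,+)$ into $\cx^n$ (with coordinate-wise multiplication), and two codewords $c,c'$ have $\phi(c),\phi(c')$ spanning the same line precisely when $c-c' \in \langle\one\rangle = \{0,\one,2\one,3\one\}$: any scalar relating two vectors all of whose entries are fourth roots of unity is itself some $i^k$, which forces $c' = c + k\one$. Since $\one \in C$, the order-$4$ subgroup $\langle\one\rangle$ lies in $C$, so the lines of $\phi(C)$ are in bijection with $C/\langle\one\rangle$ and there are $\abs{C}/4$ of them. Moreover, by linearity of $C$, for $x,y \in C$ we have $\phi(x)^*\phi(y) = \one^*\phi(y-x)$, so every angle of $\phi(C)$ equals $\abs{\one^*\phi(c)}^2$ for some $c \in C \setminus \langle\one\rangle$, and it suffices to bound the number of distinct such values.

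Next I would compute $\abs{\one^*\phi(c)}^2$ in terms of Lee weights. Writing $n_j$ for the number of coordinates of $c$ congruent to $j$ modulo $4$, we get $\one^*\phi(c) = (n_0-n_2) + (n_1-n_3)i$, hence $\abs{\one^*\phi(c)}^2 = (n_0-n_2)^2 + (n_1-n_3)^2$. Using the identities $n_0-n_2 = n - wt(c)$ and $n_1-n_3 = n - wt(c-\one)$ recorded just before the theorem, this becomes
\[
\abs{\one^*\phi(c)}^2 = \big(n - wt(c)\big)^2 + \big(n - wt(c-\one)\big)^2 .
\]
Because $\one \in C$, both $c$ and $c-\one$ lie in $C$, so $wt(c)$ and $wt(c-\one)$ belong to the given weight set $\{0,n,2n,a_1,2n-a_1,\ldots,a_t,2n-a_t\}$.

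The crucial point is that for $c \notin \langle\one\rangle$ neither of the extreme values $0$ nor $2n$ can be attained: $wt(c)=0$ forces $c=0$, and $wt(c)=2n$ forces every coordinate of $c$ to have Lee weight $2$, i.e.\ $c = 2\one$, and both of these lie in $\langle\one\rangle$; the same applies to $c-\one$, which is also outside $\langle\one\rangle$. Consequently $(n-wt(c))^2$ and $(n-wt(c-\one))^2$ both lie in the set $V := \{0\} \cup \{(n-a_j)^2 : 1 \le j \le t\}$ (using $(n-a_j)^2 = (n-(2n-a_j))^2$ and $(n-n)^2 = 0$), and $\abs{V} \le t+1$. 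Thus every angle of $\phi(C)$ is a sum $u+v$ with $u,v \in V$, and there are at most $\binom{\abs{V}}{2} + \abs{V} = \binom{\abs{V}+1}{2} \le \binom{t+2}{2}$ such sums, which is the asserted bound. I do not anticipate a genuine obstacle here: the argument parallels the $GF(q)$ case, and the only subtlety is the observation that $wt(\cdot)$ hits $0$ or $2n$ only at $0$ or $2\one$ — this is what removes the value $n$ from the possibilities for $\abs{n-wt(c)}$ and turns $\binom{t+3}{2}$ into $\binom{t+2}{2}$ — together with the routine verification that distinct $\langle\one\rangle$-cosets give distinct lines.
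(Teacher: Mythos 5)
Your proposal is correct and follows essentially the same route as the paper: reduce to $\abs{\one^*\phi(c)}^2=(n-wt(c))^2+(n-wt(c-\one))^2$, note that for $c\notin\langle\one\rangle$ the weights $0$ and $2n$ are excluded so each square lies in a set of at most $t+1$ values, and count unordered sums to get $\binom{t+2}{2}$. Your extra care in verifying that lines coincide exactly on cosets of $\langle\one\rangle$ (giving the count $\abs{C}/4$) is a detail the paper states without proof, but it is not a different argument.
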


\begin{proof}
As with codes over finite fields, it suffices to consider the absolute value of $\one^T\phi(c)$, for each $c \in C$. Since $n_i$ is the number of occurrences of $\om^i$ in $\phi(c)$,
\begin{align*}
\abs{\one^T\phi(c)}^2 & = \abs{(n_0-n_2) + i(n_1-n_3)}^2 \\
& = (n_0-n_2)^2 + (n_1-n_3)^2 \\
& = (n-wt(c))^2 + (n-wt(c-\one))^2.
\end{align*}
Therefore, the angle depends only on the weights of $c$ and $c-\one$ (or equivalently, $c+2\one$ and $c+\one$). When $c$ is not a multiple of $\one$, each of $(n-wt(c))^2$ and $(n-wt(c-\one))^2$ can take one of $t+1$ possible values, namely $(n-a_j)^2$ for $1 \leq j \leq t$, or $0$. This leads to at most ${t+2 \choose 2}$ possible values for $(n-wt(c))^2 + (n-wt(c-\one))^2$, and so $\phi(C)$ is at most a ${t+2 \choose 2}$-distance set. Because $\phi(c)$ and $\phi(c+\one)$ are the same vector projectively, each vector occurs $4$ times in $\phi(C)$. \qed
\end{proof}

\begin{comment}
${t+2 \choose 2} = {t+1 \choose 2} + t+1$.
\end{comment}

Calderbank, Cameron, Kantor, and Seidel \cite{ccks} use this direct mapping to construct maximal sets of mutually unbiased bases from $\zz{4}$-Kerdock codes. They also use the binary version of $\phi$ to construct real mutually unbiased bases from classical Kerdock codes.

\chapter{Mutually Unbiased Bases}\label{chap:mubs}

Mutually unbiased bases have received considerable attention in the last few years, most likely because of their surprising number of connections to combinatorics. For example, Calderbank, Cameron, Kantor, and Seidel \cite{ccks} described maximal sets of bases using symplectic spreads and Kerdock codes, while Boykin, Sitharam, Tiep, and Wocjan \cite{bstw1} described them using orthogonal decompositions of $\slnc$. Several authors have made analogies between maximal sets and affine planes. Pairs of mutually unbiased bases are equivalent to complex Hadamard matrices, while triples of bases can be constructed from type-II matrices. 

Regardless of the combinatorial connections, however, little is known about how many bases actually exist in most dimensions. Recall from Corollary \ref{cor:mubbnd} that at most $d+1$ mutually unbiased bases exist in $\cx^d$. When $d$ is a prime power, equality holds. On the other hand, in dimension $6$ for example, three mutually unbiased bases have been constructed in several ways, but no-one has proved or disproved the existence of four or more. For most values of $d$, the best known construction yields far less than $d+1$ sets. 

Maximal sets of bases were first constructed for prime dimensions in 1980 by Alltop \cite{all1}, who was working in the context of communication sequences. In 1981, Ivanovic \cite{iva1} rediscovered the sets in prime dimensions and put them in the quantum setting; his construction was extended to prime powers by Wootters and Fields \cite{wf1} in 1989. Since then, several other constructions have appeared: Klappenecker and R\"otteler \cite{kr1} gave a shorter proof of the unbiasedness of Wootters and Fields' bases, and Bandyopadhyay, Boykin, Roychowdhury, and Vatan \cite{bbrv} gave yet another description of the same bases. Calderbank et al. gave their construction in 1996 in a context unrelated to quantum information. 

In this chapter, we offer a new construction of maximal sets of mutually unbiased bases in prime-power dimensions using relative difference sets and commutative semifields, or equivalently using antipodal covers of complete bipartite graphs. We then show that the resulting bases are equivalent to those of Calderbank et al., and that all other known maximal sets are encompassed by this construction. We also consider dimensions which are not prime-powers, focusing on $d = 6$.

\subsection*{Applications}

Mutually unbiased bases were introduced by Ivanovic as a tool for recovering a quantum state from a series of measurements. Let $\rho$ be a density matrix of order $d$. Since $\rho$ is a $d \times d$ Hermitian matrix with trace $1$, it is specified by $d^2-1$ real parameters. Now suppose $\rho$ is measured with respect to an orthonormal basis. The resulting state of the measurement is one of the $d$ basis elements, each of which occurs with a certain probability. These probabilities sum to $1$, so they have $d-1$ degrees of freedom. Thus at least $d+1$ different measurements are required to determine $\rho$ completely from measurement statistics.

Ivanovic showed that $d+1$ mutually unbiased bases are sufficient to reproduce $\rho$, and Wootters and Fields showed that unbiased bases are the optimal measurements in terms of statistical error. More precisely, in theory $\rho$ can be reconstructed from any $d+1$ complete measurements. In practice, this is done by preparing $\rho$ and then measuring it in each basis a finite number of times. Since we are finding probabilities by measuring a finite number of events, the results will be only approximate. This error is minimized when the bases are unbiased. 

\begin{comment} 
As with equiangular lines, $\rho = \sum_{ij}x_{ij}E_{ij}$, where $E_{ij}$ is the projection for $v_{ij} \in B_i$. Then $p_{ij} = x_{ij} + \sum_{i' \neq i} x_{i'j'}/d$, from which we recover $p_{ij}$. We use orthonormal bases here because that's how people normally measure.
\end{comment}  

More recently, mutually unbiased bases have been used in quantum cryptography. The classic BB84 protocol \cite{BB84} uses a pair of mutually unbiased bases in $\cx^2$ to distribute a cryptographically secure bit-string. This protocol can easily be generalized to use $q$ bases in $\cx^q$ to distribute a string on $q$ symbols. However, other protocols using mutually unbiased bases have also been developed (see Nikolopoulos and Alber \cite{na1} for a review). As well, there are applications to quantum fingerprinting (see Scott, Walgate, and Sanders \cite{sws}) and quantum tomography (Pittenger and Rubin \cite{pr1} and Gibbons, Hoffman, and Wootters \cite{ghw}).

\begin{comment}
In the protocol, Alice repeated transmits a random vector from $B_1 \cup B_2$ to Bob, and Bob randomly measures in either $B_1$ or $B_2$. If Bob choose the correct basis, then the resulting vector is the one Alice transmitted; otherwise, he ends up with a useless random vector. After transmission, Alice and Bob use a classical channel do decide which transmissions were measured in the correct basis. By testing a few of those vectors, Alice and Bob can be assured that the vectors were not tampered with or observed, and the remaining vectors are used to construct a key.

Quantum fingerprinting: like with equiangular lines, except assume you want to measure with respect to an orthonormal basis because that's how it's always done.

Quantum tomography: reconstructing $\rho$ by "slices" in the discrete setting. The slices are parallel lines in an affine plane, assigned to states. The states from parallel lines are orthonormal, with all states together forming MUBs. The Wigner matrix at a point $A(p)$ is the sum of $vv^*$, $v$ a line on $p$, so that different points have orthogonal matrices. To find a point $A(p)$, measure it with respect to each of the bases, and then add.
\end{comment}

\subsection*{Preliminaries}
Let $\scr{B} = \{B_0,\ldots,B_m\}$ be a collection of bases of $\cx^d$. Recall that $\scr{B}$ is mutually unbiased\index{mutually unbiased bases} if each $B_i$ is orthonormal and there is some constant $\al$ such that for $u$ and $v$ in different bases,
\[
\abs{\ip{u}{v}}^2 = \al.
\]
It is convenient to write the elements of a basis as the columns of a matrix. Then $B_i$ is orthonormal if and only if the matrix $B_i$ is unitary, and $B_i$ and $B_j$ are mutually unbiased if and only if the matrix $B_i^*B_j$ is \defn{flat}: all entries of $B_i^*B_j$ have the same absolute value.

By applying unitary transformations, we may assume without loss of generality that $B_0 = I$. If $B_i$ is unbiased with $B_0$, then all entries of $B_i$ have absolute value $\sqrt{\al}$. But $B_i$ is unitary, so $B_i^*B_i = I$, which implies that 
\[
\al = \frac{1}{d}.
\]
A flat matrix \named{$H$}{Hadamard matrix} satisfying $H^*H = dI$ is sometimes called a \defn{complex Hadamard matrix}.
We may also assume without loss of generality that the first row of each flat $B_i$ ($i \neq 0$) is the all-ones vector.

In Corollary \ref{cor:distdesscheme2}, we noted that the lines from any set of mutually unbiased bases form a $1$-design, and if $G$ is the Gram matrix, then $\{I,G\}$ is a coherently-weighted configuration. One further condition comes from Corollary \ref{cor:mubbnd}: the lines from $d+1$ mutually unbiased bases in $\cx^d$ form a $2$-design.

\begin{comment}
\begin{lemma}\label{lem:mubgram}
Let $G$ be the Gram matrix of any set of mutually unbiased bases. Then $\{I,G\}$ is a configuration.
\end{lemma}

\begin{proof}
Let $B_0,\ldots,B_m$ be mutually unbiased bases. Then $B_i^*B_i = B_iB_i^* = I$, and $B_i^*B_j$ is complex Hadamard for $i \neq j$. The Gram matrix is
\[
G = \left(\begin{matrix}
I & B_0^*B_1 & \ldots & B_0^*B_m \\
B_1^*B_0 & I & \ldots & B_1^*B_m \\
& \vdots & & \\
B_m^*B_0 & B_m^*B_1 & \ldots & I \\
\end{matrix} \right),
\]
and it is then easy to verify that
\[
G^2 = (m+1)G.
\]
Thus the span of $\{I,G\}$ is an algebra. \qed
\end{proof}
\end{comment}

\section{A construction for prime powers}\label{sec:prime}

In Theorem \ref{thm:caydist}, we showed if $X(G,D)$ is a Cayley digraph $\Cay(G,D)$ with $s$ distinct absolute values of nontrivial eigenvalues, then the characters of $G$ restricted to $D$ form an $s$-distance set in $\cx^{|D|}$. In the case of mutually unbiased bases, the relevant graph is a certain type of distance-regular graph called an antipodal cover of $K_{n,n}$. Equivalently, mutually unbiased bases can be constructed from semi-regular relative different sets, which can be found using commutative semifields. 

These observations are new, although the resulting maximal sets of bases were previously constructed by Calderbank, Cameron, Kantor, and Seidel \cite{ccks} using symplectic spreads. 

\subsection{Relative difference sets}
Recall that $D \sbs G$ is a $(v,k,\la)$-difference set if 
\[
DD^{-1} = k1_G + \la(G\backslash\{1_G\}),
\]
and that these sets produce equiangular lines, as in Corollary \ref{cor:diffequi}. For mutually unbiased bases, we need a different type of set. A \defn{relative difference set} in $G$ is a subset $D$ such that for some normal subgroup $N$,
\[
DD^{-1} = |D|1_G + \la(G\backslash N).
\]
If $|N| = n$, $|G| = mn$ and $|D| = k$, then $D$ is called a \textsl{$(m,n,k,\la)$-relative difference set}, and $N$ is called the \defn{excluded subgroup}. If $m = k$, then $D$ is \textsl{semi-regular}\index{relative difference set!semi-regular}. In this case, $k = \la n$.

\begin{lemma}\label{lem:rdsmub}
Let $D$ be a semi-regular $(k,n,k,\la)$-relative difference set in an abelian group $G$. Then there are $n+1$ mutually unbiased bases in $\cx^k$.
\end{lemma}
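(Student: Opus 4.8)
The plan is to exhibit the $n+1$ bases explicitly. Identify $\cx^k$ with the space of functions $D\to\cx$, and for each character $\chi$ of $G$ let $v_\chi\in\cx^k$ be the restriction of $\chi$ to $D$. Exactly as in the proof of \lref{diffdist}, for characters $\chi_a,\chi_b$ of $G$ one has $\ip{v_{\chi_a}}{v_{\chi_b}}=\chi(D)$ where $\chi:=\chi_b\overline{\chi_a}$, so $\abs{\ip{v_{\chi_a}}{v_{\chi_b}}}^2=\chi(DD^{-1})$; also $\|v_\chi\|^2=\abs{D}=k$ for every $\chi$.

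First I would evaluate $\chi(DD^{-1})$. Using $DD^{-1}=k1_G+\lambda(G\setminus N)$ we get $\chi(DD^{-1})=k+\lambda\bigl(\chi(G)-\chi(N)\bigr)$. Splitting into cases and using $\chi(G)=0$ whenever $\chi$ is nontrivial on $G$, $\chi(N)=0$ whenever $\chi$ restricts to a nontrivial character of $N$, and the semi-regularity relation $k=\lambda n$, this gives $\chi(DD^{-1})=k^2$ if $\chi$ is trivial, $\chi(DD^{-1})=0$ if $\chi$ is nontrivial on $G$ but trivial on $N$, and $\chi(DD^{-1})=k$ if $\chi$ is nontrivial on $N$. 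In particular no nontrivial character is identically $1$ on $D$ (for $k\geq2$; the case $k=1$ is degenerate), so $D$ generates $G$ and distinct characters restrict to distinct vectors.

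Next I would group the dual group. Let $H=\{\chi\in\widehat{G}:\chi|_N\equiv1\}\cong\widehat{G/N}$, so $\abs{H}=\abs{G/N}=k$ and $H$ has $\abs{G}/k=n$ cosets in $\widehat{G}$. For each coset the $k$ vectors $\tfrac1{\sqrt k}v_\chi$, with $\chi$ ranging over the coset, have norm $1$ and are pairwise orthogonal, since two distinct characters in a common coset differ by a nontrivial element of $H$ and therefore fall into the case $\chi(DD^{-1})=0$; hence they form an orthonormal basis $B_i$ of $\cx^k$, for $i=1,\dots,n$. If $\chi,\chi'$ lie in different cosets then $\chi'\overline{\chi}\notin H$ is nontrivial on $N$, so $\abs{\ip{\tfrac1{\sqrt k}v_\chi}{\tfrac1{\sqrt k}v_{\chi'}}}^2=\tfrac1{k^2}\cdot k=\tfrac1k$, i.e.\ $B_i$ and $B_j$ are mutually unbiased for $i\neq j$. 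Finally let $B_0=\{e_g:g\in D\}$ be the standard basis; then $\abs{\ip{e_g}{\tfrac1{\sqrt k}v_\chi}}^2=\tfrac1k\abs{\chi(g)}^2=\tfrac1k$, so $B_0$ is unbiased with every $B_i$. Thus $B_0,B_1,\dots,B_n$ are $n+1$ mutually unbiased bases in $\cx^k$.

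The only genuine computation is the case analysis for $\chi(DD^{-1})$, which is short; the point to appreciate is that semi-regularity ($k=\lambda n$) is precisely what forces the ``bad'' value $0$ and the cross-basis value $k$, and that the standard basis supplies the $(n+1)$-st member for free. I expect that case analysis, together with the coset bookkeeping for $H$, to be the main (though modest) obstacle.
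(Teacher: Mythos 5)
Your proposal is correct and follows essentially the same route as the paper: the same evaluation of $\chi(DD^{-1})$ into the three values $k^2$, $0$, $k$ via the subgroup $H\cong\widehat{G/N}$, with the $n$ cosets of $H$ giving the unbiased bases and the standard basis supplying the last one. You also spell out a couple of details the paper leaves implicit (normalization by $1/\sqrt{k}$ and the unbiasedness of the standard basis with each $B_i$), but the argument is the same.
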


\begin{proof}
By Lemma \ref{lem:diffdist}, it suffices to show that the characters of $G$ evaluated at $DD^{-1}$ have absolute value $0$, $\sqrt{k}$, and $k$. This is a standard result (see Beth, Jungnickel and Lenz \cite[Lemma 10.9]{bjl}) which we include for completeness.

Let $\chi$ be character of $G/N$. Then $\chi$ induces a character of $G$ which is constant on the cosets of $N$: for $a \in G$ and $n \in N$, define
\[
\chi(a+n) := \chi(a+N).
\]
These characters form a subgroup of the characters of $G$, which we denote $H$. Now evaluating the characters of $G$ at $DD^{-1}$, we get
\[
\chi(DD^{-1}) = k\chi(1_G) + \la\chi(G\backslash N).
\]
If $\chi$ is the trivial character $\chi_1$, then this sum is $k^2$. If $\chi$ is in $H$, then $\chi$ is trivial on $N$, and so $\chi(G\backslash N) = -n$. If $\chi$ is not in $H$, then $\chi(G) = \chi(N) = 0$. Therefore,
\[
\chi(DD^{-1}) = \begin{cases}
k^2, & \chi = \chi_1; \\
0, & \chi \neq \chi_1, \chi \in H; \\
k, & \chi \notin H.
\end{cases}
\]
Consequently the $k$ characters in each coset of $H$ are orthogonal when restricted to $D$. Those $n$ cosets, in conjunction with the standard basis, form a set of $n+1$ mutually unbiased bases. \qed
\end{proof}

For a survey of semi-regular relative difference sets in abelian groups, see Davis and Jedwab \cite{dj1}.

\begin{comment}
Doesn't actually suffice to show the characters take only $3$ values: orthogonality is also needed. To show that the characters of an abelian group are also a group, use the decomposition of abelian groups into a product of cyclics.

Most of the known semi-regular relative difference sets in abelian groups have prime-power $k$; the others have $n=2$. 
\end{comment}

\subsection{Commutative semifields}

Informally, a semifield is a field in which multiplication need not be associative. Formally, a finite set $E$ with operations $+$ and $\circ$ is a \defn{semifield} if 
\begin{enumerate}[(a)]
\item $(E,+)$ is an abelian group;
\item $(E,\circ)$ has an identity $1$;
\item if $x \circ y = 0$, then either $x = 0$ or $y = 0$; and
\item $\circ$ is left and right distributive over $+$.
\end{enumerate}
\nomenclature[\]{$x \circ y$}{semifield multiplication}%
Since there are no zero divisors in $E$, the additive subgroup of $E$ generated by $1$ is a finite field of prime order, say $GF(p)$. Since $E$ is an additive group and multiplication by $GF(p)$ distributes over addition, $E$ is a vector space over $GF(p)$. Thus the order of $E$ is a prime power. For a survey of finite semifields, see Cordero and Wene \cite{cw1}.

\begin{comment}
Knuth constructed a semifield of every order $2^{mn}$, $n > 1$ odd. Dickson's semifields have order $p^{2n}$, $n > 1$ odd. Some of Albert's twisted fields are commutative semifields of order $p^n$, $n > 2$. Other infinite classes exist. Semifields are known to exist for every $p^n$ with $n > 2$ and $p^n > 8$. For order $16$, there are 23 proper semifields and two isotopy classes. Only five proper semifields (one isotopy class) exist for order $27$. 

A Dickson semifield is constructed as follows. Let $q = p^n$, $p$ odd, $n > 1$, and let $E$ be a vector space of dimension $2$ over $GF(q)$ with basis $\{1,\la\}$. Let $x$ be a non-square in $GF(q)$ and let $\sg$ be a nontrivial automorphism: $a^\sg = a^{p^r}$. Then multiplication is defined by
\[
(a + \la b) \circ (c + \la d) := (ac + x(bd)^\sg) + \la(ad + bc).
\]
Clearly we have distributivity and identity $1+\la 0$. To show it is a division ring, assume the product is $0$ and plug $a = xb^\sg d^\sg c^{-1}$ into $ad + bc = 0$. This implies $xb^{\sg-1}d^{\sg+1} + c^2 = 0$, a contradiction because $x$ is non-square.
\end{comment} 

Let $E$ be a finite semifield. We construct an incidence structure with points $(x,y) \in E^2$ and lines defined by 
\[
[m,z] := \{(x,m \circ x + z): x \in E \}
\]
\nomenclature[\]{$[m,z]$}{line with slope $m$ and $y$-intersect $z$}%
for $m$ and $z$ in $E$. This is the affine plane coordinatized by $E$, with one parallel class of lines (the one with infinite slope) removed. We construct a difference set on an automorphism group of this structure.

Let $T_{a,b}$ be the map on points defined by 
\[
T_{a,b}(x,y) := (x+a,y+b).
\]
Then $T_{a,b}$ is an automorphism: for a point on the line $[m,z]$,
\begin{align*}
T_{a,b}(x,m \circ x + z) & = (x+a,m \circ x + z+ b) \\
& = (x+a,m \circ (x+a) - m \circ a + z + b).
\end{align*}
Thus $T_{a,b}$ maps $[m,z]$ to $[m,z+b-m \circ a]$. Similarly, if we define $S_{a,b}$ on lines such that
\[
S_{a,b}([m,z]) := [m+a,z+b],
\]
the $S_{a,b}$ is an automorphism mapping $(x,y)$ to $(x,y+b+a \circ x)$. In general the group of automorphisms generated by $T_{a,b}$ and $S_{a,b}$ is not abelian, but it has a large abelian subgroup. Define 
\[
G_{a,b} := T_{a,b}S_{a,0}.
\]
Then let $G$ be the set of all $G_{a,b}$ and let $D$ be the subset of $G$ with $b = 0$. The following result is due to Hughes \cite{hug1} in 1956.

\begin{theorem}
If $E$ is a commutative semifield, then $G$ is an abelian group and $D$ is an $(|E|,|E|,|E|,1)$-relative difference set.
\end{theorem}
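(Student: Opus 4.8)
The plan is to verify the two claims separately: first that $G = \{G_{a,b} : a,b \in E\}$ is an abelian group under composition, and then that $D = \{G_{a,0} : a \in E\}$ is an $(|E|,|E|,|E|,1)$-relative difference set with excluded subgroup $N = \{G_{0,b} : b \in E\}$. For the group structure, I would compute the composition $G_{a,b} \circ G_{c,d}$ directly from the definition $G_{a,b} = T_{a,b}S_{a,0}$, using the actions already worked out in the excerpt: $T_{a,b}$ sends $(x,y)$ to $(x+a,y+b)$ and $S_{a,0}$ sends $(x,y)$ to $(x,y+a\circ x)$. Composing, $G_{a,b}$ sends $(x,y)$ to $(x+a, y + b + a\circ x)$. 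Then $G_{a,b}$ followed by $G_{c,d}$ sends $(x,y)$ to $(x+a+c,\ y+b+a\circ x + d + c\circ(x+a)) = (x+(a+c),\ y + (b+d+c\circ a) + (a+c)\circ x)$, where I have used left distributivity of $\circ$ over $+$ and, crucially, \emph{commutativity} of $\circ$ to rewrite $c\circ(x+a) = c\circ x + c\circ a = c\circ x + a\circ c$ and then combine $a\circ x + c\circ x = (a+c)\circ x$. This shows $G_{a,b}G_{c,d} = G_{a+c,\,b+d+a\circ c}$, which is manifestly symmetric in the pairs $(a,b),(c,d)$ (again using commutativity of $\circ$ on the $a\circ c$ term), so $G$ is closed and abelian; the identity is $G_{0,0}$ and the inverse of $G_{a,b}$ is $G_{-a,\,-b+a\circ a}$. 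Since $|G| = |E|^2$, write $v = |E|^2$, $n = |E|$, $m = |E|$, $k = |E|$.

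For the relative difference set property I must show $DD^{-1} = |E|\cdot 1_G + 1\cdot(G\setminus N)$ in the group algebra, where $N = \{G_{0,b}\}$ is the (normal, since $G$ is abelian) subgroup of ``pure translations in the second coordinate.'' Equivalently, I count for each $g \in G$ the number of ways $g = G_{a,0}G_{c,0}^{-1}$ with $a,c \in E$. Using the composition formula, $G_{a,0}G_{c,0}^{-1} = G_{a,0}G_{-c,\,c\circ c} = G_{a-c,\ c\circ c + a\circ(-c)} = G_{a-c,\ c\circ c - a\circ c}$. So $g = G_{\alpha,\beta}$ arises exactly when $\alpha = a-c$ and $\beta = c\circ c - a\circ c = -(a-c)\circ c = -\alpha \circ c$. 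If $\alpha = 0$: then $a = c$ is free and $\beta = 0$, giving $|E|$ solutions for $g = G_{0,0} = 1_G$ and $0$ solutions for any other $g$ with first coordinate $0$ (i.e. for $g \in N\setminus\{1_G\}$). If $\alpha \neq 0$: the map $c \mapsto -\alpha\circ c$ is a bijection of $E$ onto itself (it is additive, and injective because $\alpha \circ c = 0$ with $\alpha\neq 0$ forces $c=0$, by the no-zero-divisors axiom), so for each $\beta$ there is exactly one $c$, hence exactly one representation. Thus every $g \in G\setminus N$ has exactly $1$ representation, $1_G$ has $|E|$ representations, and elements of $N\setminus\{1_G\}$ have $0$. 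This is precisely $DD^{-1} = |E|\,1_G + (G\setminus N)$, so $\lambda = 1$ and $D$ is a semi-regular $(|E|,|E|,|E|,1)$-relative difference set.

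I do not anticipate a genuine obstacle here — everything reduces to the composition formula and the bijectivity of $c \mapsto \alpha\circ c$. The one place to be careful is making sure the \emph{commutativity} hypothesis is used correctly: without it, $G_{a,b}G_{c,d}$ would involve $c\circ a$ in a way that does not symmetrize to $a\circ c$, and both closure and abelianness would fail; so the main ``content'' of the argument is simply tracking where commutativity enters (the cross terms $a\circ c$ versus $c\circ a$). A secondary point of care is the normality of $N$, which is automatic since $G$ is abelian, and checking $N$ really is a subgroup: from the composition formula $G_{0,b}G_{0,d} = G_{0,b+d}$, so $N \cong (E,+)$, of order $|E| = n$, as required for the excluded subgroup. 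Finally I would remark that $m = k = |E|$ confirms semi-regularity, and $k = \lambda n$ reads $|E| = 1\cdot|E|$, consistent with the general identity noted before the lemma.
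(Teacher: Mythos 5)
Your proposal is correct and follows essentially the same route as the paper: derive the composition formula $G_{a,b}G_{c,d}=G_{a+c,\,b+d+a\circ c}$ (where commutativity of $\circ$ is what makes $G$ closed and abelian), compute $G_{a,0}G_{c,0}^{-1}=G_{a-c,\,-(a-c)\circ c}$, and use the no-zero-divisor axiom to control the multiplicities. Your one small improvement is phrasing the ``exactly once'' count via the bijection $c\mapsto -\alpha\circ c$ for fixed $\alpha\neq 0$, which gives exact multiplicity directly, whereas the paper only proves at-most-once and leaves the counting implicit.
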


\begin{proof}
From the definition of $G_{a,b}$, we have
\[
G_{a,b}(x,y) = (x+a,y+b+a \circ x).
\]
We first show $G$ is a group.
\begin{align*}
G_{a,b}G_{c,d}(x,y) & = G_{a,b}(x+c,y+d+c\circ x) \\
& = (x+c+a,y+d+c \circ x + b+ a \circ(x+c)) \\
& = G_{a+c,b+d+a \circ c}(x,y).
\end{align*}
Therefore $G_{a,b}G_{c,d} = G_{a+c,b+d+a \circ c}$, so $G$ is an abelian group when multiplication is commutative in $E$.

Next we show $D$ is a relative difference set. Note that the inverse of $G_{a,0}$ in $G$ is $G_{-a,a \circ a}$. Then an arbitrary element of $DD^{-1}$ is of the form
\[
G_{a,0}G_{-b,b \circ b} = G_{a-b,-(a-b)\circ b}.
\]
If we let
\[
N := \{G_{0,b} : b \in E\},
\]
then no element of $N$ occurs in $DD^{-1}$ except $G_{0,0}$. Furthermore, every element of $G\backslash N$ occurs exactly once. For, suppose $G_{a-b,-(a-b)\circ b} = G_{c-d,-(c-d) \circ d}$. Then $a-b = c-d$, and substituting into the second index, 
\[
-(a-b) \circ b = -(a-b) \circ d.
\]
We conclude that either $G_{a-b,-(a-b)\circ b} = G_{0,0}$, or $(a,b) = (c,d)$. \qed
\end{proof}

\begin{corollary} \label{cor:semimub}
If $E$ is a finite commutative semifield of order $q$, then there is a set of $q+1$ mutually unbiased bases in $\cx^q$. \qed
\end{corollary}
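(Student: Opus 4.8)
The plan is to obtain this as an immediate consequence of the preceding theorem together with Lemma~\ref{lem:rdsmub}. The theorem attached to a finite commutative semifield $E$ an abelian group $G$ (the group of automorphisms $G_{a,b}$) and a subset $D \subseteq G$ that is an $(|E|,|E|,|E|,1)$-relative difference set, with excluded subgroup $N = \{G_{0,b} : b \in E\}$. Writing $q = |E|$, this says $D$ is a $(q,q,q,1)$-relative difference set in an abelian group; since the first and third parameters coincide ($m = k = q$), $D$ is semi-regular, and indeed the relation $k = \lambda n$ holds trivially as $q = 1 \cdot q$.

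From there I would simply apply Lemma~\ref{lem:rdsmub} with $k = q$, $n = q$, and $\lambda = 1$: a semi-regular $(q,q,q,1)$-relative difference set in an abelian group yields $n + 1 = q + 1$ mutually unbiased bases in $\cx^{k} = \cx^{q}$. That completes the argument. (Equivalently, one could bypass Lemma~\ref{lem:rdsmub} and feed $DD^{-1}$, which takes only the absolute values $q^2$, $0$, $q$ on characters of $G$, directly into Lemma~\ref{lem:diffdist} together with the orthogonality observation about cosets of the character subgroup $H$; but the packaged statement of Lemma~\ref{lem:rdsmub} already records exactly this.)

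Since the corollary is essentially a repackaging of results proved earlier in the section, there is no genuine obstacle in the proof itself — the work was all in establishing the semifield-to-relative-difference-set construction. The one point worth noting is that the statement is non-vacuous and in fact sharp: commutative semifields of order $q$ exist for every prime power $q$ (finite fields are themselves commutative semifields, and proper examples such as Dickson's semifields and Albert's commutative twisted fields supply the remaining non-prime, non-small cases), so Corollary~\ref{cor:semimub} recovers the maximal set of $q+1$ mutually unbiased bases in every prime-power dimension, matching the upper bound of Corollary~\ref{cor:mubbnd}.
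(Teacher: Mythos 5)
Your proof is correct and follows exactly the paper's intended route: the corollary is an immediate consequence of the preceding theorem (which produces the $(q,q,q,1)$-relative difference set $D$ in the abelian group $G$) combined with Lemma~\ref{lem:rdsmub} applied with $k=n=q$ and $\lambda=1$, which is why the paper states it with no further argument. Your parameter bookkeeping and the observation that $m=k$ gives semi-regularity are both right, so nothing is missing.
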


By finding the characters of the group $G$, we can give explicit formulas for the mutually unbiased bases arising from Corollary \ref{cor:semimub}.

We consider the case when $E$ is a semifield of odd order first. Then $E$ is a vector space over $GF(p)$; let $a^Tx$ denote the standard scalar product from $E \times E$ to $GF(p)$, and let $\om$ be a $p$-th root of unity.

\begin{lemma} \label{lem:oddsemichar}
When $|E|$ is odd, the characters of $G$ have the form
\[
\chi_{y,z}(G_{a,b}) = \om^{\scp{z}{(a^{\circ 2}-2b)}+2\scp{y}{a}}
\]
for $a,b,y,z \in E$.
\end{lemma}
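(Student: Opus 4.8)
The plan is to verify directly that the functions $\chi_{y,z}$ are well-defined homomorphisms from $G$ to $\cx^\times$, and then to count them. Recall from the proof of Hughes' theorem that the group law on $G$ is $G_{a,b}G_{c,d} = G_{a+c,\, b+d+a\circ c}$. So to show $\chi_{y,z}$ is a character I would compute
\[
\chi_{y,z}(G_{a,b})\,\chi_{y,z}(G_{c,d}) = \om^{\scp{z}{(a^{\circ 2}-2b)}+2\scp{y}{a}}\,\om^{\scp{z}{(c^{\circ 2}-2d)}+2\scp{y}{c}}
\]
and compare it with
\[
\chi_{y,z}(G_{a+c,\,b+d+a\circ c}) = \om^{\scp{z}{((a+c)^{\circ 2}-2(b+d+a\circ c))}+2\scp{y}{(a+c)}}.
\]
The linear-in-$y$ part matches immediately. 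For the $z$-part, the point is that $(a+c)^{\circ 2} = a^{\circ 2} + 2(a\circ c) + c^{\circ 2}$: this uses both distributivity of $\circ$ over $+$ and, crucially, commutativity of $\circ$ (so that $a\circ c + c\circ a = 2(a\circ c)$). Then $(a+c)^{\circ 2} - 2(a\circ c) = a^{\circ 2}+c^{\circ 2}$, and the exponents agree. One also checks $\chi_{y,z}(G_{0,0}) = \om^{0} = 1$, so each $\chi_{y,z}$ is genuinely a homomorphism into the unit circle.

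Next I would argue these are \emph{all} the characters, by a counting and injectivity argument. Since $G \cong E \times E$ as a set and has order $q^2$ with $q = p^n$, it has exactly $q^2$ characters. The family $\{\chi_{y,z} : y,z \in E\}$ has $q^2$ members, so it suffices to show the map $(y,z)\mapsto \chi_{y,z}$ is injective. Suppose $\chi_{y,z} = \chi_{y',z'}$. Evaluating at $G_{0,b}$ for all $b\in E$ gives $\om^{-2\scp{z}{b}} = \om^{-2\scp{z'}{b}}$ for every $b$; since $p$ is odd, $2$ is invertible mod $p$, so $\scp{z-z'}{b} = 0$ in $GF(p)$ for all $b$, and non-degeneracy of the standard scalar product forces $z = z'$. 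Then evaluating at $G_{a,0}$ and cancelling the (now equal) $\scp{z}{a^{\circ 2}}$ terms gives $\scp{y-y'}{a} = 0$ for all $a$, hence $y = y'$. So the family is injective, has the right cardinality, and therefore is the full character group.

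The main obstacle — really the only subtle point — is making sure the algebraic identity $(a+c)^{\circ 2} = a^{\circ 2} + 2(a\circ c) + c^{\circ 2}$ is used correctly and that commutativity of the semifield is genuinely what makes $\chi_{y,z}$ multiplicative; without commutativity the cross term would be $a\circ c + c\circ a$ rather than $2(a\circ c)$, and the formula would fail. A secondary point to be careful about is the role of $|E|$ being odd: it is needed so that $2$ is a unit in $GF(p)$, both in the definition (the factor $a^{\circ 2} - 2b$ should recover $b$ up to the invertible constant $2$) and in the injectivity step above. Everything else is a routine substitution using the explicit group law.
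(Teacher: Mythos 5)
Your verification of multiplicativity is exactly the paper's proof: the paper's entire argument is the one-line computation $\chi_{y,z}(G_{a,b})\chi_{y,z}(G_{c,d}) = \chi_{y,z}(G_{a+c,\,b+d+a\circ c})$, which rests on the same identity $(a+c)^{\circ 2} = a^{\circ 2} + 2(a\circ c) + c^{\circ 2}$ that you correctly flag as the point where commutativity of $\circ$ enters. The one difference is that you go further: the paper stops after showing the $\chi_{y,z}$ are characters, whereas you also prove the map $(y,z)\mapsto\chi_{y,z}$ is injective and hence that these $q^2$ functions exhaust the character group, which is what the word ``the'' in the lemma statement actually requires; your use of oddness of $p$ (invertibility of $2$) and non-degeneracy of the scalar product in that step is correct and fills a gap the paper leaves implicit.
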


\begin{proof}
\begin{align*}
\chi_{y,z}(G_{a,b})\chi_{y,z}(G_{c,d}) & = \om^{\scp{z}{(a^{\circ 2}-2b)}+2\scp{y}{a}}\om^{\scp{z}{(c^{\circ 2}-2d)}+2\scp{y}{c}} \\
& = \om^{\scp{z}{((a+c)^{\circ 2}-2(b+d+a \circ c))}+2\scp{y}{(a+c)}} \\
& = \chi_{y,z}(G_{a+c,b+d+a \circ c}). \tag*{\sqr53}
\end{align*}
\end{proof}

In fact, the scalar product $a^Tx$ can be replaced with any nondegenerate bilinear form. When $E$ is a field, let $\tr$ denote the trace function from $E$ to $GF(p)$.

\begin{comment}
Bilinear form: bilinear, from $V \times V$ to its underlying field. Nondegenerate: if $B(x,y) = 0$ for all $y$, then $x = 0$ and similarly $B(y,x)$. If the form is degenerate, then the characters will not be distinct.
\end{comment}

\begin{corollary} \label{cor:oddfieldchar}
When $E$ is a finite field of odd order, the characters of $G$ have the form 
\[
\chi_{y,z}(G_{a,b}) = \om^{\tr(z(a^2-2b)+2ya)}. \qed
\]
\end{corollary}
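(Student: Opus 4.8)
The plan is to deduce this immediately from Lemma~\ref{lem:oddsemichar} together with the observation recorded just before the statement: the scalar product there may be replaced by \emph{any} nondegenerate bilinear form on $E$, and when $E$ is a field we take that form to be $(u,v) \mapsto \tr(uv)$. First I would note that when the semifield $E$ is in fact a finite field, the operation $\circ$ is ordinary multiplication, so $a^{\circ 2} = a^2$ and $a \circ c = ac$; hence the group law $G_{a,b}G_{c,d} = G_{a+c,\,b+d+a\circ c}$ specializes to $G_{a+c,\,b+d+ac}$.

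Next I would check that $B(u,v) := \tr(uv)$ is a nondegenerate symmetric $GF(p)$-bilinear form on $E = GF(q)$ viewed as a vector space over $GF(p)$. Bilinearity and symmetry are clear from $GF(p)$-linearity of $\tr$ and commutativity of multiplication in $E$. For nondegeneracy, suppose $\tr(uv) = 0$ for all $v \in E$; since the extension $GF(q)/GF(p)$ is separable, the trace map is not identically zero, so there is some $w$ with $\tr(w) \neq 0$, and if $u \neq 0$ then choosing $v = u^{-1}w$ gives $\tr(uv) = \tr(w) \neq 0$, a contradiction. Thus $u = 0$, and $B$ is nondegenerate. (This is the standard fact that the trace form of a finite field is nondegenerate; one could instead cite it directly.)

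Now I would simply substitute into the formula of Lemma~\ref{lem:oddsemichar}: with $\scp{z}{w} = \tr(zw)$ and $\scp{y}{a} = \tr(ya)$, and using $a^{\circ 2} = a^2$,
\[
\chi_{y,z}(G_{a,b}) = \om^{\,\tr\!\big(z(a^2-2b)\big) + 2\tr(ya)} = \om^{\,\tr\!\big(z(a^2-2b)+2ya\big)},
\]
where the last equality uses that $\tr$ is additive and $\mathbb{Z}$-linear, so $\tr(z(a^2-2b)) + 2\tr(ya) = \tr(z(a^2-2b)+2ya)$. Finally, to confirm these exhaust the character group of $G$, I would count: $G$ has order $q^2$, there are $q^2$ choices of $(y,z)$, and nondegeneracy of $B$ forces the $\chi_{y,z}$ to be pairwise distinct (if $\chi_{y,z} = \chi_{y',z'}$, evaluating at $G_{0,b}$ for all $b$ forces $\tr((z-z')b)=0$ for all $b$, so $z=z'$, and then evaluating at $G_{a,0}$ forces $y=y'$), so we have all $q^2$ characters.

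There is essentially no obstacle here: the content is entirely contained in Lemma~\ref{lem:oddsemichar}, and the only genuinely new ingredient is the nondegeneracy of the trace form, which is a standard property of finite fields. The one point to be careful about is the factor of $2$ and its interaction with the characteristic: since $|E|$ is odd we have $2 \in GF(p)^*$, so $2\tr(ya) = \tr(2ya)$ is legitimate and no collapse occurs.
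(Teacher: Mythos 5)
Your proposal is correct and follows exactly the route the paper intends: the corollary is stated with no proof because it is meant to be immediate from Lemma~\ref{lem:oddsemichar} together with the preceding remark that the scalar product may be replaced by any nondegenerate bilinear form, with the trace form $(u,v)\mapsto\tr(uv)$ playing that role when $E$ is a field. Your added details (nondegeneracy of the trace form, the merging of $\tr(z(a^2-2b))+2\tr(ya)$ into a single trace, and the count showing the $\chi_{y,z}$ exhaust the $q^2$ characters) are exactly the omitted verifications, and all are sound.
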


Next suppose $E$ is a semifield of even order. Here $E$ is a vector space over $GF(2)$, but we ``lift" to a ring over $\zz{4}$. Let $e_1,\ldots,e_m$ be the standard basis for $E$ over $GF(2)$, and let $\widehat{e}_1,\ldots,\widehat{e}_m$ be the standard basis for a free module \named{$R$}{ring} over $\zz{4}$. Then we can embed every element of $E$ into $R$ as follows: if $x = \sum_j x_je_j$ with $x_j \in \zz{2}$, then
\[
x \mapsto \widehat{x} := \sum_{j=1}^m x_j \widehat{e}_j.
\]
Any element of $R$ can be written uniquely in the form $\widehat{x} + 2\widehat{y}$, for some $x$ and $y$ in $E$. Note that the embedding map does not preserve addition; however, 
\[
2(\widehat{x+y}) = 2(\widehat{x} + \widehat{y}).
\]
Multiplication in $R$ is defined by 
\[
\widehat{e}_j\widehat{e}_k := \widehat{e_j \circ e_k}
\]
for basis elements and then extended linearly to all of $R$. It follows that multiplication distributes over addition. Again multiplication is not preserved by the embedding map, but
\[
2(\widehat{x \circ y}) = 2\widehat{x}\widehat{y}.
\]
Finally, since $\widehat{x} + \widehat{y} = \widehat{x+y} + 2\widehat{z}$ for some $z \in E$, we find that 
\[
(\widehat{x}+ \widehat{y})^2 = (\widehat{x+y} + 2\widehat{z})^2 = (\widehat{x+y})^2.
\]
We can now write down the characters of $G$.  If $\widehat{x} = \sum_j x_j \widehat{e}_j$ and $\widehat{y} = \sum_j y_j \widehat{e}_j$, then 
\[
\scp{\widehat{x}}{\;\widehat{y}} := \sum_{j=1}^m x_jy_j
\]
is a bilinear map from $R \times R$ to $\zz{4}$. Let $i$ be a primitive $4$-th root of unity.

\begin{comment}
By a ``ring over $\zz{4}$" we mean ring $R$ that is a module over $\zz{4}$: $R$ is an abelian group, and multiplication by the ring $\zz{4}$ distributes over addition.
\end{comment}

\begin{lemma} \label{lem:evensemichar}
When $|E|$ is even, the characters of $G$ have the form
\[
\chi_{y,z}(G_{a,b}) = i^{\scp{\widehat{z}}{(\widehat{a}^2-2\widehat{b})}+2\scp{\widehat{y}}{\widehat{a}}}
\]
for $a,b,y,z \in E$.
\end{lemma}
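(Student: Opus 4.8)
The plan is to follow the proof of Lemma~\ref{lem:oddsemichar} verbatim, replacing the field identities by the ``lifted'' identities recorded just above the statement. Concretely, I would check that each $\chi_{y,z}$ is multiplicative, i.e.\ $\chi_{y,z}(G_{a,b})\,\chi_{y,z}(G_{c,d}) = \chi_{y,z}(G_{a+c,\,b+d+a\circ c})$; together with the composition rule $G_{a,b}G_{c,d} = G_{a+c,b+d+a\circ c}$ proved earlier and $\chi_{y,z}(G_{0,0}) = i^{0} = 1$, this shows $\chi_{y,z}$ is a character of $G$. Since $|G| = q^{2} = |E|^{2}$, it then suffices to note that the $q^{2}$ maps $\chi_{y,z}$ are pairwise distinct, which comes from nondegeneracy of the form $\scp{\;}{\;}$ on $R$ (evaluating $\chi_{y,z}/\chi_{y',z'}$ on all $G_{a,0}$ and separating the part linear in $a$ from the quadratic part forces $y=y'$ and $z=z'$).

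For the multiplicativity check, multiplying the two character values adds their exponents modulo $4$, so I must show
\[
\scp{\widehat z}{(\widehat a^{2} - 2\widehat b)} + 2\scp{\widehat y}{\widehat a} + \scp{\widehat z}{(\widehat c^{2} - 2\widehat d)} + 2\scp{\widehat y}{\widehat c} \equiv \scp{\widehat z}{((\widehat{a+c})^{2} - 2\widehat{b+d+a\circ c})} + 2\scp{\widehat y}{\widehat{a+c}} \pmod 4 .
\]
The linear part is immediate: since $\widehat a + \widehat c = \widehat{a+c} + 2\widehat w$ for some $w\in E$ and the form is $\zz 4$-bilinear, $2\scp{\widehat y}{\widehat a} + 2\scp{\widehat y}{\widehat c} = 2\scp{\widehat y}{\widehat a + \widehat c} \equiv 2\scp{\widehat y}{\widehat{a+c}}$, the discarded term carrying a factor $4$. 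For the quadratic part I would use that $R$ is commutative (because $\widehat e_j\widehat e_k = \widehat{e_j\circ e_k} = \widehat{e_k\circ e_j} = \widehat e_k\widehat e_j$ and $E$ is commutative), so $(\widehat{a+c})^{2} = (\widehat a + \widehat c)^{2} = \widehat a^{2} + 2\widehat a\widehat c + \widehat c^{2}$, and pair this against $\widehat z$. The cross term $2\scp{\widehat z}{\widehat a\widehat c}$ produced here is exactly cancelled by the term $-2\scp{\widehat z}{\widehat{a\circ c}}$ hidden inside $-2\scp{\widehat z}{\widehat{b+d+a\circ c}}$: indeed $2\widehat{b+d+a\circ c} = 2(\widehat b + \widehat d) + 2\widehat a\widehat c$ by the identities $2\widehat{x+y} = 2(\widehat x + \widehat y)$ and $2\widehat{x\circ y} = 2\widehat x\widehat y$. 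Collecting the surviving terms recovers the left-hand side.

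The only real work is the mod-$4$ bookkeeping: the embedding $x\mapsto\widehat x$ is neither additive nor multiplicative, so every rewriting must be funnelled through $2\widehat{x+y} = 2(\widehat x + \widehat y)$, $2\widehat{x\circ y} = 2\widehat x\widehat y$, and $(\widehat x + \widehat y)^{2} = (\widehat{x+y})^{2}$, and one has to confirm that the ``error'' each time is divisible by $4$ and hence invisible to $i^{(\cdot)}$. I do not anticipate any conceptual obstacle beyond keeping this accounting straight; the structure of the argument is identical to the odd-order case.
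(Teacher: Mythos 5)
Your proposal is correct and is essentially the paper's own argument: the paper's proof of this lemma literally reads ``With the observations above, the proof is the same as in Lemma~\ref{lem:oddsemichar},'' and you have simply made explicit the mod-$4$ bookkeeping via the three lifted identities $2(\widehat{x+y})=2(\widehat{x}+\widehat{y})$, $2(\widehat{x\circ y})=2\widehat{x}\widehat{y}$, and $(\widehat{x}+\widehat{y})^2=(\widehat{x+y})^2$, exactly as intended. The added remark on pairwise distinctness of the $q^2$ characters goes slightly beyond what the paper records (even in the odd case) but is a harmless, correct completion.
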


\begin{proof}
With the observations above, the proof is the same as in Lemma \ref{lem:oddsemichar}. \qed
\end{proof}

Suppose $E$ is the finite field $GF(2^m)$. Let $\tha$ be a primitive element of $E$, and assume $\tha$ has minimal polynomial $f(x) \in \zz{2}[x]$. It follows from Hensel's Lemma (see \cite[Theorem 13.4]{mcd1}) that there is a unique ``lifting polynomial" $h(x) \in \zz{4}[x]$ such that $f(x) = h(x) \mod 2$ and $h(x)$ divides $x^{2^m-1}-1 \mod 4$. Then $R$ is the \defn{Galois ring} defined as follows:
\[
GR(4^m) := \zz{4}[x]/\lb h(x) \rb.
\]
\nomenclature{$GR(4^m)$}{Galois ring of order $4^m$}%
If $\xi$ is a root of $h(x)$ in $R$, then the set
\[
T := \{0,1,\xi,\ldots,\xi^{2^m-2}\}
\]
\nomenclature{$T$}{Teichm\"uller set}%
is call the \defn{Teichm\"uller set} of $R$. Every element of $R$ is congruent mod $2$ to exactly one element of $T$. Therefore, we may take $T$ to be the embedding of $E$ in $R$. That is, for each $x \in GF(2^m)$, define $\widehat{x}$ to be the unique element in the Teichm\"uller set of $GR(4^m)$ such that 
\[
\widehat{x} = x \mod 2.
\]
If $z = \widehat{x} + 2\widehat{y}$ for $\widehat{x}$ and $\widehat{y}$ in $T$, then the \defn{Galois ring trace} is defined by
\[
\tr(z) = \widehat{x} + 2\widehat{y} + \widehat{x}^2 + 2\widehat{y}^2 + \ldots + \widehat{x}^{2^{m-1}} + 2\widehat{y}^{2^{m-1}}.
\]
This is a linear map from $R$ to $\zz{4}$. For more details on Galois rings, see Hammons, Kumar, Calderbank, Sloane, and Sol\'e \cite{hkcss} or McDonald \cite{mcd1}.

\begin{corollary}\label{cor:evenfieldchar}
When $E$ is a field of even order, the characters of $G$ have the form 
\[
\chi_{y,z}(G_{a,b}) = i^{\tr(\widehat{z}(\widehat{a}^2-2\widehat{b})+2\widehat{ya})}. \qed
\]
\end{corollary}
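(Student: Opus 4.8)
The plan is to obtain this as the specialization of Lemma~\ref{lem:evensemichar} to the field case, exactly as Corollary~\ref{cor:oddfieldchar} specializes Lemma~\ref{lem:oddsemichar}. Recall the remark following Lemma~\ref{lem:oddsemichar}: in the character formula the coordinate pairing $\scp{\widehat{x}}{\widehat{y}}$ may be replaced by \emph{any} nondegenerate symmetric $\zz{4}$-bilinear form on $R$, the only role of nondegeneracy being to guarantee that $(y,z)\mapsto\chi_{y,z}$ is injective; since there are $|E|^2 = q^2 = |G|$ pairs, an injective list of homomorphisms of this length is automatically the whole character group of $G$. So it suffices to check that, when $E = GF(2^m)$ and $R = GR(4^m)$, the pairing $B(x,w) := \tr(xw)$ built from the Galois ring trace is such a form, and then to rewrite the exponent.

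First, $B$ is $\zz{4}$-bilinear because multiplication in $R$ is distributive and $\zz{4}$-linear and $\tr$ is $\zz{4}$-linear, and it is symmetric since $R$ is commutative. For nondegeneracy I would reduce modulo $2$: the Galois ring trace reduces mod~$2$ to the ordinary trace $\overline{\tr}$ of $GF(2^m)$ over $GF(2)$, which is nondegenerate since the extension is separable. Concretely, suppose $\chi_{y,z} = \chi_{y',z'}$. Comparing the values at $G_{a,b}$ and at $G_{a,0}$ for fixed $a$ and varying $b$ removes the $\widehat{a}^2$ and $2\widehat{ya}$ terms and yields $2\tr\big((\widehat{z}-\widehat{z'})\widehat{b}\big)\equiv 0\pmod 4$, i.e.\ $\tr\big((\widehat{z}-\widehat{z'})\widehat{b}\big)\equiv 0\pmod 2$, for all $b\in E$. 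As $\{\widehat{b}:b\in E\}$ is the Teichm\"uller set $T$, which $\zz{4}$-spans $R$ and hence reduces to a $GF(2)$-spanning set of $R/2R\cong GF(2^m)$, nondegeneracy of $\overline{\tr}$ forces $\widehat{z}\equiv\widehat{z'}\pmod 2$, so $z=z'$ (reduction is injective on $T$). With $z=z'$, the analogous comparison in the variable $a$ gives $y=y'$. Thus the $q^2$ functions $i^{\tr(\widehat{z}(\widehat{a}^2-2\widehat{b}))+2\tr(\widehat{y}\widehat{a})}$ are precisely the characters of $G$.

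It remains to combine the two traces. Since $\tr$ is $\zz{4}$-linear, $2\tr(\widehat{y}\widehat{a}) = \tr(2\widehat{y}\widehat{a})$; and in the field case the embedding $x\mapsto\widehat{x}$ is the Teichm\"uller lift, which is multiplicative because $T$ is closed under multiplication and reduction mod~$2$ is a ring homomorphism, so $\widehat{y}\widehat{a} = \widehat{ya}$. Hence the exponent becomes $\tr\big(\widehat{z}(\widehat{a}^2-2\widehat{b}) + 2\widehat{ya}\big)$, which is the stated formula. (Alternatively, one could verify the formula directly by checking $\chi_{y,z}(G_{a,b})\chi_{y,z}(G_{c,d}) = \chi_{y,z}(G_{a+c,b+d+a\circ c})$ modulo~$4$: the cross term $2\widehat{z}\widehat{a}\widehat{c}$ arising from $(\widehat{a+c})^2 = \widehat{a}^2+\widehat{c}^2+2\widehat{a}\widehat{c}$ cancels against the term $-2\widehat{z}\widehat{ac}$ coming from $-2\widehat{b+d+ac} = -2(\widehat{b}+\widehat{d}+\widehat{ac})$, using $2\widehat{a}\widehat{c}=2\widehat{ac}$; one still needs nondegeneracy and the count $q^2 = |G|$ to conclude that these are all the characters.)

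The main obstacle is the nondegeneracy step: one must be sure that trading the coordinate pairing for the trace pairing does not identify two of the characters. This is exactly where the standard facts about Galois rings enter — that the Galois ring trace reduces mod~$2$ to the nondegenerate finite-field trace, and that the Teichm\"uller set $\zz{4}$-spans $GR(4^m)$ — while everything else is routine $\zz{4}$-arithmetic already set up for Lemma~\ref{lem:evensemichar}.
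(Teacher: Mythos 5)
Your proposal is correct and follows essentially the same route the paper intends: the corollary is the specialization of Lemma~\ref{lem:evensemichar} to $E = GF(2^m)$, using the remark (made explicitly before Corollary~\ref{cor:oddfieldchar}) that the coordinate pairing may be replaced by any nondegenerate bilinear form, here the Galois ring trace pairing. You have merely filled in the details the paper leaves implicit — the mod-$2$ reduction argument for nondegeneracy, the count $q^2 = |G|$, and the multiplicativity of the Teichm\"uller lift needed to write $2\widehat{y}\widehat{a} = 2\widehat{ya}$ — all of which are accurate.
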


In each of the above cases, since the relative difference set is $\nobreak{\{G_{a,0}: a \in E\}}$, the mutually unbiased bases are the characters $\chi_{y,z}$ restricted to the set with $b = 0$ and suitably normalized. For example, when $E$ is an odd semifield, the bases have matrix form
\[
(W_z)_{a,y} :=  \frac{1}{\sqrt{q}}\om^{z^T(a \circ a)+2y^T a},
\]
for $a$, $y$ and $z$ in $E$.

\subsection{Symplectic spreads}

There is a direct correspondence between commutative semifields and symplectic spreads, due to Kantor \cite{kan1}. As a result, our mutually unbiased bases can also be constructed from spreads: this was done by Calderbank, Cameron, Kantor, and Seidel \cite{ccks} in 1996.

Let $V$ be a vector space of dimension $n$ over $GF(q)$. A \defn{spread} in $V^2$ is a collection of $n$-dimensional subspaces $U_0,\ldots,U_{q^n}$ such that 
\[
\{U_i-\{0\} : 0 \leq i \leq q^n \}
\]
is a partition of $V^2-\{0\}$. Let $\col(M)$ denote the column space of a matrix $M$. Then using unitary transformations, we can assume without loss of generality that
\nomenclature[a$c]{$\col$}{column space}%
\[
U_0 = \col\twovec{0}{I},
\]
and for $i \geq 1$,
\[
U_i = \col\twovec{I}{M_i}.
\]
Also without loss of generality, $M_1 = 0$. Then $U_i$ and $U_j$ have a trivial intersection if and only if 
\[
\twomat{I}{I}{M_i}{M_j} 
\]
is invertible, which occurs if and only if $M_i-M_j$ is invertible. Thus, we are looking for a collection of $q^n$ matrices of order $n$ with invertible differences. For this reason a spread refers to either a collection of subspaces or the corresponding collection of matrices. 

Spreads can be used to construct affine planes in the same manner as semifields. The line $y = m \circ x + b$ with elements from a semifield is replaced with $y = Mx + b$, with $M$ from a spread and $y$, $x$, and $b$ from $V$. 

\begin{lemma}\label{lem:semispread}
Let $E$ be a semifield of order $q^n$, a vector space of dimension $n$ over $GF(q)$. For each $a$ in $E$, let $M_a$ be the $GF(q)$-linear transformation corresponding to multiplication by $a$. Then $\{M_a: a \in E\}$ forms a spread.
\end{lemma}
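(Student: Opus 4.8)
The plan is to verify directly that the matrices $M_a$ have pairwise invertible differences, since this is precisely the defining property of a spread (of matrices) recorded just above the lemma. First I would observe that the assignment $a \mapsto M_a$ is additive: by \emph{left} distributivity of $\circ$ over $+$, for every $x \in E$ we have $M_{a+a'}(x) = (a+a') \circ x = a \circ x + a' \circ x = (M_a + M_{a'})(x)$, so $M_{a+a'} = M_a + M_{a'}$, and in particular $M_a - M_b = M_{a-b}$. Next I would note that $a \mapsto M_a$ is injective on $E$: if $M_a = M_b$, applying both sides to the multiplicative identity $1 \in E$ gives $a = a \circ 1 = M_a(1) = M_b(1) = b \circ 1 = b$. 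Hence $\{M_a : a \in E\}$ consists of exactly $|E| = q^n$ distinct $GF(q)$-linear maps on the $n$-dimensional space $E$, i.e.\ $q^n$ distinct $n \times n$ matrices over $GF(q)$ (one of which, $M_0$, is the zero matrix).

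The key step is to show that $M_c$ is invertible for every $c \neq 0$. Since $E$ is finite-dimensional it suffices to show $M_c$ has trivial kernel; but $M_c(x) = c \circ x = 0$ with $c \neq 0$ forces $x = 0$ by the no-zero-divisor axiom (c) in the definition of a semifield. Combining this with the previous paragraph, for $a \neq b$ the difference $M_a - M_b = M_{a-b}$ is multiplication by the nonzero element $a - b$, hence invertible. This is exactly the assertion that $\{M_a : a \in E\}$ forms a spread.

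If one wants the "subspace" formulation, I would take $U_0 = \col\twovec{0}{I}$ together with $U_a = \col\twovec{I}{M_a}$ for $a \in E$, all $n$-dimensional subspaces of $V^2 = E \oplus E$ (with $V = E$ as a $GF(q)$-space). For $a \neq b$, a common nonzero vector of $U_a$ and $U_b$ would have the form $\twovec{x}{M_a x} = \twovec{x'}{M_b x'}$, forcing $x = x'$ and $(M_a - M_b)x = 0$, hence $x = 0$ by invertibility of the difference; and $U_0 \cap U_a = \{0\}$ is immediate. So the $q^n + 1$ subspaces meet pairwise only in $\{0\}$, and since each contributes $q^n - 1$ nonzero vectors while $(q^n+1)(q^n-1) = q^{2n} - 1 = |V^2| - 1$, they partition $V^2 \setminus \{0\}$.

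I do not expect a genuine obstacle here; the only points requiring care are (i) that one must invoke \emph{left} distributivity to obtain $M_a - M_b = M_{a-b}$, (ii) that each $M_a$ really is $GF(q)$-linear — this is the compatibility built into viewing $E$ as a $GF(q)$-vector space (equivalently, $GF(q)$ lying inside the middle nucleus of $E$), which the lemma statement takes as given — and (iii) that invertibility of $M_c$ rests solely on the absence of zero divisors, not on any associativity or commutativity. Commutativity of $E$ plays no role in this lemma; it becomes relevant only when one upgrades the spread to a \emph{symplectic} spread.
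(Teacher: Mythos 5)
Your proof is correct and takes essentially the same route as the paper's: both reduce the claim to the identity $M_a - M_b = M_{a-b}$ via distributivity and then conclude invertibility of the difference from the absence of zero divisors, with your version merely spelling out the injectivity of $a \mapsto M_a$, the subspace/counting formulation, and exactly which axiom each step uses. (One small quibble: the expansion $(a+a')\circ x = a\circ x + a'\circ x$ is conventionally the \emph{right} distributive law rather than the left one, though the semifield axioms grant both, so nothing is at stake.)
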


\begin{proof}
The linear transformation $M_a$ is the unique matrix such that 
\[
M_ax = a \circ x
\]
for every $x \in E$. Since
\[
M_ax - M_bx = ax - bx  = (a-b) \circ x = M_{a-b}x,
\]
we see that $M_a - M_b = M_{a-b}$. Thus $M_a - M_b$ is invertible for every $a \neq b$. \qed
\end{proof}

Conversely, given an additively closed spread on $V$ containing the identity, associate each element $a \in V$ with a spread matrix $M_a$. Choose $M_0 = 0$ and $M_1 = I$. Then, with multiplication defined by
\[
a \circ x := M_a x,
\]
$V$ becomes a semifield. 

\begin{comment}
The bijection between commutative semifields and symplectic semifield spreads due to Kantor is slightly more complicated. Let $\{v_1,\ldots,v_n\}$ be a basis for $E$, and assume multiplication on the left by $v_i$ is given by the matrix $M_i$:
\[
v_i \circ v_j = \sum_k (M_i)_{kj}v_k.
\]
Thus the set of entries $\{(M_i)_{kj} \}$ determines a semifield. However these entries also determine a spread, namely the span of the matrices $M_i$. In general a set of entries $\{(M_i)_{jk}\}$ determines a spread if and only if the columns $(M_i)_k$ are all linearly independent. Therefore if $\{(M_i)_{jk}\}$ determines a spread, so does $\{(M_k)_{ji}\}$. Call this the ``dual spread" of $M$. 

Now the semifield of $(M_i)_{jk}$ is commutative if and only if $(M_i)_{jk} = (M_j)_{ik}$. This occurs if and only if $(M_k)_{ji} = (M_k)_{ij}$ in the dual spread; that is, if and only if the dual spread is symmetric. This establishes the bijection of Kantor.

Alternatively, we can see the bijection between symplectic semifield spreads and commutative semifields from the later section on equivalence of mutually unbiased bases.

Semifields are isotopic (which is defined in terms of linear transformations from on to the other) if and only if their projective planes are isomorphic.
\end{comment}

Now let $B$ be a bilinear form on a vector space. Then $B$ can be represented as a matrix: $B(x,y) = x^TBy$. We call $B$ \textsl{alternating}\index{alternating form} if $B(x,y) = -B(y,x)$; equivalently, the matrix of $B$ is skew-symmetric. There is a non-degenerate alternating bilinear form on a vector space $W$ if and only if it has even dimension, say $W = V^2$. A subspace $U_i \sbs V^2$ is \defn{totally isotropic} if $B$ vanishes on $U_i$. That is, $B(x,y) = 0$ for every $x$ and $y$ in $U_i$.

\begin{comment}
If $V^2$ is a vector space over $GF(2)$, then $B$ can be constructed from a quadratic form. A function $Q: V^2 \rightarrow GF(q)$ is a \defn{quadratic form} if
\begin{enumerate}[(a)]
\item $Q(\la x) = \la^2 Q(x)$ for all $\la \in GF(q), x \in V$, and \\
\item $B(x,y) := Q(x+y)-Q(x)-Q(y)$ is a bilinear form.
\end{enumerate}
Clearly $B$ is symmetric, and therefore over $GF(2)$ it is alternating. If $Q$ vanishes on $V^2$, then so does $B$.
\end{comment}

A spread in $V^2$ is \textsl{symplectic}\index{spread!symplectic} is there is a nondegenerate alternating bilinear form for which every subspace in the spread is totally isotropic. As an example, consider the spread from a commutative semifield $E$ in Lemma \ref{lem:semispread}. Let $a^T b$ denote the standard vector space scalar product for $E$, and define a bilinear form on $E^2$ by
\[
[(a_1,b_1),(a_2,b_2)] := a_2^Tb_1 - a_1^T b_2.
\]
For each matrix $M_i$ representing multiplication by $a$ in $E$, the corresponding subspace $U_i$ in the spread has elements of the form $(x,M_ix)$. Then $U_i$ is isotropic if and only if, for all $x$ and $y$ in $E$,
\[
[(x,M_ix),(y,M_iy)] = y^T(M_i - M_i^T)x = 0.
\]
Thus the spread from Lemma \ref{lem:semispread} is symplectic if and only if each $M_i$ is symmetric.

We now describe the construction of Calderbank, Cameron, Kantor, and Seidel. Let $V = GF(p)^m$, and consider a vector space of dimension $|V|$ over $\cx$ with standard basis $\{e_v: v \in V\}$. Then for each $a \in V$, define the $|V| \times |V|$ \defn{generalized Pauli matrices} by the following linear maps:
\begin{align*}
X(a): e_v \mapsto e_{v+a}, \\
Y(a): e_v \mapsto \om^{v^T a} e_v,
\end{align*}
\nomenclature{$X(a),Y(a)$}{generalized Pauli matrices}%
where $\om$ is a $p$-th primitive root of unity. We work with the group 
\[
G = \lb X(a),Y(a): a \in V \rb / \lb\om I \rb,
\]
which has size $p^{2m}$. This group is abelian, and every element of $G$ can be written uniquely in the form 
\[
X(a)Y(b)\lb\om I \rb
\]
for some $a,b \in V$. Then $V^2$ is isomorphic to $G$ as a vector space via the following map:
\[
\phi:(a,b) \mapsto X(a)Y(b)\lb\om I \rb.
\]
For,
\begin{align*}
\phi(a_1+a_2,b_1+b_2) & = X(a_1+a_2)Y(b_1+b_2)\lb\om I \rb  \\
& = (X(a_1)Y(b_1)\lb\om I \rb)(X(a_2)Y(b_2)\lb\om I \rb) \\
& = \phi(a_1,b_1)\phi(a_2,b_2).
\end{align*}
Define a bilinear form on $G$ as follows:
\[
[ \phi(a_1,b_1), \phi(a_2,b_2)] := a_2^T b_1 - a_1^T b_2.
\]
This form is nondegenerate and alternating.

\begin{lemma} \label{lem:2groupcom}
The matrices $X(a_1)Y(b_1)$ and $X(a_2)Y(b_2)$ commute if and only if $a_2^T b_1 - a_1^T b_2 = 0$.
\end{lemma}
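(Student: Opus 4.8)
The plan is to compute the product $X(a_1)Y(b_1)X(a_2)Y(b_2)$ directly on the standard basis $\{e_v : v \in V\}$ and compare it with the product taken in the opposite order. First I would recall the defining actions: $X(a): e_v \mapsto e_{v+a}$ and $Y(b): e_v \mapsto \om^{v^Tb}e_v$. The only nontrivial commutation relation needed is how $Y(b)$ and $X(a)$ fail to commute: applying $X(a)$ then $Y(b)$ to $e_v$ gives $Y(b)X(a)e_v = Y(b)e_{v+a} = \om^{(v+a)^Tb}e_{v+a}$, whereas $X(a)Y(b)e_v = X(a)\om^{v^Tb}e_v = \om^{v^Tb}e_{v+a}$. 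Hence $Y(b)X(a) = \om^{a^Tb}X(a)Y(b)$, i.e. $X(a)$ and $Y(b)$ commute up to the scalar $\om^{a^Tb}$.

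Next I would use this relation to move the factors past one another. Starting from $X(a_1)Y(b_1)X(a_2)Y(b_2)$, commute $Y(b_1)$ past $X(a_2)$ using the relation above, picking up a factor $\om^{a_2^Tb_1}$, to obtain $\om^{a_2^Tb_1}X(a_1)X(a_2)Y(b_1)Y(b_2)$. Since the $X$'s commute among themselves and the $Y$'s commute among themselves (both $\{X(a)\}$ and $\{Y(b)\}$ are abelian, as $X(a_1)X(a_2) = X(a_1+a_2)$ and $Y(b_1)Y(b_2)=Y(b_1+b_2)$), this equals $\om^{a_2^Tb_1}X(a_1+a_2)Y(b_1+b_2)$. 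Performing the same computation in the opposite order gives $X(a_2)Y(b_2)X(a_1)Y(b_1) = \om^{a_1^Tb_2}X(a_1+a_2)Y(b_1+b_2)$. Therefore the two products agree if and only if $\om^{a_2^Tb_1} = \om^{a_1^Tb_2}$, i.e. if and only if $a_2^Tb_1 - a_1^Tb_2 \equiv 0 \pmod p$, which (since $\om$ is a primitive $p$-th root of unity) is exactly the condition $a_2^Tb_1 - a_1^Tb_2 = 0$ in $GF(p)$.

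There is really no serious obstacle here; the proof is a short direct calculation. The only point requiring a modicum of care is the bookkeeping of scalars: one must be consistent about whether $\om^{a^Tb}$ or its inverse appears, and must note that the exponents live in $GF(p)$ while $\om$ has order $p$, so that vanishing of the exponent modulo $p$ is equivalent to the stated bilinear-form condition. I would also remark that this immediately explains why the form $[\phi(a_1,b_1),\phi(a_2,b_2)] := a_2^Tb_1 - a_1^Tb_2$ governs commutativity in the quotient group $G = \langle X(a),Y(a)\rangle/\langle \om I\rangle$, since passing to the quotient kills exactly the scalar discrepancy computed above.
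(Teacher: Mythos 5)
Your proof is correct and follows essentially the same route as the paper: a direct computation of both products on the standard basis vectors $e_v$, yielding $X(a_1)Y(b_1)X(a_2)Y(b_2) = \om^{a_2^Tb_1}X(a_1+a_2)Y(b_1+b_2)$ and the analogous identity with the roles reversed, so the products agree exactly when $\om^{a_2^Tb_1} = \om^{a_1^Tb_2}$. Isolating the single commutation relation $Y(b)X(a) = \om^{a^Tb}X(a)Y(b)$ first is only a cosmetic reorganization of the paper's calculation.
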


\begin{proof}
Consider the action of the matrices on $e_v$:
\begin{align*}
X(a_1)Y(b_1)X(a_2)Y(b_2)e_v & = \om^{v^T b_2} X(a_1)Y(b_1)e_{v+a_2} \\
& = \om^{v^T b_2 + (v+a_2)^T b_1} e_{v+a_1+a_2} \\
& = \om^{a_2^T b_1} X(a_1+a_2)Y(b_1+b_2)e_v.
\end{align*}
Similarly,
\[
X(a_2)Y(b_2)X(a_1)Y(b_1) = \om^{a_1^T b_2} X(a_1+a_2)Y(b_1+b_2).
\]
The matrices coincide if and only if $\om^{a_2^T b_1} = \om^{a_1^T b_2}$. \qed
\end{proof}

Thus a set of matrices $\{X(a_i)Y(b_i)\}$ commute whenever the bilinear form vanishes on the set $\{(a_i,b_i)\}$. Since every $X(a)Y(b)$ is normal, a set of commuting matrices of that form are simultaneously diagonalizable. 

For example, the set $Y(V) := \{Y(a): a \in V\}$ is commuting, and the standard basis $\{e_a: a \in V\}$ is a complete set of orthonormal eigenvalues for $Y(V)$. Similarly, if 
\[
e_a^* := \frac{1}{\sqrt{p^m}} \sum_{v \in V}\om^{a^T v}e_v,
\]
then $\{e_a^*: a \in V\}$ is a complete set of eigenvalues for $X(V) := \{X(a): a \in V\}$. Moreover, the bases $\{e_a\}$ and $\{e_a^*\}$ are mutually unbiased. 

\begin{theorem} \label{thm:spreadmub}
Let $U_0,\ldots,U_{p^m}$ be a symplectic spread in $G$, and let $B_i$ be an orthonormal basis of eigenvalues for the matrices of $U_i$. Then $\{B_0,\ldots,B_{p^m}\}$ is a set of $p^m+1$ mutually unbiased bases in $\cx^{p^m}$. \qed
\end{theorem}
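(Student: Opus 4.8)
The plan is to use the Weyl--Heisenberg structure of the generalized Pauli group together with the single combinatorial input that distinct members of a spread meet only in $0$. Fix a subspace $U_i$ of the spread. By Lemma~\ref{lem:2groupcom} and the fact that $U_i$ is totally isotropic, the matrices $\{X(a)Y(b):(a,b)\in U_i\}$ pairwise commute; each is unitary, hence normal, so they are simultaneously diagonalizable by a unitary, and I would take $B_i$ to be an orthonormal basis of common eigenvectors. First I would argue that every joint eigenspace is one--dimensional. A direct computation gives the ``nice error basis'' relation $\tr\big((X(a)Y(b))^*X(a')Y(b')\big)=p^m$ when $(a,b)=(a',b')$ and $0$ otherwise; in particular the $p^m$ matrices $\{X(a)Y(b):(a,b)\in U_i\}$ are linearly independent. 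They all lie in the commutative algebra of matrices diagonal in the basis $B_i$, whose dimension equals the number of distinct joint eigenspaces; hence there are at least $p^m$, and therefore (the space being $p^m$--dimensional) exactly $p^m$ eigenspaces, each of dimension $1$. In particular $B_i$ is determined uniquely as a set of lines.

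Next I would record a Fourier expansion of the eigenline projections. For $u\in B_i$ write $X(a)Y(b)\,u=\mu_u(a,b)\,u$, with $\abs{\mu_u(a,b)}=1$ and $\mu_u(0,0)=1$. Since $\{X(a)Y(b):(a,b)\in U_i\}$ becomes, in the basis $B_i$, a family of $p^m$ linearly independent diagonal matrices, it spans all diagonal matrices, so the rank--one projection $uu^*$ can be written as $\sum_{(a,b)\in U_i}\alpha_{a,b}\,X(a)Y(b)$; pairing with $(X(a)Y(b))^*$ under the trace and using the error--basis relation yields $\alpha_{a,b}=\frac{1}{p^m}\overline{\mu_u(a,b)}$, so in particular the coefficient of $I=X(0)Y(0)$ is $1/p^m$. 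The same holds for any $w\in B_j$, with $U_j$ and a unimodular function $\nu_w$.

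Finally I would compute the angle. Using $X(a)Y(b)X(c)Y(d)=\om^{c^Tb}X(a+c)Y(b+d)$ and $\tr\big(X(a+c)Y(b+d)\big)=p^m$ exactly when $a+c=0$ and $b+d=0$,
\[
\abs{u^*w}^2=\tr(uu^*\,ww^*)=\frac{1}{p^{2m}}\sum_{\substack{(a,b)\in U_i\\ (c,d)\in U_j}}\overline{\mu_u(a,b)}\;\overline{\nu_w(c,d)}\;\om^{c^Tb}\,\tr\big(X(a+c)Y(b+d)\big),
\]
and the only surviving terms have $(c,d)=-(a,b)$. But then $(c,d)\in U_j$ while $-(a,b)\in U_i$, so $(c,d)\in U_i\cap U_j=\{0\}$ because $U_i$ and $U_j$ are distinct members of the spread; hence $(a,b)=(c,d)=(0,0)$, and the sum collapses to $\frac{1}{p^{2m}}\,\overline{\mu_u(0,0)}\,\overline{\nu_w(0,0)}\,p^m=\frac{1}{p^m}$. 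Thus for $i\neq j$ and any $u\in B_i$, $w\in B_j$ we get $\abs{u^*w}^2=1/p^m$, and since each $B_i$ is orthonormal the family $\{B_0,\dots,B_{p^m}\}$ is a set of $p^m+1$ mutually unbiased bases.

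The main obstacle is the one--dimensionality claim in the first step: it is what makes $B_i$ well defined and what powers the projection formula, and it must hold for an arbitrary maximal totally isotropic $U_i$, not just the coordinate subspaces underlying $X(V)$ and $Y(V)$. The linear--independence/dimension count above handles this cleanly and avoids any metaplectic--representation machinery; it is also worth noting that nothing in the argument requires $\mu_u$ to be a group homomorphism, which is convenient since the $2$--cocycle $\om^{c^Tb}$ obstructs that when $p=2$.
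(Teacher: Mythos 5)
Your proof is correct, and it is a genuinely different route from the one the thesis takes. The thesis states Theorem~\ref{thm:spreadmub} without proof and defers to Calderbank, Cameron, Kantor, and Seidel, whose argument exhibits enough automorphisms of $G$ to carry any maximal commuting class onto $X(V)$ or $Y(V)$, reducing everything to the standard pair of unbiased bases; the constructive path actually developed in the chapter goes through relative difference sets and semifields (Lemma~\ref{lem:rdsmub}, Corollary~\ref{cor:semimub}) and then matches those bases to the spread picture by explicit character computations (Lemmas~\ref{lem:oddsemiequiv} and~\ref{lem:evensemiequiv}). You instead prove the theorem directly from two facts: the trace orthogonality $\tr\bigl((X(a)Y(b))^*X(a')Y(b')\bigr)=p^m\de_{(a,b),(a',b')}$, which forces each joint eigenspace of a totally isotropic class to be one-dimensional and yields the expansion $uu^*=\frac{1}{p^m}\sum_{(a,b)\in U_i}\overline{\mu_u(a,b)}\,X(a)Y(b)$, and the spread condition $U_i\cap U_j=\{0\}$, which collapses the double sum in $\tr(uu^*ww^*)$ to the single term $1/p^m$. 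This buys uniformity (no case split on $p$, no Weil or Gauss sums, no appeal to transitivity of the Clifford group) and makes transparent exactly where each hypothesis is used: isotropy gives commutativity, maximality gives rank-one spectral projections, and the spread intersection property gives unbiasedness. One phrasing nit in the first step: the algebra of matrices diagonal in the basis $B_i$ has dimension $p^m$ outright; what your dimension count actually needs is the subalgebra of operators acting as a scalar on each joint eigenspace, whose dimension is the number of those eigenspaces. Since your $p^m$ linearly independent operators lie in that subalgebra, the conclusion that all eigenspaces are one-dimensional stands, but you should say ``constant on each joint eigenspace'' rather than ``diagonal in the basis $B_i$'' to avoid circularity (the basis is only determined once one-dimensionality is known).
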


Calderbank, Cameron, Kantor, and Seidel proved this theorem by finding a large number of isomorphisms of $G$, so that every pair of $p^m$ commuting matrices are equivalent to $X(V)$ and $Y(V)$ under some isomorphism.

In fact, we can show explicitly that the mutually unbiased bases from semifields in Corollary \ref{cor:semimub} are equivalent to those from symplectic spreads in Theorem \ref{thm:spreadmub}. 

Let $E$ be a commutative semifield of order $q = p^m$, and for $a \in E$ let $M_a$ be the matrix representing multiplication by $a$.

\begin{lemma}\label{lem:compauli}
Fix $z \in E$, and for each $a \in E$, let $b_a = 2M_a^Tz$. Then the set
\[
U_z = \{X(a)Y(b_a): a \in E\}
\]
is commuting.
\end{lemma}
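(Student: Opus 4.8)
The plan is to reduce the claim to the commutation criterion already proved in \lref{2groupcom} and then carry out a one-line computation that uses only (i) the description of $M_a$ as left multiplication by $a$ and (ii) the commutativity of the semifield product. Recall that under the isomorphism $\phi\colon(a,b)\mapsto X(a)Y(b)\lb\om I\rb$ the alternating form on $G$ is $[\phi(a_1,b_1),\phi(a_2,b_2)] = a_2^Tb_1 - a_1^Tb_2$, and by \lref{2groupcom} two elements $X(a)Y(b_a)$ and $X(c)Y(b_c)$ commute precisely when this form vanishes on the pair, i.e.\ when $c^Tb_a - a^Tb_c = 0$. So it suffices to verify this identity for all $a,c\in E$ when $b_a = 2M_a^Tz$.

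The computation I would do is the following. Substituting $b_a = 2M_a^Tz$ gives $c^Tb_a = 2\,c^TM_a^Tz = 2\,(M_ac)^Tz$, and similarly $a^Tb_c = 2\,(M_ca)^Tz$. Since $M_a$ is the $GF(q)$-linear map representing multiplication by $a$ (as in \lref{semispread}), we have $M_ac = a\circ c$ and $M_ca = c\circ a$, so $c^Tb_a - a^Tb_c = 2\bigl((a\circ c)^Tz - (c\circ a)^Tz\bigr)$. Because $E$ is a \emph{commutative} semifield, $a\circ c = c\circ a$, the bracket vanishes, and hence $c^Tb_a - a^Tb_c = 0$. By \lref{2groupcom} every pair of elements of $U_z$ commutes, which is the assertion.

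There is essentially no obstacle here; the proof is a short manipulation of a bilinear form. The only points that need care are keeping the two arguments of the alternating form in the right order, so that it is exactly commutativity of $\circ$ — rather than symmetry of some matrix — that is being invoked, and remembering that $M_a$ acts on the left so that $(M_ac)^Tz = (a\circ c)^Tz$. It is worth noting in passing that the factor $2$ in $b_a = 2M_a^Tz$ is irrelevant to the commutation argument itself; it is inserted with an eye toward the later identification of $U_z$ with an explicit eigenbasis (and, when $p$ is odd, $2$ is a unit, so $a\mapsto b_a$ remains a bijection for fixed $z$).
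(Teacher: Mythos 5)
Your proof is correct and is essentially identical to the one in the paper: both reduce to the commutation criterion of Lemma~\ref{lem:2groupcom}, compute $c^Tb_a = 2(M_ac)^Tz = 2(a\circ c)^Tz$ and $a^Tb_c = 2(c\circ a)^Tz$, and invoke commutativity of the semifield product. The remarks about the order of the arguments and the role of the factor $2$ are accurate but not needed.
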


\begin{proof}
Recall that $X(a_1)Y(b_1)$ and $X(a_2)Y(b_2)$ commute if and only if $a_2^T b_1 = a_1^T b_2$. Here,
\[
a_2^Tb_1 = 2 a_2^T M_{a_1}^Tz = 2(M_{a_1}a_2)^Tz = 2(a_1 \circ a_2)^Tz,
\]
and similarly
\[
a_1^T b_2 = 2(a_2 \circ a_1)^Tz.
\]
Since the semifield is commutative, the expressions are equal and the matrices commute. \qed
\end{proof}

Thus the space over $GF(p)$ spanned by $U_z$ is isotropic, and the set of all such $U_z$ is a symplectic spread.

For odd order, the mutually unbiased bases in Corollary \ref{cor:semimub} have matrix form
\[
(W_z)_{x,y} := \frac{1}{\sqrt{q}}\om^{\scp{z}{(x \circ x)} + 2\scp{y}{x}},
\]
for $x,y,z \in E$. 

\begin{lemma} \label{lem:oddsemiequiv}
Let $b = 2M_a^Tz$ in $E$, where $|E|$ is odd. Then the columns of $W_z$ form a spanning set of eigenvectors for $X(a)Y(b)$.
\end{lemma}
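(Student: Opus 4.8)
The plan is a direct computation: apply $X(a)Y(b)$ to an arbitrary column of $W_z$ and verify it is merely rescaled by a scalar. Write $w_y$ for the $y$-th column of $W_z$, a vector in $\cx^{E}$ with $(w_y)_x = q^{-1/2}\om^{z^T(x\circ x)+2y^Tx}$ for $x\in E$. From the defining actions $X(a):e_v\mapsto e_{v+a}$ and $Y(b):e_v\mapsto\om^{v^Tb}e_v$ one gets, for any vector $w$ and any $u\in E$, the formula $(X(a)Y(b)w)_u=\om^{(u-a)^Tb}\,w_{u-a}$. So the whole computation reduces to simplifying the exponent
\[
(u-a)^Tb+z^T\bigl((u-a)\circ(u-a)\bigr)+2y^T(u-a).
\]

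The key is to feed in the hypothesis $b=2M_a^Tz$ through the identity $v^Tb=2v^TM_a^Tz=2(M_av)^Tz=2(a\circ v)^Tz$ valid for all $v\in E$ (this just says $M_a$ implements multiplication by $a$), together with the semifield being commutative, so that $(u-a)\circ(u-a)=u\circ u-2(a\circ u)+a\circ a$ by bilinearity. Substituting $u^Tb=2(a\circ u)^Tz$ and $a^Tb=2(a\circ a)^Tz$ and collecting terms, the two occurrences of $(a\circ u)^Tz$ cancel and the exponent collapses to
\[
z^T(u\circ u)+2y^Tu-z^T(a\circ a)-2y^Ta.
\]
Hence $(X(a)Y(b)w_y)_u=\om^{-z^T(a\circ a)-2y^Ta}\,(w_y)_u$, i.e.\ $w_y$ is an eigenvector of $X(a)Y(b)$ with eigenvalue $\om^{-z^T(a\circ a)-2y^Ta}$. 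Finally, $W_z$ is one of the mutually unbiased bases produced in Corollary~\ref{cor:semimub}, hence unitary, so its $q=|E|$ columns form an orthonormal — in particular spanning — set, which gives the "spanning set of eigenvectors" claim.

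I expect the only real care needed is bookkeeping: tracking the several bilinear expressions $v^Tb$, $z^T(v\circ v)$, $y^Tv$ and checking the cancellation is exact; there is no conceptual obstacle once the identity $v^Tb=2(a\circ v)^Tz$ and commutativity of $\circ$ are in hand. The oddness of $|E|$ enters only through the shape of $W_z$ (the character formula of Lemma~\ref{lem:oddsemichar}); the even-order analogue would run verbatim with the Galois-ring lift of Lemma~\ref{lem:evensemichar} in place of the ordinary trace-form exponent.
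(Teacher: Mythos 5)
Your proposal is correct and is essentially the paper's own proof: both arguments expand $(x+a)^{\circ 2}$ (equivalently $(u-a)\circ(u-a)$) using commutativity of $\circ$, cancel the cross term against $v^Tb = 2(a\circ v)^Tz$ supplied by the hypothesis $b = 2M_a^Tz$, and arrive at the same eigenvalue $\om^{-z^T(a\circ a)-2y^Ta}$; your entrywise bookkeeping is just the change of variable the paper performs by reindexing the sum. The remark that spanning follows from unitarity of $W_z$ is a welcome explicit touch the paper leaves implicit.
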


\begin{proof}
Let $(W_z)_y$ denote the $y$-th column on $W_z$.
\begin{align*}
X(a)Y(b)(W_z)_y & = X(a)Y(b)\sum_{x \in G} \om^{\scp{z}{(x \circ x)} + 2\scp{y}{x}} e_x \\
& = \sum_{x \in G} \om^{\scp{z}{(x \circ x)} + 2\scp{(y+b)}{x}} e_{x+a} \\
& = \sum_{x \in G} \om^{\scp{z}{(x+a)^{\circ 2}} + 2\scp{y}{(x+a)} + \scp{b}{x} - 2\scp{z}{(a \circ x)} - \scp{z}{(a \circ a)} - 2\scp{y}{a}} e_{x+a}.
\end{align*}
Now since $b = 2M_a^Tz$, we have
\[
b^Tx = 2z^TM_ax = 2\scp{z}{(a \circ x)}.
\]
Therefore the previous expression simplifies to 
\begin{align*}
X(a)Y(b)(W_z)_y & = \om^{-\scp{z}{(a \circ a)} - 2\scp{y}{a}} \sum_{x \in G} \om^{\scp{z}{(x+a)^{\circ 2}} + 2\scp{y}{(x+a)}} e_{x+a} \\
& = \om^{-\scp{z}{(a \circ a)} - 2\scp{y}{a}} (W_z)_y. \inqed
\end{align*}
\end{proof} 

The even case is similar. Let $E$ have even-order, and for $x \in E$ let $\widehat{x}$ be the embedding of $E$ into the free module $R$ over $\zz{4}$, as in Lemma \ref{lem:evensemichar}. Then the bases in Corollary \ref{cor:semimub} have matrix form
\[
(W_z)_{x,y} := \frac{1}{\sqrt{q}}i^{\lb \widehat{z}, \widehat{x}^2 \rb + \lb 2\widehat{y}, \widehat{x}\rb}.
\]
\begin{lemma}\label{lem:evensemiequiv}
Let $b = M_a^Tz$ in $E$, where $|E|$ is even. Then the columns of $W_z$ form a complete set of  eigenvectors for $X(a)Y(b)$.
\end{lemma}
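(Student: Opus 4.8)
The plan is to carry out the same computation as in the proof of Lemma~\ref{lem:oddsemiequiv}, substituting $i$ for $\om$ and using the arithmetic of the lift $R$ established just before Lemma~\ref{lem:evensemichar} in place of the arithmetic of $E$. First I would expand the $y$-th column of $W_z$,
\[
(W_z)_y=\frac{1}{\sqrt q}\sum_{x\in E} i^{\lb\widehat z,\widehat x^{\,2}\rb+\lb 2\widehat y,\widehat x\rb}e_x ,
\]
and apply $X(a)Y(b)$ term by term: $Y(b)$ scales $e_x$ by a power of $i$ whose exponent is $\lb 2\widehat b,\widehat x\rb$, and then $X(a)$ sends $e_x\mapsto e_{x+a}$ with the sum taken in $E$. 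Substituting $x\mapsto x+a$ (so that $-a=a$ in the $GF(2)$-space $E$) returns the basis vector to $e_x$ and replaces the exponent by one in which $\widehat x$ is everywhere replaced by $\widehat{(x+a)}$.

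Next I would simplify this new exponent using the three identities recorded before Lemma~\ref{lem:evensemichar}, namely $(\widehat x+\widehat a)^2=\widehat{(x+a)}^{\,2}$, $\;2\widehat x\,\widehat a=2\widehat{x\circ a}$, and $2\widehat{(x+a)}=2(\widehat x+\widehat a)$, together with the fact that a factor $2$ may be moved freely across the pairing. Expanding $\widehat{(x+a)}^{\,2}=\widehat x^{\,2}+\widehat a^{\,2}+2\widehat x\,\widehat a$, the part of the exponent that reproduces $(W_z)_y$ is $\lb\widehat z,\widehat x^{\,2}\rb+\lb 2\widehat y,\widehat x\rb$, and the remaining $x$-dependent contributions are $\lb 2\widehat z,\widehat{a\circ x}\rb$ coming from the cross term of the square and $\lb 2\widehat b,\widehat x\rb$ coming from the phase of $Y(b)$. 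The defining relation $b=M_a^{T}z$ is precisely the even-order analogue of $b=2M_a^{T}z$, $b^{T}x=2z^{T}M_a x$ in Lemma~\ref{lem:oddsemiequiv}: since $x\mapsto 2\widehat x$ is a $GF(2)$-linear isomorphism of $E$ onto $2R$ under which the pairing $\lb 2\widehat u,\widehat v\rb$ becomes twice the standard form on $GF(2)^m$, the relation gives $\lb 2\widehat b,\widehat x\rb=\lb 2\widehat z,\widehat{M_a x}\rb=\lb 2\widehat z,\widehat{a\circ x}\rb$. Hence the two $x$-dependent leftovers are equal $2$-torsion elements of $\zz4$, so their sum is a multiple of $4$ and vanishes; what remains is $(W_z)_y$ multiplied by the $x$-independent unit $i^{\lb\widehat z,\widehat a^{\,2}\rb+\lb 2\widehat y,\widehat a\rb+\lb 2\widehat b,\widehat a\rb}$, which is therefore the eigenvalue. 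Since $W_z$ is one of the unitary matrices produced by Corollary~\ref{cor:semimub}, its $q$ columns are linearly independent, so they form a complete set of eigenvectors in $\cx^{q}$.

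The main obstacle is the $\zz4$ bookkeeping of the middle paragraph: one has to keep careful track of which quantities are genuine elements of $\zz4$ and which are $2$-torsion, verify that the cross term produced by squaring in $R$ matches the $Y(b)$-phase under $b=M_a^{T}z$, and confirm that the two then cancel because each is already $2$-torsion (so their sum is $4\,(\cdots)=0$) rather than cancelling by an exact sign as in the odd-order case. Everything else — the index substitution, the use of the three lifting identities, and the extraction of the constant phase — is formally identical to the odd-order argument, so once the cross-term identity is checked the lemma follows.
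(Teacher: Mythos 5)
Your proposal is correct and follows exactly the route the paper intends: its proof of this lemma is literally ``As in Lemma~\ref{lem:oddsemiequiv},'' and your computation is that argument transported to the lift $R$, with the right identification of the one genuinely new point, namely that $\lb 2\widehat b,\widehat x\rb$ and $\lb 2\widehat z,\widehat{a\circ x}\rb$ are equal $2$-torsion elements of $\zz{4}$ whose sum vanishes, rather than cancelling by sign as in the odd case. Nothing further is needed.
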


\begin{proof}
As in Lemma \ref{lem:oddsemiequiv}. \qed
\end{proof}

The situation in which $E$ is a field is special. In the odd case, let $x$, $y$, and $z$ be in $GF(q)$, and let
\[
(W_z)_{x,y} := \frac{1}{\sqrt{q}}i^{\tr(zx^2 + 2yx)}.
\]
Then when $b = 2az$ in $GF(q)$, the columns of $W_z$ form a complete set of  eigenvectors for $X(a)Y(b)$. In the even case, let $T$ be the Teichm\"uller set of the Galois ring $R$, and for $x$, $y$, and $z$ in $T$, let
\[
(W_z)_{x,y} := \frac{1}{\sqrt{q}}i^{\tr(zx^2+2yx)}.
\]
Then the columns of $W_z$ form a complete set of eigenvectors for $X(a)Y(b)$ when $b = az$.
 
\subsection{Covering graphs}

Any difference set determines a bipartite graph with a group of automorphisms acting on the colour classes. In the case of $(n,k,n,\la)$-relative difference sets, the graph has an interesting structure.

\begin{theorem} \label{thm:rdscover}
There exists an $(n,k,n,\la)$-relative difference set if and only if there exists an $n$-fold distance-regular cover of \named{$K_{k,k}$}{complete bipartite graph} whose automorphism group has a subgroup acting regularly on each colour class.
\end{theorem}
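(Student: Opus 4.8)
The plan is to set up an explicit correspondence between the group-algebra identity defining a relative difference set and the combinatorial regularity of a bipartite cover. We work in an abelian (indeed arbitrary, but the earlier sections keep to abelian) group $G$ of order $kn$ with normal subgroup $N$ of order $n$. Given a subset $D \subseteq G$, form the bipartite graph $\Ga$ whose two colour classes $Y$ and $Z$ are both copies of the quotient $G/N$, and whose edges record the coset structure of $D$: roughly, $y_{gN}$ is adjacent to $z_{hN}$ exactly when the ``difference'' $g^{-1}h$ meets $D$ in the right pattern. More precisely, following the recipe in Lemma~\ref{lem:disimp}, pick base vertices $y_1 \in Y$, $z_1 \in Z$, let $G$ act on each class through $G/N$, index $y_g, z_g$ by the action, and declare $z_g \sim y_1$ iff $g \in DN/N$ — then $G$ acts regularly on each colour class by construction, and the adjacency matrix $A$ of one shore is the $G/N$-Cayley matrix of the image $\bar D$ of $D$. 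I would reverse-engineer this so that the multiplicity with which $g$ appears as a quotient-difference in $D$ becomes the number of common neighbours of the corresponding pair of vertices.

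First I would record the dictionary precisely: the coefficient of $g$ in $DD^{-1}$ counts the number of ways $g = d_1 d_2^{-1}$ with $d_i \in D$, and this is exactly the number of length-two paths in $\Ga$ from a vertex in one class to the vertex in the same class indexed by $g$ (modulo keeping careful track of which shore). Then the relative-difference-set condition $DD^{-1} = k\,1_G + \la(G \setminus N)$ translates, shore by shore, into: any two distinct vertices in the same colour class have either $0$ common neighbours (if they are ``equivalent'' modulo $N$, i.e. lie in the same fibre) or exactly $\la$ common neighbours (otherwise), and the valency is $k$. This is precisely the statement that $\Ga$ is an $n$-fold cover of $K_{k,k}$ with the fibres being the $N$-orbits: distinct fibres induce a matching-free ``biregular'' pattern with $\la$ common neighbours between cross-fibre pairs, and the quotient $\Ga/\pi$ collapsing fibres is $K_{k,k}$. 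To see distance-regularity I would invoke the intersection-array computation already carried out in the excerpt: a cover of $K_{k,k}$ with these parameters has intersection array $\{k, k-1, k-\la, 1; 1, \la, k-1, k\}$ (equation~\eqref{eqn:dracknnarray}), and one checks the antipodal-cover axioms directly — any two vertices at maximal distance lie in a common fibre, which holds because being at distance $4$ is equivalent to being $N$-equivalent but distinct, by the $\la$-versus-$0$ dichotomy. Conversely, given such a cover with a regular automorphism subgroup $G$ on each colour class, I reverse the construction: identify each colour class with $G/(\text{fibre stabilizer}) = G/N$ where $N$ is the kernel of the action on one fibre, define $D \subseteq G$ from the neighbourhood of a base vertex as in Lemma~\ref{lem:disimp}, and read the common-neighbour counts back off as the coefficients of $DD^{-1}$, obtaining exactly $k\,1_G + \la(G\setminus N)$.

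The two directions are formally symmetric once the dictionary is in place, so the write-up would state the bijection, verify one direction carefully, and note that the inverse map is obtained by running the same correspondence backwards. The main obstacle I anticipate is bookkeeping rather than conceptual: getting the bipartite/non-bipartite translation exactly right (the cover $\Ga$ is bipartite of diameter $4$, whereas $DD^{-1}$ lives in a single group, so one must be careful that ``distance $2$ within a colour class'' is what corresponds to group differences, and that the fibre structure on \emph{both} shores is consistent with the single subgroup $N$), and confirming that the regular automorphism group $G$ supplied on the colour classes is forced to be a group under which the whole $K_{k,k}$-cover structure is equivariant, so that $N$ is genuinely normal and the coset picture is well-defined. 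I would also need to double-check the degenerate small-diameter cases (ensuring $d \geq 3$ so the cover terminology from the earlier section applies, which is automatic here since the array has diameter $4$), and handle the normalization that distinct elements $\al e_i + C$-style collisions do not occur — i.e. that $D$ injects into $G/N$, equivalently that the cover is a genuine $n$-fold cover with all fibres of size $n$, which is exactly the semi-regularity-adjacent hypothesis baked into the parameters.
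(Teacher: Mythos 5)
Your overall plan --- encode $D$ in a bipartite graph so that the coefficient of $g$ in $DD^{-1}$ becomes a common-neighbour count within a colour class, read off the $\la$-versus-$0$ dichotomy, let $N$ determine the fibres, and invert the recipe of Lemma~\ref{lem:disimp} for the converse --- is exactly the route the paper takes, and that part of your outline (antipodality because $N$ is a subgroup, the intersection array \eqref{eqn:dracknnarray}, the converse by reading $D$ off the neighbourhood of a base vertex) is sound.

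The construction you actually write down, however, lives on the wrong vertex set, and the error is fatal as stated. You take the colour classes to be copies of $G/N$, let $G$ act through the quotient, and join $z_g$ to $y_1$ when $g\in DN/N$. An $n$-fold cover of $K_{k,k}$ has $nk$ vertices in each colour class, so a subgroup acting regularly on a class has order $nk=|G|$; a class of size $|G/N|=k$ cannot carry such an action ($G$ acting through $G/N$ is not regular --- only $G/N$ is). Worse, the difference sets in question are semi-regular ($|D|=k=|G/N|$, since $\sum_i d_i=\sum_i d_i^2=k$ for the coset multiplicities $d_i$ forces one element of $D$ per coset), so $DN/N$ is all of $G/N$ and your graph is literally $K_{k,k}$: there is no cover left. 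The colour classes must be two copies of $G$ itself, with $(0,x)\sim(1,y)$ iff $y-x\in D$; the fibres are then the cosets of $N$ --- these are the ``$N$-orbits'' your second paragraph silently relies on, and they only exist once the vertex set is $G$ --- and the translations $(i,x)\mapsto(i,x+a)$ supply the regular subgroup. The same slip recurs in your converse, where you identify a colour class with ``$G/(\text{fibre stabilizer})=G/N$'': point stabilizers of a regular action are trivial, so the class is identified with all of $G$, and $N$ must instead be recovered as the set of group elements carrying a fixed vertex into its own fibre (equivalently, to distance $0$ or $4$), which one then checks is a subgroup.
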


\begin{proof}
Let $D$ be an $(n,k,n,\la)$-relative difference set in $G$, with excluded subgroup $N$. Define a graph $\Ga$ with vertices $\zz{2} \times G$ and edges as follows:
\[
(0,x) \sim (1,y) \quad \Longleftrightarrow \quad y-x \in D.
\]
Clearly, $\Ga$ is bipartite, and the automorphism group
\[
\{ \phi_a: (i,x) \mapsto (i,x+a) \mid a \in G\}
\]
acts regularly on each colour class. 

We show that $\Ga$ is a distance-regular antipodal cover. Without loss of generality, consider the neighbourhoods of $(0,0)$. The first neighbourhood of $(0,0)$ is $\{(1,y): y \in D\}$, while $(0,x)$ is at distance $2$ if and only if there is some $y \in D$ such that $y-x$ is also in $D$. But this occurs if and only if $x$ is a difference in $D$. We have (for $x \neq 0$):
\[
d((0,0),(0,x)) = \begin{cases}
2, & x \notin N; \\
4, & x \in N.
\end{cases}
\]
Since $N$ is a subgroup of $G$, vertices at distance $4$ from $G$ are also at distance $4$ from each other: $N$ is antipodal. If $x \notin N$, then $x$ occurs $\la$ times as a difference in $D$, and therefore $(0,x)$ has $\la$ common neighbours with $(0,0)$. From this information, it follows that $\Ga$ is distance-regular with intersection array
\[
\{k,k-1,k-\la,1;1,\la,k-1,k\}.
\]
This is the intersection array of an antipodal cover of $K_{k,k}$.

The converse is similar. Let $X$ and $Y$ be the two colour classes of an antipodal distance-regular $n$-fold cover of $K_{k,k}$, and assume the group $G$ acts regularly on both $X$ and $Y$. Identify $G$ with $X$ as follows: fix some $x \in X$, and for each $u \in X$ let $g_u$ be the unique element of $G$ such that $g_u(x) = u$. Now for some fixed $y \in Y$, define
\[
D := \{g_u \in G: u \sim y\}.
\]
Using similar counting arguments, it follows that $D$ is an $(n,k,n,\la)$-difference set. If we identify $Y$ with $G$ by uniquely letting $h_v \in G$ satisfy $h_v(y) = v$ for each $v \in Y$, then the excluded subgroup is
\[
N := \{h_v \in G: d(v,y) \in \{0,4\}\}. \qed
\]
\end{proof}

Note that the graph $\Ga$ in the proof of Theorem \ref{thm:rdscover} has
\[
\phi: (i,x) \mapsto (1-i,-x)
\]
as an automorphism in addition to the automorphism acting on the colour classes. Therefore the graph corresponding to a relative difference set is vertex transitive (but not necessarily Cayley with respect to an abelian group).

\begin{comment}
Note that the neighbourhood of $v \in Y$ is $\{h_v(u): u \sim y\}$, since $h_v$ is an automorphism mapping $y$ to $v$. Now $v$ and $y$ are at distance $2$ if and only if $v \sim u \sim y$ for some $u$. This says that $h_v(w) = u$ for some $w \sim y$. But this says that $h_vg_w(x) = g_u(x)$, or $h_v = g_ug_w^{-1}$, for $g_u$ and $g_w$ in $D$.
\end{comment}

An $(n,k,n,\la)$-relative difference set is also equivalent to a symmetric transversal design $ST(n,\la)$ admitting a Singer group. See Jungnickel \cite{jun1} for details.

\begin{comment}
A $TD_\la(k,n)$ has $k$ partition classes of size $n$, blocks of size $k$, and $p, q$ contained in exactly $0$ or $\la$ common blocks depending on whether or not $p$ and $q$ are in the same cell. Symmetric means the dual structure is also a $TD$. A Singer group acts on the vertices. The bipartite distance-regular antipodal cover is the incidence graph.

A relative difference set $D$ of size $q$ in a group $G$ of size $q^2$ is equivalent to a certain type of affine plane. Take $G$ to be the points of the plane, take the develop of $D$ plus the cosets of $N$ to be the lines.
\end{comment}

\section{Equivalence of known constructions}\label{sec:mubequiv}

In this section, we consider vectors projectively: two vectors are considered the same if they span the same $1$-dimensional space. Note that the angle $\abs{x^*y}^2$ does not depend on the choice of unit vector $x$ within the subspace $\lb x \rb$. Two sets of complex lines are \textsl{equivalent}\index{equivalence of lines} if there is a unitary matrix $U$ which maps one to the other. Recall that $U$ preserves the angles between vectors:
\[
\abs{(Ux)^*(Uy)} = \abs{x^*U^*Uy} = \abs{x^*y}.
\]
For two equivalent sets of mutually unbiased bases, say $\scr{B} = \{B_0,B_1,\ldots,B_n\}$ and $\scr{B'} =\{B_0',\ldots,B_n'\}$, this means that two vectors from the same basis $B_i$ will be mapped to the same basis $B_j'$ for some $j$. Here, we show that all of the known maximal sets of mutually unbiased bases are equivalent to the ones in Section \ref{sec:prime}. The results in this section are new.

We work with bases in matrix form. Let $q = p^m$ be odd with $p > 3$, and let $\om$ be a primitive $p$-th root of unity. For $x$, $y$, and $z$ in $GF(q)$, define a $q \times q$ matrix with entries
\[
(A_z)_{x,y} := \frac{1}{\sqrt{q}}\om^{\tr((x+z)^3 + y(x+z))}.
\]
We call this an \textsl{Alltop} matrix, as it was Alltop \cite{all1} who showed that for $p > 3$, the set $\{A_z: z \in GF(q)\}$ together with $I$ is a maximal set of mutually unbiased bases. Also consider the bases constructed in Corollary \ref{cor:semimub} in the case where the semifield $E$ is just $GF(q)$:
\[
(W_z)_{x,y} := \frac{1}{\sqrt{q}}\om^{\tr(zx^2 + 2yx)}.
\]
We call $W_z$ a \textsl{Wootters \& Fields} matrix (see \cite{wf1} for their construction).

\begin{theorem}
For $p > 3$, the Alltop matrices are equivalent as mutually unbiased bases to the Wootters \& Fields matrices. 
\end{theorem}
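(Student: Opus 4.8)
The plan is to write down an explicit diagonal unitary $U$ that carries the Alltop bases onto the Wootters \& Fields bases, using nothing beyond the binomial expansion of the cube and the fact that equivalence of mutually unbiased bases permits rephasing and permuting the vectors inside a single basis.

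First I would expand, using additivity of $\tr$ and commutativity of $GF(q)$, and group by powers of $x$:
\[
\tr\bigl((x+z)^3 + y(x+z)\bigr) = \tr(x^3) + 3\tr(zx^2) + \tr\bigl((3z^2+y)x\bigr) + \tr(z^3+yz),
\]
so that
\[
(A_z)_{x,y} = \frac{1}{\sqrt q}\,\omega^{\tr(x^3)}\cdot\omega^{3\tr(zx^2)+\tr((3z^2+y)x)}\cdot\omega^{\tr(z^3+yz)}.
\]
The last factor depends only on $z$ and on the column index $y$, so it merely rescales each column of $A_z$ by a complex unit; since we work with the bases projectively, it may be dropped. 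Now let $U$ be the diagonal matrix with $U_{x,x}=\omega^{-\tr(x^3)}$. It is unitary, it sends $I$ to a basis spanning the same lines as $I$, and left multiplication by $U$ cancels the factor $\omega^{\tr(x^3)}$ above.

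Second, I would match columns. After discarding the column phases, the $y$-th column of $UA_z$ spans the same line as $\frac{1}{\sqrt q}\bigl(\omega^{3\tr(zx^2)+\tr((3z^2+y)x)}\bigr)_{x\in GF(q)}$. Since $q$ is odd, $2$ is invertible in $GF(q)$, so for each fixed $z$ the assignment $\tilde y := 2^{-1}(3z^2+y)$ is a bijection of $GF(q)$ — i.e.\ a relabelling of the columns of $UA_z$ — and the column with new label $\tilde y$ is
\[
\frac{1}{\sqrt q}\bigl(\omega^{\tr((3z)x^2)+2\tr(\tilde y x)}\bigr)_{x\in GF(q)} = (W_{3z})_{\cdot,\tilde y}.
\]
Thus $UA_z$ and $W_{3z}$ are the same basis of lines. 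Because $p>3$, the scalar $3$ is a unit in $GF(p)$, so $z\mapsto 3z$ permutes $GF(q)$; hence $U$ carries $\{I\}\cup\{A_z : z\in GF(q)\}$ onto $\{I\}\cup\{W_{z'} : z'\in GF(q)\}$ basis by basis, which is the claimed equivalence.

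The computation is routine; the only points needing care are conceptual. One must use that equivalence of mutually unbiased bases allows the vectors within each basis to be independently rescaled by units and permuted (this legitimizes discarding the $\omega^{\tr(z^3+yz)}$ phase and passing from $y$ to $\tilde y$), and one must observe that the hypotheses — $p>3$ together with $q$ odd — are exactly what make $3$ and $2$ invertible modulo $p$, the step that breaks down for $p\in\{2,3\}$. This bookkeeping is the main (and only minor) obstacle.
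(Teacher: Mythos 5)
Your proposal is correct, and it takes a genuinely different route from the paper's proof. The paper establishes the equivalence by left-multiplying each $A_a$ by the non-diagonal unitary $A_0^*$, which forces it to evaluate the resulting entries as Weil sums via the Gaussian-sum formula of Lidl and Niederreiter; after normalizing each column by its $x=0$ entry, the paper finds $A_a \mapsto W_{-1/12a}$, which requires treating $A_0$ and $I$ separately since that formula is undefined at $a=0$. Your argument replaces all of this with the binomial expansion of $(x+z)^3$: the cubic term $\tr(x^3)$ is a pure row phase absorbed into a diagonal unitary, the $\tr(z^3+yz)$ term is a pure column phase discarded projectively, and what remains is literally $W_{3z}$ after the column relabelling $y \mapsto 2^{-1}(3z^2+y)$. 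This buys you a uniform treatment of all $q+1$ bases (including $z=0$, with $I\mapsto I$), and it makes completely transparent where the hypothesis $p>3$ enters, namely in the invertibility of $3$ (and of $2$, which holds since $q$ is odd). What the paper's route buys in exchange is the incidental observation that $A_0^*$ itself, suitably rephased, is $W_0$ — i.e., the two families already meet at one matrix — but as a proof of equivalence your computation is shorter and avoids the character-sum machinery entirely. The two points you flag as needing care (that rephasing and permuting vectors within a basis does not change it as a set of lines, and that a diagonal unimodular matrix is unitary and fixes the standard basis projectively) are exactly the right ones, and both are consistent with the paper's definition of equivalence in Section 5.2.
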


\begin{proof}
For convenience, let 
\[
\chi(x) := \om^{\tr{x}}.
\]
Multiply each $A_a$ on the left by the unitary matrix $A_0^*$. Since $A_0^* = A_0^{-1}$, this map takes $A_0$ to $I$ and $I$ to $A_0^*$ (which, after dividing column $x$ by $\om^{\tr{x^3}}$, is $W_0$).  In the remaining cases:
\begin{align*}
\left(A_0^*A_a\right)_{x,y} & = \sum_{z \in GF(q)} \left(A_0^*\right)_{x,z}\left(A_a\right)_{z,y} \\
& = \frac{1}{q}\sum_{z \in GF(q)} \charac{-z^3-xz}\charac{(z+a)^3+y(z+a)} \\
& = \frac{1}{q}\sum_{z \in GF(q)} \charac{3a z^2 + (3a^2+y-x)z + (a^3+ya)}.
\end{align*}
This expression is known as a Weil sum and can be evaluated with the following formula from Lidl and Niederreiter \cite[Theorem 5.33]{lid1}:
\[
\sum_{z \in GF(q)} \charac{a_2z^2 + a_1z + a_0} = \charac{a_0 - \frac{a_1^2}{4a_2}} \eta(a_2)G(\eta, \chi).
\]
Here $\eta(a_2)$ is the quadratic residue of $a_2$ and $G(\eta, \chi)$ is a Gaussian sum which is independent of $a_0$, $a_1$ and $a_2$. Thus,
\[
\left(A_0^*A_a\right)_{x,y} = \frac{1}{q}\charac{\frac{12a^4 + 12ya^2 - (3a^2+y-x)^2}{12a}} \eta\left(3a\right)G(\eta, \chi).
\]
Now divide each column by its entry in the row $x = 0$, namely $(A_0^*A_a)_{0,y}$. (This does not affect the absolute value of the angle between the columns.) Most of the terms cancel. The result is
\begin{align*}
\frac{\left(A_0^*A_a\right)_{x,y}}{\left(A_0^*A_a\right)_{0,y}} & = \charac{\frac{-x^2 +2x(3a^2+y)}{12a}} \\
& = \charac{-\frac{1}{12a}x^2 + \frac{3a^2+y}{6}x} \\
& = \left(W_{-\frac{1}{12a}}\right)_{x,\frac{3a^2+y}{6}}.
\end{align*}
We conclude that pre-multiplying by $A_0^*$ maps $A_a$ to $W_{-1/12a}$, up to the column permutation $y \mapsto (3a^2+y)/6$. Thus the mutually unbiased bases are equivalent. \qed
\end{proof}

A third construction of maximal sets of bases in odd prime-power dimensions is due to Bandyopadhyay, Boykin, Roychowdhury, and Vatan \cite{bbrv}. They partition the Generalized Pauli matrices into maximal commuting sets and show that the common eigenvectors of these sets are mutually unbiased. Their partition is that of Lemma \ref{lem:oddsemiequiv}: for each $z \in GF(q)$,
\[
\{X(a)Y(2az): a \in GF(q)\}
\]
is commuting set. This implies that their construction is a special case of the bases in Corollary \ref{cor:semimub} when $E$ is a field. 

\begin{corollary}
The mutually unbiased bases of Bandyopadhyay, Boykin, Roychowdhury, and Vatan are equivalent to the Wootters \& Fields matrices. \qed
\end{corollary}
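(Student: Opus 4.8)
The plan is to read off the claim from Lemma~\ref{lem:oddsemiequiv} together with the identification of the two partitions that was just pointed out. Recall that the bases of Bandyopadhyay, Boykin, Roychowdhury, and Vatan are, by definition, joint eigenbases of maximal commuting sets of generalized Pauli matrices: the standard basis $\{e_a : a \in GF(q)\}$ is the joint eigenbasis of $Y(V) = \{Y(a) : a \in GF(q)\}$, and for each $z \in GF(q)$ one takes the joint eigenbasis of the commuting family $\{X(a)Y(2az) : a \in GF(q)\}$. So the first step is to fix this description precisely.

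The second step is to observe that each of these families is a \emph{maximal} commuting set, so that its joint eigenbasis is unique as a set of lines. Indeed, the $q$ matrices $X(a)Y(2az)$, $a \in GF(q)$, are pairwise distinct modulo scalars, and (as in Lemma~\ref{lem:compauli} and the surrounding discussion of symplectic spreads) the span of the corresponding pairs $(a,2az)$ over $GF(p)$ is a totally isotropic subspace of $G$ of maximal dimension $m$; hence the group they generate modulo the centre is elementary abelian of order $q$ acting on $\cx^q$, its common eigenspaces are all one-dimensional, and the joint eigenbasis is determined up to reordering and phases --- that is, uniquely projectively.

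The key step is then Lemma~\ref{lem:oddsemiequiv} specialized to the semifield $E = GF(q)$, so that $M_a$ is multiplication by $a$ and $b = 2M_a^{T} z = 2az$: that lemma says exactly that the columns of the Wootters \& Fields matrix $W_z$ form a complete set of eigenvectors for every $X(a)Y(2az)$, $a \in GF(q)$. Since $W_z$ is a flat matrix with $W_z^{*}W_z = I$, its columns are orthonormal, hence an orthonormal joint eigenbasis of that commuting family. By the uniqueness from step two, the columns of $W_z$ span the same set of lines as the BBRV basis indexed by $z$; likewise the standard basis matches $\{e_a\}$. Therefore the BBRV set of mutually unbiased bases and the set $\{I, W_0, \ldots, W_{q-1}\}$ coincide as sets of lines --- in particular they are equivalent (with $U = I$).

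The only real obstacle, and it is a mild one, is justifying in step two that the commuting families occurring in the BBRV partition have simple joint spectrum, so that "being a complete set of common eigenvectors" pins the basis down uniquely; this is precisely where one invokes the maximal totally isotropic property already established for symplectic spreads. Everything else is bookkeeping, which is why the statement is recorded with an immediate \textbf{qed} in the text.
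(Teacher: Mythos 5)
Your argument is correct and is essentially the paper's own: the text immediately preceding the corollary identifies the BBRV partition with the commuting families $\{X(a)Y(2az) : a \in GF(q)\}$ of Lemma~\ref{lem:oddsemiequiv}, and that lemma (with $E = GF(q)$) shows the columns of $W_z$ are a complete orthonormal eigenbasis for each such family, which is all the paper invokes before writing \textsl{qed}. Your additional step two, making explicit that a maximal commuting family has one-dimensional common eigenspaces so that the joint eigenbasis is projectively unique, is exactly the point the paper leaves implicit, and it is the right justification.
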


There are fewer constructions for even dimensions $q = 2^m$. Let $R = GR(4^m)$, let $T$ be the Teichm\"uller set and let $\tr: R \rightarrow \zz{4}$ be the Galois ring trace. The \textsl{Wootters \& Fields} matrices are, for $x$, $y$, and $z$ in $T$ and $i = \sqrt{-1}$,
\[
(W_z)_{x,y} := \frac{1}{\sqrt{q}}i^{\tr(zx^2+2yx)}.
\]
Since $\tr(x^2) = \tr(x)$ in $R$, the exponent $\tr(zx^2+2yx)$ can be simplified to $\tr((z+2y)x)$ (for some $z$). This description was given by Klappenecker and R\"otteler \cite{kr1}. These matrices are again equivalent to those in Corollary \ref{cor:semimub} when $E$ is a field. Lemma \ref{lem:evensemiequiv} shows that the columns of these matrices are eigenvectors for the generalized Pauli matrices, but it is instructive to see this explicitly.

Recall that for $x \in GF(q)$, there is a unique $\widehat{x} \in T$ such that $\widehat{x} = x \mod 2$. Using this bijection, the generalized Pauli matrices act on $T$. If $x$ and $y$ are in $T$, then in general $x+y$ is not. However, $T$ is closed under multiplication, so $(x+y)^2 = x^2 + y^2 + 2xy$
is in $T$, and so is its square root $x+y+2\sqrt{xy}$. Moreover, $x+y+2\sqrt{xy}$ is the unique element of $T$ congruent to $x+y$ mod $2$. Therefore, the Pauli matrices act on $T$ as follows:
\begin{align*}
X(a): e_v \mapsto e_{v+a+2\sqrt{av}}; \\
Y(a): e_v \mapsto i^{\tr(2av)}e_v.
\end{align*}

\begin{comment}
For $T$ in $GR(4^m)$, since $x^{2^m} = x$, define $\sqrt{x} := x^{2^{m-1}}$.
\end{comment}

\begin{lemma}
Let $b = az$ in $T$. Then the columns of $W_z$ form a complete set of  eigenvectors for $X(a)Y(b)$.
\end{lemma}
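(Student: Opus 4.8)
The plan is to verify directly that $X(a)Y(b)$ maps each column of $W_z$ to a scalar multiple of itself, using the explicit action of the generalized Pauli matrices on the Teichm\"uller set $T$ computed just before the statement. Write $(W_z)_y$ for the $y$-th column, so that $(W_z)_y = \frac{1}{\sqrt q}\sum_{x \in T} i^{\tr(zx^2 + 2yx)} e_x$. First I would apply $Y(b)$, which multiplies the coefficient of $e_x$ by $i^{\tr(2bx)}$, giving exponent $\tr(zx^2 + 2yx + 2bx)$; since $b = az$ this is $\tr(zx^2 + 2(y + az)x)$. Then I would apply $X(a)$, which sends $e_x \mapsto e_{x + a + 2\sqrt{ax}}$; reindexing the sum by $w = x + a + 2\sqrt{ax}$ (equivalently $x = $ the unique element of $T$ with $x \equiv w + a \bmod 2$, which by the identities recalled in the text is $w + a + 2\sqrt{aw}$ up to the usual square-root bookkeeping), I would substitute and collect terms.

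The heart of the computation is the algebraic simplification of $\tr\big(z(x+a+2\sqrt{ax})^2 + 2(y+az)(x+a+2\sqrt{ax})\big)$ after the substitution. Here I would lean on the facts, all either stated or readily derived from the material on Galois rings and the Teichm\"uller set: that $T$ is closed under multiplication, that $(x+y)^2 = (x+y+2\sqrt{xy})^2$ for $x,y \in T$, that $\tr$ is $\zz4$-linear, that $2\cdot 2 = 0$ in $\zz 4$ so cross-terms with two factors of $2$ vanish, and that $\tr(u^2) = \tr(u)$ in $R$. Using $(x + a + 2\sqrt{ax})^2 = (x+a)^2 = x^2 + a^2 + 2ax$, the $z$-part becomes $\tr(z x^2 + z a^2 + 2zax)$. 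Expanding the linear part and discarding $2\cdot 2 \sqrt{ax}$ terms, one collects $\tr(zx^2 + 2(y)x)$ — the original exponent defining column $y$ — plus a constant (in $x$) correction term of the form $\tr(za^2 + 2(y+az)a - \text{something})$, and possibly a shift of the column index coming from the $2zax$ and $2az\cdot x$ contributions. I expect these to either cancel against each other (because $b$ was chosen precisely as $az$, mirroring the role $b = 2M_a^T z$ plays in Lemmas \ref{lem:oddsemiequiv} and \ref{lem:evensemiequiv}) or to amount to a permutation of columns $y \mapsto y + (\text{linear in } a, z)$ together with an overall eigenvalue $i^{-\tr(za^2 + 2ya)}$ or similar.

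So the skeleton is: (i) write out $X(a)Y(b)(W_z)_y$ as a sum over $T$; (ii) reindex using the explicit formula $X(a): e_v \mapsto e_{v + a + 2\sqrt{av}}$ and its inverse; (iii) expand the exponent, use $\zz4$-linearity of $\tr$, $\tr(u^2)=\tr(u)$, and the characteristic-$2$-squaring identities on $T$ to kill all terms involving $2\cdot 2$; (iv) recognize the surviving $x$-dependent part as the exponent of $(W_z)_{y'}$ for the appropriate shifted index $y'$, with the leftover $x$-independent part as the eigenvalue. Since the text says ``the proof is as in Lemma \ref{lem:oddsemiequiv}'' for the even semifield case, I would mimic that argument verbatim, only replacing the general embedding $\widehat{(\cdot)}$ by the Teichm\"uller embedding and $M_a$ by multiplication in $GF(q)$, so that $b = M_a^T z = az$.

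The main obstacle, and the only genuinely delicate point, is the square-root bookkeeping when reindexing: one must be careful that $x \mapsto x + a + 2\sqrt{ax}$ is a bijection of $T$ and track exactly which element of $T$ is congruent mod $2$ to a given sum, so that the $2\sqrt{a\,\cdot}$ terms that survive modulo $4$ are handled correctly rather than glibly dropped. Everything else is routine manipulation in $\zz4$ using $\tr$-linearity; once the reindexing is pinned down, the eigenvalue falls out as a constant exponential and the claim follows. I would close by noting, as the paper does elsewhere, that since the columns of $W_z$ are orthonormal (as $W_z$ is unitary) they form a \emph{complete} set of eigenvectors, which is what is asserted.
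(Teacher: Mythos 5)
Your proposal is correct and follows essentially the same route as the paper: apply $Y(b)$ then $X(a)$ to the column $(W_z)_y$, observe that the cross term $2zax$ from expanding $z(x+a)^2$ cancels the $2bx$ term precisely because $b=az$, use the identities $(x+a)^2=(x+a+2\sqrt{xa})^2$ and $2(x+a)=2(x+a+2\sqrt{xa})$ to reindex over $T$, and read off the eigenvalue $i^{-\tr(za^2+2ya)}$. The only difference is your hedge about a possible column permutation --- in fact no shift of $y$ occurs, exactly as in Lemma~\ref{lem:oddsemiequiv} --- and your closing remark on orthonormality supplies the completeness claim just as the paper intends.
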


\begin{proof}
Let $(W_z)_y$ denote the $y$-th column on $W_z$.
\begin{align*}
X(a)Y(b)(W_z)_y & = X(a)Y(b)\sum_{x \in G} i^{\tr(zx^2 + 2yx)} e_x \\
& = \sum_{x \in G} i^{\tr(zx^2 + 2yx + 2bx)} e_{x+a+2\sqrt{xa}} \\
& = \sum_{x \in G} i^{\tr(z(x+a)^2 + 2y(x+a) + 2bx - 2zax - za^2 - 2ya)} e_{x+a+2\sqrt{xa}} \\
& = i^{-\tr(za^2 + 2ya)} \sum_{x \in G} i^{\tr(z(x+a)^2 + 2y(x+a))} e_{x+a+2\sqrt{xa}} \\
& = i^{-\tr(za^2 + 2ya)} (W_z)_y.
\end{align*}
In the second last line, $(x+a)^2 = (x+a+2\sqrt{xa})^2$ and $2(x+a) = 2(x+a+2\sqrt{xa})$. \qed
\end{proof} 

As with in the odd case, this description of the Wootters \& Fields matrices in dimension $2^m$ as the eigenvalues of generalized Pauli matrices was given by Bandyopadhay et al. \cite{bbrv}.

\section{Non-prime-power dimensions}\label{sec:nonprime}

We have seen that $n+1$ mutually unbiased bases in $\cx^k$ can be constructed from $(k,n,k,\la)$-relative difference sets, and that a $(k,k,k,1)$-relative difference set exists whenever $k$ is a prime power. For other dimensions, the largest known general construction is the following, due to R\"otteler and Klappenecker \cite{kr1}.

\begin{lemma}
Suppose there exist $n$ mutually unbiased bases in both $\cx^{k_1}$ and in $\cx^{k_2}$. Then there exist $n$ mutually unbiased bases in $\cx^{k_1k_2}$.
\end{lemma}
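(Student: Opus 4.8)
The plan is to use the standard tensor product construction. Suppose $\{B_0^{(1)}, \ldots, B_{n-1}^{(1)}\}$ are $n$ mutually unbiased bases in $\cx^{k_1}$ and $\{B_0^{(2)}, \ldots, B_{n-1}^{(2)}\}$ are $n$ mutually unbiased bases in $\cx^{k_2}$. For each index $i \in \{0, \ldots, n-1\}$, I would form the basis $B_i := B_i^{(1)} \otimes B_i^{(2)}$ of $\cx^{k_1 k_2} \cong \cx^{k_1} \otimes \cx^{k_2}$, whose elements are the vectors $u \otimes w$ as $u$ ranges over $B_i^{(1)}$ and $w$ over $B_i^{(2)}$. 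Note that it is essential to pair up the bases by matching index rather than taking all $n^2$ products: different pairs $(i,i)$ and $(j,j)$ will be unbiased, but $(i,j)$ and $(i,k)$ with $j \neq k$ would share a common first tensor factor and hence fail to be unbiased.

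The two things to check are that each $B_i$ is orthonormal and that $B_i$ and $B_j$ are mutually unbiased for $i \neq j$. Orthonormality follows from the elementary identity $\ip{u_1 \otimes w_1}{u_2 \otimes w_2} = \ip{u_1}{u_2}\ip{w_1}{w_2}$: since $B_i^{(1)}$ and $B_i^{(2)}$ are orthonormal, the products $\ip{u_1}{u_2}\ip{w_1}{w_2}$ equal $\delta$-values, and since $\dim(\cx^{k_1}\otimes\cx^{k_2}) = k_1 k_2 = |B_i^{(1)}||B_i^{(2)}|$, the set $B_i$ has the right size to be a basis. For unbiasedness, take $u_1 \otimes w_1 \in B_i$ and $u_2 \otimes w_2 \in B_j$ with $i \neq j$; then
\[
\abs{\ip{u_1 \otimes w_1}{u_2 \otimes w_2}}^2 = \abs{\ip{u_1}{u_2}}^2 \abs{\ip{w_1}{w_2}}^2 = \frac{1}{k_1}\cdot\frac{1}{k_2} = \frac{1}{k_1 k_2},
\]
using that $u_1, u_2$ come from different bases of $\cx^{k_1}$ and likewise $w_1, w_2$ from different bases of $\cx^{k_2}$, together with the fact (established in the discussion preceding Corollary~\ref{cor:mubbnd}) that the constant angle between unbiased bases in $\cx^k$ is $1/k$. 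Since this is a single constant independent of the chosen vectors, $B_i$ and $B_j$ are mutually unbiased.

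There is no real obstacle here; the only point requiring a moment's care is the pairing-by-index, i.e.\ recognizing that one cannot take the full set of $n^2$ tensor products and still retain mutual unbiasedness. In matrix language, if one writes the bases as unitary matrices, the argument amounts to the observation that $(B_i^{(1)} \otimes B_i^{(2)})^* (B_j^{(1)} \otimes B_j^{(2)}) = (B_i^{(1)*}B_j^{(1)}) \otimes (B_i^{(2)*}B_j^{(2)})$, and that the tensor product of two flat matrices is flat, with the common entry modulus being the product of the two moduli. This gives the result immediately.
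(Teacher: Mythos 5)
Your proposal is correct and matches the paper's proof: the paper pairs the bases by index, forms $A_1\otimes B_1,\ldots,A_n\otimes B_n$, and observes that $(A_i\otimes B_i)^*(A_j\otimes B_j)=(A_i^*A_j)\otimes(B_i^*B_j)$ is flat, which is precisely the matrix-language argument you give in your final paragraph. The vector-level verification and the remark about pairing by index are fine elaborations of the same idea.
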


\begin{proof}
Let $A_1,\ldots,A_n$ be mutually unbiased bases (in matrix form) in $\cx^{k_1}$, and $B_1,\ldots,B_n$ mutually unbiased in $\cx^{k_2}$. Then for $i \neq j$, both $A_i^*A_j$ and $B_i^*B_j$ are flat matrices, and 
\[
(A_i \otimes B_i)^*(A_j \otimes B_j) = (A_i^*A_j) \otimes (B_i^*B_j) 
\]
is also flat. Thus
\[
A_1 \otimes B_1,\ldots,A_n \otimes B_n
\]
is a set of mutually unbiased bases in $\cx^{k_1k_2}$. \qed
\end{proof}

\begin{corollary} \label{cor:mubprod}
Let $d = p_1^{e_1}\ldots p_r^{e_r}$, where $p_1,\ldots,p_r$ are distinct primes. Then there exists a set of
\[
\min\{p_1^{e_1},\ldots,p_r^{e_r}\} + 1
\]
mutually unbiased bases in $\cx^d$. \qed
\end{corollary}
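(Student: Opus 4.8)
The plan is to combine the field-case construction from Corollary \ref{cor:semimub} with the tensor-product lemma that immediately precedes this corollary, using a short induction on the number of distinct prime factors. The statement is a formal consequence of those two results, so the proof will be brief.

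First I would observe that for each $i$, since $p_i^{e_i}$ is a prime power, the finite field $GF(p_i^{e_i})$ exists; being a (commutative) semifield of order $p_i^{e_i}$, Corollary \ref{cor:semimub} yields a set of $p_i^{e_i}+1$ mutually unbiased bases in $\cx^{p_i^{e_i}}$. Next, set
\[
n := \min\{p_1^{e_1},\ldots,p_r^{e_r}\} + 1,
\]
and note that $n \leq p_i^{e_i}+1$ for every $i$, so by discarding bases we obtain at least $n$ mutually unbiased bases in each $\cx^{p_i^{e_i}}$ (any subcollection of a set of mutually unbiased bases is again mutually unbiased). I would then argue by induction on $r$: the base case $r=1$ is exactly the previous paragraph, and for the inductive step I apply the tensor-product lemma (the one stating that $n$ MUBs in $\cx^{k_1}$ and $n$ MUBs in $\cx^{k_2}$ give $n$ MUBs in $\cx^{k_1 k_2}$) with $k_1 = p_1^{e_1}\cdots p_{r-1}^{e_{r-1}}$ and $k_2 = p_r^{e_r}$, using the induction hypothesis on $k_1$. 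This produces $n$ mutually unbiased bases in $\cx^{d}$, as required.

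There is no real obstacle here; the only points worth stating explicitly are that a sub-family of mutually unbiased bases is still mutually unbiased (so the $\min$ is the right quantity) and that the tensor-product lemma composes associatively, which is what makes the induction go through. One could alternatively phrase the argument non-inductively by iterating the lemma $r-1$ times, but the induction is cleanest to write down.
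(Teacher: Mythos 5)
Your proof is correct and follows exactly the route the paper intends: the corollary is stated as an immediate consequence of the prime-power construction (Corollary \ref{cor:semimub}) and the tensor-product lemma, and your induction on the number of distinct prime factors, together with the observation that a subfamily of mutually unbiased bases is still mutually unbiased, is precisely how that consequence is obtained. No issues.
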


Wocjan and Beth \cite{wb1} have a construction which slightly improves the lower bound in certain square dimensions: they construct $n+2$ bases in dimension $d = k^2$ from a set of $n$ mutually orthogonal Latin squares of size $k \times k$. However, nothing better than Corollary \ref{cor:mubprod} is known for most $d$. The bound implies that there are at least three mutually unbiased bases in any dimension; in Section \ref{sec:typeii} we construct three in every dimension using spin models.

\begin{comment}
Wocjan and Beth use a $(k,s)$-net: a set of $ks$ blocks of size $s$ on $s^2$ points partitioned such that 
\[
|B \cap B'| = \begin{cases}
1, & B,B' \mbox{ in different cells;} \\
0, & B,B' \mbox{ in the same cell.} 
\end{cases}
\]
The nonzero entries of the characteristic vectors of each block are then replaced with columns from an $s \times s$ generalized Hadamard matrix, resulting in a total of $ks^2$ blocks and $k$ mutually unbiased bases.
\end{comment}

\subsection{Dimension $6$}

At least three and at most seven mutually unbiased bases exist in $\cx^6$: the exact number is not known. Here we consider the possibility that more than three exist.

If $B_0$ and $B_1$ are mutually unbiased, then without loss of generality $B_0 = I$ and $B_1$ is a complex Hadamard matrix. The list of known Hadamard matrices of order $6$ is short. Let $s$ and $t$ be complex numbers of absolute value $1$, let $i = \sqrt{-1}$, and let $\om$ be a primitive third root of unity. Then
\begin{equation} \label{eqn:hadsym}
\left( \begin{matrix}
1 & 1 & 1 & 1 & 1 & 1 \\
1 & -1 & i & -i & -i & i \\
1 & i & -1 & t & -t & -i \\
1 & -i & -\bar{t} & -1 & i & \bar{t} \\
1 & -i & \bar{t} & i & -1 & -\bar{t} \\
1 & i & -i & -t & t & -1 
\end{matrix} \right)
\end{equation}
and
\begin{equation}\label{eqn:hadchar}
\left( \begin{matrix}
1 & 1 & 1 & 1 & 1 & 1 \\
1 & 1 & \om & \om & \om^2 & \om^2 \\
1 & 1 & \om^2 & \om^2 & \om & \om \\
1 & -1 & s & -s & t & -t \\
1 & -1 & s\om & -s\om & t\om^2 & -t\om^2 \\
1 & -1 & s\om^2 & -s\om^2 & t\om & -t\om 
\end{matrix} \right)
\end{equation}
are complex Hadamard. Note that \eqref{eqn:hadsym} is symmetric, while \eqref{eqn:hadchar} is the character table of $\zz{6}$ when $s = t = 1$. Now let 
\[
d := \frac{1-\sqrt{3}}{2} + i\sqrt{\frac{\sqrt{3}}{2}}.
\]
Then
\begin{equation}\label{eqn:hadd}
\left( \begin{matrix}
1 & 1 & 1 & 1 & 1 & 1 \\
1 & -1 & -d & -d^2 & d^2 & d \\
1 & \bar{d} & 1 & d^2 & -d^3 & d^2 \\
1 & -\bar{d}^2 & \bar{d}^2 & -1 & d^2 & -d^2 \\
1 & \bar{d}^2 & -\bar{d}^3 & \bar{d}^2 & 1 & -d \\
1 & \bar{d} & d^2 & -\bar{d}^2 & -\bar{d} & -1 
\end{matrix} \right)
\end{equation}
is skew-symmetric Hadamard. Two Hadamard matrices $B_1$ and $B_2$ are \textsl{equivalent} if 
\[
B_1 = P_1D_1B_2D_2P_2,
\]
where $P_i$ is a permutation matrix and $D_i$ is diagonal (with all diagonal entries having the same absolute value). Up to equivalence, \eqref{eqn:hadsym}, \eqref{eqn:hadchar} and \eqref{eqn:hadd} is the complete list of known order-$6$ Hadamard matrices. To this list, we add another class, which is skew-symmetric and a generalization of \eqref{eqn:hadd}.

\begin{comment}
See Tadej and Zyczkowski (2005) for a survey of known small order complex Hadamard matrices.
\end{comment}

\begin{lemma}
Let $s$, $t$ and $u$ be complex numbers of absolute value $1$ satisfying
\[
stu+s+t+u+2 = 0.
\]
Then
\begin{equation}\label{eqn:hadskew}
\left(\begin{matrix}
1 & 1 & 1 & 1 & 1 & 1 \\
1 & -1 & -s & s & \conj{t} & -\conj{t} \\
1 & -\conj{s} & -1 & u & \conj{s} & -u \\
1 & \conj{s} & \conj{u} & 1 & \overline{stu} & \conj{t} \\
1 & t & s & stu & 1 & u \\
1 & -t & -\conj{u} & t & \conj{u} & -1 \\
\end{matrix} \right) 
\end{equation}
is a complex Hadamard matrix. \qed
\end{lemma}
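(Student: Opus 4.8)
The plan is to verify the two defining properties of a complex Hadamard matrix of order $6$ directly: that every entry has modulus $1$, and that $H^*H = 6I$. The first is immediate, since every entry of $H$ is $\pm 1$ or one of $\pm s,\pm t,\pm u$, their conjugates, or $\pm stu,\pm\overline{stu}$, all of modulus $1$ because $\abs s=\abs t=\abs u=1$. For the second property, the useful structural remark is that $H$ is Hermitian: comparing $H_{jk}$ with $\overline{H_{kj}}$ entry by entry (the diagonal is $(1,-1,-1,1,1,-1)$, hence real, and off the diagonal each pair matches using $\overline{\overline x}=x$ and $\overline{stu}=\overline s\,\overline t\,\overline u$) one finds $H=H^*$. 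So it suffices to prove $H^2=6I$. Since $H$ is flat, each diagonal entry of $H^2$ equals $\sum_\ell H_{j\ell}\overline{H_{j\ell}}=\sum_\ell\abs{H_{j\ell}}^2=6$; and since $H^2$ is Hermitian (as $(H^2)^*=(H^*)^2=H^2$), it is enough to check that the $15$ entries $(H^2)_{jk}=\sum_\ell H_{j\ell}H_{\ell k}$ with $j<k$ all vanish.

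The five entries with $j=1$ are simply the column sums of $H$, because the first row of $H$ is all ones. Columns $2$, $3$ and $6$ sum to $0$ by cancellation of signs, requiring no hypothesis; column $4$ sums to $2+s+t+u+stu$, which is $0$ by assumption, and column $5$ sums to the complex conjugate $2+\overline s+\overline t+\overline u+\overline{stu}$, hence also $0$. The remaining ten entries are indexed by pairs from $\{2,3,4,5,6\}$, and each is a sum of six unit-modulus monomials in $s,t,u$ and their conjugates; after collecting terms, multiplying by a suitable unit monomial $s^a t^b u^c$ clears all conjugates and produces either $stu+s+t+u+2$ or $1+st+su+tu+2stu$. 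The former is exactly the hypothesis; the latter is $stu$ times $\overline{stu}+\overline s+\overline t+\overline u+2=\overline{stu+s+t+u+2}=0$. For example $(H^2)_{45}=1+\overline s\,\overline t+\overline s\,\overline u+2\,\overline{stu}+\overline t\,\overline u$, and multiplying by $stu$ gives $stu+u+t+2+s=0$; while $(H^2)_{23}=1+2s+s\overline u+s\overline t+\overline t\,\overline u$, and multiplying by $tu$ gives $1+st+su+tu+2stu=0$. The other eight pairs are dispatched identically.

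The only real obstacle is bookkeeping: once the reduction above is set up, all fifteen checks are mechanical and there is no conceptual content beyond the single algebraic identity $stu+s+t+u+2=0$ (together with its conjugate). One can compress the work by noting that this defining relation is invariant under the symmetric group permuting $s$, $t$, $u$; if a corresponding permutation of the rows and columns of $H$ realizes that action up to the (fixed) diagonal, then the fifteen orthogonality relations split into a handful of $S_3$-orbits and only one representative per orbit need be verified. In the absence of such a symmetry, the brute-force enumeration described above still completes the proof.
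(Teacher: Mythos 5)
Your verification is correct, and it supplies the computation that the paper omits entirely (the lemma is stated with \qed and no proof). The reduction via the observation that $H$ is Hermitian---so that one only needs $H^2=6I$, the diagonal entries being $6$ by flatness---is a genuine economy, and your two worked examples check out. One small inaccuracy in the bookkeeping: of the ten inner products among columns $2$ through $6$, only four (the pairs $\{2,3\}$, $\{2,6\}$, $\{3,6\}$, $\{4,5\}$) actually reduce, after clearing conjugates, to $stu+s+t+u+2=0$ or to $1+st+su+tu+2stu=0$; the remaining six cancel identically with no use of the hypothesis, so the claim that every such pair produces one of your two expressions is not literally true. Those cases are only easier, however, so the proof stands as written.
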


To see that \eqref{eqn:hadskew} is a generalization of \eqref{eqn:hadd}, take $s = t = d^2$ and $u = -\conj{d}$. Theorem \ref{thm:rdscover} and Lemma \ref{lem:rdsmub} show that any distance regular antipodal $n$-fold cover of $K_{k,k}$ with an appropriate automorphism group produces a set of $n+1$ mutually unbiased bases in $\cx^k$. In fact, a $3$-fold cover of $K_{6,6}$ exists. It was found by Farad\v{z}ev, Ivanov, and Ivanov \cite{fii}, although this description is due to Aldred \cite{ald2}, who works with a so-called \defn{tank-trap} (see \cite{ald1} for more details). 

Let $T_0$ be a $3 \times 5$ matrix where each entry is a subset of $\{\infty,0,\ldots,4\}$. The $i$-th row of $T_0$ (for $i \in \zz{5}$) is 
\[
(\{\infty,i\}, \{1+i,4+i\}, \{2+i,3+i\}).
\]
Each row is a $1$-factor of $K_6$, and the entire array is a $1$-factorization. Define two more arrays $T_1$ and $T_2$ by shifting the columns of $T_0$:
\[
T_j(i,h) = T_0(i,h-j).
\]
Here columns are indexed mod $3$. Clearly $T_1$ and $T_2$ are also $1$-factorizations. Now define the cover of $K_{6,6}$: let $B_i$ and $W_i$ be the fibres of the two colour classes, where $B_i(j)$ is the $j$-th vertex in fibre $i$ (taking $0 \leq j \leq 2$ and $0 \leq i \leq 4$). Then
\[
B_i(j) \sim W_k(h) \text{ if and only if } k \in T_j(i,h).
\]
Additionally, set $B_\infty(j) \sim W_k(j)$ and $W_\infty(j) \sim B_k(j)$ for all $j$ and $k$. 

Clearly, there is a matching between $B_i$ and $W_k$, since $T_j(i,h) = T_{j+l}(i,h+l)$ and therefore $B_i(j)$ and $W_k(h)$ if and only if $B_i(j+l)$ and $W_k(h+l)$ are. To see that $B_i(j)$ and $B_{i'}(j')$ have two common neighbours for $i \neq i'$, note that $W_k(h)$ is a common neighbour if and only if $k$ is in both $T_0(i,h-j)$ and $T_0(i',h-j')$. As $h$ runs over the columns of $T_0$, by inspection $T_0(i,h-j)$ and $T_0(i',h-j')$ have nontrivial intersection exactly twice. A similar argument applies for the vertices of $B_\infty$, and it follows that the graph is a distance-regular antipodal cover. 

If this graph had an automorphism group which acted regularly on each colour class, then four mutually unbiased bases would exist in $\cx^6$. Unfortunately, no such automorphism group exists. An exhaustive computer search of the two abelian groups of order $18$ shows that there is no relative difference set of size $6$. 

\begin{comment}
Note that there is an automorphism group which permutes the vertices in each fibre: namely, for $l \leq 2$,
\[
\begin{cases}
B_i(j) \mapsto B_i(j+l), \\
W_k(h) \mapsto W_k(h+l).
\end{cases}
\]
Similarly, there is a group which permutes fibres $\{B_0,\ldots,B_4\}$ and $\{W_0,\ldots,W_4\}$:
\[
\begin{cases}
B_i(j) \mapsto B_{i+l}(j), \\
W_k(h) \mapsto W_{k+l}(h).
\end{cases}
\]
\end{comment}

\section{Type-II matrices}\label{sec:typeii}

In this section, we consider the connection between mutually unbiased bases and a class of type-II matrices called spin models. The term ``spin model" refers to a model of statistical mechanics that the matrices represent, while ``type II'' refers to the second Reidemeister move, an operation under which any link invariant must remain constant. The connection between the two was found by Jones \cite{jon1}. For more of an introduction to knot theory, link invariants, and the connections to Lie algebras, see Kauffman \cite{kau1}. The results in this section, unless otherwise noted, are due to Godsil.

\begin{comment}
Type-II matrices were not named until 1998 \cite{jmn}.
\end{comment}

Let $W$ be an $n \times n$ matrix with no zero entries. Then the \defn{Schur inverse} of $W$ is the matrix $W^{(-)}$ such that
\[
W \circ W^{(-)} = J.
\]
An invertible, Schur-invertible matrix \named{$W$}{type II matrix} is \defn{type II} if
\[
W W^{(-)T} = nI.
\]

Recall that if $W$ is mutually unbiased with $I$ in $\cx^n$, then $W$ is unitary and flat with entries of absolute value $1/\sqrt{n}$. Then
\[
W^{(-)} = n\overline{W},
\]
which implies that $W$ is type II. More generally, any two of the following imply the third:
\begin{enumerate}[(a)]
\item some real multiple of $W$ is unitary;
\item $W$ is flat;
\item $W$ is type II.
\end{enumerate}
Moreover, satisfying all three conditions is equivalent to some multiple of $W$ being unitary and mutually unbiased with $I$. There is another characterization of type-II matrices due to Godsil and Chan \cite{gc1}.

\begin{lemma}\label{lem:typeiichar}
An $n \times n$ matrix $W$ is type II if and only if 
\[
n\tr(AW^{-1}BW) = \tr(A)\tr(B), 
\]
for every diagonal $A$ and $B$. \qed
\end{lemma}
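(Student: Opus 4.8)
An $n \times n$ matrix $W$ is type II if and only if $n\tr(AW^{-1}BW) = \tr(A)\tr(B)$ for every diagonal $A$ and $B$.

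Let me plan a proof of this.

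The statement is an identity that must hold for all diagonal $A, B$, so the natural strategy is to test it on a basis of the space of diagonal matrices, namely the matrix units $E_{ii}$ (with a $1$ in position $(i,i)$ and zeros elsewhere). Write $A = \sum_i a_i E_{ii}$ and $B = \sum_j b_j E_{jj}$; since both sides are bilinear in $(A,B)$, the general identity is equivalent to the family of scalar identities obtained by setting $A = E_{ii}$, $B = E_{jj}$, as $i,j$ range over all indices. So the plan is to compute $\tr(E_{ii} W^{-1} E_{jj} W)$ explicitly in terms of the entries of $W$ and $W^{-1}$.
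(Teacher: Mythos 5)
Your strategy is sound and the lemma does follow from it, but what you have written is only the plan: the reduction to matrix units via bilinearity is the easy half, and the actual content of the proof --- the computation of $\tr(E_{ii}W^{-1}E_{jj}W)$ and its identification with the type-II condition --- is never carried out. (For what it is worth, the paper itself gives no proof either; it cites Godsil and Chan, so there is no in-paper argument to compare against.) To close the gap: since $E_{jj}W$ has $j$-th row equal to the $j$-th row of $W$ and all other rows zero,
\[
\tr(E_{ii}W^{-1}E_{jj}W) \;=\; (W^{-1}E_{jj}W)_{ii} \;=\; (W^{-1})_{ij}\,W_{ji},
\]
so by bilinearity the stated identity for all diagonal $A,B$ is equivalent to
\[
n\,(W^{-1})_{ij}\,W_{ji} \;=\; 1 \qquad \text{for all } i,j .
\]
This forces every $W_{ji}\neq 0$ and says exactly that $(W^{-1})_{ij} = \tfrac{1}{n}\,(W^{(-)})_{ji}$, i.e.\ $W^{-1} = \tfrac{1}{n}W^{(-)T}$, which is the condition $WW^{(-)T}=nI$ defining a type-II matrix; running the same equivalences backwards gives the converse. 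Without this step your proposal asserts a method but proves nothing, so as submitted it is incomplete rather than wrong.
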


Define the \defn{Schur ratio} of columns $i$ and $j$ of $W$ to be the $i$-th column of $W$ Schur-divided by the $j$-th column:
\[
W_{i/j} := We_i \circ We_j^{(-)}.
\]
\nomenclature{$W_{i/j}$}{Schur-ratio of columns}%
Then $W$ is a \defn{spin model} if $W$ is type II, and every $W_{i/j}$ is an eigenvector for $W$. 

One example of a spin model is the following: let $\tha$ be a root of unity such that $\tha^2$ is a primitive $n$-th root, and define an $n \times n$ matrix
\[
W_{ij} := \tha^{(i-j)^2}.
\]
Clearly, $W$ is flat, and 
\begin{align*}
(W^*W)_{ij} & = \sum_k\tha^{-(i-k)^2+(j-k)^2} \\
& = \tha^{j^2-i^2} \sum_k \tha^{2k(i-j)} \\
& = \begin{cases}
n, & i=j; \\
0, & i \neq j.
\end{cases}
\end{align*}
Thus $W/\sqrt{n}$ is unitary and $W$ is type II. Moreover,
\[
(W_{j/i})_k = \tha^{-(i-k)^2+(j-k)^2} = \tha^{j^2-i^2}\om^{2(i-j)k},
\]
which is an eigenvector for the circulant $W$. Thus $W$ is a spin model.

\begin{lemma}
Let $W$ be an $n \times n$ spin model and let $D_j$ be diagonal with 
\[
(D_j)_{i,i} := \sqrt{n}(W^{(-)})_{i,j}.
\]
Then 
\[
D_jWD_j^{-1} = W^{-1}D_jW.
\]
\end{lemma}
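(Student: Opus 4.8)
The plan is to turn the claimed matrix identity into a statement about the eigenvalues of $W$ on its Schur‑ratio columns, and then to evaluate those eigenvalues. Write $\Delta_j:=\operatorname{diag}(W_{1j},\dots,W_{nj})$ for the diagonal matrix carrying the $j$‑th column of $W$; since $(W^{(-)})_{ij}=W_{ij}^{-1}$ we have $D_j=\sqrt n\,\Delta_j^{-1}$, and therefore
\[
W_{i/j}=We_i\circ (We_j)^{(-)}=\Delta_j^{-1}We_i=\tfrac1{\sqrt n}\,D_jWe_i .
\]
Because $W$ is a spin model, each $W_{i/j}$ is an eigenvector of $W$; write $WW_{i/j}=\theta_{ij}W_{i/j}$ and set $\Theta_j:=\operatorname{diag}(\theta_{1j},\dots,\theta_{nj})$. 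Reading these eigenvector equations off one column at a time packages them into the single matrix identity $WD_jW=D_jW\Theta_j$.

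First I would massage the target. Multiplying $D_jWD_j^{-1}=W^{-1}D_jW$ on the left by $W$ and on the right by $D_j$ shows it is equivalent to the braid‑type relation $WD_jW=D_jWD_j$. Comparing this with $WD_jW=D_jW\Theta_j$ and cancelling the invertible factor $D_jW$, the whole lemma reduces to the single scalar assertion
\[
\Theta_j=D_j,\qquad\text{i.e.}\qquad \theta_{ij}=\sqrt n\,(W^{(-)})_{ij}\ \ \text{for all }i .
\]

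So the work is in evaluating $\theta_{ij}$, and this is the step I expect to be the real obstacle; everything else is bookkeeping. The route I would take uses the type‑II property in the form $W^{-1}=\tfrac1n W^{(-)T}$ (equivalently $W^{T}W^{(-)}=nI$). Applying $W^{-1}$ to $WW_{i/j}=\theta_{ij}W_{i/j}$ shows that $W_{i/j}$ is simultaneously an eigenvector of $W^{(-)T}$, with eigenvalue $n/\theta_{ij}$. Writing out the $k=i$ coordinate of $W^{(-)T}W_{i/j}=(n/\theta_{ij})W_{i/j}$, the left side telescopes — the factor $(W^{(-)})_{mi}W_{mi}$ collapses to $1$ — leaving the $j$‑th column sum of $W^{(-)}$; and a second application of type II to $\one$, together with the fact that $W_{j/j}=\one$ is itself an eigenvector of $W$ (so $W$ has a common row sum $r$), identifies every column sum of $W^{(-)}$ as $n/r$. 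This yields $\theta_{ij}=rW_{ii}/W_{ij}$. Finally I invoke the standard normalization of a spin model — that $W$ has constant diagonal with $rW_{ii}=\sqrt n$ (the modulus) — to conclude $\theta_{ij}=\sqrt n/W_{ij}=\sqrt n\,(W^{(-)})_{ij}$, which is exactly $\Theta_j=D_j$, completing the proof. An alternative for this last step is to feed $A=D_j$ and an elementary diagonal matrix $B$ into the trace characterization of type‑II matrices, Lemma~\ref{lem:typeiichar}, which isolates the relevant diagonal entry of $W^{-1}D_jW$ and again reduces everything to identifying $\theta_{ij}$.
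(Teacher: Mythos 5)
Your proof follows the same skeleton as the paper's: you observe that the $i$-th column of $D_jW$ is $\sqrt{n}\,W_{i/j}$, package the spin-model condition as $WD_jW=D_jW\Lambda_j$ with $\Lambda_j$ the diagonal matrix of eigenvalues, and reduce the lemma to the single identity $\Lambda_j=D_j$. Where you diverge is in evaluating those eigenvalues: the paper feeds $A=E_{ii}$ and $B=D_j^{-1}$ into the trace characterization of type-II matrices (Lemma~\ref{lem:typeiichar}) --- the route you mention only as an alternative --- whereas your primary route applies $W^{-1}=\tfrac1nW^{(-)T}$ to the eigenvector equation, reads off the $i$-th coordinate (the telescoping is correct), and uses $W\one=r\one$, coming from $W_{j/j}=\one$, to identify the column sums of $W^{(-)}$. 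This computation is right and in fact more explicit than the paper's: it lands cleanly on $\lambda_{i/j}=rW_{ii}/W_{ij}$, which is exactly the quantity the paper's last line silently asserts to equal $\sqrt{n}/W_{ij}$.

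That last step is where the real content sits, and your proposal does not close it: you ``invoke the standard normalization of a spin model --- that $W$ has constant diagonal with $rW_{ii}=\sqrt{n}$,'' but no such normalization appears in the paper's definition (type II plus every $W_{i/j}$ an eigenvector of $W$), and it does not follow from it. For instance $W=\bigl(\begin{smallmatrix}1&i\\ i&1\end{smallmatrix}\bigr)$ satisfies that definition and has $rW_{ii}=1+i$, which has modulus $\sqrt2$ but is not $\sqrt2$; there $\Lambda_j=e^{i\pi/4}D_j$, and the asserted identity holds only up to the unimodular factor $e^{-i\pi/4}$. So as written you are assuming precisely what remains to be proved (or what must be added as a normalization hypothesis). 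In fairness, the paper's own proof jumps over the identical point --- its closing ``from which it follows that $\Lambda_j=D_j$'' needs the same constancy and normalization of the diagonal --- and the corollary that follows only needs the identity up to a scalar of modulus one, so nothing downstream is affected. But to make your argument self-contained you must either derive $rW_{ii}=\sqrt{n}$ from the stated axioms (the example shows this is impossible) or state it explicitly as an extra hypothesis on $W$.
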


\begin{proof}
Let $W_{i/j}$ have eigenvalue $\la_{i/j}$. Since the $i$-th column of $D_jW$ is $\sqrt{n}W_{i/j}$, an eigenvector, it follows that $WD_jW = D_jW\La_j$, where $\La_j$ is diagonal with $(\La_j)_{ii} = \la_{i/j}$. Now using Lemma \ref{lem:typeiichar} with $A = E_{ii}$ (the matrix with $ii$ entry $1$ and zeros elsewhere) and $B = D_j^{-1}$, we get
\begin{align*}
tr(D_j) & = n\tr(E_{ii}W^{-1}D_j^{-1}W) \\
& = n\tr(E_{ii}\La_j(D_jW)^{-1}) \\
& = n(\La_j)_{ii}(D_jW)_{ii}^{-1},
\end{align*}
From which it follows that $\La_j = D_j$. \qed
\end{proof}

\begin{comment}
Note that
\[
(\La_j)_ii = \tr(D_j)(D_jW)_{ii}/n = \tr(D_j)W_{ii}/nW_{ij},
\]
while $(D_j)_ii = \sqrt{n}/W_{ij}$. So it suffices to show that $W_{ii}$ is constant.
\end{comment}

If $D$ is diagonal and all diagonal entries have absolute value $1$, then $D$ is unitary. Then $D^{-1}WD$ is unitary whenever $W$ is unitary, and moreover $D^{-1}WD$ is flat whenever $W$ is flat. Therefore, when $W$ is a unitary spin model and $D = D_j$, we have that $D_jWD_j^{-1} = W^{-1}D_jW$ is both flat and unitary.

\begin{corollary}
If $W$ is a unitary spin model, then $I$, $W$, and $D_jW$ are mutually unbiased.
\end{corollary}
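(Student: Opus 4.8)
The plan is to verify the three pairwise mutual‑unbiasedness conditions for the bases given by the (suitably normalized) columns of $I$, $W$, and $D_jW$, using only that a unitary spin model is flat and type II, together with the identity $D_jWD_j^{-1}=W^{-1}D_jW$ from the preceding lemma.

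First I would dispose of the two pairs involving $I$. Since $W$ is a unitary spin model it is flat, so $I^*W=W$ has all entries of equal absolute value, and the standard basis is unbiased with the columns of $W$. For the pair $I,D_jW$: because $W$ is flat, its Schur inverse $W^{(-)}$ is flat too, so the diagonal matrix $D_j$ — whose $i$-th diagonal entry is $\sqrt n\,(W^{(-)})_{i,j}$ — has all diagonal entries of the same modulus. Hence $D_j^*D_j$ is a positive scalar multiple of $I$ (so a scalar multiple of $D_jW$ is unitary), and $D_jW$ is flat, being obtained from the flat matrix $W$ by scaling each row by a scalar of constant modulus. Thus $I^*(D_jW)=D_jW$ is flat, and the standard basis is unbiased with the columns of $D_jW$.

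The remaining pair — the columns of $W$ against those of $D_jW$ — is where the spin‑model hypothesis enters. Here I would invoke the lemma: up to a positive scalar (as $W$ is unitary, $W^{-1}$ is a scalar multiple of $W^*$), the matrix $W^*(D_jW)$ equals $W^{-1}D_jW=D_jWD_j^{-1}$; and conjugating the flat matrix $W$ by the constant‑modulus diagonal $D_j$ leaves each entry's absolute value unchanged, so $D_jWD_j^{-1}$, hence $W^*(D_jW)$, is flat. Assembling the three checks — and noting that each of $I$, $W$, $D_jW$ is a positive scalar times a unitary matrix, so its columns genuinely form an orthonormal basis — completes the proof.

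I do not anticipate a real obstacle. The one point requiring care is bookkeeping of the scalar normalizations (the scaling of $W$ relative to $W/\sqrt n$, and the common modulus of the diagonal entries of $D_j$), so that the constant value $\abs{u^*v}^2$ between distinct bases comes out to $1/n$; the single substantive ingredient, that $D_jWD_j^{-1}$ is both flat and equal to $W^{-1}D_jW$, is precisely the preceding lemma together with the paragraph just before this corollary.
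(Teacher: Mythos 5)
Your proposal is correct and follows essentially the same route as the paper: the paper's own justification is the paragraph immediately preceding the corollary, which observes that conjugation by the constant-modulus diagonal $D_j$ preserves flatness and unitarity, so $D_jWD_j^{-1}=W^{-1}D_jW$ is flat and unitary, giving unbiasedness of $W$ with $D_jW$; the pairs involving $I$ follow from flatness of $W$ and $D_jW$ exactly as you say. Your version merely makes explicit the scalar-normalization bookkeeping and the two easy pairs that the paper leaves implicit.
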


Since there is a spin model of order $n$ for every $n$, spin models produce three mutually unbiased bases in every dimension. 

All of the known maximal sets of mutually unbiased bases are equivalent to sets of the form 
\[
\scr{B} = \{I,W,D_1W,\ldots,D_{n-1}W\},
\]
where $W$ is a flat type-II matrix and $D_i$ is diagonal with entries of absolute value $1$. In particular, $W$ is the character table of an abelian group, and $\{D_0 = I,D_1,\ldots,D_{n-1}\}$ is also a group of diagonal matrices. If the diagonals do form a group so that $D_i^*D_j = D_k$, then $\scr{B}$ is a set of mutually unbiased bases if and only if
\[
(D_iW)^*(D_j^*W) = W^*D_kW
\]
is flat for each $D_k$.

\subsection{Orthogonal decompositions of Lie algebras}\label{sec:lie}

Here we describe an important connection between mutually unbiased bases and subalgebras of $\slnc$ discovered by Boykin, Sitharam, Tiep, and Wocjan \cite{bstw}.
\nomenclature[a$s]{$\slnc$}{trace-zero $n \times n$ matrices}

A \defn{Lie algebra} is an algebra with a skew-symmetric bilinear bracket multiplication satisfying the \defn{Jacobi identity}:
\[
[X,[Y,Z]]+[Y,[Z,X]]+[Z,[X,Y]] = 0.
\]
Note that skew-symmetry implies that $[X,X] = 0$. Given an associative algebra $A$, the Lie product
\[
[X,Y] := XY-YX
\]
turns $A$ into a Lie algebra. The algebra we are interested in is $\slnc$, the set of $n \times n$ complex matrices with trace zero. This Lie algebra is \textsl{simple}\index{Lie algebra!simple}: the only proper ideal is the trivial ideal. As a vector space over $\cx$, the dimension of $\slnc$ is $n^2-1$.

A \defn{Cartan subalgebra} of a simple Lie algebra is a maximal abelian subalgebra. If $\cH$ is an abelian subalgebra of $\slnc$ in which every matrix is normal, then commutativity implies that $\cH$ is simultaneously diagonalizable. The traceless diagonal matrices form a vector space of dimension $n-1$, so $\dim(\cH) \leq n-1$ with equality if and only if $\cH$ is Cartan.

\begin{comment}
More generally, a Cartan subalgebra is a maximal self-normalizing subalgebra: if $[A,B] \in \cH$ for all $B \in \cH$, then $A$ is also in $\cH$.
\end{comment}

Since the bracket product is bilinear, the map $X \mapsto [A,X]$ is a linear operation, denoted $\ad A$. The \defn{Killing form} of a Lie algebra is a nondegenerate bilinear form defined by 
\[
K(X,Y) := \tr(\ad X \ad Y).
\]
In the case of $\slnc$, it can be shown that this reduces to
\[
K(X,Y) = 2n\tr(XY).
\]
Now suppose the Lie algebra $A$ can be decomposed (as a vector space) into a direct sum of Cartan subalgebras:
\[
A = \cH_0 \oplus \ldots \oplus \cH_h.
\]
An \textsl{orthogonal}\index{Lie algebra!orthogonal decomposition} decomposition refers to one in which every $\cH_i$ and $\cH_j$ are orthogonal with respect to the Killing form. In the case of $\slnc$, the decomposition has $n+1$ subalgebras. 

See Kostrikin and Tiep \cite{kt1} for a more detailed introduction to orthogonal decompositions or de Graaf \cite{deg1} for Lie algebras in general.

\begin{theorem}
There exists a set of $k$ mutually unbiased bases in $\cx^n$ if and only if there exists a set of $k$ pairwise orthogonal normal Cartan subalgebras of $\slnc$. In particular, there exists a maximal set of mutually unbiased bases if and only if there exists a normal orthogonal decomposition.
\end{theorem}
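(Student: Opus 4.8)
The plan is to build an explicit bijection between orthonormal bases of $\cx^n$ (up to reordering and rephasing of the vectors) and normal Cartan subalgebras of $\slnc$, and then to show that under this bijection ``mutually unbiased'' translates exactly into ``orthogonal with respect to the Killing form''. Given an orthonormal basis $B$, written as a unitary matrix whose columns are the basis vectors, I would associate the subspace $\cH_B := \{\, BDB^* : D \text{ diagonal},\ \tr(D)=0 \,\}$. This is an abelian subalgebra of $\slnc$ (the bracket vanishes identically on it, and $\tr(BDB^*)=\tr(D)=0$), every matrix in it is normal, and it has dimension $n-1$, so by the remark preceding the theorem it is a normal Cartan subalgebra. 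Conversely, if $\cH$ is any normal Cartan subalgebra, then its matrices commute and are normal, hence are simultaneously unitarily diagonalizable: there is a unitary $B$ with $B^*\cH B$ inside the algebra of traceless diagonal matrices, and since both have dimension $n-1$ we get $\cH=\cH_B$. Thus $B\mapsto\cH_B$ is the desired correspondence; distinct bases up to monomial equivalence give distinct Cartans and vice versa, so a set of $k$ mutually unbiased bases produces $k$ pairwise distinct normal Cartan subalgebras and conversely.

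The core of the argument is the orthogonality computation. Fix bases $B_i,B_j$ and set $U:=B_i^*B_j$, a unitary matrix. For traceless diagonal $D,E$ one computes, using $K(X,Y)=2n\tr(XY)$,
\[
K(B_iDB_i^*,\,B_jEB_j^*) \;=\; 2n\,\tr\!\big(D\,UEU^*\big) \;=\; 2n\sum_{a,b}|U_{ab}|^2\,D_{aa}E_{bb}.
\]
Writing $M_{ab}:=|U_{ab}|^2$, unitarity of $U$ makes $M$ doubly stochastic, and $\cH_{B_i}$ is Killing-orthogonal to $\cH_{B_j}$ precisely when $d^{T}Me=0$ for all vectors $d,e$ orthogonal to the all-ones vector $\one$. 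Since $M\one=\one=M^{T}\one$, I would split $M=\tfrac1n J+M'$ with $M'\one=0=M'^{T}\one$; then $d^{T}Me=d^{T}M'e$ on $\one^{\perp}$, and this vanishes identically iff $M'=0$, i.e. iff $|U_{ab}|^2=1/n$ for all $a,b$, i.e. iff $U$ is flat, i.e. iff $B_i$ and $B_j$ are mutually unbiased. This gives both directions of the first statement at once: $k$ mutually unbiased bases yield $k$ pairwise orthogonal normal Cartan subalgebras, and $k$ pairwise orthogonal normal Cartan subalgebras, after diagonalizing each one, yield $k$ mutually unbiased bases.

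For the ``in particular'', recall (Corollary~\ref{cor:mubbnd}) that a maximal set has $n+1$ bases, and that a normal orthogonal decomposition means $\slnc=\cH_0\oplus\cdots\oplus\cH_n$ with the $\cH_i$ pairwise Killing-orthogonal normal Cartan subalgebras. The forward implication follows from the first statement once I check that the $n+1$ Cartans coming from $n+1$ mutually unbiased bases actually form a direct sum: their dimensions total $(n+1)(n-1)=n^2-1=\dim\slnc$, and if $\sum_i x_i=0$ with $x_i\in\cH_{B_i}$, then for each $j$ and each $y\in\cH_{B_j}$ pairwise orthogonality gives $0=K\!\big(y,\sum_i x_i\big)=K(y,x_j)$, whence $x_j=0$ because $K$ restricted to a Cartan subalgebra is nondegenerate (here $K(BDB^*,BEB^*)=2n\tr(DE)$ is the standard nondegenerate form on traceless diagonal matrices). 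The reverse implication is immediate, since a normal orthogonal decomposition is in particular $n+1$ pairwise orthogonal normal Cartan subalgebras. The main obstacle I anticipate is the elementary but essential linear-algebra step that a doubly stochastic $M$ with $d^{T}Me=0$ on $\one^{\perp}$ must equal $\tfrac1n J$; once that is in hand, everything else is bookkeeping with the explicit form of $\cH_B$ and the identity $K(X,Y)=2n\tr(XY)$ already recorded in the text.
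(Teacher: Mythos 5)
Your proposal is correct and follows essentially the same route as the paper: you attach to each orthonormal basis $B$ the Cartan subalgebra $\{BDB^*: D \text{ traceless diagonal}\}$ and show that Killing-orthogonality of two such subalgebras is equivalent to flatness of the transition matrix $U = B_i^*B_j$. The one point of divergence is the final step: the paper deduces flatness by observing that orthogonality forces $n\tr(DW^{-1}D'W) = \tr(D)\tr(D')$ for all diagonals, so that $W = B_j^*B_i$ is type II by Lemma~\ref{lem:typeiichar} and hence flat because it is also unitary, whereas you carry out the same computation by hand, showing that the doubly stochastic matrix $M = (\abs{U_{ab}}^2)$ must equal $\frac{1}{n}J$ once the bilinear form it induces vanishes on $\one^\perp \times \one^\perp$. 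The two arguments unpack to the identical calculation, so nothing genuinely new is gained either way, though your version is self-contained and you additionally spell out the converse direction and the direct-sum count for the ``in particular'' clause, both of which the paper dismisses with ``the converse is similar.''
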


\begin{proof}
Let $\cH_0,\ldots,\cH_k$ be a set of normal Cartan subalgebras. Since $\cH_i$ is simultaneously diagonalizable, let $B_i$ be a complete set of orthonormal eigenvectors for $\cH_i$. We show that the set of $B_0,\ldots,B_k$ are mutually unbiased. 

Suppose $H_i$ is in $\cH_i$, so $H_iB_i = B_iD_i$ for some diagonal $D_i$. Then for different $i$ and $j$, since $K(H_i,H_j) = 0$ we have
\[
2n\tr(H_iH_j) = 2n\tr(B_iD_iB_i^*B_jD_jB_j^*) = \tr(D_i(B_j^*B_i)^{-1}D_j(B_j^*B_i)) = 0.
\]
Note also that $\tr(D_i) = \tr(H_i) = 0$. Now the set of diagonals $D_i$ for $\cH_i$, together with $I$, span the space of all diagonal matrices. Letting $W = B_j^*B_i$, we have for any diagonals $D$ and $D'$,
\[
\tr(DW^{-1}D'W) = \tr(D)\tr(D').
\]
By Lemma \ref{lem:typeiichar}, $W$ is type II. Since $W$ is also unitary, it is therefore flat, and hence $B_i$ and $B_j$ are mutually unbiased. The converse is similar. \qed
\end{proof}

\begin{comment}
Recall that normal is equivalent to unitarily diagonalizable. By normal Cartan subalgebras, we mean Cartan subalgebras in which every matrix is normal.
Let $B_i$ be unbiased, and $D_i$ a spanning set of trace-$0$ diagonals (plus $D_0 = I$). Then $W = B_j^*B_i$ is flat type-II, and working backwards, $H_{ij} = B_iD_jB_i^*$ gives orthogonal normal Cartan subalgebras.
\end{comment}

A Cartan subalgebra $\cH$ is \textsl{monomial}\index{Cartan subalgebra!monomial} if it has a basis of monomial matrices. Similarly, a set of mutually unbiased bases is \textsl{monomial}\index{mutually unbiased bases!monomial}  if it is equivalent to a set of bases which are the eigenvalues of monomial Cartan subalgebras. The following theorem is from Kostrikin and Tiep \cite{kt1}.

\begin{theorem}
In $sl_6(\cx)$ there are at most $3$ pairwise orthogonal monomial Cartan subalgebras. \qed
\end{theorem}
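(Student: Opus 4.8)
The plan is to argue through the correspondence, proved above, between mutually unbiased bases in $\cx^n$ and pairwise orthogonal normal Cartan subalgebras of $\slnc$: by the very definition of a monomial set of mutually unbiased bases, that correspondence restricts to one between pairwise orthogonal monomial Cartan subalgebras and monomial sets of mutually unbiased bases, so it suffices to show that $\cx^6$ carries at most three monomial mutually unbiased bases. First I would describe the monomial Cartan subalgebras explicitly. If $\cH$ is such a subalgebra, then $\cH\oplus\cx I$ is a maximal commutative subalgebra of $M_6(\cx)$ with a basis of monomial matrices, and any such algebra is, up to conjugation by a permutation matrix, a direct sum $\bigoplus_j\mathcal A_j$ in which $\mathcal A_j$ is the regular representation of a cyclic group $\zz{n_j}$ on a block of coordinates, with $\sum_j n_j=6$. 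The corresponding eigenbasis is, up to a coordinate permutation and an overall unitary diagonal factor, a ``generalized Fourier matrix'' built from the character tables of the $\zz{n_j}$. Since $6$ has only a short list of partitions, there are only finitely many conjugacy types of monomial Cartan subalgebra, and the ``full'' one, coming from $6=6$, has $W$, the character table of $\zz{6}$, as eigenbasis.

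The second step is to turn orthogonality into flatness. Exactly as in the proof of the Cartan-subalgebra/MUB correspondence, the Killing-form identity $K(X,Y)=2n\,\tr(XY)$ together with Lemma \ref{lem:typeiichar} shows that two monomial Cartan subalgebras with eigenbases $B$ and $B'$ are orthogonal if and only if $B^*B'$ is flat (equivalently type II). In the basic situation in which several of the bases are aligned to a common full-type $W$, one writes $B=D\,W$ with $D=\operatorname{diag}(\delta)$ unimodular, computes that the $(a,b)$ entry of $B^*B'=W^*\overline{D}D'W$ is $\tfrac16\widehat{\mu}(b-a)$ where $\mu=\overline{\delta}\delta'$ and $\widehat{\mu}$ is the Fourier transform on $\zz{6}$, and concludes that orthogonality means $|\widehat{\mu}|$ is constant on $\zz{6}$, i.e. $\mu$ is biunimodular. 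Since the standard basis is automatically unbiased with every $D\,W$, a fourth monomial basis of this aligned kind would require unimodular $\delta,\delta'$ on $\zz{6}$ with $\delta$, $\delta'$ and $\overline{\delta}\delta'$ all biunimodular, and I expect the general case to reduce, via $B^*B'$, to the same sort of Fourier-analytic question on $\zz{6}$ or on one of its subgroups.

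To rule this out I would use the factorisation $\zz{6}\cong\zz{2}\times\zz{3}$ and the matching tensor splitting $W=W_2\otimes W_3$ to present each candidate sequence as a $2\times 3$ array, normalise away global scalars and the translation action, and expand the biunimodularity conditions into a small system of Laurent-polynomial equations on the unit circle. The heart of the argument, and the step I expect to be the main obstacle, is the ensuing finite case analysis — organised by which coordinates of the competing sequences are forced to be proportional — showing that this system has no solution other than the degenerate ones in which two of the bases coincide or collapse back to $W$. This is precisely where the arithmetic of $6$, rather than that of a prime power, enters: the Gauss-sum ``chirp'' construction $x\mapsto\om^{x^2}$ that yields $d+1$ mutually biunimodular-related sequences when $d$ is an odd prime power has no analogue on $\zz{6}$ producing more than one independent such sequence, which is exactly why the monomial bound drops from $d+1$ to $3$.

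Finally I would dispose of the remaining conjugacy types: monomial Cartan subalgebras attached to a $6$-cycle different from the standard one, and those coming from a proper partition of $6$ (so of type $3{+}3$, $4{+}2$, $3{+}2{+}1$, $2{+}2{+}2$, and the like). A Cartan of the latter kind has nonzero diagonal part, and more generally each member of these families is supported on a proper block-permutation pattern; a support-and-trace count, followed when needed by a further reduction to a Fourier computation on $\zz{6}$ or a subgroup, then shows that such a subalgebra can be orthogonal to at most one full-type member and cannot be orthogonal to two members of a family whose other members have already been forced to be of full type. Combining these cases, no family of pairwise orthogonal monomial Cartan subalgebras of $sl_6(\cx)$ can have more than three members, which is the theorem.
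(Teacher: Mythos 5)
The paper does not prove this theorem at all: it is quoted verbatim from Kostrikin and Tiep \cite{kt1}, which is why the statement carries an immediate \qed. So there is no internal argument to compare yours against; the relevant question is whether your proposal would stand on its own, and as written it does not. The decisive content of the theorem is exactly the step you defer: the ``ensuing finite case analysis'' showing that no three unimodular sequences $\delta,\delta'$ on $\zz{6}$ can have $\delta$, $\delta'$ and $\conj{\delta}\delta'$ all biunimodular (beyond the degenerate configurations), together with the elimination of the non-cyclic conjugacy types coming from the partitions $3{+}3$, $4{+}2$, $2{+}2{+}2$, etc. You explicitly flag this as ``the main obstacle'' and do not carry it out; but this computation \emph{is} the theorem --- everything before it is routine translation. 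In Kostrikin--Tiep this case analysis occupies a substantial amount of work, and the fact that the corresponding question without the monomial hypothesis (the ``Winnie-the-Pooh problem'' of whether $sl_6(\cx)$ admits an orthogonal decomposition) remains open is a warning that the arithmetic of $6$ does not yield to a one-line Fourier argument.

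There is also a gap in your opening reduction. The correspondence established in the paper is between mutually unbiased bases and pairwise orthogonal \emph{normal} Cartan subalgebras, and the paper deliberately derives ``at most $3$ monomial MUBs in dimension $6$'' as a \emph{corollary} of the Cartan-subalgebra bound, not the other way around. A monomial Cartan subalgebra need not be normal: a monomial matrix $DP$ is normal only when $\abs{D_{ii}}$ is constant on the cycles of $P$, which is not forced by the definition. So ``it suffices to bound the number of monomial mutually unbiased bases'' does not follow from the stated correspondence; you would first have to show that every monomial Cartan subalgebra occurring in a pairwise orthogonal family can be replaced by a normal (equivalently, unitarily diagonalizable) one without changing the orthogonality pattern, or else run the Killing-form orthogonality computation directly on the monomial generators. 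Your structural claim that a maximal commutative subalgebra of $M_6(\cx)$ with a monomial basis is, up to permutation conjugacy, a sum of regular representations of cyclic groups likewise needs an argument rather than an assertion.
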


\begin{corollary}
In dimension $6$, there are at most $3$ monomial mutually unbiased bases. \qed
\end{corollary}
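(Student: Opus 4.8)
The plan is to pass to the Lie-algebra picture, where Kostrikin and Tiep's theorem has already done the combinatorial work, and then to pull their bound back to mutually unbiased bases through the correspondence between normal Cartan subalgebras of $\slnc$ and mutually unbiased bases.

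Suppose, for a contradiction, that there were four monomial mutually unbiased bases in $\cx^6$. Unfolding the definition of monomiality, after applying a unitary we may assume the four bases are $B_0,B_1,B_2,B_3$, where each $B_i$ is a complete orthonormal set of eigenvectors of a monomial Cartan subalgebra $\cH_i$ of $sl_6(\cx)$; applying a unitary does not disturb mutual unbiasedness, so $\{B_0,\ldots,B_3\}$ is still mutually unbiased. (Here $\cH_i$ is in fact forced to equal the full algebra $\{B_iDB_i^*: D\text{ diagonal, traceless}\}$, since that algebra is already maximal abelian.) The next step is to verify that $\cH_0,\ldots,\cH_3$ are pairwise orthogonal for the Killing form, which is precisely the computation inside the proof of the correspondence theorem: for $i\neq j$, $H_i\in\cH_i$, $H_j\in\cH_j$, write $H_iB_i=B_iD_i$ and $H_jB_j=B_jD_j$ with $D_i,D_j$ traceless diagonal, and set $W=B_j^*B_i$. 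Since $B_i$ and $B_j$ are mutually unbiased, $W$ is flat and hence type II, so Lemma~\ref{lem:typeiichar} yields
\[
K(H_i,H_j)=2\cdot 6\cdot\tr(H_iH_j)=12\,\tr(D_iW^{-1}D_jW)=2\,\tr(D_i)\tr(D_j)=0 .
\]
Thus $\cH_0,\ldots,\cH_3$ would be four pairwise orthogonal monomial Cartan subalgebras of $sl_6(\cx)$, contradicting the preceding theorem of Kostrikin and Tiep. Hence there can be at most $3$ monomial mutually unbiased bases in $\cx^6$.

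Essentially all of the difficulty is buried inside the quoted Kostrikin--Tiep bound, so this corollary is little more than bookkeeping. The one point that needs care is matching vocabularies: one must be sure that the Cartan subalgebras attached to a monomial set of mutually unbiased bases are simultaneously \emph{normal} (so that they genuinely possess orthonormal eigenbases and the type-II argument is available) and \emph{monomial} (so that the Kostrikin--Tiep hypothesis is satisfied). Both properties are built into the definition of a monomial set of mutually unbiased bases recalled above, so no extra argument is required and the statement follows.
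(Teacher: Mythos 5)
Your proposal is correct and follows the same route the paper intends: the corollary is the immediate combination of the Kostrikin--Tiep bound on pairwise orthogonal monomial Cartan subalgebras of $sl_6(\cx)$ with the correspondence between mutually unbiased bases and orthogonal normal Cartan subalgebras, and your Killing-form computation via Lemma~\ref{lem:typeiichar} is exactly the relevant direction of that correspondence. The paper simply states the corollary without spelling this out; your write-up fills in the same bookkeeping correctly.
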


As an example, the subalgebras spanned by the matrices in Lemma \ref{lem:compauli} are monomial. This implies that all known maximal sets of mutually unbiased bases are monomial, since all known maximal sets are equivalent to a set constructed from symplectic spreads.

\section{Real MUBs}\label{sec:realmub}

In this section we review what is known about mutually unbiased bases in $\re^d$. While these bases are not as useful as the complex ones for quantum measurements, they do have connections to coding theory. Moreover, the questions of existence are probably much easier, because the search space for unbiased bases is finite for any given dimension.

In the complex case, at most $d+1$ mutually unbiased bases exist in $\cx^d$; here, at most $d/2+1$ bases exist in $\re^d$. However, more can be said depending the particular value of $d$.

If $B_0$ and $B_1$ are real and mutually unbiased, then by applying an orthogonal transformation we may assume $B_0 = I$ and $B_1$ is flat. Up to some scalar multiple, a real, flat, unitary matrix is a Hadamard matrix, which can exist only in dimensions which are multiples of $4$. 

\begin{lemma}
At most $2$ real mutually unbiased bases exist in $\re^{4s}$ if $s$ is not square. 
\end{lemma}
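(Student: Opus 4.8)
The plan is to exploit the fact that a third real basis unbiased with $I$ and a Hadamard matrix $H$ forces a strong arithmetic condition on $s$. First I would set up the situation in the standard way: assume $B_0 = I$, $B_1 = \frac{1}{\sqrt{4s}}H$ for a real Hadamard matrix $H$ of order $4s$, and $B_2 = \frac{1}{\sqrt{4s}}K$ for a second real Hadamard matrix. Mutual unbiasedness of $B_1$ and $B_2$ says that $\frac{1}{\sqrt{4s}}H^T \cdot \frac{1}{\sqrt{4s}}K = \frac{1}{4s}H^TK$ is flat, i.e. every entry of the integer matrix $M := H^TK$ has absolute value exactly $\sqrt{4s} = 2\sqrt{s}$. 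But the entries of $M$ are integers (a sum of $4s$ terms each $\pm 1$), so $2\sqrt{s}$ must be an integer, which forces $s$ to be a perfect square. Contrapositively, if $s$ is not a square then no such $B_2$ exists, so at most $2$ real mutually unbiased bases exist in $\re^{4s}$.

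The one gap to fill is the reduction to Hadamard matrices: I would note, as the paragraph preceding the lemma in the text already does, that a real flat matrix some real multiple of which is orthogonal is (up to scaling) a Hadamard matrix, so $B_1$ and $B_2$ genuinely have the form $\frac{1}{\sqrt{4s}}H$ and $\frac{1}{\sqrt{4s}}K$ with $H,K$ real $\{\pm 1\}$ matrices satisfying $H^TH = K^TK = 4s\,I$. Then $M = H^TK$ satisfies $MM^T = H^T K K^T H = 4s\, H^T (K K^T) H / (4s)$... more directly, $M M^T = H^T (KK^T) H$; since $K$ is Hadamard $KK^T = 4s I$, so $MM^T = 4s\, H^TH = (4s)^2 I$, consistent with every row of $M$ having squared norm $(4s)^2/ \ldots$ — but the cleaner route is simply: $(B_1^TB_2)$ flat with $B_1^TB_2$ orthogonal and of order $4s$ means each entry has absolute value $1/\sqrt{4s}$, hence each entry of $M = \sqrt{4s}\cdot\sqrt{4s}\,(B_1^TB_2) = 4s\,(B_1^TB_2)$ has absolute value $\sqrt{4s}$. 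I would keep this computation to one or two lines.

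The main obstacle, such as it is, is purely bookkeeping: making sure the scalar normalizations are tracked correctly so that the key conclusion — an integer equal to $2\sqrt{s}$ — comes out cleanly. There is no deep difficulty here; the entire content is the integrality of the entries of $H^TK$ together with the flatness constraint. I expect the proof to be three or four lines once the Hadamard reduction is invoked. A remark worth appending is that this also shows real mutually unbiased bases in $\re^{4s}$ with $s$ square are the interesting case, connecting to the coding-theoretic constructions mentioned in the section, but that is not needed for the lemma itself.

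\begin{proof}
As noted above, a real flat matrix that is a scalar multiple of an orthogonal matrix is, up to scaling, a Hadamard matrix; such matrices exist only when the order is a multiple of $4$. So if $B_0, B_1, B_2$ are real mutually unbiased bases in $\re^{4s}$, we may assume $B_0 = I$, and $B_1 = \tfrac{1}{\sqrt{4s}}H$, $B_2 = \tfrac{1}{\sqrt{4s}}K$, where $H$ and $K$ are Hadamard matrices of order $4s$ (so $H, K$ have $\pm 1$ entries and $H^TH = K^TK = 4s\,I$). Since $B_1$ and $B_2$ are mutually unbiased, the matrix $B_1^TB_2$ is flat and orthogonal of order $4s$, so each of its entries has absolute value $1/\sqrt{4s}$. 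Hence every entry of
\[
M := 4s\,(B_1^TB_2) = H^TK
\]
has absolute value $\sqrt{4s} = 2\sqrt{s}$. But each entry of $M = H^TK$ is a sum of $4s$ terms, each equal to $\pm 1$, and is therefore an integer. Thus $2\sqrt{s}$ is an integer, which forces $s$ to be a perfect square. Consequently, when $s$ is not square, no basis $B_2$ unbiased with both $B_0$ and $B_1$ can exist, so at most $2$ real mutually unbiased bases exist in $\re^{4s}$. \qed
\end{proof}
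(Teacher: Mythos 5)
Your proof is correct and follows essentially the same route as the paper's: reduce $B_1,B_2$ to scaled Hadamard matrices, observe that the integer matrix $H^TK$ must have all entries of absolute value $\sqrt{4s}$, and conclude that $s$ is a square. The bookkeeping of normalizations is handled correctly.
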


\begin{proof}
Let $B_i$ be mutually unbiased with $I$. The angle between lines from different bases is $\al = 1/4s$, and
\[
B_i = \frac{1}{\sqrt{4s}}H_i,
\]
where $H_i$ is a Hadamard matrix. Now suppose $B_1$ and $B_2$ are mutually unbiased with each other as well as $I$. Then 
\[
B_1^TB_2 = \frac{1}{4s}H_1^TH_2
\]
is flat and also has entries of absolute value $1/\sqrt{4s}$. Since the entries of $H_1^TH_2$ are integers, this implies $\sqrt{4s}$ is an integer and $s$ is square. \qed
\end{proof}

Boykin, Sitharam, Tarifi, and Wocjan \cite{bstw1} used another counting argument for Hadamard matrices to find a second bound. 

\begin{lemma}
At most $3$ real mutually unbiased bases exist in $\re^{4s}$ if $s$ is odd.
\end{lemma}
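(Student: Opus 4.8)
The plan is to reduce, exactly as in the previous lemma, to a statement about Hadamard matrices and then run a short parity count. Suppose for contradiction that $\re^{4s}$ contains four real mutually unbiased bases; after an orthogonal change of coordinates we may take them to include $I,B_1,B_2,B_3$. Since each $B_i$ $(i=1,2,3)$ is unbiased with $I$, the matrix $H_i:=\sqrt{4s}\,B_i$ is a real Hadamard matrix of order $n:=4s$, and since $B_i$ and $B_j$ are mutually unbiased, $B_i^TB_j=\tfrac1{4s}H_i^TH_j$ is flat with all entries of absolute value $1/\sqrt{4s}$. As in the proof of the preceding lemma, the entries of $H_i^TH_j$ are then integers of absolute value $\sqrt n$, so $\sqrt n$ is an integer, $s$ is a perfect square, say $s=t^2$; since $s$ is odd, $t$ is odd, and every entry of $H_i^TH_j$ equals $\pm 2t$.

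Next I would compute parities of columns. Fix $i\neq j$ in $\{1,2,3\}$ and columns $a$ of $H_i$, $b$ of $H_j$, viewed as vectors in $\{\pm1\}^n$; their inner product is $n-2\delta$, where $\delta$ is the number of coordinates in which $a$ and $b$ disagree. From $n-2\delta=\pm 2t$ we get $\delta=2t^2\mp t$, which is odd because $t$ is odd. But $\delta$ is the size of the symmetric difference of the two sets of coordinates carrying $-1$, so $\delta$ is congruent mod $2$ to the sum of the number of $-1$'s in $a$ and the number of $-1$'s in $b$. Hence for every such pair of columns these two counts have opposite parity; in particular all columns of a fixed $H_i$ contain a number of $-1$'s of a single parity $\pi_i\in\{0,1\}$, and the pairwise condition becomes $\pi_1+\pi_2\equiv\pi_1+\pi_3\equiv\pi_2+\pi_3\equiv 1\pmod 2$. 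Summing the three congruences gives $0\equiv 2(\pi_1+\pi_2+\pi_3)\equiv 1\pmod 2$, a contradiction, so at most three real mutually unbiased bases exist in $\re^{4s}$ when $s$ is odd.

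The reduction in the first paragraph is routine and is essentially already carried out in the proof of the previous lemma, so the real content is locating the right invariant, namely the common parity $\pi_i$ of the number of $-1$'s in the columns of $H_i$; once that is in hand the contradiction is immediate. I expect the only subtlety in a careful write-up to be bookkeeping about which normalizations are permitted (one may rescale a chosen basis to $I$, and permute and resign the vectors of each basis, but one may not independently resign the rows of a given $H_i$), though none of these operations is actually needed for the count above. The hypothesis that $s$ is odd enters in exactly one place — forcing $\delta$ to be odd — which is why the statement genuinely requires it.
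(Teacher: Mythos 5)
The paper does not actually prove this lemma: it is stated without proof, with only a citation to Boykin, Sitharam, Tarifi, and Wocjan, so there is no in-paper argument to compare yours against. Your proof is correct and self-contained. The reduction in your first paragraph is exactly the one the paper carries out for the preceding lemma (normalize one basis to $I$, so the others become $\tfrac{1}{\sqrt{4s}}H_i$ with $H_i$ Hadamard and $H_i^TH_j$ an integer matrix with entries of absolute value $\sqrt{4s}=2t$, forcing $s=t^2$ with $t$ odd). The parity count is sound: $\delta=2t^2\mp t$ is odd, $\delta\equiv |A|+|B|\pmod 2$ where $A,B$ are the supports of the $-1$'s, so columns from different $H_i$ have opposite parity, all columns within one $H_i$ share a parity $\pi_i$, and the three conditions $\pi_i+\pi_j\equiv 1\pmod 2$ demand three pairwise distinct residues mod $2$, which is impossible. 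Your worry about normalizations is also harmless for the reason you suspect, and in fact the invariant is robust: flipping the sign of a column replaces the count of $-1$'s by $4s$ minus that count, which has the same parity since $4s$ is even, so $\pi_i$ does not depend on the projective sign choices at all. This matches the spirit of what the paper describes as ``another counting argument for Hadamard matrices,'' and it would be a worthwhile addition since the paper leaves the proof to the reference.
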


A result analogous to Corollary \ref{cor:distdesscheme2} applies to real bases: if $G$ is the Gram matrix of a set of real mutually unbiased bases, then $\{I,G\}$ is coherently-weighted configuration. 

\subsection{Constructions}

Sets of complex lines with small angles may be used to construct sets of real lines.

\begin{lemma}\label{lem:realcomplex}
Let $X$ be a set of vectors in $\cx^d$ such that 
\[
\abs{u^*v}^2 \leq \ep
\]
for all $u$ and $v$ in $X$. Then there is a set of $2|X|$ vectors in $\re^{2d}$ satisfying the same bound.
\end{lemma}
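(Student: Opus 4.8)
Lemma \ref{lem:realcomplex} asks for a way to turn a set of complex vectors with bounded pairwise angle into a set of twice as many real vectors in twice the dimension satisfying the same bound. The plan is to use the standard realification map $\cx^d \to \re^{2d}$ that sends $z = x + iy$ (with $x, y \in \re^d$) to the pair $(x, y) \in \re^{2d}$, together with a ``twist'' that accounts for the phase ambiguity of complex lines.

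First I would set up the realification. For a unit vector $z \in \cx^d$ write $z = x+iy$ and define $\iota(z) := (x,y)^T \in \re^{2d}$; this is a real unit vector since $|z|^2 = |x|^2 + |y|^2$. The key computation is that for $u, v \in \cx^d$, the real inner product $\iota(u)^T\iota(v) = \Re(u^*v)$. So $\iota$ preserves the \emph{real part} of the inner product but loses the imaginary part, and in particular $(\iota(u)^T\iota(v))^2 = (\Re(u^*v))^2 \leq |u^*v|^2 \leq \ep$, which is already the desired bound for these $|X|$ vectors. The problem is that $\iota(X)$ only has $|X|$ vectors, not $2|X|$, and two distinct complex lines could realify to vectors that are close in the real sense only through their real parts — we need a second copy. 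The natural second copy is $\iota(iz)$: multiplying by $i$ rotates $z$, and $\iota(iu)^T\iota(iv) = \Re((iu)^*(iv)) = \Re(u^*v)$ as well, while the cross term is $\iota(u)^T\iota(iv) = \Re(u^*(iv)) = \Re(i\, u^*v) = -\Im(u^*v)$.

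So the set to take is $X' := \{\iota(z) : z \in X\} \cup \{\iota(iz) : z \in X\}$, which has $2|X|$ vectors in $\re^{2d}$ (one checks $\iota(z)$ and $\iota(iz)$ are distinct — indeed orthogonal, since $\iota(u)^T\iota(iu) = -\Im(u^*u) = 0$). Now I would verify the angle bound case by case on pairs from $X'$: for $\iota(u), \iota(v)$ the squared inner product is $(\Re u^*v)^2 \leq |u^*v|^2 \leq \ep$; for $\iota(iu), \iota(iv)$ it is again $(\Re u^*v)^2 \leq \ep$; and for the mixed pair $\iota(u), \iota(iv)$ it is $(\Im u^*v)^2 \leq |u^*v|^2 \leq \ep$. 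In all three cases the bound $\ep$ holds, so $X'$ satisfies the same bound. This completes the argument.

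The main obstacle — really a point to handle carefully rather than a deep difficulty — is making sure the $2|X|$ vectors are genuinely distinct as lines and that the bound $|u^*v|^2 \le \ep$ is being applied to the right pairs (it is assumed to hold for all $u,v \in X$, which includes the case $u = v$ giving $1 \le \ep$, so strictly one wants $\ep < 1$ for the statement to be non-vacuous, but the inequality is formally fine regardless). One should also note that the construction respects the projective viewpoint: replacing $z$ by $e^{i\theta}z$ changes $\iota(z)$, so a ``set of lines'' in $\cx^d$ of size $|X|$ becomes, after this doubling, a set of $2|X|$ \emph{vectors} in $\re^{2d}$ whose pairwise angles (in the sense $|u^Tv|^2$) are all at most $\ep$ — which is exactly what is claimed. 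If instead one wanted real \emph{lines}, the two images $\iota(z), \iota(iz)$ of a single complex line would be distinct real lines, and the count $2|X|$ is correct.
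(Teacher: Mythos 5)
Your proposal is correct and is essentially the paper's own argument: the paper's two real vectors $(a_1,b_1,\ldots,a_d,b_d)$ and $(b_1,-a_1,\ldots,b_d,-a_d)$ are exactly your $\iota(v)$ and $\iota(-iv)$ up to a coordinate permutation, and both proofs rest on the identity $\abs{u^*v}^2 = (\Re\, u^*v)^2 + (\Im\, u^*v)^2$, which bounds each of the two real inner products by $\ep$. The case analysis and the orthogonality of the two images of a single vector match the paper as well.
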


\begin{proof}
Let $v = (a_1+ib_1,\ldots,a_d+ib_d)$ be a vector in $X$, with $a_j$ and $b_j$ real. Then we construct two vectors in $\re^{2d}$:
\begin{align*}
v_1 & = (a_1,b_1,\ldots,a_d,b_d), \\
v_2 & = (b_1,-a_1,\ldots,b_d,-a_d).
\end{align*}
Note that $v_1$ and $v_2$ are orthogonal. Similarly, given $u = (c_1+id_1,\ldots,c_n+id_n)$ construct $u_1$ and $u_2$. Then
\begin{align*}
\abs{u^*v}^2 & = \Big|\sum_j(a_j - ib_j)(c_j + id_j)\Big|^2 \\
& = \Big|\sum_j a_jc_j + b_jd_j + i(a_jd_j - b_jc_j)\Big|^2 \\
& = \Big(\sum_j a_jc_j + b_jd_j \Big)^2 + \Big(\sum_j a_jd_j - b_jc_j\Big)^2 \\
& = \abs{v_1^Tu_1}^2 + \abs{v_1^Tu_2}^2.
\end{align*}
Since $\abs{u^*v}^2$ is at most $\ep$, so is each of $\abs{v_1^Tu_1}^2$ and $\abs{v_1^Tu_2}^2$. Similarly, 
\[
\abs{u^*v}^2 = \abs{v_2^Tu_1}^2 + \abs{v_2^Tu_2}^2.
\]
Therefore all of the angles between $v_1$, $v_2$, $u_1$, and $u_2$ are at most $\ep$. \qed
\end{proof}

Suppose $d$ is a power of $2$, and $B_0 = I,\ldots,B_d$ is one of the known maximal sets of mutually unbiased bases in $\cx^d$. Applying Lemma \ref{lem:realcomplex} produces a maximal set of $d+1$ mutually unbiased bases in $\re^{2d}$. These sets were originally constructed by Cameron and Seidel \cite{cs1}.

\begin{comment} Some the vectors must be multiplied by $e^{i\pi/4}$.
\end{comment}

Several of the results for complex lines in Chapter \ref{chap:constructions} can be specialized to real lines for fields of characteristic $2$. For example, Theorem \ref{thm:caydist} implies that if the graph $\Cay(\zz{2}^k,D)$ has exactly $s$ nontrivial eigenvalues which are distinct in absolute value, then there is an $s$-distance set of size $|G|$ in $\re^{|D|}$. Another example is Corollary \ref{cor:nearcode}, in which codes of length $n$ over $\zz{2}$ are mapped to $\re^n$ via $x \mapsto (-1)^x$. In fact, as noted by Cameron and Seidel \cite{cs1} and Calderbank et al. \cite{ccks}, applying Corollary \ref{cor:nearcode} to binary Kerdock codes produces maximal sets of mutually unbiased bases. (Kerdock codes are closely related to orthogonal spreads, which have the same role for real bases as symplectic spreads do for complex bases.)

The binary Kerdock code $K(m)$ is a code of length $2^m$ with the following weight distribution:
\[
\begin{array}{c|c}
Weight & Multiplicity \\ 
\hline
0 & 1 \\
2^{m/2}\pm 2^{m/2-1} & 2^m(2^{m-1}-1) \\
2^{m/2} & 2^{m+1}-2 \\
2^m & 1
\end{array}
\]
After discarding one of $\{x,\one+x\}$, without loss of generality the remaining code has these weights:
\[
\begin{array}{c|c}
Weight & Multiplicity \\ 
\hline
0 & 1 \\
2^{m/2}- 2^{m/2-1} & 2^m(2^{m-1}-1) \\
2^{m/2} & 2^m-1 \\
\end{array}
\]
Mapping $\{0,1\}$ to $\{1,-1\}$, the words of weight $2^{m/2}$ are orthogonal to $\zero$, and the words of weight $2^{m/2}-2^{m/2-1}$ all have angle $2^{m/2}$ with $\zero$. Since the Kerdock code is distance transitive, the same angles occur for any codewords, and these lines form a set of $2^{m-1}$ real mutually unbiased bases. Together with the standard basis, this is a maximal set.

\begin{comment}
Calderbank et al. also described these results using orthogonal spreads, in analogy with symplectic spreads. This is the standard construction for Kerdock codes.
\end{comment}

When the dimension is not a power of $2$, Boykin et al. \cite{bstw1} gave a construction using Latin squares. Given a $\sqrt{d} \times \sqrt{d}$ Hadamard matrix and $k$ mutually orthogonal Latin squares of order $\sqrt{d}$, there exist $k+2$ mutually unbiased bases in $\re^d$. This is the best known lower bound for $d \neq 2^m$.

\chapter{Equiangular Lines}\label{chap:eals}

Equiangular lines are perhaps the most interesting instance of complex lines with few angles. They have an even wider range of applications than mutually unbiased bases and have significant connections to combinatorics (for example, Corollary \ref{cor:maxequides} showed that maximal sets of equiangular lines are equivalent to minimal complex $2$-designs). Most intriguingly, there is significant evidence that maximal sets exist in every dimension, but only a small number of dimensions actually have proofs. In this chapter, we summarize the known maximal sets and try to extend the ideas to higher dimensions.

\subsection*{Applications}

Like mutually unbiased bases, one of the primary motivations for studying complex equiangular lines comes from quantum measurements. A measurement $\{M_1,\ldots,M_n\}$ is \textsl{informationally complete}\index{informationally complete} if it is possible to recover any density matrix $\rho$ from the measurement statistics $p_i = \tr(M_i\rho M_i^*)$. Since $\rho$ is a $d \times d$ Hermitian matrix with trace $1$, it has $d^2-1$ degrees of freedom. Therefore an informationally complete measurement must have at least $d^2$ matrices (since the probabilities $p_i$ sum to $1$, a measurement's degrees of freedom is one fewer than the number of matrices).

If $M_i$ has rank one, then $E_i := M_i^*M_i$ is proportional to a projection matrix for a pure quantum state, say $v_i$. A measurement is \textsl{symmetric}\index{measurement!symmetric} if $\tr(E_iE_j)$ is a constant for all $i \neq j$; such measurements make it particularly easy to reconstruct $\rho$. With this in mind, a \textsl{symmetric informationally complete POVM} or \defn{SIC-POVM} is a symmetric POVM consisting of $d^2$ rank-one matrices with constant trace. Since the matrices sum to $I$, each $E_i$ has trace $1/d$. Since 
\[
\tr(E_iE_j) = \frac{1}{d^2}\abs{v_i^*v_j}^2
\]
is a constant, a SIC-POVM is equivalent to a set of $d^2$ equiangular lines in $\cx^d$. 

\begin{comment}
To construct $\rho$: the $E_i$'s span the Hermitian matrices, so let $\rho = \sum_i x_i E_i$. Then 
\[
p_i = \tr(\rho E_i) = \sum_j\tr(x_iE_jE_i) = \frac{x_i}{d} + \sum_{j \neq  i}\frac{x_i}{d^2(d+1)}.
\]
This particular system of linear equations $p = (aJ+bI)x$ is straightforward to solve: it has the form $x = (a'J+b'I)p$. The maximal error in $x_i$ is a function of the max error in $p_i$. For any other set of $d^2$ informationally complete lines, the maximal error would be at least big owing the uniformity of $aJ+bI$.
\end{comment}

One example of a quantum application of equiangular lines is in fingerprinting. In classic fingerprinting, Alice first sends a message $x$ to Bob over an unsecured public channel. Then Alice sends Bob a single bit $a$ from $x$ over an authenticated public channel. (Alice chooses a position in $x$ at random, and transmits the bit $a$ along with its position so that Bob may compare $a$ to the appropriate bit in $x$.) If $a$ matches $x$, then Bob takes the message to be authentic; this will always be the case if Eve did not tamper with $x$. However, Eve might replace $x$ with $y$, which also matches $a$. This is called \defn{one-sided error}. The authentication process is repeated with different bits $a$ until Alice and Bob are satisfied.

To minimize error, Alice and Bob might choose an initial pool of valid messages $C$ such that any pair from $C$ has a small number of bits in common; this ensures that the probability of authenticating $y$ instead of $x$ is small. If $p(x,y)$ is the number of bits $x$ and $y$ have in common, then the goal is minimize the \defn{worst-case error} probability:
\[
P_{wce} = \max_{x \neq y \in C} p(x,y).
\]
This could be accomplished, for example, with a binary error-correcting code of large distance.

In quantum fingerprinting, assume the authentication ``bit" is some pure quantum state $\rho(x) = v_xv_x^*$. Authentication consists of measuring $\rho(x)$ using the POVM $\{E_1 = \rho(x),E_2 = I-\rho(x)\}$. The message is authenticated if the outcome is $1$ (that is, $\rho(x)$ is measured with respect to $E_1$). Again there is a one-sided error, where with some probability Bob could take a substitute message $\rho(y)$ as valid. If the valid message pool is a finite set of pure states $C$, then the worst-case error probability is
\[
P_{wce} = \max_{x \neq y \in C} \abs{v_x^*v_y}^2.
\] 
This error is minimized when $C$ is a set of equiangular lines. 

\begin{comment}
The fingerprint is what is replaced by a quantum state, sent over a secure quantum channel.
\end{comment}

For more details on quantum fingerprinting, see for example Scott, Walgate, and Sanders \cite{sws}. Equiangular lines have several other applications to quantum information: like mutually unbiased bases, they have been used in quantum cryptographic protocols (see Fuchs and Sasaki \cite{fs1}) and in quantum tomography (see Caves, Fuchs, and Schack \cite{cfs}). Minimizing the error $P_{wce}$ also has applications in classical communications. In that context, maximal sets of equiangular lines are sometimes called \defn{Grassmannian frames} (see Strohmer and Heath \cite{sh1}) or \textsl{$2$-uniform $(n,d)$-frames} (see Bodmann and Paulsen \cite{bp1}).

\begin{comment}
Fuchs and Sasaki replaced MUBs with SIC-POVMs in the key distribution. There is a trade-off between rate of key accumulation and security from Eve. Equiangular lines are optimal under a certain set of assumptions.
\end{comment}

\subsection*{Background}

The problem of equiangular lines in $\cx^d$ was studied as early as 1975 by Delsarte, Goethals, and Seidel \cite{dgs}, who, in addition to proving the upper bound of $d^2$ lines, found maximal sets in dimensions $2$ and $3$. Since then, others such as Hoggar \cite{hog1} and K\"onig \cite{kon1} have examined lines in $\cx^d$ with various applications in mind. Equiangular lines were introduced in the quantum setting by Zauner \cite{zau1} in 1999. The first major progress in finding maximal sets in $\cx^d$ was made by Renes, Blume-Kohout, Scott and Caves \cite{ren1} in 2003, when they found numerical solutions for $d \leq 45$. This strongly suggests that $d^2$ lines exist for every $d$, but a proof seems elusive. Analytic solutions have now been found for $d \leq 8$ and $d = 19$.

Corollary \ref{cor:maxequides} (the \textsl{relative bound}) stated that if $X$ is a set of equiangular lines in $\cx^d$ with angle $\al$, then
\[
|X| \leq \frac{d(1-\al)}{1-d\al},
\]
with equality if and only if $X$ a $1$-design. Solving for $\al$, we get one case of the \defn{Welch Lower Bound} (see \cite{wel1}).

\begin{corollary}
If $X$ is a set of equiangular lines in $\cx^d$, then 
\[
\al \geq \frac{|X|-d}{d(|X|-1)},
\]
with equality if and only if $X$ is a $1$-design. \qed
\end{corollary}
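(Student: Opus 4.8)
The plan is to obtain this by solving the relative bound for equiangular lines, \cref{ealrelbnd}, for $\al$, being careful about the regime in which that bound applies. The degenerate cases $d=1$ or $|X|\le 1$ carry no content, so assume $d\ge 2$ and $|X|\ge 2$, whence $|X|-1>0$. If $\al\ge 1/d$, then $\frac{|X|-d}{d(|X|-1)}<\frac{|X|-1}{d(|X|-1)}=\frac1d\le\al$, the first inequality being strict because $d\ge 2$; so the asserted inequality holds, strictly. In the remaining regime $\al<1/d$ we have $1-d\al>0$, and \cref{ealrelbnd} gives $|X|\le \frac{d(1-\al)}{1-d\al}$. Clearing the positive denominator yields $|X|(1-d\al)\le d(1-\al)$, which rearranges to $|X|-d\le d\al(|X|-1)$, and dividing by $d(|X|-1)>0$ gives exactly $\al\ge \frac{|X|-d}{d(|X|-1)}$.

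For the equality statement, observe that every step in the last chain of rearrangements is reversible, so --- within the regime $\al<1/d$ --- equality in the Welch bound is equivalent to equality in \cref{ealrelbnd}, that is, to $X$ being a $1$-design. It then remains only to rule out equality (and $1$-designs) in the regime $\al\ge 1/d$, and the cleanest route is a direct computation: if $X$ is a $1$-design then $\frac1{|X|}\sum_{v\in X}vv^{*}=\frac1d I$, and taking the trace of the square of both sides gives $|X|+|X|(|X|-1)\al=|X|^{2}/d$, hence $\al=\frac{|X|-d}{d(|X|-1)}$. Thus a $1$-design of equiangular lines attains the bound and automatically satisfies $\al<1/d$ (for $d\ge 2$); in particular no $1$-design occurs once $\al\ge 1/d$, consistent with the strict inequality established there. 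Combining the two regimes delivers both the inequality and the full equality characterization.

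There is essentially no mathematical obstacle here; the one point that needs care is the bookkeeping around the pole of $\frac{d(1-\al)}{1-d\al}$ at $\al=1/d$ and the small values of $d$ and $|X|$, since the rational expressions change sign there. I would state explicitly that \cref{ealrelbnd} is invoked only for $\al<1/d$, dispatch the case $\al\ge 1/d$ by the elementary comparison above, and present the identity $\frac1{|X|}\sum_{v\in X}vv^{*}=\frac1d I$ as the characterization of a $1$-design used in the equality analysis.
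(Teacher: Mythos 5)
Your proof is correct and follows essentially the same route as the paper, which simply obtains the Welch bound by "solving for $\al$" in the relative bound of Corollary \ref{cor:ealrelbnd}. Your extra bookkeeping --- restricting the algebraic rearrangement to the regime $\al<1/d$ where that corollary applies, dispatching $\al\ge 1/d$ by direct comparison, and verifying via $\frac{1}{|X|}\sum_{v\in X}vv^{*}=\frac{1}{d}I$ that a $1$-design attains the bound --- is a legitimate tightening of a step the paper leaves implicit, but it is not a different argument.
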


\begin{comment}
In fact, the Welch bound is a lower bound on the maximum angle across any set of $|X|$ lines in $\cx^d$, not just equiangular lines.
\end{comment}

Corollary \ref{cor:maxequides} (the \textsl{absolute bound}) stated that if $X$ is a set of $d^2$ equiangular lines in $\cx^d$, then $X$ is a $2$-design. In this case, the relative bound implies that 
\[
\al = \frac{1}{d+1}.
\]

\section{Fiducial vectors}\label{sec:fiducial}

In Section \ref{sec:prime} we introduced the generalized Pauli matrices $X(a)$ and $Y(a)$ for $a$ in some finite vector space $V$. We can also define these matrices over $\zz{d}$\index{generalized Pauli matrices}. Let $\{e_j: j \in \zz{d}\}$ be the standard basis for $\cx^d$, and let $\om$ be an $d$-th primitive root of unity in $\cx$. Then the Pauli matrices for $\zz{d}$ act as follows:
\begin{align*}
X(j): & \; e_k \mapsto e_{k+j}; \\
Y(j): & \; e_k \mapsto \om^{jk}e_k.
\end{align*}
When $d$ is prime, $X(j)$ and $Y(j)$ coincide with the Pauli matrices for a vector space. 

The quotient group
\[
G := \lb X(j)Y(k) : j,k \in \zz{d} \rb / \lb \om I \rb
\]
has order $d^2$, and every element can be written uniquely as $X(j)Y(k)\lb \om I \rb$. Moreover, $G$ is isomorphic to $\zz{d}^2$ as a free module over $\zz{d}$. As in Lemma \ref{lem:2groupcom}, $X(j)Y(k)$ commutes with $X(j')Y(k')$ if and only if $j'k = jk'$. The proof of the following is the same as that of Lemma \ref{lem:oddsemiequiv}. Note that if $\tha = -e^{\pi i/d}$, then $\tha^2 = \om$ is a primitive $d$-th root of unity, while $\tha^d$ is $1$ or $-1$ if $d$ is odd or even respectively.

\begin{lemma}
Define $\theta = -e^{\pi i/d}$, and let $k = jz$ in $\zz{d}$. Then
\[
\phi_{z,y} := \sum_{x \in \zz{d}} \tha^{zx^2 +  2yx} e_x
\]
is an eigenvector for $X(j)Y(k)$. Moreover, for each $z$, the set $\{\phi_{z,y}: y \in \zz{d} \}$ spans $\cx^d$. \qed
\end{lemma}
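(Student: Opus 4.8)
The plan is to follow the proof of Lemma~\ref{lem:oddsemiequiv} almost verbatim: I would compute $X(j)Y(k)\phi_{z,y}$ directly by applying the two maps to the standard basis vectors appearing in $\phi_{z,y}$, reindex the resulting sum, and use the hypothesis $k=jz$ to see that the linear ``cross term'' vanishes and a scalar factors out. The spanning claim I would reduce to the invertibility of the character table of $\zz{d}$.

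First I would apply $Y(k)$, using $\om=\tha^2$, to get $Y(k)\phi_{z,y}=\sum_{x\in\zz{d}}\tha^{zx^2+2yx}\om^{kx}e_x=\sum_x\tha^{zx^2+2(y+k)x}e_x$. Applying $X(j)$ and substituting $x'=x+j$ yields $\sum_{x'}\tha^{z(x'-j)^2+2(y+k)(x'-j)}e_{x'}$, and expanding the exponent gives
\[
z(x'-j)^2+2(y+k)(x'-j)=zx'^2+2(y+k-jz)x'+\bigl(zj^2-2(y+k)j\bigr).
\]
Since $k=jz$, the linear coefficient collapses to $2y$ and the constant term becomes $-zj^2-2yj$. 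Hence $X(j)Y(k)\phi_{z,y}=\tha^{-zj^2-2yj}\phi_{z,y}$, so $\phi_{z,y}$ is an eigenvector (with eigenvalue $\tha^{-zj^2-2yj}$).

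The one point that needs care — essentially the only obstacle — is that $\tha$ is a $(2d)$-th root of unity while $x$ runs over $\zz{d}$, so I must check that $\tha^{zx^2+2yx}$ is well defined, i.e. that replacing $x$ by $x+d$ leaves the term unchanged. That substitution multiplies the term by $\tha^{2d(zx+y)+zd^2}=\tha^{zd^2}$, and the identity already noted in the text — $\tha^d=1$ when $d$ is odd and $\tha^d=-1$ when $d$ is even — shows $\tha^{zd^2}=1$ in both cases (using $d^2\equiv d \pmod{2d}$ for $d$ odd and $d^2\equiv 0\pmod{2d}$ for $d$ even). With this in hand the reindexing $x'=x+j$ above is legitimate.

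For the spanning statement I would fix $z$ and write $\phi_{z,y}=D_z w_y$, where $D_z$ is the invertible diagonal matrix whose $(x,x)$-entry is $\tha^{zx^2}$ and $w_y:=\sum_{x\in\zz{d}}\om^{xy}e_x$. The vectors $\{w_y:y\in\zz{d}\}$ are the columns of the character table of $\zz{d}$, a Vandermonde matrix in the distinct $d$-th roots of unity $\om^y$, hence linearly independent and spanning $\cx^d$; since $D_z$ is invertible, $\{\phi_{z,y}:y\in\zz{d}\}=\{D_z w_y:y\in\zz{d}\}$ spans $\cx^d$ as well.
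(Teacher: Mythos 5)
Your proposal is correct and follows essentially the same route as the paper, which simply notes that the proof is the same direct computation as in Lemma~\ref{lem:oddsemiequiv}: apply $Y(k)$ then $X(j)$, reindex, and use $k=jz$ to kill the linear cross term, obtaining the eigenvalue $\tha^{-zj^2-2yj}$. Your explicit check that $\tha^{zx^2+2yx}$ is well defined on $\zz{d}$ (via $\tha^d=\pm1$) and the Vandermonde argument for spanning are welcome elaborations of points the paper leaves implicit.
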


\begin{comment}
In the case when $d$ is even, $\tha$ has order $2d$. In fact it does not matter whether we consider $x$, $y$, and $z$ to be in $\zz{d}$ or in $\zz{2d}$. For, let $f(x,y,z) = zx^2 +  2yx$ in $\zz{2d}$. Then $f(x,y,z) = f(x+d,y,z) = f(x,y+d,z)$, so we may take $x$ and $y$ in $\zz{d}$. On the other hand, if we take $z \in \zz{2d}$, then it suffices to note that
\[
\tha^{(z+d)x^2 +  2yx} = \tha^{dx^2}\tha^{zx^2 +  2yx} = (-1)^{x^2}\tha^{zx^2 +  2yx} = (-1)^x\tha^{zx^2 +  2yx} = \tha^{zx^2 +  2(y+d/2)x},
\]
and therefore $\phi_{z+d,y} = \phi_{z,y+d/2}$.

\begin{proof}
\begin{align*}
X(a)Y(b)\phi_{z,y} & = X(j)Y(k)\sum_{x \in \zz{n}} \tha^{zx^2 +  2yx} e_x \\
& = \sum_{x \in \zz{n}} \tha^{zx^2 +  2yx + 2kx} e_{x+j} \\
& = \sum_{x \in \zz{n}} \tha^{z(x+j)^2 +  2y(x+j) + 2kx - 2jzx - zj^2 - 2yj} e_{x+j} \\
& = \tha^{- zj^2 - 2yj}\phi_{z,y}.
\end{align*}
\end{proof} 
\end{comment}

Almost all of the known constructions of maximal sets of equiangular lines have the form 
\[
\{X(j)Y(k)v : j,k \in \zz{d}\}
\]
for some $v \in \cx^d$. The vector $v$ is called the \defn{fiducial vector}. The lone exception is Hoggar's set of $64$ lines in $\cx^8$, which uses the group $\zz{2}^3$ instead of $\zz{8}$. Hoggar's construction is discussed in Section \ref{sec:hoggar}. 

Zauner \cite{zau1} finds a fiducial vector for every $d \leq 5$, and Renes et al.~\cite{ren1} find all possible vectors for $d \leq 4$. In dimension $2$, there are two possible vectors (up to orbits under the Pauli group):
\[
v := \frac{1}{\sqrt{6}}\left(\begin{matrix}
\pm\sqrt{3 \pm \sqrt{3}} 
\vspace{0.1cm} \\
e^{i\pi/4}\sqrt{3 \mp \sqrt{3}}
\end{matrix}\right).
\]
In dimension $3$, there is an infinite number of fiducial vectors. One example is
\[
v := \frac{1}{\sqrt{2}}\left(\begin{matrix}
1 \\
1 \\
0 \\
\end{matrix}\right).
\]
For dimensions $4$, $5$, $6$, and $7$, Renes et al.~found that there are $16$, $80$, $96$, and $336$ vectors respectively. However, the analytic descriptions become more complicated as the dimension increases. Grassl \cite{gra1} gives a fiducial vector in dimension $6$ which takes two-thirds of a page to describe. Appleby \cite{app1} found analytic solutions for $d = 7$ and $d = 19$.

\begin{comment}
\subsection{Clifford group}
\end{comment}

Denote by \named{$GP(d)$}{generalized Pauli group} the group of matrices of the form $\tha^k X(i)Y(j)$, with $i,j \in \zz{d}$ and $k \in \zz{2d}$. Then the \defn{Clifford group} \named{$C(d)$}{Clifford group} is the group of unitary matrices $U$ that normalize $GP(d)$:
\[
UGP(d)U^* = GP(d).
\]
Suppose $U$ is in the Clifford group, and $M$ is in $GP(d)$. Then there is some other $M'$ in $GP(d)$ such that
\[
(Uv)^*M(Uv) = v^*M'v.
\]
Thus if $v$ is a fiducial vector, then so is $Uv$. 

An operation $U: \cx^d \rightarrow \cx^d$ is \defn{anti-linear} if, for all $\al, \be \in \cx$ and $u,v \in \cx^d$,
\[
U(\al u + \be v) = \al^*U(u) + \be^*U(v).
\]
Any anti-linear operation is a linear operation composed with the complex conjugacy operation. Let $U^*: \cx^d \rightarrow \cx^d$ denote the unique operation such that 
\[
u^*U(v) = v^*U^*(u)
\]
for any $u$ and $v$. Then an \defn{anti-unitary} operation is an anti-linear operation such that $U \circ U^*$ is the identity. The \defn{extended Clifford group} \named{$EC(d)$}{extended Clifford group} is the group of unitary and anti-unitary operations that normalize $GP(d)$. 

Each of the fiducial vectors constructed by Renes et al.~\cite{ren1} is an eigenvector for some element $U$ in the extended Clifford group. Moreover, each such $U$ has order $3$. Zauner \cite{zau1} conjectures that this is the case in all dimensions. (See Appleby \cite{app1} for more details.)

\begin{comment}
Let $f_U$ and $g_U$ be the functions on $\zz{d}$ such that 
\[
UX(i)Y(j)U^*\lb \tha I \rb = X(f_U(i))Y(g_U(j))\lb \tha I \rb.
\]
Since $X(i)Y(j)X(i')Y(j')\lb \tha I \rb = X(i+i')Y(j+j')\lb \tha I \rb$, it follows that $f_U$ and $g_U$ are linear. Since $f_U$ and $g_U$ are only functions of $i$ and $j$, there is a $2 \times 2$ matrix $F_U$ such that
\[
\twovec{f_U(i)}{g_U(j)} = F_U \twovec{i}{j}.
\]
\end{comment}

\section{The difference set construction}\label{sec:ealdiff}

Corollary \ref{cor:diffequi} showed that if $D$ is a $(v,k,\la)$-difference set in an abelian group $G$, then the characters of $G$, restricted to $D$, are a set of $v$ equiangular lines in $\cx^k$. Since $v = (k^2-k+\la)/\la$, this produces the most lines for a given $k$ when $\la = 1$ and $v = k^2-k+1$. These lines are maximal with respect to the relative bound (Corollary \ref{cor:ealrelbnd}).

If $k=q+1$ for some prime power $q$, then the Singer subgroup for \named{$PG(2,q)$}{projective plane over $GF(q)$} is a cyclic group of size $v = q^q+q+1$ containing a $(v,k,1)$-difference set. (For details, see Beth, Jungnickel, and Lenz \cite[Theorem VI.1.9]{bjl}.) The lines from this set were constructed by K\"onig \cite{kon1} in 1999 and then rediscovered by Xia, Zhou, and Giannakis \cite{xzg}. (K\"onig observed the construction only when $q$ is prime, while Xia, Zhou, and Giannakis in fact found the more general difference set construction of Corollary \ref{cor:diffequi} for any cyclic group.)

\begin{comment}
The Singer subgroup is an automorphism group of $PG(2,q)$, cyclic of size $q^2+q+1$. Identifying $PG(2,q)$ with $GF(q^3)$, if $\tha$ is a primitive element, then $\tha^{q^2+q+1}$ generates the cycle on the points. The difference set is simply of the lines.
\end{comment}

Since the lines in this construction are characters (restricted to a particular subset), the vectors are flat. Although the lines are not maximal with respect to the absolute bound, they are maximal with respect to flatness. The following result is new.

\begin{lemma}
There are at most $k^2-k+1$ flat equiangular lines in $\cx^k$.
\end{lemma}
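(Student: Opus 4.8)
The plan is to mimic the absolute bound argument of Theorem \ref{thm:sdistbnd} but to exploit flatness to gain an extra dimension, exactly as the passage from $\Hom(s,s)$ to $\Hom(s,s-1)$ is used there when $0$ is an angle. Suppose $X = \{v_1,\ldots,v_n\}$ is a set of flat equiangular lines in $\cx^k$, say with common angle $\al$, and (by scaling each flat vector) assume every entry of every $v_i$ has absolute value $1/\sqrt{k}$, so that $v_i^*v_i = 1$. For each $v \in X$ I would form the zonal polynomial
\[
f_v(z) := (v^*z)\,\bigl(\abs{v^*z}^2 - \al\bigr),
\]
which lies in $\Hom(2,1)$ once padded with powers of $Z$. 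As in Theorem \ref{thm:sdistbnd}, $f_v(u) = 0$ for $u \neq v$ in $X$ while $f_v(v) = 1-\al \neq 0$, so $\{f_v : v \in X\}$ is linearly independent. The point is that flatness forces all these polynomials into a subspace strictly smaller than $\Hom(2,1)$: since every coordinate of $v$ has modulus $1/\sqrt k$, the term $(v^*z)\abs{v^*z}^2 = (v^*z)(v^*z)(z^*v)$ expands as $\sum_{a,b,c} \bar v_a v_b \bar v_c\, z_a \bar z_b z_c$, and on each monomial the coefficient $\bar v_a v_b \bar v_c$ is (up to the fixed scalar $k^{-3/2}$) determined only by which indices among $a,b,c$ coincide — in particular whenever $a = c$ the two factors $\bar v_a, \bar v_c$ contribute $\abs{v_a}^2 = 1/k$, a constant. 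So modulo $Z$ the cubic part of $f_v$ collapses: the diagonal contributions $a=c$ merge into a term proportional to $\sum_b \bar v_b\, z_b \cdot Z$, i.e. into $(v^*z)Z$, which we may discard. Hence each $f_v$ lies in the span of $\{(v^*z)\}$ together with the "multilinear" monomials $z_a\bar z_b z_c$ with $a,b,c$ distinct (and $\abs{v^*z}^2(v^*z)$ restricted accordingly).

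Thus the task reduces to a dimension count: I would let $W \sbs \Hom(2,1)$ be the subspace spanned by $\{z_a : a\} \cup \{z_a\bar z_b z_c : a,b,c \text{ distinct}\}$, show every $f_v$ lies in $W$, and compute $\dim W$. The linear part contributes $k$; the genuinely trilinear part contributes the number of monomials $z_a\bar z_b z_c$ with $a \neq b$, $b \neq c$, $a \neq c$ — but $z_a\bar z_b z_c = z_c \bar z_b z_a$, so this is $\binom{k}{2}(k-2)$ in the naive count before symmetrising in $a,c$; symmetrising gives $\binom{k}{2}(k-2) \bigl/$ nothing new since $a,c$ are already unordered once we write $z_a z_c$, i.e. $\binom{k}{2}(k-2)$. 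Together with contributions where exactly one coincidence occurs — which I must handle carefully, since $a=b$ or $b=c$ gives terms like $\abs{z_a}^2 z_c$ that are NOT absorbed by $Z$ — the bookkeeping should, after the flatness reduction kills all the $a=c$ terms and folds the $a=b,\,b=c$ terms into a controlled complement, leave a space of dimension exactly $k^2-k+1$. So $n = \abs{X} \leq \dim W = k^2 - k + 1$.

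The main obstacle I anticipate is getting the combinatorial dimension count exactly right: one has to be precise about which monomials of $\Hom(2,1)$ genuinely survive after (i) discarding multiples of $Z$ and (ii) imposing flatness, and in particular to verify that the flatness identity $\abs{v_a}^2 = 1/k$ removes precisely enough degrees of freedom — neither too few (which would give only the absolute bound $k^2$) nor too many. A cleaner route, which I would pursue in parallel, is to observe directly that the map $v \mapsto f_v$, after the flatness reduction, factors through the vector $w_v \in \cx^{k}\otimes(\text{something of dimension }k-1\ldots)$; more concretely, flatness means $v_a = k^{-1/2}\zeta_a$ with $\abs{\zeta_a}=1$, and $f_v$ is determined by the $k$ numbers $\zeta_a$ together with the $\binom{k}{2}$ ratios $\zeta_a/\zeta_b$ — but these ratios are not independent of the $\zeta_a$, so the effective parameter count is $k + \bigl(\text{dimension of the span of the }\zeta_a\zeta_b^{-1}\zeta_c\text{-tensors}\bigr)$. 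Matching this against the Singer difference set construction, which achieves $k^2-k+1$, would confirm the bound is the right one and pin down the constant. Either way the proof is short once the linear-algebraic reduction is set up; the risk is purely in the arithmetic of the dimension.
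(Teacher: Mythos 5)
There is a genuine gap here: the subspace of $\Hom(2,1)$ into which flatness confines the polynomials $f_v$ does not have dimension $k^2-k+1$ --- it is strictly larger, so the dimension count your argument rests on cannot close. There is first a bookkeeping slip: in $\sum_{a,b,c}\bar{v}_a v_b\bar{v}_c\,z_a\bar{z}_b z_c$, the products that flatness turns into the constant $1/k$ are those pairing a conjugated index with an unconjugated one, i.e.\ $\bar{v}_av_b$ with $a=b$ (or $v_b\bar{v}_c$ with $b=c$); the coincidence $a=c$ you single out gives $\bar{v}_a^{\,2}$, which has constant modulus but is not constant, so the monomials $z_a^2\bar{z}_b$ do not collapse. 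More seriously, after correctly folding the $a=b$ and $b=c$ terms into multiples of $(v^*z)Z$, what survives of $f_v$ (writing $v_a=\zeta_a/\sqrt{k}$ with $\abs{\zeta_a}=1$) is governed by the coefficient vectors $(\bar{\zeta}_a)$ on the linear-plus-diagonal part, $(\bar{\zeta}_a^{\,2}\zeta_b)_{a\neq b}$ on the monomials $z_a^2\bar{z}_b$, and $(\bar{\zeta}_a\bar{\zeta}_c\zeta_b)$ on the fully distinct monomials; these span a space of dimension $k+k(k-1)+\binom{k}{2}(k-2)$ in general, far exceeding $k^2-k+1$. Already for $k=2$ the target is $3$, but taking $\zeta_1=1$ and letting $\zeta_2$ run over the fourth roots of unity produces four linearly independent coefficient vectors $(1,\bar{\zeta}_2,\zeta_2,\bar{\zeta}_2^{\,2})$, so your ambient space has dimension $4$ and the method proves only $\abs{X}\le 4$.

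The proof in the text works in the opposite direction: instead of shrinking the ambient space it enlarges the set of lines. Flatness says precisely that every $v_i$ makes angle $1/k$ with every standard basis vector $e_j$, so one forms the Gram matrix of the $m+k$ rank-one projections $v_1v_1^*,\ldots,v_mv_m^*,e_1e_1^*,\ldots,e_ke_k^*$ (with entries $\tr(uu^*ww^*)=\abs{u^*w}^2$), which has explicit block form with blocks $\al J+(1-\al)I$, $\frac{1}{k}J$, and $I$. Row reduction shows this matrix has rank at least $m+k-1$, and since all $m+k$ projections lie in the $k^2$-dimensional real space of Hermitian matrices, $m+k-1\le k^2$. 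If you want to salvage a polynomial-space proof you would need an analogue of this augmentation --- something that accounts for the extra $k-1$ dimensions occupied by $\abs{z_1}^2,\ldots,\abs{z_k}^2$ inside $\Hom(1,1)$ alongside the functions $\abs{v_i^*z}^2$ --- rather than a restriction of $\Hom(2,1)$.
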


\begin{proof}
Let $\{v_1,\ldots,v_m\}$ be a set of $m$ flat equiangular lines in $\cx^k$, and let $e_1,\ldots,e_k$ be the standard basis. Then consider the Gram matrix $G$ of \[
X := \{v_1v_1^*,\ldots,v_mv_m^*,e_1e_1^*,\ldots,e_ke_k^*\}
\]
For vectors $u$ and $v$, the entry in the Gram matrix is
\[
G_{uu^*,vv^*} = \tr(uu^*vv^*) = \abs{u^*v}^2.
\]
Assume $\abs{v_i^*v_j}^2 = \al$. Then we have
\[
G = \left( \begin{matrix}
\al J + (1-\al)I & \frac{1}{k}J \\
\frac{1}{k}J & I \\
\end{matrix} \right),
\]
where each $J$ is an appropriately sized all-ones matrix. Using elementary row operations, $G$ is row-equivalent to 
\[
\left( \begin{matrix}
(\al - \frac{1}{k})J + (1-\al)I & 0 \\
\frac{1}{k} J & I \\
\end{matrix} \right).
\]
It is then easy to find the eigenvalues of $G$. When the relative bound from Corollary \ref{cor:ealrelbnd} holds, $G$ has rank $m+k-1$. Otherwise, it has full rank $m+k$. In either case, the rank, which is also the dimension of the span of $X$, is at least $m+k-1$. But $X$ is a subset of the Hermitian matrices, a space of dimension $k^2$. Thus
\[
m+k-1 \leq k^2,
\]
and so $m \leq k^2-k+1$. \qed
\end{proof}

\section{Hoggar's construction}\label{sec:hoggar}

In this section and the next, we discuss two particular constructions of equiangular lines due to Hoggar \cite{hog1} and Appleby \cite{app1}, and we show that certain natural generalizations do not work in higher dimensions. Lemmas \ref{lem:hoggen} and \ref{lem:almostflat} are new.

Hoggar found $64$ equiangular lines in $\cx^8$. He worked with quaternions, but there is a simple description of his construction using generalized Pauli matrices. Let $V = GF(2)^3$. Then the Pauli matrices $\{X(a),Y(a): a \in V\}$ generate a group of size $128$ in which every element may be written $\pm X(a)Y(b)$.

\begin{lemma}
Let 
\[
r := \sqrt{2}, \; s := \frac{1+i}{\sqrt{2}}, \; t := \frac{1-i}{\sqrt{2}},
\] 
and let 
\[
v := (0,0,s,t,s,-s,0,r).
\] 
Then
\[
\{X(a)Y(b)v : a,b \in V\}
\]
is a set of $64$ equiangular lines in $\cx^8$. \qed
\end{lemma}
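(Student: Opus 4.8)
The plan is to verify directly that the $64$ vectors $X(a)Y(b)v$ form an equiangular set, using the group structure of the generalized Pauli matrices over $V = GF(2)^3$. First I would record the basic facts: each $X(a)Y(b)$ is unitary, so every vector in the set is a unit vector (up to the fixed normalization of $v$), and the family $\{X(a)Y(b) : a,b \in V\}$, modulo scalars, is an abelian-up-to-phase group isomorphic to $V^2$; concretely $X(a)Y(b)X(a')Y(b') = \om^{\langle a',b\rangle} X(a+a')Y(b+b')$ in the $GF(2)$ setting (here phases are $\pm 1$). Because $v^*\big(X(a)Y(b)\big)^*X(a')Y(b')v$ depends, up to a phase of modulus $1$, only on the difference $(a'-a,b'-b)$, it suffices to show that $\abs{v^* X(c)Y(d) v}^2$ takes the same value for all $(c,d) \neq (0,0)$ in $V^2$, and that this common value is nonzero (so that no two of the $64$ lines coincide and the angle is a single constant $\al$).

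The key computational step is therefore to evaluate the $63$ inner products $v^* X(c)Y(d)v$ for $(c,d)$ ranging over $V^2 \setminus \{(0,0)\}$. With $v = (0,0,s,t,s,-s,0,r)$ indexed by $V = GF(2)^3$ in the usual order, $X(c)$ permutes coordinates by $k \mapsto k+c$ and $Y(d)$ multiplies coordinate $k$ by $(-1)^{\langle d,k\rangle}$, so
\[
v^* X(c)Y(d)v = \sum_{k \in V} \overline{v_k}\, (-1)^{\langle d,k\rangle} v_{k+c}.
\]
I would organize this as a finite check over the $7$ nonzero values of $c$ (for each, a sum of at most $8$ terms), and for each such sum a further linear dependence on $d$; using $\abs{r}^2 = 2$, $\abs{s}^2 = \abs{t}^2 = 1$, $s\bar t = i$, $\bar s t = -i$, etc., one finds $\abs{v^*X(c)Y(d)v}^2$ is constant. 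A clean way to see the constant: the total $\sum_{(c,d)} \abs{v^*X(c)Y(d)v}^2$ equals $\sum_{(c,d)} \tr\big(X(c)Y(d) vv^* (X(c)Y(d))^* vv^*\big)$, and since $\{X(c)Y(d)/\sqrt{8}\}$ is an orthonormal basis for the $64$-dimensional space of $8\times 8$ matrices (they are trace-orthogonal), this sum is $8\,\tr\big((vv^*)^2\big)\cdot 8 / 8 = 8\|v\|^4 = 8\cdot 8^2$ after normalizing $v$ to unit length — wait, more carefully: $\sum_{(c,d)}\tr(M_{cd} A M_{cd}^* A) = 8\,(\tr A)^2$ for $A = vv^*$ of unit trace, giving total $8$; subtracting the $(c,d)=(0,0)$ term ($=1$) leaves $7$ spread over $63$ pairs, forcing $\al = \abs{v^*X(c)Y(d)v}^2 = 1/9$ for every nonzero $(c,d)$, provided the value is indeed constant. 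So the argument reduces to: (i) the phase/translation-invariance reducing to differences, (ii) the averaging identity pinning down the sum of squared angles, and (iii) the genuine constancy, which the averaging alone does not give.

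The main obstacle is exactly point (iii): the trace-orthonormality argument shows the squared angles \emph{sum} to $7$ over $63$ pairs but does not by itself show they are all equal — that equiangularity is a real property of this particular $v$, not of every unit vector. So the heart of the proof is the explicit (if short) verification that $\abs{\sum_k \overline{v_k}(-1)^{\langle d,k\rangle} v_{k+c}}^2 = 1/9$ uniformly; equivalently, one exhibits a suitable symmetry group (a subgroup of the Clifford group, acting transitively on the $63$ nonzero elements of $V^2$ and fixing the line $\lb v\rb$) that makes all these inner products conjugate in absolute value. I would present the direct coordinate computation, grouped by the $7$ choices of $c$, as the cleanest route; it is routine but must be done. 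Once constancy and nonvanishing are established, Corollary \ref{cor:ealabsbnd} (the absolute bound $d^2 = 64$ for $\cx^8$) confirms the set is maximal, and by Corollary \ref{cor:maxequides} it is automatically a $2$-design.
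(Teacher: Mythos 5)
Your proposed proof is correct in outline and the numbers do check out (for instance $v^*X(001)v=-2$, $v^*X(111)v=-2$, $v^*X(100)v=2$, each of squared modulus $4=\tfrac19\|v\|^4$ since $\|v\|^2=6$). Note, however, that the paper gives no proof of this lemma at all: it is stated with the proof omitted, deferring to Hoggar's original quaternionic construction, and the surrounding text only remarks that the Pauli description is a convenient reformulation. So your argument is a genuine addition rather than a variant. Its two good ideas are (i) the reduction, via the group law $X(a)Y(b)X(a')Y(b')=\pm X(a+a')Y(b+b')$, of all $\binom{64}{2}$ inner products to the $63$ quantities $\abs{v^*X(c)Y(d)v}$, and (ii) the trace-orthonormality identity $\sum_{c,d}\abs{v^*X(c)Y(d)v}^2=\|v\|^4\cdot 8/8\cdot 8=8\|v\|^4/8\cdot\ldots$ — more simply, $\sum_{c,d}\tr(M_{cd}\,vv^*M_{cd}^*\,vv^*)=8(\tr vv^*)^2$ — which forces the common angle to be $1/(d+1)=1/9$ \emph{before} any coordinate work, so that the remaining verification is a pure consistency check. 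You are right that (ii) cannot replace the case check, since the identity holds for every unit vector; equiangularity is special to this $v$. Two small points to tighten: the condition for two of the $64$ lines to be projectively distinct is that the common normalized angle be different from $1$ (not merely nonzero), which is immediate once you know it equals $1/9$; and your phase $(-1)^{\langle d,k\rangle}$ versus $(-1)^{\langle d,k+c\rangle}$ differs from the operator-ordering convention by the global sign $(-1)^{\langle d,c\rangle}$, harmless for absolute values but worth a sentence. Finally, the closing appeal to Corollary~\ref{cor:ealabsbnd} and Corollary~\ref{cor:maxequides} is a pleasant remark but not needed for the statement itself.
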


It is natural to ask if there is a similar construction for other powers of two. Let $V(k,2) = GF(2)^k$, and consider the group $G_k$ generated by the Pauli matrices of $V(k,2)$. 

\begin{lemma}\label{lem:hoggen}
For any $v \in \cx^{2^k}$, the lines
\[
\{X(a)Y(b)v : a,b \in V(k,2)\}
\]
can only be equiangular for $k \in \{1,3\}$.
\end{lemma}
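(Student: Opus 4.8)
\textbf{Proof proposal for \lref{hoggen}.}

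The plan is to use the $s$-distance set bounds from \chref{bounds} together with the design-theoretic characterization of maximal equiangular line sets. The orbit $\{X(a)Y(b)v : a,b \in V(k,2)\}$ has size $4^k = 2^{2k}$ lines in $\cx^{2^k}$ (assuming the orbit is full, which one checks from the commutation relations of the $X(a)$, $Y(b)$ modulo scalars). If these lines were equiangular, then by \cref{ealabsbnd} we would need $2^{2k} \le (2^k)^2$, which is of course satisfied with equality. So the orbit would have to be a \emph{maximal} set of $d^2$ equiangular lines in dimension $d = 2^k$, and by \cref{maxequides} it would then be a $2$-design, with common angle $\al = 1/(d+1) = 1/(2^k+1)$ by the relative bound (\cref{ealrelbnd}).

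First I would compute the angle structure of a generic orbit vector. As in the proof of \tref{balancedcode}, for $v$ in $\cx^d$ and two group elements $g_1 = X(a_1)Y(b_1)$, $g_2 = X(a_2)Y(b_2)$, the quantity $\abs{(g_1 v)^*(g_2 v)}^2$ depends only on $g_1^{-1}g_2$ (up to a phase coming from the non-commutativity, which is killed by the absolute value), so it suffices to understand $\abs{v^*X(a)Y(b)v}^2$ as $(a,b)$ ranges over $V(k,2)^2 \setminus \{0\}$. Writing this out, $v^*X(a)Y(b)v = \sum_{w} \conj{v_w}\, \om^{w^Tb}\, v_{w+a}$ where $\om = -1$ since $p = 2$; this is a character sum over the vector space. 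The key number-theoretic fact I would invoke is a counting/parity obstruction: requiring that all $4^k - 1$ of these sums have the \emph{same} nonzero absolute value $d/(d+1) \cdot$ (normalization) forces a Diophantine condition on $2^k$. Concretely, the sum $\sum_{a,b}\abs{v^*X(a)Y(b)v}^2$ can be evaluated two ways — directly it equals $d^2 \cdot \frac{1}{d}\cdot$(design identity giving $\sum \abs{v_i^*v_j}^2 = 2d^2 - d$ for a $2$-design of $d^2$ unit vectors after rescaling), and termwise using the orbit structure it equals $d + (d^2-1)\al d^2$ or similar; matching these, together with the integrality of certain sub-sums of $\pm 1$'s (the entries of $v^*X(a)Y(b)v$ land in $\frac12\zz{}[i]$-type lattices when $v$ has algebraic coordinates), pins down that $\sqrt{d+1} = \sqrt{2^k+1}$ must behave arithmetically in a way only possible for $k=1$ ($d+1 = 3$) and $k=3$ ($d+1 = 9$).

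The main obstacle — and the step that needs the most care — is isolating exactly which arithmetic invariant of $2^k+1$ is being constrained. The cleanest route I can see: a set of $d^2$ equiangular lines attaining the absolute bound is a SIC-POVM, equivalently a $(d,d^2,1,2)$-configuration; the associated Bose–Mesner / Gram-matrix algebra (\cref{distdesscheme2}, \tref{distdesscheme1}) has idempotents whose ranks are $d$ and $d^2 - d$, and the Krein / eigenvalue parameters $p_{ij}$, $q_{ij}$ must be nonnegative \emph{integers} (the $p_{ij}(k)$ are genuine counts since $\cA$ is a scheme). For the group-covariant case the relevant combinatorial object is a difference-set-like structure in $V(k,2)^2$ whose existence, via a character-sum (Gauss/Weil-sum) evaluation as in the proof of \tref{balancedcode}, requires $d = 2^k$ to make a specific square appear; that square condition is what fails for $k \notin \{1,3\}$. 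I would structure the final write-up as: (i) orbit has full size $4^k$; (ii) if equiangular, it is a $2$-design with $\al = 1/(2^k+1)$; (iii) evaluate the defining character sums and extract the integrality constraint; (iv) show the constraint holds only for $k=1,3$, exhibiting the solutions in \lref{hoggen}'s statement (dimensions $2$ and $8$) as the Hoggar-type configurations.
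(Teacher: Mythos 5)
There is a genuine gap, and it is located exactly where you flag it: step (iii). Your steps (i) and (ii) are fine and agree with the paper's setup (the orbit, if equiangular, meets the absolute bound, so $\al = 1/(d+1)$ and every nonidentity $M$ in the group satisfies $\abs{v^*Mv} = 1/\sqrt{d+1}$). But neither of the two mechanisms you propose for extracting a Diophantine constraint can produce one. The double-count of $\sum_{a,b}\abs{v^*X(a)Y(b)v}^2$ evaluates to $d$ on both sides for \emph{every} unit vector $v$ and every $d$ (it is the identity $\frac{1}{d^2}\sum_{a,b} X(a)Y(b)\,\rho\,(X(a)Y(b))^* = \frac{1}{d}I$ in disguise), so it is consistent with $\al = 1/(d+1)$ identically and yields nothing. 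The integrality of the scheme parameters is equally empty: the association scheme of a set of equiangular lines is just $\{I, J-I\}$, the complete-graph scheme, whose intersection numbers and Krein parameters are nonnegative integers for all $d$. More fundamentally, any argument that only uses ``$d^2$ equiangular lines forming a $2$-design'' cannot work, because such configurations exist (as Weyl--Heisenberg orbits over $\zz{d}$) in dimensions $2,3,4,5,6,7,8,19,\ldots$; the obstruction in this lemma is specific to the group being the elementary abelian $\zz{2}^{2k}$ rather than $\zz{d}^2$, and your outline never uses that distinction in an essential way.

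The paper's proof is built entirely on that distinction. Because the $Y(b)$ are $\pm1$-diagonal, the $d$ conditions $\abs{v^*Av} = 1/\sqrt{d+1}$ for $A \in \{I,Y\}^{\otimes k}$ become a \emph{real linear} system $H\al = \frac{1}{\sqrt{d+1}}(\sqrt{d+1},\pm1,\ldots,\pm1)^T$ in the diagonal quantities $\al_j = \abs{v_j}^2$, where $H$ is the $k$-fold tensor power of the $2\times2$ Hadamard matrix; inverting with $H^{-1} = \frac{1}{d}H$ pins down $\al_j = (\sqrt{d+1}+l_j)/(d\sqrt{d+1})$ with $l_j$ an odd integer. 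A second such system, for the coset $X(a)\{I,Y\}^{\otimes k}$ with $a^Ta=1$, forces the off-diagonal correlations $f_j = v_{\sg(j)}^*v_j$ to have \emph{rational} squared modulus $f_jf_j^* = \al_j\al_{\sg(j)}$, and the explicit formula for $\al_j\al_{j'}$ is rational only if $\sqrt{d+1}$ is rational or $l_j = -l_{j'}$; the first alternative gives $2^k+1$ a perfect square, hence $k=3$, and the second forces $d=2$. So while you have correctly guessed the target condition (rationality of $\sqrt{2^k+1}$, plus the degenerate $k=1$ case), the actual derivation requires setting up and solving these Hadamard-type linear systems over the $\pm1$ characters of $\zz{2}^k$, which is absent from your proposal.
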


\begin{proof}
Let $X$ and $Y$ denote the $2 \times 2$ Pauli matrices. Then modulo $-I$, the matrices of $G_k$ have the form 
\[
\Om_k := \{I,X,Y,XY\}^{\otimes k},
\]
and the angles of interest have the form $v^*Mv$, for $M \in \Om_k$. Let $v = (v_1,\ldots,v_d)$, where $d = 2^k$. Then the angles will give a system of constraints on the values of $v_i$.

Let $\al_j = v_j^*v_j$. Then from $v^*Iv = 1$, we get
\[
\al_1+ \ldots+\al_d = 1.
\]
Next consider 
\[
\abs{v^*(I \otimes \ldots \otimes I \otimes Y)v} = \frac{1}{\sqrt{d+1}}.
\]
Since $v_j^*v_j$ is real, the value of $v^*(I \otimes \ldots \otimes I \otimes Y)v$ must be real, and we get
\[
\al_1-\al_2+\ldots+\al_{d-1}-\al_d = \pm \frac{1}{\sqrt{d+1}}.
\]
More generally, let $\al = (\al_1,\ldots,\al_d)$, and let $H$ be the following $d \times d$ Hadamard matrix:
\[
H = \left(\begin{matrix} 
1 & 1 \\ 
1 & -1 
\end{matrix} \right)^{\otimes k}.
\]
Then by considering $v^*Av$ for $A \in \{I,Y\}^{\otimes k}$, we get the following system of real equations:
\[
H \al = \frac{1}{\sqrt{d+1}}\left(\begin{matrix}
\sqrt{d+1} \\
\pm 1 \\
\vdots \\
\pm 1
\end{matrix} \right).
\]
Since $H^{-1} = \frac{1}{d}H$, this system is easily solved for $\al$:
\begin{equation}
\al_j = \frac{\sqrt{d+1} + l_j}{d \sqrt{d+1}},
\label{eqn:ali}
\end{equation}
for some odd integer $l_j$. 

Generalizing this, suppose $a \in V(k,2)$ and $a^Ta = 1$ (mod $2$). Let $\sg$ be the involution of $V(k,2)$ corresponding to $X(a)$, so that $\sg$ takes the coordinate for $x \in V(k,2)$ to the coordinate for $x+a$. Also let $f_j = v_{\sg(j)}^*v_j$. Then from $v^*X(a)v$, we get
\[
f_1 + f_1^* + \ldots + f_{d-1} + f_{d-1}^* = \pm \frac{1}{\sqrt{d+1}}.
\]
Again, since $f_j + f_j^*$ is real, the right-hand side is also real. From $v^*X(a)Y(a)v$, we get
\[
f_1 - f_1^* + \ldots + f_{d/2} - f_{d/2}^* = \pm \frac{i}{\sqrt{d+1}}.
\]
Here, both the left and right sides are purely imaginary. In fact for each $b \in V$, from $v^*X(a)Y(b)v$ we get a purely real or purely imaginary linear constraint involving $\pm f_j$ and $\pm f_j^*$. Letting $f = (f_1, f_1^*, \ldots, f_{d-1}, f_{d-1}^*)$, we have
\[
H P f = \frac{1}{\sqrt{d+1}}\left(\begin{matrix}
\pm 1 \\
\pm i \\
\vdots \\
\pm 1 \\
\pm i
\end{matrix} \right),
\]
for some permutation $P$. The solutions in $f$ are of the form
\[
f_j \in \frac{\pm \{0,2,4,\ldots\} \pm \{0,2,4,\ldots\}i}{d \sqrt{d+1}}.
\]
Thus,
\[
f_jf_j^* = \frac{m_j}{d^2(d+1)},
\]
for some integer $m_j$; that is, $f_jf_j^*$ is rational. However, 
\[
f_jf_j^* = v_{\sg(j)}^*v_jv_j^* v_{\sg(j)} = \al_j\al_{\sg(j)}.
\]
This is true for all $\sg$, so $\al_j\al_{j'}$ is rational for any $j \neq j'$. From\eqref{eqn:ali},
\[
\al_j\al_{j'} = \frac{d+1 + l_jl_{j'} + (l_j+l_{j'})\sqrt{d+1}}{d^2(d+1)},
\]
which is rational if and only if $\sqrt{d+1}$ is rational or $l_j = -l_{j'}$. If $l_j = -l_{j'}$ for all $j \neq j'$, then there are only two possible indices of $j$ and $j'$, so $d = 2$. On the other hand, $\sqrt{2^k+1}$ is rational only if $k = 3$. We conclude that the lines can only be equiangular for $d \in \{2,8\}$. \qed
\end{proof}

\section{Almost flat constructions} 

Appleby \cite{app1} constructed fiducial vectors in dimensions $7$ and $19$, which have a very similar description. Recall that if $d$ is prime, the \defn{Legendre symbol} on $\zz{d}$ is defined as follows:
\[
\leg{x}{d} := \begin{cases}
0, & x = 0; \\
1, & x \text{ is a quadratic residue mod } d; \\
-1 & x \text{ is not  a quadratic residue mod } d.
\end{cases}
\]
More generally, the \defn{Jacobi symbol} is defined for any odd $d$. If $d = p_1^{k_1}p_2^{k_2}\ldots p_r^{k_r}$, then the Jacobi symbol is 
\[
\leg{x}{d} := \leg{x}{p_1}^{k_1}\leg{x}{p_2}^{k_2}\ldots\leg{x}{p_r}^{k_r}.
\]
%\nomenclature{$\leg{x}{d}$}{Jacobi symbol}%
Now define two constants:
\[
a := \sqrt{\frac{1-1/\sqrt{d+1}}{d}}, \quad b := \sqrt{\frac{1+(d-1)/\sqrt{d+1}}{d}},
\]
and consider the following equation in $y$:
\begin{equation} \label{eqn:quartic}
(2by+(d-1)ay^2-a)^2 + 4(1-y^2)(b-ay)^2 = \frac{1}{a^2(d+1)}.
\end{equation}
This equation is quartic; call the solutions $y_1,y_2,y_3,y_4$. Then both of Appleby's fiducial vectors $v = (v_1,\ldots,v_d)$ have the form 
\[
v_x = \begin{cases}
ae^{i\cos^{-1}(y_j)(\frac{x}{d})}, & 1 \leq x \leq d-1; \\
b, & x = d.
\end{cases}
\]
The question, then, is whether or not there are fiducial vectors of this form for dimensions other than $7$ and $19$. For each odd $d$, we can solve equation \eqref{eqn:quartic} and test if $v$ is fiducial with each solution $y_j$. Using Maple, we find that there are no other fiducial vectors of this form for odd $d$ less than $400$. 

However, we can at least confirm that for fiducial vectors that are flat except for one entry, the absolute values of the entries must be exactly Appleby's choices of $a$ and $b$. The following result is new.

\begin{lemma}\label{lem:almostflat}
Let $v$ be a fiducial vector in $\cx^d$ such that one entry of $v$ has squared absolute value $b$, and all other entries have squared absolute value $a$. Then
\[
a = \frac{1 \pm 1/\sqrt{d+1}}{d}; \quad b = \frac{1 \mp (d-1)/\sqrt{d+1}}{d}.
\]
\end{lemma}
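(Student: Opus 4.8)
The plan is to extract two equations in $a$ and $b$ from the fiducial property and then solve a two-variable linear system. Recall first that, by definition, a fiducial vector $v$ is one for which $X = \{X(j)Y(k)v : j,k \in \zz{d}\}$ is a set of $d^2$ equiangular lines in $\cx^d$; by Corollary~\ref{cor:ealabsbnd} this is a maximal set, and (as recorded in the background discussion, using the relative bound) the common angle is then $\al = 1/(d+1)$. We may take $v$ to be a unit vector. Since applying a translation $X(j)$ to $v$ merely relabels the set $X$, we may assume without loss of generality that the distinguished coordinate is the $0$-th one, so that $|v_0|^2 = b$ and $|v_x|^2 = a$ for all $x \in \zz{d}\backslash\{0\}$.

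Next I would write down the two constraints. The normalization $v^*v = 1$ gives immediately
\[
(d-1)a + b = 1.
\]
For the second constraint, consider the angle between the line $v$ and the line $Y(k)v$ for $k \ne 0$. Since $X(0) = I$, the pair $(0,k)$ is one of the $d^2-1$ nontrivial elements of the group, so equiangularity forces $|v^*Y(k)v|^2 = 1/(d+1)$. But $Y(k)$ acts diagonally, $Y(k)e_x = \om^{kx}e_x$, so
\[
v^*Y(k)v = \sum_{x \in \zz{d}} |v_x|^2\,\om^{kx} = b + a\sum_{x \ne 0}\om^{kx} = b - a,
\]
using $\sum_{x \in \zz{d}}\om^{kx} = 0$ for $k \ne 0$. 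Hence $(b-a)^2 = 1/(d+1)$, i.e. $b - a = \pm 1/\sqrt{d+1}$.

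Finally I would solve the system: substituting $b = a \pm 1/\sqrt{d+1}$ into $(d-1)a + b = 1$ yields $da = 1 \mp 1/\sqrt{d+1}$, so $a = (1 \mp 1/\sqrt{d+1})/d$ and then $b = a \pm 1/\sqrt{d+1} = (1 \pm (d-1)/\sqrt{d+1})/d$, which after relabelling the sign choice is exactly the pair of formulas claimed. There is essentially no obstacle in the argument; the only points that deserve a line of justification are that $\al = 1/(d+1)$ and that $v$ may be normalized, both of which follow from the definition of a fiducial vector together with results already established, and the evaluation of the character sum $\sum_{x \ne 0}\om^{kx} = -1$. Everything else is a routine linear solve.
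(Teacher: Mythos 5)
Your proof is correct, but it takes a genuinely different route from the one in the thesis. You extract the two constraints directly from the group structure: normalization gives $(d-1)a+b=1$, and the diagonal matrices $Y(k)$, $k\neq 0$, give $v^*Y(k)v=\sum_x |v_x|^2\om^{kx}=b-a$, whence $(b-a)^2=\al=1/(d+1)$; the system is then linear once the square root is taken. The thesis instead runs a rank (Gram-matrix) argument: it forms the $d^2+d$ projections $v_iv_i^*$ and $e_le_l^*$, writes out their Gram matrix under the trace inner product (whose entries are exactly $\al$, $a$, $b$, and $1$), computes its spectrum by row reduction, and forces two eigenvalues to vanish because these Hermitian matrices live in a real space of dimension $d^2$. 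The two vanishing eigenvalues yield $(b-a)^2=1/(d+1)$ and $\al=2ab+(d-2)a^2$, which determine the same pair $(a,b)$. Your argument is shorter and uses only the $d-1$ equiangularity conditions coming from the diagonal subgroup, exploiting the specific action of the Pauli matrices on coordinates; the thesis's argument uses only the overlaps $|e_l^*v_i|^2$ and the angle $\al$, so it fits the rank-bound template used elsewhere in that chapter (e.g.\ for the bound on flat equiangular lines) and would survive replacing the Pauli orbit by any maximal equiangular set with the stated entry profile. Both are valid; the only small points to make explicit in yours are the ones you already flagged (that $v$ is a unit vector and that $\al=1/(d+1)$ for a set of $d^2$ equiangular lines, the latter following from Corollary~\ref{cor:maxequides} and the relative bound).
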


\begin{proof}
Assume without loss of generality that the first entry of $v$ has absolute value $b$. Let $v=v_1,v_2,\ldots,v_{d^2}$ be the action of the Pauli matrices on $v$, where for $j,k \leq d$, the $k$-th entry of $v_{dj+k}$ has absolute value $b$. Then consider the Gram matrix of 
\[
X := \{v_1v_1^*,\ldots,v_{d^2}v_{d^2}^*,e_1e_1^*,\ldots,e_de_d^*\}.
\]
Clearly, $\abs{e_l^*v_{dj+k}}^2$ is $b$ if $l = k$ and $a$ otherwise. For simplicity, let $\al = 1/(d+1)$. Then the Gram matrix (written with $(d+1)^2$ blocks of size $d$) is
\[
G = \left(\begin{matrix}
\al J + (1-\al)I & \al J & \ldots & \al J & aJ + (b-a)I \\
\al J & \al J + (1-\al)I & \ldots & \al J & aJ + (b-a)I\\
& & \vdots & & \\
\al J & \al J & \ldots & \al J + (1-\al)I & aJ + (b-a)I  \\
aJ + (b-a)I & aJ + (b-a)I & \ldots & aJ + (b-a)I & I 
\end{matrix} \right).
\]
By subtracting linear combinations of the last $d$ rows, we can find the eigenvalues of $G$. Let
\[
x := \al-2ab-(d-2)a^2; \quad y := (b-a)^2.
\]
Then $G$ is equivalent by row and column operations to
\begin{align*}
G' & = \left(\begin{matrix}
xJ + (1-\al-y)I & xJ - yI & \ldots & xJ - yI & 0 \\
xJ - yI & xJ + (1-\al-y)I & \ldots & xJ - yI & 0 \\
& & \vdots & & \\
xJ - yI & xJ - yI & \ldots & xJ + (1-\al-y)I & 0 \\
0 & 0 & \ldots & 0 & I 
\end{matrix} \right) \\
& = \left(\begin{matrix}
(1-\al)I_{d^2} + (xJ_d-yI_d) \otimes J_d & 0 \\
0 & I_d
\end{matrix} \right),
\end{align*}
which has the following eigenvalues:
\[
\{ 1^{(d)}, (1-\al)^{(d^2-d)}, (1-\al-dy)^{(d-1)}, (1-\al+d^2x - dy)^{(1)} \}.
\]
Since the elements of $X$ are Hermitian matrices, the rank of $G$ must be at most $d^2$; therefore, the last two eigenvalues must be $0$. Solving for $a$ and $b$, we get the stated values. \qed
\end{proof}

\begin{comment}
There is a common theme among the proofs of this chapter, namely the so-called \defn{rank-bound}. Roughly speaking, the rank bound is simply the fact that the size of a set of linearly independent objects is at most the dimension of the space in which they reside. One example is Fischer's inequality, in which the ``objects'' are the characteristic vectors of points of $2$-design $(V,B)$, which resdie in the space $\re^B$. In this case we treat the characteristic vectors as the rows of the incidence matrix $N$ and show they are linearly independent by showing $NN^T = (r-\la)I + \la J$ is nonsingular.

In the case of this chapter, the objects are a set $X$ of Hermitian $d \times d$ matrices, which reside in a real vector space of dimension $d^2$. Writing each $M$ as a vector of length $d^2$, we can construct a $d^2 \times |X|$ ``incidence matrix'' $N$. The Gram matrix $G$ of $X$ is $NN^T$, and so
\[
\rk(G) \leq \rk(N) = \dim(\spn(X)) \leq d^2. 
\]
\end{comment}

\chapter{Future Work}\label{chap:outro}

A number of issues relating to the construction of maximal sets of complex lines are unresolved. Most importantly:

\begin{itemize}
\item How many mutually unbiased bases exist in $\cx^d$, when $d$ is not a prime power? Are all maximal sets of mutually unbiased bases monomial?
\item Do $d^2$ equiangular lines exist in $\cx^d$ for every $d$? If so, are they always the orbits of fiducial vectors?
\end{itemize}

Another issue is raised by the weighted adjacency algebras of Chapter 2. At present, the Gram-matrix algebras of Delsarte, Goethals, and Seidel in Section \ref{sec:gram} are the only known examples that come from Hermitian (but not symmetric) homogeneous coherently-weighted configurations. It would be interesting to know if other examples exist, and if so, whether or not they also have combinatorial interpretations. Also, recall that the weighted adjacency matrices of Section \ref{sec:gram} have the form
\[
A'_i = A_i \circ G,
\]
where $G$ is a Gram matrix and $A_i$ is a Schur idempotent in an association scheme. Is that true of every Hermitian homogeneous coherently-weighted configuration?

Yet another issue comes from the constructions of Chapter \ref{chap:constructions}. While Chapter \ref{chap:bounds} gives bounds on the size of an $s$-distance set for any $s$, very little is known about constructions for $s \geq 3$. Historically there has been less motivation to study the problem in its full generality, and the difficulty almost certainly increases with $s$. Even when $s = 2$, apart from the results already mentioned, there are only a few known constructions: Delsarte et al. \cite{dgs} refer to $2$-distance sets of size $45$ in $\cx^5$ and size $4060$ in $\cx^{28}$, each of which satisfies the relative bound with equality. However, it seems likely that the results in Chapter \ref{chap:constructions} could be applied when $s$ is greater than $2$, and this would be worth investigating.

\section*{Mutually unbiased subspaces}

There is one extension of these problems that seems to be very well motivated: using subspaces instead of unit vectors. Recall that if $M_u$ and $M_v$ are the projection matrices onto the one-dimensional subspaces spanned by vectors $u$ and $v$ respectively, then the angle between $u$ and $v$ is 
\[
\abs{u^*v}^2 = \tr(M_uM_v).
\]
More generally, given two subspaces $U$ and $V$ with projection matrices $M_U$ and $M_V$, consider $\tr(M_UM_V)$, which we call the \textsl{overlap} between $U$ and $V$. If $U$ and $V$ are orthogonal, then $\tr(M_UM_V) = 0$. What, then, is the maximum size of a collection of subspaces of a fixed dimension with a given set of overlaps? This question has some important implications in quantum computing.

Recall from the introduction that a projective measurement in quantum mechanics is a set of projection matrices $M_1,\ldots,M_n$ which decompose $\cx^d$ into orthogonal subspaces. Call a collection of measurements \textsl{mutually unbiased} if each measurement is projective and the overlap between any two subspaces from distinct measurements is some fixed $\al$. Mutually unbiased bases are a special case. In terms of subspaces, the objective here is to find a maximal $2$-distance set with overlaps $\{0,\al\}$, where the subspaces can be partitioned into orthogonal decompositions of $\cx^n$.

At first glance it may appear that this problem may be reduced to that of finding mutually unbiased bases by observing that the inner product space of $d \times d$ matrices is isomorphic to $\cx^{d^2}$. However, because the matrices we are looking for must be projections, the two questions are not equivalent.

Such collections of measurements are useful in \textsl{quantum tomography}, where the objective is to reproduce a quantum state using only certain restricted types of measurements. Gibbons, Hoffman, and Wootters \cite{ghw} described how to perform quantum tomography using a \textsl{Wigner function}, which, given a state $\rho$ and collection of projective measurements $\{M_i\}$, is essentially a formula for reconstructing $\rho$ from the measurement statistics $\tr(M_i\rho)$. Gibbons et al.~only considered mutually unbiased bases, but their method applies to any mutually unbiased projections. Investigating the existence of these subspaces is therefore a natural direction for future research. 

\begin{comment}
In this case, suppose a $d \times d$ state $\rho$ can only be measured using projections onto subspaces of dimension $d/2$; this can happen, for example, if only one qubit in the system is easy to manipulate. Each measurement $\{M_i, I-M_i\}$ (where $M_i$ is a projective matrix) has one degree of freedom, so in principle $\rho$ can be reconstructed from a collection of $d^2-1$ projective measurements. As with mutually unbiased bases, the reconstruction is simplest using mutually unbiased measurements.
\end{comment}

%\bibliographystyle{agbib} %mwnbib
%\bibliography{Lcodes}

%\printnomenclature[2cm]
\printindex

\end{document}